\documentclass[12pt]{amsart}
\usepackage{mathrsfs}
\usepackage{bbm}
\usepackage{bm}
\usepackage{bbding}
\usepackage[mathcal]{euscript}
\usepackage{graphicx}
\usepackage{amsthm,amscd}
\usepackage{appendix}
\usepackage{enumerate}
\usepackage{calc}
\usepackage{mathtools}
\usepackage{dsfont}
\usepackage{CJK,fancyhdr}
\usepackage[dvipsnames]{xcolor}
\usepackage{anysize}
\usepackage{amssymb}
\usepackage{indentfirst}
\usepackage{tikz-cd}
\usepackage{lineno}
\usepackage[all,poly,knot]{xy}
\usepackage{xpatch}
\usepackage[pagebackref]{hyperref}
\tikzset{
	tail reversed/.code={\pgfsetarrowsstart{tikzcd to}}
}

\hypersetup{
	colorlinks=true,
	linkcolor=Blue,
	filecolor=YellowOrange,
	citecolor = WildStrawberry,      
	urlcolor=cyan,
}

\allowdisplaybreaks[2]

\providecommand{\U}[1]{\protect\rule{.1in}{.1in}}

\numberwithin{equation}{section}

\newtheorem{theorem} {Theorem} [section]
\newtheorem{proposition}[theorem]{Proposition}
\newtheorem{corollary}  [theorem]     {Corollary}
\newtheorem{lemma}  [theorem]     {Lemma}
\theoremstyle{definition}
\newtheorem{example}  [theorem]     {Example}

\newtheorem{step}{Step}
\xpatchcmd{\step}{%
	\normalfont\scshape\centering}{%
	\normalfont\scshape}{}{}%
\theoremstyle{definition}

\newtheorem{definition}  [theorem]     {Definition}

\newtheorem{conjecture}  [theorem]     {Conjecture}

\newtheorem*{thm A*}{Theorem A}
\newtheorem*{thm B*}{Theorem B}
\newtheorem*{thm C*}{Theorem C}
\newtheorem*{thm D*}{Theorem D}

\theoremstyle{definition}
\newtheorem{remark}  [theorem]     {Remark}

\newcommand{\im}{\mathrm{im}}

\newcommand{\cF}{\mathcal{F}}
\newcommand{\cO}{\mathcal{O}}

\newcommand{\CC}{\mathbb{C}}

\newcommand{\btheorem}{\begin{theorem}}
	\newcommand{\etheorem}{\end{theorem}}
\newcommand{\bproposition}{\begin{proposition}}
	\newcommand{\eproposition}{\end{proposition}}
\newcommand{\bdefinition}{\begin{definition}}
	\newcommand{\bbeta}{\boldsymbol{\beta}}	\newcommand{\edefinition}{\end{definition}}
\newcommand{\bcorollary}{\begin{corollary}}
	\newcommand{\ecorollary}{\end{corollary}}
\newcommand{\bproof}{\begin{proof}}
	\newcommand{\eproof}{\end{proof}}
\newcommand{\beq}{\begin{equation}}
	\newcommand{\eeq}{\end{equation}}
\newcommand{\ee}{\end{eqnarray*}}
\newcommand{\be}{\begin{eqnarray*}}

\newcommand{\elemma}{\end{lemma}}
\newcommand{\blemma}{\begin{lemma}}

\newcommand{\bd}{\begin{enumerate} }
	\newcommand{\ed}{\end{enumerate}}

\DeclareMathOperator{\Tor}{Tor}
\DeclareMathOperator{\codim}{codim}
\DeclareMathOperator{\Ann}{Ann}
\DeclareMathOperator{\Supp}{Supp}
\DeclareMathOperator{\Pic}{Pic}
\DeclareMathOperator{\BN}{BN}
\DeclareMathOperator{\BS}{BS}

\usepackage{adjustbox}
\usepackage{float}
\begin{document}
	\setcounter{tocdepth}{1}
	\title{Aspects of deformation invariance of plurigenera of K\"ahler varieties}
	\author{Jiawei Feng}
	\address{Department of Mathematics, Wuhan University, Hubei, Wuhan 430074, China}
	\email{jiaweifeng.math@gmail.com}
	\author{Yi Li}
	\address{Department of Mathematics, Wuhan University, Hubei, Wuhan 430074, China}
	\email{yilimath@whu.edu.cn}
	\author{Sheng Rao}
	\address{Department of Mathematics, Wuhan University, Hubei, Wuhan 430074, China}
	\email{likeanyone@whu.edu.cn}
	\begin{abstract}
		We study several aspects of deformation invariance of plurigenera in K\"ahler families over a
		smooth curve. First, for a relatively flat coherent sheaf, we characterize local constancy of
		fiberwise cohomology dimension by equivalent conditions in terms of extension, base
		change, torsion freeness of higher direct images, and the vanishing of
		jumping classes. Then, combining this criterion with the K\"ahler minimal model
		program, we prove invariance of plurigenera
		for K\"ahler families of threefolds whose central fiber has canonical
		singularities, and for smooth K\"ahler families of maximal Albanese
		dimension. We also establish deformation invariance of log plurigenera when
		the central fiber is projective, and is of log
		general type. We prove the invariance of higher cohomological plurigenera 
		assuming that the canonical bundle of the total space is semipositive.
		Finally, we extend Z.~Li--Z.~Wang's deformation invariance result for
		generalized polarized pairs with abundant nef parts from projective families
		to Moishezon families.
	\end{abstract}
	\maketitle
	\tableofcontents
	
	\section{Introduction}
	Plurigenera are among the fundamental birational invariants of compact
	complex manifolds, encoding the growth of their pluricanonical sections. A central problem in complex algebraic and analytic geometry is to determine whether plurigenera remain constant in smooth
	families. Let $f:\mathcal{X}\to T$ be a smooth  family of
	compact complex manifolds over a connected base, and write
	$X_t=f^{-1}(t)$ for $t\in T$. For $m\geq1$, the \emph{$m$-th plurigenus} or \emph{$m$-genus} of $X_t$ is
	\[
	P_m(X_t):=\dim H^0\!\left(X_t,mK_{X_t} \right).
	\]
	The problem is to determine when $P_m(X_t)$ is locally constant in $t$. 
	Kodaira--Spencer \cite{KS58} proved, by harmonic theoretic
	methods, that the Hodge numbers are locally constant in smooth K\"ahler
	families. For plurigenera, the decisive breakthrough was
	Y.-T.~Siu's proof~\cite{Siu98} of invariance for
	smooth projective families of manifolds of general type. Siu's
	argument introduced $L^2$ extension methods into the subject, combining the Ohsawa--Takegoshi extension 
	theorem \cite{OT87} with multiplier ideal sheaves
	\cite{Nad90}. Siu \cite{Siu02} subsequently removed the general type assumption.
	
	Siu's work led to many important refinements and extensions, such as Y.  Kawamata~\cite[Theorem~6]{KawaDeform99} and N. Nakayama~\cite[Chapter~VI]{Ny04}. M.~P{\u{a}}un~\cite{Paun07} gave a one-tower proof, while
	S.~Takayama~\cite{Taka06} treated Moishezon families and allowed singular
	fibers. S.~Rao--I.-H.~Tsai~\cite{RT1,RT2} obtained further results for
	fiberwise Moishezon families by means of bimeromorphic embeddings. 
	Recently, M.-L. Li--Rao--K. Wang--M. Wang \cite[Corollary~A.2]{Ra26} prove deformation invariance of plurigenera for a smooth family with uncountably many Moishezon fibers.

	From the
	viewpoint of the relative minimal model program, Nakayama~\cite{Naka85,Naka87}
	showed that the relative minimal model and abundance conjectures imply deformation
	invariance of plurigenera.
	C.~D.~Hacon--J.~M\textsuperscript{c}Kernan--C.~Xu~\cite{HMX13}
	proved deformation invariance of log plurigenera for log smooth families.
	For generalized pairs, S.~Filipazzi--R.~Svaldi~\cite[Theorem~4.1]{FS20}
	established deformation invariance of plurigenera for big and nef klt
	generalized pairs, while Z.~Li--Z.~Wang~\cite{LW26} proved an invariance
	theorem for generalized polarized pairs with abundant nef parts.
	
	Beyond the projective setting, the invariance of plurigenera for arbitrary
	smooth families of compact K\"ahler manifolds is still open:
	\begin{conjecture}[{\cite[Conjecture~0.4]{Siu02}}]\label{siu-conj}
		\textit{Let $\pi:X\to\Delta$ be a smooth family of compact K\"ahler manifolds over
			the unit disk, with fiber $X_t$. Then, for every $m\geq1$, the $m$-genus $P_m(X_t)$ is independent of $t\in\Delta$.}
	\end{conjecture}

	In this paper, we study this problem and related deformation invariance questions for plurigenera, higher
	cohomological plurigenera, log plurigenera, and generalized plurigenera. For
	a compact complex manifold $X$, we set
	\[
	P_m^{(k)}(X):=\dim H^k\!\left(X,\mathcal O_X(mK_X)\right),
	\qquad m\geq1,\quad k\geq0.
	\]
	Thus $P_m^{(0)}(X)=P_m(X)$, and we refer to the collection
	$\{P_m^{(k)}(X)\}_{k\geq0}$ as the \emph{higher cohomological
		plurigenera}. For a log smooth pair $(X,B)$ and an integer $m\geq1$ such
	that $mB$ is integral, we define the \emph{log plurigenus} by
	\[
	P_m(X,B):=\dim H^0\!\left(X,
	\mathcal O_X\bigl(m(K_X+B)\bigr)\right).
	\]
	For a $\mathbb Q$-divisorial generalized pair $(X,B+\mathbf M)$, we define
	the \emph{generalized plurigenus}, whenever
	$m(K_X+B+\mathbf M_X)$ is Cartier, by
	\[
	P_m(X,B+\mathbf M):=\dim H^0\!\left(X,
	\mathcal O_X\bigl(m(K_X+B+\mathbf M_X)\bigr)\right).
	\]
	When $B=0$, we abbreviate this as $P_m(X,\mathbf M)$.

	Our starting point is a cohomological criterion for proper morphisms over a
	smooth Stein curve. For a coherent sheaf flat over the base, it characterizes the local constancy of fiberwise cohomology dimensions in terms of cohomological flatness, base change and
	extension in consecutive degrees, and torsion freeness of the corresponding
	higher direct image sheaves. 
	
	\begin{theorem}[= Theorem~\ref{Main}]\label{intro-main-criterion}
		Let $f:X\to Y$ be a proper flat morphism of complex spaces to a smooth Stein
		curve and $\mathscr F$ a coherent analytic sheaf on $X$ which is flat with respect to $f$. Fix $q\geq0$. Then the following conditions are equivalent:
		\begin{itemize}
			\item[(i)] $\mathscr F$ is cohomologically flat in dimension $q$ over
			$Y$.
			\item[(ii)] The base change maps in degrees $q$ and $q-1$,
			\[
			\varphi^j(y):(R^jf_*\mathscr F)_y
			\otimes_{\mathcal O_{Y,y}}\mathbb C(y)
			\longrightarrow H^j(X_y,\mathscr F_{(y)}),
			\qquad j=q,q-1,
			\]
			are isomorphisms for every $y\in Y$.
			\item[(iii)] The function
			\[
			y\longmapsto\dim H^q(X_y,\mathscr F_{(y)})
			\]
			is locally constant.
			\item[(iv)] The restriction maps
			\[
			r^j:H^j(X,\mathscr F)\longrightarrow
			H^j(X_y,\mathscr F_{(y)}),
			\qquad j=q,q-1,
			\]
			are surjective for every $y\in Y$.
			\item[(v)] The sheaves $R^qf_*\mathscr F$ and
			$R^{q+1}f_*\mathscr F$ are torsion free.
			\item[(vi)] The jumping classes satisfy
			\[
			J^{q+1}=0 \quad\text{and}\quad J^q=0.
			\]
		\end{itemize}
	\end{theorem}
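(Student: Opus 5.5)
The whole argument reduces to Grauert's theory of base change for proper morphisms and coherent flat sheaves. Fix $y\in Y$. Since $Y$ is a smooth Stein curve, we may work on a relatively compact Stein neighbourhood of $y$. There Grauert's direct image theorem gives a bounded complex $K^\bullet$ of finite free $\mathcal O_Y$-modules (locally near $y$) such that
\[
R^jf_*(\mathscr F\otimes f^*\mathcal G)\cong \mathcal H^j(K^\bullet\otimes\mathcal G)
\]
for every coherent $\mathcal O_Y$-module $\mathcal G$. This uses flatness of $\mathscr F$ over $Y$. Because $\mathcal O_{Y,y}$ is a DVR with uniformizer $t$, there is an exact sequence
\[
0\to (R^jf_*\mathscr F)_y\otimes\mathbb C(y)\xrightarrow{\varphi^j(y)} H^j(X_y,\mathscr F_{(y)})\to \Tor_1\bigl((R^{j+1}f_*\mathscr F)_y,\mathbb C(y)\bigr)\to 0 .
\]
It comes from the long exact sequence of $0\to\mathscr F\xrightarrow{t}\mathscr F\to\mathscr F_{(y)}\to0$, which is exact by flatness. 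The last term is exactly the $t$-torsion of $(R^{j+1}f_*\mathscr F)_y$. I take this sequence as the basic tool, and I identify the jumping class $J^{j+1}$ at $y$ with this torsion, or its image under the connecting map, following the paper's definition. With that identification, (v)$\Leftrightarrow$(vi) is immediate.

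\textbf{(v)$\Rightarrow$(ii), (i), (iii).} Assume $R^qf_*\mathscr F$ and $R^{q+1}f_*\mathscr F$ are torsion free, hence locally free on the smooth curve $Y$. The sequence with $j=q$ shows that $\varphi^q(y)$ is an isomorphism. The sequence with $j=q-1$ shows that $\varphi^{q-1}(y)$ is an isomorphism, since $R^qf_*\mathscr F$ is torsion free. This gives (ii). It also gives $\dim H^q(X_y,\mathscr F_{(y)})=\operatorname{rank}R^qf_*\mathscr F$, which is (iii). Cohomological flatness in dimension $q$, meaning $R^qf_*\mathscr F$ is locally free and commutes with base change, follows as well, so (i) holds.

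\textbf{(iii)$\Rightarrow$(v).} By the sequence, $h^q(y)=\operatorname{rk}_y(R^qf_*\mathscr F)+\tau_y(R^{q+1}f_*\mathscr F)$, where the first term is the fibre dimension and $\tau_y$ is the length of the torsion part. Fibre dimension of a coherent sheaf on a curve is upper semicontinuous. It equals the generic rank exactly when the sheaf is free at $y$; otherwise it exceeds the generic rank by the torsion length. At a generic point both sheaves are locally free and $h^q$ equals the generic rank. Local constancy of $h^q$ therefore forces $\operatorname{tors}(R^qf_*\mathscr F)_y=0$ and $\operatorname{tors}(R^{q+1}f_*\mathscr F)_y=0$. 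This is the step needing care, since it reads off both torsions from one number. I would use the decomposition $M\cong\mathcal O^r\oplus T$ over the DVR, which gives $\dim M\otimes\mathbb C(y)=r+\dim T\otimes\mathbb C(y)$, and the fact that the torsion is supported at isolated points.

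\textbf{(i)$\Rightarrow$(v).} Cohomological flatness gives that $R^qf_*\mathscr F$ is locally free with base change. Comparing the sequence with $j=q$ then gives $\operatorname{tors}R^{q+1}f_*\mathscr F=0$. Similarly, if (ii) holds, surjectivity of $\varphi^q$ kills $\operatorname{tors}R^{q+1}$. Surjectivity of $\varphi^{q-1}$ kills $\operatorname{tors}R^q$, by the sequence at $j=q-1$ together with Tor-length bookkeeping, so (ii)$\Rightarrow$(v).

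\textbf{(ii)$\Leftrightarrow$(iv).} Since $Y$ is Stein, Leray and Cartan's Theorem B give $H^j(X,\mathscr F)=H^0(Y,R^jf_*\mathscr F)$. Moreover, $H^0(Y,R^jf_*\mathscr F)\to (R^jf_*\mathscr F)_y\otimes\mathbb C(y)$ is surjective, by Theorem B applied to the kernel of $R^jf_*\mathscr F\to R^jf_*\mathscr F\otimes\mathbb C(y)$. The restriction map $r^j$ factors as this surjection followed by $\varphi^j(y)$. Hence $r^j$ is surjective if and only if $\varphi^j(y)$ is surjective, and the latter is equivalent to being an isomorphism, because $\varphi^j(y)$ is always injective by the exact sequence. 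This closes the cycle of equivalences.
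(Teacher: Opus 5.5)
The gap is in (vi); everything else is correct.

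Your treatment of (i)--(v) is sound and follows the paper's lower route. Your single sequence $0\to (R^jf_*\mathscr F)_y\otimes\mathbb C(y)\to H^j(X_y,\mathscr F_{(y)})\to \Tor_1\bigl((R^{j+1}f_*\mathscr F)_y,\mathbb C(y)\bigr)\to 0$ is the universal-coefficient sequence of Proposition~\ref{jump-dimension}, derived as in Appendix~\ref{app-base change}. Where the paper quotes \cite[Theorem~4.12.(ii), Chapter~III]{BS76} and Grauert's base change theorem, you route (i) and (ii) through (v) using this sequence. In (ii)$\Leftrightarrow$(iv) you replace Corollary~\ref{iso-sur-lem} by the injectivity of $\varphi^j(y)$ read off from the same sequence. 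Both substitutions are valid and make those equivalences self-contained.

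There is one slip in (iii)$\Rightarrow$(v). Since $\dim_{\mathbb C}\Tor_1(\mathcal O_{Y,y}/(t^e),\mathbb C(y))=1$ for every $e$, the correction terms are the numbers $\mu(T^q)$ and $\mu(T^{q+1})$ of cyclic torsion summands, not the lengths $\sum e_i$. The conclusion is unaffected, because these numbers vanish exactly when the torsion does.

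The genuine gap is (vi). The jumping classes are not defined via torsion. $E^q$ and $J^q$ are defined through formal solutions $\Gamma(t)$ of $\delta_\Phi\Gamma(t)=0$ in Griffiths' framework. So identifying $J^{j+1}$ with the torsion is a theorem that must be supplied, not something you can take from the definition. It is also not a naive identification. The obvious candidate sends $\ker t\subset (R^{q+1}f_*\mathscr F)_0$ into $H^{q+1}(X_0,\mathscr F_{(0)})$ through $(R^{q+1}f_*\mathscr F)_0\otimes\mathbb C(0)$, and this map kills every summand $\mathcal O_{Y,0}/(t^e)$ with $e\geq 2$. The correct match, $\dim J^{q+1}=\mu(T^{q+1})$, is precisely what needs proof.

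The missing input is \cite[Theorems~1.1 and~4.2]{Gr65}: $H^q(X_0)/E^q\cong J^{q+1}$ and $H^q(X_t)\cong E^q/J^q$. These give $\dim H^q(X_0)-\dim H^q(X_t)=\dim J^{q+1}+\dim J^q$, and hence (vi)$\Leftrightarrow$(iii) directly; this is how the paper closes the cycle. With that citation your argument is complete, and you need no identification of $J$ with torsion at all.
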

	
	Let us outline the proof of Theorem~\ref{intro-main-criterion}. Since the base is a
	smooth curve, its local rings are discrete valuation rings. The
	cohomological flatness theorem
	\cite[Theorem~4.12.(ii), Chapter~III]{BS76} relates cohomological flatness to
	the local constancy of fiberwise cohomology dimensions. The
	universal coefficient theorem and base change exact sequences identify the
	failures of base change and of extension in consecutive degrees with torsion in the
	corresponding higher direct images. Finally, the identities of
	Griffiths~\cite[Theorems~1.1 and~4.2]{Gr65} express cohomology jumps in terms of the
	jumping classes. Combining these ingredients, we obtain the equivalence of
	the six conditions in Theorem~\ref{intro-main-criterion}.
	Following Rao--Tsai~\cite{RT1}, we use exact
	sequences associated with a local parameter to make the obstruction caused
	by torsion explicit in Appendix \ref{app-base change}. 
	
	Following A.~Teleman~\cite{Teleman15}, we relate
	torsion in higher direct images to
	Brill--Noether loci and exclude it by codimension estimates based on
	Cohen--Macaulay rings and regular sequences. This leads to the following
	criterion.
	\begin{theorem}[= Theorem \ref{cri}]\label{intro-brill-noether}
		Let $B$ and $M$ be complex analytic spaces, $\pi:M\to B$ a flat morphism, and $\mathcal E$ a holomorphic vector bundle on $M$.
		Suppose that $B$ is Cohen--Macaulay and that for every $\max\{k-\dim B,0\}\leq i\leq k-1$,
		$R^{i}\pi_*(\mathcal{E})=0$ and
		$\operatorname{codim}_{B}(\BN_\pi^i)\geq k-i+1$ for the {i-th Brill--Noether locus} $\BN_\pi^i$. Then
		$R^k\pi_*(\mathcal{E})$ is torsion free.
	\end{theorem}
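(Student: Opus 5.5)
We work locally on $B$ with the Grauert direct image complex. Since $\pi$ is flat and proper (properness is implicit for coherence of $R^i\pi_*$) and $\mathcal E$ is locally free, hence flat over $B$, around any point $b\in B$ we can choose a bounded complex of finite free $\mathcal O_B$-modules
\[
0\to K^0\xrightarrow{d^0}K^1\xrightarrow{d^1}\cdots\xrightarrow{} K^N\to 0
\]
on a Stein neighbourhood $U$. This complex computes $R^i\pi_*\mathcal E$ universally: for every base change, in particular to points $y$, its cohomology gives $H^i(M_y,\mathcal E_y)$. In this model the Brill--Noether locus $\BN^i_\pi=\{y: h^i(M_y,\mathcal E_y)>h^i_{\mathrm{gen}}\}$, where the generic value is taken near $b$, is cut out by determinantal (Fitting) ideals of the differentials $d^{i-1},d^i$. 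The plan is to show that the truncated complex
\[
K^{j}\to K^{j+1}\to\cdots\to K^{k}
\]
is exact except at the top, for a suitable range of $j$. We then use it to exhibit $R^k\pi_*\mathcal E$ as a submodule of a free module, or at least show that it has no associated primes of positive codimension, which on a Cohen--Macaulay (hence reduced-on-generic-points type) base gives torsion freeness.

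The key steps, in order, are as follows. (1) Since $R^i\pi_*\mathcal E=0$ for $i_0:=\max\{k-\dim B,0\}\le i\le k-1$, the complex $K^{\bullet}$ is exact in these degrees. Replace $K^\bullet$ by the complex $\cdots\to K^{i_0-1}\to \ker$-free truncation. More concretely, dualize: consider $L^\bullet=\operatorname{Hom}(K^\bullet,\mathcal O_B)$, so that $R^k\pi_*\mathcal E=\operatorname{coker}(d^{k-1})$ relates to $\ker$ of the dual map. (2) Use the Buchsbaum--Eisenbud acyclicity criterion, or equivalently the Peskine--Szpiro acyclicity lemma on the Cohen--Macaulay local rings $\mathcal O_{B,y}$. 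The rank conditions hold because the cohomology vanishes generically. The depth of the Fitting ideal $I_{r_i}(d^i)$ is at least its codimension, by the Cohen--Macaulay property, and $\operatorname{Supp}(\mathcal O/I_{r_i}(d^i))\subset \BN^i_\pi\cup\BN^{i+1}_\pi$. The hypothesis $\operatorname{codim}\BN^i_\pi\ge k-i+1$ thus yields $\operatorname{depth} I_{r_i}\ge k-i+1$. (3) This depth bound, which grows by one per step downward from degree $k$, is exactly what makes the dual complex $\operatorname{Hom}(K^{k}\to K^{k-1}\to\cdots\to K^{i_0})$ acyclic. It therefore realizes $\operatorname{Ext}$-type modules as having depth at least $1$ at every point of positive codimension. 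Equivalently, it shows $\operatorname{Hom}(R^k\pi_*\mathcal E,\mathcal O)$-duality controls torsion. From this we conclude that the torsion submodule, supported on a proper analytic subset $Z$, vanishes: localize at a generic point $y$ of a component of $Z$ of codimension $c$ and derive a contradiction from a depth count in $\mathcal O_{B,y}$, which has depth $c$. (4) When $k\ge\dim B$, the lower cutoff $i_0=k-\dim B$ suffices, because torsion components have codimension at most $\dim B$, so only $c\le \dim B$ steps of the complex are needed.

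We expect the main obstacle to be step (3): correctly matching the codimension hypothesis on each $\BN^i_\pi$ with the depth of the relevant ideals of minors, and running the acyclicity lemma on a complex that is only partially known to be exact. Our first approach would be the Peskine--Szpiro lemma applied to the localized complex $K^{i_0}_y\to\cdots\to K^k_y$ at a generic point $y$ of a putative torsion component. There all cohomology in degrees $<k$ vanishes, so $\operatorname{depth}$ propagates and forces $\operatorname{depth}$ of the torsion part to be positive, which is a contradiction since it has finite length at $y$. This is in the spirit of Teleman's argument.
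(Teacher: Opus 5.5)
Your setup is fine as far as it goes. The Grauert complex $K^\bullet$ over $R:=\mathcal O_{B,x}$, the Fitting ideals $I_{r_i}(d^i)$, and grade $=$ height on a Cohen--Macaulay ring are the right tools. Note that you need $V(I_{r_i}(d^i))\subseteq\BN^i_\pi$, not just the union $\BN^i_\pi\cup\BN^{i+1}_\pi$. It does hold, because a drop in $\operatorname{rank}d^i(y)$ forces $h^i(y)$ to jump ($\operatorname{rank}d^{i-1}(y)$ cannot increase). With the union you would lose one in the bound, and get nothing for $i=k-1$.

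The real problem is that the step converting these bounds into torsion freeness is either wrong or missing. In (3), the bounds $\operatorname{grade} I_{r_{k-j}}(d^{k-j})\geq j+1$ do not make the dual complex $0\to (K^k)^*\to\cdots\to (K^{i_0})^*$ acyclic. Buchsbaum--Eisenbud for that complex would require $\operatorname{grade} I_{r_{k-1}}(d^{k-1})\geq k-i_0$, whereas you only have $\geq 2$. Moreover, exactness at $(K^k)^*$ means $\operatorname{Hom}(W^k,R)=0$ for $W^k:=\operatorname{coker}(d^{k-1})$, which fails as soon as the generic $h^k$ is nonzero. (Also, $R^k\pi_*\mathcal E$ is not $\operatorname{coker}(d^{k-1})$ but the submodule $\ker d^k/\operatorname{im}d^{k-1}$ of $W^k$; this is harmless for torsion freeness.)

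Your fallback in the last paragraph gets positive depth from the vanishing of cohomology below degree $k$ alone, and that cannot work. In the paper's abelian variety example, $\mathbb R\pi_*\mathcal P=\mathcal O_{\{0\}}[-g]$, so $R^i\pi_*\mathcal P=0$ for all $i<g$ and $0\to K^0\to\cdots\to K^g$ is exact below degree $g$. Yet $R^g\pi_*\mathcal P$ is torsion: at the origin, of height $c=g$, the depth count only gives $\operatorname{depth}\geq c-g=0$. The Brill--Noether hypothesis never enters your final argument, and it is exactly what is needed.

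The missing idea is to use the codimension bound once, in degree $k-c$. Let $\mathfrak p$ be an associated prime of $R^k\pi_*\mathcal E$ of height $c\geq1$.

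If $c\leq k$, then $k-c\geq i_0$, and $\operatorname{codim}\BN^{k-c}_\pi\geq c+1$ gives $I_{r_{k-c}}(d^{k-c})\not\subseteq\mathfrak p$. Hence $W^{k-c+1}_{\mathfrak p}=\operatorname{coker}(d^{k-c})_{\mathfrak p}$ is free. Since $(R^i\pi_*\mathcal E)_{\mathfrak p}=0$ for $k-c<i<k$, the sequence $0\to W^{k-c+1}_{\mathfrak p}\to K^{k-c+2}_{\mathfrak p}\to\cdots\to K^k_{\mathfrak p}\to W^k_{\mathfrak p}\to 0$ is a free resolution of length $c-1$. For $c=1$ this just says $W^k_{\mathfrak p}$ is free.

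If $c>k$, then $i_0=0$ and $0\to K^0\to\cdots\to K^k\to W^k\to 0$ is exact, of length $k<c$.

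In both cases $R_{\mathfrak p}$ is Cohen--Macaulay of dimension $c$, so Auslander--Buchsbaum gives $\operatorname{depth}W^k_{\mathfrak p}\geq 1$. Thus $\mathfrak p\notin\operatorname{Ass}(W^k)\supseteq\operatorname{Ass}(R^k\pi_*\mathcal E)$, a contradiction.

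With this repair your route is a genuine alternative to the paper's, which avoids the Grauert complex entirely. The paper takes the prime annihilator $\mathfrak p$ of a torsion element $u$ and a regular sequence $\varphi_1,\dots,\varphi_h\in\mathfrak p$ with $h=\operatorname{ht}\mathfrak p$. It lifts $u$ through the connecting maps of $0\to\mathcal E_{j-1}\xrightarrow{\varphi_j}\mathcal E_{j-1}\to\mathcal E_j\to 0$ to $u_h\in R^{k-h}\pi_*(\mathcal E_h)$ with annihilator $\mathfrak p$. This contradicts $(R^{k-h}\pi_*\mathcal E_h)_{\mathfrak p}=0$, which is forced by $\operatorname{codim}\BN^{k-h}_\pi\geq h+1$. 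Both arguments spend the same $h$ steps: the paper's by induction along a regular sequence, yours by a length count in a free resolution. Yours yields the slightly stronger conclusion that $\operatorname{coker}(d^{k-1})$ is torsion free, and it makes the role of the extra $+1$ in the codimension hypothesis transparent.
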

	
	We next explain the argument for Theorem~\ref{intro-brill-noether}. The
	codimension hypotheses on the Brill--Noether
	loci, together with the Cohen--Macaulay property of the base, provide the
	regular sequences needed to exclude torsion in $R^k\pi_*\mathcal E$. Thus
	the geometry of the jumping loci is converted into a torsion freeness
	criterion for the higher direct image.
	
	With these general cohomological tools in place, we turn to applications to
	pluricanonical systems. Our first application concerns the deformation
	invariance of plurigenera for K\"ahler threefolds with canonical singularities.
	
	\begin{theorem}[= Theorem~\ref{t-threefold-plurigenera}]\label{intro-threefold-plurigenera}
		Let $f:X\to S$ be a $\mathbb{Q}$-factorial K\"ahler family of compact threefolds over a
		relatively compact, smooth, connected curve. Assume that $X_0$ has
		canonical singularities. Then, for every integer $m\geq1$, the $m$-th
		plurigenus $P_m(X_t)$
		is independent of $t$ near $0$.
	\end{theorem}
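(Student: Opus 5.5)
The plan is to follow Nakayama's strategy of running a relative minimal model program, now in the K\"ahler setting, and then to feed the output into the cohomological criterion of Theorem~\ref{intro-main-criterion} with $q=0$. Upper semicontinuity already gives $P_m(X_t)\le P_m(X_0)$. So it suffices to show that every section of $\mathcal O_{X_0}(mK_{X_0})$ extends to a neighborhood of $X_0$. By condition (iv) of Theorem~\ref{intro-main-criterion}, in degree $0$ this amounts to showing that $f_*\mathcal O_X(mK_X)$ commutes with base change. For $q=0$ this is in turn equivalent to $R^1f_*\mathcal O_X(mK_X)$ being torsion free near $0$. I will shrink $S$ to a small disk around $0$ throughout.

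\emph{Step 1 (singularities of the total space).} Since $X_0$ is a Cartier divisor with canonical singularities, inversion of adjunction (valid analytically by local computations) shows that $(X,X_0)$ is plt near $X_0$. Hence $X$ has canonical singularities near $X_0$, and after shrinking $S$ every fiber $X_t$ has canonical singularities. Adjunction gives $K_X|_{X_t}=K_{X_t}$ as Weil divisors. Reflexive sheaves behave well under this restriction because $X_t$ is $S_2$ and $X$ is Cohen--Macaulay.

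\emph{Step 2 (case split by Kodaira dimension).} Suppose first that $K_{X_0}$ is not pseudoeffective. Then $X_0$ is uniruled, by the K\"ahler results of H\"oring--Peternell and Brunella. Uniruledness deforms, and so does non-pseudoeffectivity nearby; this gives $P_m(X_t)=0$ for all small $t$, and we are done. Suppose instead that $K_{X_0}$ is pseudoeffective. Then I run the relative $K_X$-MMP over $S$ for the $\mathbb Q$-factorial K\"ahler threefold family, using the relative K\"ahler MMP for threefolds (H\"oring--Peternell, Das--Hacon--P\u{a}un and related work). Each step is a divisorial contraction or a flip, both isomorphisms in codimension one on the total space. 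Restricted to fibers, these steps preserve $H^0(mK)$, since canonical singularities are preserved and the steps are $K$-negative. The output is a relative minimal model $X'\to S$ with $K_{X'}$ nef over $S$ and $X'_0$ canonical. The plurigenera of $X$ and $X'$ agree fiberwise near $0$.

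\emph{Step 3 (abundance and vanishing).} By abundance for K\"ahler threefolds (Campana--H\"oring--Peternell), $K_{X'_t}$ is semiample. Applied relatively, this makes $K_{X'}$ semiample over $S$ after shrinking, giving a relative Iitaka fibration $g:X'\to Z$ over $S$. Then $mK_{X'}-X'_t$ is of the form $K_{X'}+(m-1)K_{X'}$ with $(m-1)K_{X'}$ nef and abundant over $S$. A relative Koll\'ar-type torsion freeness and injectivity theorem (Takegoshi's analytic version for K\"ahler morphisms) then shows that $R^1f'_*\mathcal O_{X'}(mK_{X'})$ is torsion free. For $m\ge2$ this uses the pull back from $Z$; the case $m=1$ is Hodge-theoretic invariance of $h^{0}(K)$. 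Theorem~\ref{intro-main-criterion} then gives local constancy of $P_m$.

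The main obstacle is Steps 2 and 3 in the analytic relative setting. The relative K\"ahler MMP must terminate over a disk, and abundance must hold uniformly: I need fiberwise semiampleness to give relative semiampleness. I would first try to deduce this from nefness over $S$ plus the base-point-free theorem of the K\"ahler threefold MMP, applied to $K_{X'}$ relative to $S$.
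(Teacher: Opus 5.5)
Your proposal follows essentially the same route as the paper: run the relative K\"ahler MMP over $S$, dispose of the non-pseudoeffective (Mori fiber space) case via $P_m(X_0)=0$ and upper semicontinuity, and in the minimal-model case combine relative semiampleness of $K_{X'}$ with Koll\'ar-type torsion freeness and Theorem~\ref{Main} in degree $q=0$ (for relative semiampleness the paper cites Das--Ou, while your fallback of fiberwise abundance plus a relative base-point-free theorem is what the paper itself does in its maximal Albanese dimension argument). The remaining inaccuracies are minor and do not affect the argument: divisorial contractions are not isomorphisms in codimension one, but you only need $K$-negativity there; plt-ness of $(X,X_0)$ gives only that $X$ is klt, so canonicity of $X$ near $X_0$ should be quoted from Kawamata; and $\mathcal O_X(mK_X)|_{X_t}\cong\mathcal O_{X_t}(mK_{X_t})$ requires the divisorial sheaf $\mathcal O_X(mK_X)$ itself to be Cohen--Macaulay (true on klt spaces; the paper cites Nakayama), not merely $X$ Cohen--Macaulay and $X_t$ being $S_2$.
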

	
	To prove Theorem~\ref{intro-threefold-plurigenera}, we run a relative
	minimal model program over $S$, following Nakayama~\cite{Naka85} and
	\cite{DHP23,DO24,DO26}. If some step contracts the central fiber, then
	$P_m(X_0)=0$ for every $m\geq1$. By upper semicontinuity,
	$P_m(X_t)=0$ for all $t$ sufficiently close to $0$; hence $P_m$ is $0$ and locally
	constant. Otherwise, the minimal model program terminates
	with either a relative minimal model or a Mori fiber space. In the first
	case, semiampleness and Koll\'ar's torsion freeness (Proposition~\ref{p-kollar-torsion freeness}) give base change for
	pluricanonical sections and hence invariance of plurigenera. In the latter case (i.e. the Mori fiber space case), one can show that
	$P_m(X_t)=0$ for every $m\geq1$ and all $t$ sufficiently close to $0$.
	
	A second application concerns smooth K\"ahler families of varieties with maximal Albanese
	dimension. We generalize J.~A.~Chen--Hacon's deformation-invariance result for
	Kodaira dimension~\cite[Corollary~2]{CH01} by proving deformation invariance
	of plurigenera in this setting. The proof uses results of O.~Das--Hacon~\cite{DHAlbanese} and
	O.~Fujino~\cite{Fujinoalb} on varieties of maximal Albanese dimension.
	
	\begin{theorem}[= Theorem~\ref{t-maxAlbdim}]\label{intro-maxAlbdim}
		Let $f:X\to S$ be a smooth K\"ahler family over a relatively compact,
		smooth, connected curve, and assume that $X_0$ has maximal Albanese
		dimension. Then, for every $m\geq1$, the $m$-th plurigenus $P_m(X_t)$ is independent of $t$
		near $0$.
	\end{theorem}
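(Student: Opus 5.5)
We reduce to showing that $P_m(X_t)$ is lower semicontinuous at $t=0$; upper semicontinuity is automatic. Equivalently, by Theorem~\ref{intro-main-criterion} applied with $q=0$ and $\mathscr F=\mathcal O_X(mK_{X/S})$, it suffices to show that every section of $mK_{X_0}$ extends to a neighbourhood of $X_0$. Since $S$ is a curve, a further equivalent form is that $R^1f_*\mathcal O_X(mK_{X/S})$ is torsion free near $0$. Note that $X_t$ is a compact K\"ahler manifold for every $t$.

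First, maximal Albanese dimension is open in smooth K\"ahler families. The Albanese maps form a relative family, because $H^0(X_t,\Omega^1)$ and $H_1$ are locally constant by Hodge theory (Kodaira--Spencer~\cite{KS58}). The differential of the relative Albanese map has generic rank $\dim X_t$ at $t=0$, hence also for nearby $t$. Thus all fibers near $0$ have maximal Albanese dimension.

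Next, we use the structure theory of Das--Hacon~\cite{DHAlbanese} and Fujino~\cite{Fujinoalb}. For compact K\"ahler varieties of maximal Albanese dimension, a good minimal model exists and abundance holds. Concretely, $\kappa(X_t)\ge0$, and the Iitaka fibration is birationally a smooth morphism onto a variety of general type whose fibers are birational to complex tori. Moreover, $P_m$ for $m\ge2$ is computed via generic vanishing: $a_*\mathcal O(mK)$ is a GV sheaf on the Albanese torus.

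We then split into two cases.

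\emph{Case $m=1$.} Here $P_1=h^{n,0}$, which is locally constant by Hodge theory, so there is nothing to prove.

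\emph{Case $m\ge2$.} We run the relative K\"ahler MMP over $S$ as in the proof of Theorem~\ref{intro-threefold-plurigenera}. The hypothesis used there was dimension three; here it is replaced by the existence of minimal models for maximal Albanese dimension, in the relative setting of \cite{DHAlbanese}. Since $\kappa(X_0)\ge0$, no step contracts the central fiber and no Mori fiber space occurs. We therefore obtain a relative good minimal model $X'\to S$ with $K_{X'/S}$ $f$-semiample, whose fibers $X'_t$ are minimal models of $X_t$, at least for $t$ near $0$. We then write
\[
mK_{X'}=K_{X'}+(m-1)K_{X'},
\]
where $(m-1)K_{X'}$ is semiample over $S$. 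Koll\'ar's torsion freeness (Proposition~\ref{p-kollar-torsion freeness}) then gives that $R^1f'_*\mathcal O(mK_{X'/S})$ is torsion free. Theorem~\ref{intro-main-criterion}, (v)$\Rightarrow$(iii), yields local constancy of $h^0(X'_t,mK_{X'_t})=P_m(X_t)$.

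The main obstacle is the relative MMP step. We must construct a relative good minimal model in the K\"ahler setting, and also ensure that its fibers are minimal models of the $X_t$ with canonical singularities, so that plurigenera are preserved under restriction.

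If the relative MMP is not available, the fallback is a torus-pullback argument. Use the relative Albanese $a:X\to\mathcal A\to S$, and write $\mathcal O(mK)$ as $K+L$, where $L$ carries a singular metric with semipositive curvature whose multiplier ideal is trivial generically. Then apply Koll\'ar/Takegoshi-type torsion freeness for $R^1f_*(K_{X/S}\otimes L\otimes\mathcal I)$, combined with the identification $H^0(X_t,mK)=H^0(X_t,K\otimes L\otimes\mathcal I)$ that comes from Fujino's result that effective pluricanonical divisors have klt singularities in the appropriate sense.
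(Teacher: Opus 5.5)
\textbf{Gap: the relative good minimal model over $S$ is asserted, not constructed.} Your overall skeleton matches the paper's: openness of maximal Albanese dimension, then a relative good minimal model $X'\to S$ with semiample canonical class, then Koll\'ar's torsion freeness, then the criterion of Theorem~\ref{intro-main-criterion}. Your treatment of openness and of $m=1$ is fine. The problem is the step you yourself flag as the main obstacle.

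You cannot run the relative K\"ahler MMP over $S$ ``as in the threefold case''. There the MMP over $S$ came from \cite{DHP23}, which is specific to four-dimensional total spaces. In arbitrary dimension, neither the existence of the steps nor termination of a $K_X$-MMP over $S$ is known when $K_X$ is not big over $S$, and here it need not be (think of families of tori). Your remark that $\kappa(X_0)\ge 0$ rules out contracting $X_0$ and Mori fiber spaces presupposes that such an MMP exists and terminates. Likewise, \cite{DHAlbanese} is an absolute statement about a single compact K\"ahler variety; it has no ready-made ``relative setting'' over $S$.

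\textbf{The missing idea: run the MMP over the relative Albanese variety, not over $S$.} Work over $\mathcal A=\operatorname{Alb}(X/S)$.
\begin{enumerate}
\item Once all nearby fibers have maximal Albanese dimension (this is where the openness step is actually used), the relative Albanese map $\alpha:X\to\mathcal A$ is generically finite. Hence $K_X$ is big over $\mathcal A$, and Lemma~\ref{l-existmmp} gives a good minimal model $Y$ over $\mathcal A$.
\item $K_Y$ is then automatically nef over $S$ (Lemma~\ref{l-refnefalb}). By the cone theorem, a $K_Y$-negative curve may be taken rational. Since $K_Y$ is nef over $\mathcal A$, such a curve must map nonconstantly into a fiber of $\mathcal A\to S$. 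But those fibers are complex tori and contain no rational curves.
\item For general $t$, $Y_t$ is a minimal model of $X_t$, so $K_{Y_t}$ is semiample by \cite{DHAlbanese}. Nakayama's relative base point free theorem \cite[Theorem~5.8]{Naka87} upgrades this to semiampleness of $K_Y$ over $S$; this is Theorem~\ref{Existencegoodmm}.
\end{enumerate}
Only after this does your Koll\'ar torsion-freeness step apply.

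\textbf{Your fallback does not close the gap.} The key input is an identification of $H^0(X_t,mK_{X_t})$ with sections of $(K+L)\otimes\mathcal I(h)$ on $X_t$. It would have to hold uniformly in $t$ and be compatible with restricting the metric from the total space. That is essentially the whole analytic difficulty of Siu's conjecture, and it does not follow from Fujino's results.
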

	
	To prove Theorem~\ref{intro-maxAlbdim}, we first show that having maximal Albanese
	dimension is stable under deformation. The relative Albanese map is
	therefore generically finite, so $K_X$ is big over the relative Albanese
	variety. The K\"ahler minimal model program then produces a good minimal model
	over this relative Albanese variety. Since the fibers of the relative Albanese variety are complex
	tori and contain no rational curves, Lemma~\ref{l-refnefalb} shows that the canonical divisor of the output of the mmp is nef over $S$. The theorem of
	Das--Hacon~\cite{DHAlbanese}, together with the relative base point free theorem, then
	gives a relative good minimal model over $S$. Finally,
	Proposition~\ref{p-kollar-torsion freeness} yields the constancy of the plurigenera.
	
	We next shift our attention to log plurigenera. Using the extension theorem for the minimal
	model program developed in \cite[Proposition~4.16]{HLR26} (see also
	Theorem~\ref{ExtMMP}), we obtain a log analogue of Koll\'ar's
	theorem~\cite[Theorem 1]{DefGeneralType}.
	
	\begin{theorem}[= Theorem~\ref{t-log-plurigenera}]\label{intro-log-plurigenera}
		Let $g:X\to S$ be a smooth family such that $(X,B)$ is log smooth over a relatively compact, smooth, connected curve $S$. Suppose that the special fiber $(X_0,B_0)$ is projective with canonical singularities, that $\lfloor B\rfloor=0$, and that $K_{X_0}+B_0$ is big.
		Assume moreover that the negative part of its Boucksom--Zariski
		decomposition satisfies
		$$
		N(K_{X_0}+B_0)\wedge B_0 = 0 .
		$$
		Then there exists an open neighborhood $U\subset S$ of $0$ such that,
		for every integer $m\geq1$ such that $mB$ is integral, the $m$-th log plurigenus
		$P_m(X_t,B_t)$
		is independent of $t\in U$.
	\end{theorem}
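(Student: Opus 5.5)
We follow the strategy of Koll\'ar's theorem \cite[Theorem 1]{DefGeneralType}, replacing the general-type MMP on a projective fiber by its log version. We then transport it to the K\"ahler family via the extension theorem for the MMP (Theorem~\ref{ExtMMP}, from \cite[Proposition~4.16]{HLR26}).

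First, since $(X_0,B_0)$ is a projective klt pair with $K_{X_0}+B_0$ big, BCHM gives a finite sequence of $(K_{X_0}+B_0)$-negative steps ending in a good log canonical model. Concretely, we obtain a log terminal model $(X_0^{\min},B_0^{\min})$ with $K_{X_0^{\min}}+B_0^{\min}$ semiample and big. The hypothesis $N(K_{X_0}+B_0)\wedge B_0=0$ says that no component of $B_0$ lies in the negative part. Since the divisors contracted by the MMP are exactly the components of $N(K_{X_0}+B_0)$, no component of $B_0$ is contracted. This is precisely the condition under which Theorem~\ref{ExtMMP} extends each step of the MMP over a neighborhood $U\ni 0$: each divisorial contraction or flip of $(X_0,B_0)$ lifts to a corresponding step of $(X,B)$ over $U$. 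The condition also guarantees that the restriction of the strict transform $B^{(i)}$ to the central fiber is the strict transform $B_0^{(i)}$. Inversion of adjunction then keeps $(X^{(i)},B^{(i)}+X^{(i)}_0)$ plt near the central fiber.

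Second, let $(X^{\min},B^{\min})\to U$ be the resulting model. Then $K_{X^{\min}}+B^{\min}$ restricts to $K_{X^{\min}_0}+B^{\min}_0$, which is nef and big. Relative nefness and bigness are open conditions in the K\"ahler setting after shrinking $U$, so the relative base point free theorem makes $m(K_{X^{\min}}+B^{\min})$ relatively semiample for $m$ divisible enough. Since the MMP steps are $(K+B)$-negative and preserve sections, we have
\[
H^0\bigl(X_t,m(K_{X_t}+B_t)\bigr)\cong H^0\bigl(X^{\min}_t,m(K_{X^{\min}_t}+B^{\min}_t)\bigr)
\]
for every $t$ and every $m$ with $mB$ integral. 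This uses that the fibers $(X_t,B_t)$ have klt singularities and that the steps restrict to birational contractions on each fiber. For the latter we use flatness, and that $X_t^{\min}$ is normal for $t$ near $0$ by openness of normality.

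Third, we apply Proposition~\ref{p-kollar-torsion freeness} to the relatively semiample $\mathbb Q$-divisor $L=m(K_{X^{\min}}+B^{\min})-(K_{X^{\min}}+B^{\min})$ (nef and big over $U$ for $m\ge2$). This shows that $R^1g^{\min}_*\mathcal O(m(K+B))$ is torsion free; it in fact vanishes by relative Kawamata--Viehweg. By Theorem~\ref{intro-main-criterion}, (v)$\Rightarrow$(iii) with $q=0$, the function $t\mapsto P_m(X_t,B_t)$ is locally constant. The case $m=1$ follows directly, because $K+B$ has fractional coefficients: either $B=0$, where the plurigenus is the geometric genus, handled by Hodge theory, or the section spaces reduce to those of $K_X+\lfloor B\rfloor=K_X$.

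The main obstacle is the first step: verifying the hypotheses of Theorem~\ref{ExtMMP}, in particular that the components of $B_0$ survive every step so that restriction commutes with strict transform, and that the neighborhood $U$ can be chosen uniformly. The MMP is finite, so a single $U$ works for all steps, and hence for all $m$.
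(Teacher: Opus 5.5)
There is a genuine gap, at the step that has to make the constancy uniform in $m$. Your overall architecture matches the paper: BCHM on the projective central fiber, extension of the steps via Theorem~\ref{ExtMMP}, a vanishing statement fed into Theorem~\ref{Main}, then transport back along the $(K+B)$-negative maps.

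\textbf{The main gap.} You assert that relative nefness and bigness of $K_{X^{\min}}+B^{\min}$ are open conditions after shrinking $U$, and then invoke the relative base point free theorem. Nefness is not an open condition in families. In general, a divisor nef on $X_0$ is only known to be nef on fibers outside a countable union of proper analytic subsets, and on a curve such a set may accumulate at $0$. Moreover, the statement does not assume the total space is K\"ahler.

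Your closing sentence, ``the MMP is finite, so a single $U$ works for all steps, and hence for all $m$'', conflates two different neighborhoods: the one over which the MMP extends, and the one on which $P_m$ is constant. For each fixed $m$, Kawamata--Viehweg on $X^{\min}_0$ plus upper semicontinuity only gives a neighborhood $U_m$, which could a priori shrink to $\{0\}$ as $m\to\infty$.

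This is exactly the point the paper addresses. KV on $X'_0$ and Corollary~\ref{l-squeeze} give degree-$0$ base change isomorphisms at $0$ for every multiple $\ell N(K_{X'}+B')$. The ring $\bigoplus_\ell H^0(X'_0,\ell N(K_{X'_0}+B'_0))$ is finitely generated, so \cite[Lemma~11]{DefGeneralType} gives a neighborhood $U$, independent of $m$, over which $\bigoplus_\ell \pi'_*\mathcal O_{X'}(\ell N(K_{X'}+B'))$ is finitely generated. Its relative Proj $Z\to U$ is projective with $K_Z+B_Z$ ample over $U$. KV on every fiber $Z_t$, $t\in U$, together with Corollary~\ref{l-squeeze}, then gives constancy on $U$ for all $m\ge 2$.

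Within your own framework you could repair the step more cheaply, with no appeal to openness of nefness:
\begin{enumerate}
\item Fix one $\ell$ such that $|\ell N(K_{X'_0}+B'_0)|$ is base point free.
\item KV on $X'_0$, semicontinuity and Corollary~\ref{l-squeeze} give degree-$0$ base change at $0$ for this single $\ell$, so these sections extend to $X'$.
\item The relative base locus of $\ell N(K_{X'}+B')$ is then a closed analytic set disjoint from $X'_0$, so by properness it is empty over a smaller $U$.
\end{enumerate}
This yields relative semiampleness, and hence a projective ample model, over one $U$ that does not depend on $m$.

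\textbf{Secondary corrections.}
\begin{itemize}
\item Proposition~\ref{p-kollar-torsion freeness} concerns only $\mathcal O_X(mK_X)$ for canonical $X$ with $K_X$ itself $f$-semiample. It says nothing about $\mathcal O(m(K+B))$ with a boundary, nor about twists by $L$, so it cannot be applied as you do.
\item Relative Kawamata--Viehweg is the right tool, but in the analytic category it needs a projective (or at least K\"ahler) morphism, and $X^{\min}\to U$ is not known to be either. Work on the projective model $Z\to U$, or fiberwise on the projective fibers $Z_t$ as the paper does.
\item (v)$\Rightarrow$(iii) with $q=0$ also requires $R^0$ to be torsion free. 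This follows from Proposition~\ref{p-pushforward-torsion free}, but you should say so.
\item For $m=1$, integrality of $mB=B$ together with $\lfloor B\rfloor=0$ forces $B=0$, so your second alternative never occurs. For $B=0$, Hodge theory does work, since nearby fibers of the projective $X_0$ are K\"ahler by Kodaira--Spencer stability. The paper instead uses $h^0(K_{X_t})=h^n(\mathcal O_{X_t})$ and \cite[Complement~2.62.5]{Kolbook}.
\item Fiberwise bimeromorphism of $X\dashrightarrow X^{\min}$ near $0$ is only sketched via flatness and openness of normality; the paper cites \cite[Lemma~2.22]{CRT25} for it.
\end{itemize}
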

	
	The proof of Theorem~\ref{intro-log-plurigenera} begins with the log minimal
	model program on the projective central fiber, and
	Theorem~\ref{ExtMMP} extends its steps over a neighborhood of
	$0$. Kawamata--Viehweg vanishing, together with Theorem~\ref{Main}, then
	gives the local constancy of the fiberwise cohomology dimensions for $P_m(X_t,B_t)$ when
	$m\geq2$. The case $m=1$ is treated by \cite[Complement 2.62.5]{Kolbook}.
	
	For higher cohomological plurigenera, the extension theorems of
	\cite{CDM17,ZZ22} provide the condition required by
	Theorem~\ref{Main} and yield the following result.
	
	\begin{theorem}[= Corollary~\ref{c-higher-plurigenera}]\label{intro-higher-plurigenera}
		Let $f:X\to S$ be a smooth family of compact K\"ahler manifolds over a
		relatively compact, smooth, connected curve. If $K_X$ is \emph{semipositive} (i.e., admits a smooth Hermitian metric with semipositive Chern curvature),
		then, for every $m\geq1$ and $q\geq0$, the higher cohomological plurigenus $P_m^{(q)}(X_t)$
		is constant on $S$.
	\end{theorem}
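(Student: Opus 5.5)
We reduce to the criterion of Theorem~\ref{intro-main-criterion}, applied with $\mathscr F=\mathcal O_X(mK_{X/S})$. This sheaf is locally free, hence flat over $S$. By adjunction its restriction to each fibre is $\mathcal O_{X_t}(mK_{X_t})$.

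First we localise. The statement is local on $S$, and constancy on a connected curve follows from local constancy, so we may replace $S$ by a small disc, or by a Stein open neighbourhood of any given point. There $K_S$ is trivial, so $\mathcal O_X(mK_{X/S})\cong\mathcal O_X(mK_X)$. By the equivalence (iii)$\Leftrightarrow$(iv) of Theorem~\ref{intro-main-criterion}, it suffices to show, for every $q\ge0$ and every $t\in S$, that the restriction
\[
r^q:H^q(X,\mathcal O_X(mK_X))\longrightarrow H^q(X_t,\mathcal O_{X_t}(mK_{X_t}))
\]
is surjective. We need this in degree $q$ and degree $q-1$, and since $q$ is arbitrary, surjectivity in every degree is enough.

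The key step is this surjectivity, which we take from the extension theorems of \cite{CDM17,ZZ22}. Write $mK_{X_t}=K_{X_t}+(m-1)K_{X}|_{X_t}$ and $L=(m-1)K_X$. Since $K_X$ carries a smooth metric with semipositive curvature, $L$ is a semipositive line bundle on the K\"ahler total space $X$. The $L^2$/Dolbeault-type extension results of Cao--Demailly--Matsumura and Zhou--Zhu say the following. For a holomorphic fibration over a Stein base, or for the restriction to a hypersurface such as the fibre $X_t=\{s=t\}$ cut out by a coordinate, every class in $H^q(X_t,K_{X_t}\otimes L|_{X_t})$ lifts to $H^q(X,K_X\otimes L)$ whenever $L$ is semipositive (or pseudoeffective with trivial multiplier ideal). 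The lift is realised by harmonic representatives with respect to a K\"ahler metric, extended via the Ohsawa--Takegoshi type estimate. For $m=1$ we have $L$ trivial, and the statement is the classical surjectivity coming from Kodaira--Spencer invariance of Hodge numbers. That case is also covered directly, since the trivial metric is semipositive.

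We therefore obtain surjectivity of $r^q$ for all $q$ and all $t$, and Theorem~\ref{intro-main-criterion} gives local constancy of $t\mapsto P_m^{(q)}(X_t)$. Connectedness of $S$ then yields constancy.

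The main obstacle is verifying the hypotheses of the extension theorem. Those results are usually formulated for a compact K\"ahler $X$ with a subvariety, or for a weakly pseudoconvex K\"ahler total space. Here the total space of the family over a relatively compact Stein disc must be checked to be holomorphically convex and K\"ahler, possibly after shrinking. Precomposing $f$ with a strictly plurisubharmonic exhaustion of the disc gives weak pseudoconvexity. We also need the semipositive metric on $K_X$ to match the twist needed for the fibre, which is handled by the triviality of $K_S$ and of the normal bundle of $X_t$ on the Stein base.
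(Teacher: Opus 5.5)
Your proposal is correct and follows essentially the paper's own argument. Apply the Cao--Demailly--Matsumura/Zhou--Zhu extension theorem with $L=(m-1)K_X$ and $\psi=\log|s|^2$ (Theorem~\ref{Exthigherdeg}(3)) to get surjective restriction maps in every degree, then use (iv)$\Rightarrow$(iii) of Theorem~\ref{Main} and connectedness of $S$; working with $\mathcal O_X(mK_{X/S})$ instead of $\mathcal O_X(mK_X)$ is harmless on a disc, as you note.
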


	Finally, we extend Li--Wang's result
	\cite[Corollary~1.3]{LW26} from projective families to Moishezon families.
	
	\begin{theorem}[= Theorem~\ref{t-ginvarpluri}]\label{intro-ginvarpluri}
		Let $f:X\to S$ be a proper Moishezon morphism with connected fibers over
		a relatively compact, smooth, connected curve and
		$(X,\mathbf M)$ a generalized pair with abundant nef part over $S$. Write
		\[
		K_{X_t}+\mathbf M_t
		=\left.(K_X+\mathbf M_X)\right|_{X_t},
		\]
		and assume that $(X_t,\mathbf M_t)$ has generalized canonical
		singularities for every $t\in S$. Then, for every $m\geq1$ such that
		$m\mathbf M_X$ is Cartier, the generalized $m$-th plurigenera
		\[
		t\longmapsto
		\dim H^0\!\left(X_t,
		\mathcal O_{X_t}\bigl(m(K_{X_t}+\mathbf M_t)\bigr)\right)
		\]
		is constant on $S$.
	\end{theorem}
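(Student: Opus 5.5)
The plan is to reduce the statement to Li--Wang's projective result \cite[Corollary~1.3]{LW26}. Passing from the Moishezon family to a projective one will be done through a relative resolution or bimeromorphic modification, and constancy will then be transported along the pullback using the criterion of Theorem~\ref{Main} with $q=0$. Since the question is local on $S$, first shrink $S$ to a small disc around a given point $0$; by connectedness of $S$, local constancy then gives constancy. Because $f$ is Moishezon, there is a projective bimeromorphic morphism $\mu:X'\to X$ such that $f'=f\circ\mu:X'\to S$ is projective, after shrinking $S$ (Moishezon over a relatively compact base means locally projective up to modification). We may also take $\mu$ high enough that $\mathbf M$ descends to $X'$, i.e.\ $\mathbf M_{X'}$ is nef$/S$ and abundant$/S$ and $\mathbf M_X=\mu_*\mathbf M_{X'}$. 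Then define $B'$ by
\[
K_{X'}+B'+\mathbf M_{X'}=\mu^*(K_X+\mathbf M_X).
\]
Generalized canonical singularities of the fibers (which, by inversion of adjunction, give canonicity of the total space near the fibers) ensure that $B'$ has nonpositive coefficients, so $-B'=E\ge0$ is $\mu$-exceptional.

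The key comparison is that, for $m$ with $m\mathbf M_X$ Cartier,
\[
H^0\bigl(X_t,m(K_{X_t}+\mathbf M_t)\bigr)\cong H^0\bigl(X'_t,m(K_{X'_t}+\mathbf M_{X'_t})\bigr)
\]
for general $t$, and similarly on the total space. On $X'$ we replace the pair by $(X',0+\mathbf M')$, where $\mathbf M'$ is the same b-divisor; then $K_{X'}+\mathbf M_{X'}=\mu^*(K_X+\mathbf M_X)+E$ with $E$ exceptional and effective, so the pushforward of sections agrees. For the special fiber $X_0$ I would instead argue directly with Theorem~\ref{Main}: it suffices to show that $f_*\mathcal O_X(m(K_X+\mathbf M_X))$ satisfies base change in degree $0$, i.e.\ that restriction $H^0(X,\cdot)\to H^0(X_t,\cdot)$ is surjective for every $t$. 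Degree $q-1=-1$ is vacuous, and torsion-freeness of $R^1f_*$ would suffice by condition (v).

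The extension step is where the projective result enters. Li--Wang's proof runs a generalized MMP over $S$ for $(X',\mathbf M')$, projective over $S$, reaching a good minimal model or a Mori fibre space. In the good minimal model case, Kollár-type torsion freeness (Proposition~\ref{p-kollar-torsion freeness}) gives torsion freeness of $R^1$ of $m(K+\mathbf M)$ on the model for $m\ge2$, since $(m-1)(K+\mathbf M)$ is semiample and the nef abundant part can be absorbed. The case $m=1$ is handled through the generalized canonical condition as in \cite{LW26}. Sections on the model agree with sections on $X'$, and hence on $X$, fiber by fiber, provided each MMP step is an isomorphism near the generic point of every fiber or respects fibrewise sections. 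This is exactly where the canonical singularities of every fiber $X_t$ are used, as in \cite{LW26}.

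The main obstacle I expect is the comparison on the special fiber. The modification $\mu$ need not restrict to a bimeromorphic map on each $X_t$; some components of $X'_t$ can be exceptional. I would therefore choose $\mu$ so that $X'_0$ has a unique component birational to $X_0$, and use the canonical singularities of $X_0$ to show $\mu_{0*}\mathcal O(m(K_{X'_0}+\mathbf M))$ recovers $\mathcal O(m(K_{X_0}+\mathbf M_0))$. If that fails, the fallback is to apply \cite[Corollary~1.3]{LW26} only on a projective modification of the total space with flatness and reducedness arranged by a base change $S'\to S$.
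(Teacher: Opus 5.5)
\textbf{There is a genuine gap at the central step: extending sections from $X_0$.} You attribute to \cite{LW26} an MMP argument: run a generalized MMP over $S$ to a good minimal model, then apply Koll\'ar-type torsion freeness. Li--Wang's argument is analytic, not MMP-theoretic, and no such MMP is available here. Nothing in the hypotheses makes $K_X+\mathbf M_X$ big over $S$. For non-big generalized pairs, the existence of a good minimal model is precisely the minimal model and abundance conjecture.

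Concretely, the case $\mathbf M=0$ satisfies Definition~\ref{def-nefabun} (take $Z=U$, $H=0$). So the theorem contains invariance of plurigenera for Moishezon families with canonical fibers. Your route would prove this only conditionally, as in Nakayama's \cite{Naka85}. Moreover, Proposition~\ref{p-kollar-torsion freeness} concerns $K_X$ semiample with canonical singularities, not $m(K+\mathbf M)$ with a nef part. So even the good-minimal-model case is not covered as you state it.

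\textbf{The reduction to a projective family also fails, exactly at the point you flagged.} On a projective modification $\mu:X'\to X$, the fiber $X'_0=\mu^*X_0$ is in general reducible and non-reduced, with $\mu$-exceptional components. The hypotheses of \cite[Corollary~1.3]{LW26} (generalized canonical fibers) therefore fail at $t=0$. Also, $h^0$ on $X'_0$ has no reason to equal $h^0$ on $X_0$, and a base change $S'\to S$ does not remove these components.

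\textbf{What the paper does instead.} It never replaces the family. Near each point it proves that $H^0(X,m(K_X+\mathbf M_X))\to H^0(X_0,m(K_{X_0}+\mathbf M_0))$ is surjective, and concludes by Theorem~\ref{Main}(iv) with $q=0$. Projectivity enters only locally, in two places.
\begin{enumerate}
\item Metrics. The factorization $X'\to Z\to U$ with $Z\to U$ projective allows $H\sim_{\mathbb Q,U}H_k+\frac1kG$ with $H_k$ ample. This yields semipositive metrics $h_k$ on $\mathbf M_X$ satisfying $\mu^*\varphi_k+\frac1k\varphi_E\approx\frac1k\varphi_F+\varphi_\Xi$ (Lemma~\ref{l-bignef}).
\item Integrability. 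Generalized canonicity ($A_0^{\mathrm{gen}}\geq0$) together with Skoda's lemma for $k\gg1$ puts every section of $m(K_{X_0}+\mathbf M_0)$ into $\mathcal G_m(h_{k,0}^m)$.
\item Extension. Takayama's extension theorem is applied on a resolution $Y\to X$ that is projective over $S$, extending from the strict transform $Y_0$. The result then descends to $X$ using canonicity of $X$ \cite{KawaDeform99} and $A_0,\Theta_0\geq0$ (Proposition~\ref{t-LWext}).
\end{enumerate}
This $L^2$ extension with metrics built from the projective factorization is the missing idea in your plan.
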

	
	The proof of Theorem~\ref{intro-ginvarpluri} is largely inspired by the
	approach of Li--Wang~\cite{LW26}, particularly their construction of singular
	metrics for abundant nef parts and their extension theorem for generalized
	pluricanonical sections. Our main new input is that, locally over each point
	of $S$, the abundance nef part hypothesis yields a factorization through a projective
	morphism $h:Z\to U$ for some neighborhood $U\subset S$, although the original
	family is only Moishezon (see
	Proposition~\ref{p-nefabundant}). This auxiliary projective model allows us to
	carry out Li--Wang's metric construction in the Moishezon setting, as in
	Lemma~\ref{l-bignef}. The resulting curvature and integrability properties
	give the extension theorem for generalized pluricanonical sections (see
	Proposition~\ref{t-LWext}), which completes the proof.

	\textbf{Notation and conventions.} Unless otherwise stated, all spaces and
	morphisms are complex analytic. For a morphism $f:X\to S$ and a point
	$s\in S$, we denote the fiber by $X_s:=f^{-1}(s)$. For a coherent sheaf
	$\mathscr F$ on $X$, we write
	$\mathscr F_{(s)}:=\mathscr F\otimes_{\mathcal O_X}\mathcal O_{X_s}$.
	Thus $\mathscr F_{(s)}$ denotes a fiber restriction, whereas $\mathscr F_x$
	denotes the stalk at $x\in X$. We set
	$h^q(X,\mathscr F):=\dim_{\mathbb C}H^q(X,\mathscr F)$. For two functions
	$\varphi$ and $\psi$, we write $\varphi\approx\psi$ if
	$\varphi-\psi$ is locally bounded. Line bundles are
	written additively: 
	$L+M:=L\otimes M, mL:=L^{\otimes m}, \text{and}
	-L:=L^\vee.$
	Finally, if $D=\sum d_iD_i$ and $E=\sum e_iD_i$ are
	$\mathbb R$-divisors written with respect to a common collection of prime
	divisors (allowing zero coefficients), we define their \emph{coefficientwise minimum} by $D\wedge E:=\sum_i\min\{d_i,e_i\}D_i$. A family is said to be \emph{K\"ahler} if its total space is K\"ahler, while a family is \emph{Moishezon} if it is bimeromorphic to a projective family. 
	
	\textbf{Organization of the paper.} Section~\ref{sec-preliminaries}
	collects preliminaries on torsion, base change, singularities, and MMP steps
	in families. Section~\ref{sec-cohomological-flatness} establishes criteria for deformation invariance of cohomological dimension, and a torsion-freeness criterion in terms of higher Brill--Noether loci. Section~\ref{sec-invariance} proves our main
	results on plurigenera, log plurigenera, higher cohomological plurigenera, and generalized plurigenera.
	Appendix~\ref{app-base change} gives Rao--Tsai's dimension-invariance approach to
	torsion freeness.
	
	\textbf{Acknowledgments.} The last two authors are grateful to Professor C.~D.~Hacon for his collaboration on
	\cite{HLR26}; the present paper may be viewed, in part, as a continuation of that work. The authors also thank Professor Zhan Li for many helpful discussions on deformation invariance of plurigenera.
	
	\section{Preliminaries}\label{sec-preliminaries}
	
	This section collects background on torsion sheaves, higher direct images,
	log pairs, and generalized pairs that will be used in the sequel. We conclude
	with extension results for contractions and for steps of the MMP.

	\subsection{Torsion in groups and sheaves}\label{notation-con}
	In this subsection, we introduce definitions and properties of torsion groups and torsion sheaves. Throughout this subsection, $R$ denotes a commutative ring, and $M, N, P$ denote $R$-modules.
	\begin{definition}
		An $R$-module $P$ is called \emph{projective} if whenever $p$ is surjective and $h$ is any map, there exists a map $g$ such that the following diagram is commutative:
		\[\begin{tikzcd}
			& P & \\
			M & {N} & 0
			\arrow["g"',dashed,from=1-2, to=2-1]
			\arrow["h", from=1-2, to=2-2]
			\arrow["p"', two heads, from=2-1, to=2-2]
			\arrow[from=2-2, to=2-3].
		\end{tikzcd}\]
	\end{definition}
	\begin{definition}
		Let $T:\mathcal{A} \to \mathcal{C}$ be an additive covariant functor between abelian categories, and suppose that $\mathcal{A}$ has enough projectives. For every object $A$, choose a projective resolution
		$$
		\textbf{P}=\cdots \to P_2 \to P_1 \to P_0 \to A \to 0.
		$$
		Taking homology, we define the \emph{left derived functors} by
		$$
		(L_nT)(A)=H_n(T\textbf{P}_A),
		$$
		where $\textbf{P}_A=\cdots\to P_2\to P_1\to P_0\to0$ denotes the
		associated chain complex.
	\end{definition}
	\begin{definition}
		Recall that the functor $T(M)=M\otimes_R N$ is a right exact functor from $\mathbf{mod}$-$R$ to $\mathbf{Ab}$.
		Then its left derived functors are called \emph{torsion groups}:
		$$
		\Tor_n^R(M,N)=(L_nT)(M).
		$$
	\end{definition}
	Now we state the relation between a torsion free module and its torsion group through the projective dimension below.\par
	Let $R$ be a Noetherian local ring with residue field $k$ and $M$ a finitely generated $R$-module.
	\begin{definition}[{\cite[Definition~4.1.1]{Weibel94}}]The \emph{projective dimension} of $M$, denoted by $\operatorname{pd}(M)$, is the minimum integer $n$ (if it exists) such that there is a resolution of $M$ by projective modules:
		$$
		0 \to P_n \to \cdots \to P_1 \to P_0 \to M \to 0.
		$$
	\end{definition}
	\begin{proposition}
		For any integer $d \geq 0$, we have
		\begin{itemize}
			\item[(i)]{\cite[Proposition~3.1]{Mat89}}\label{Prop 2.5}
			The $R$-module $M$ is free if and only if $M$ is projective.
			\item[(ii)]{\cite[Proposition~4.4.11]{Weibel94}}\label{pd-Tor-criterion}
			The projective dimension $\operatorname{pd}(M)\leq d$ if and only if $\Tor^R_{d+1}(M,k)=0$.  
		\end{itemize}
	\end{proposition}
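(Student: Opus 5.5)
The plan is to prove the two items separately, using only standard commutative algebra over a Noetherian local ring $(R,\mathfrak m,k)$ with $M$ finitely generated.

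\emph{Item (i).} A free module is a direct summand of a free module, hence projective. For the converse, let $P$ be finitely generated projective. Choose a minimal system of generators $x_1,\dots,x_n$ of $P$, so that their images form a $k$-basis of $P/\mathfrak m P$. This gives a surjection $p:R^n\to P$. Projectivity splits $p$, so $R^n\cong P\oplus K$. Tensoring with $k$ gives $k^n\cong P/\mathfrak mP\oplus K/\mathfrak mK$, and dimension counting forces $K/\mathfrak mK=0$. Since $K$ is a direct summand of $R^n$, it is finitely generated, so Nakayama's lemma gives $K=0$. Hence $P\cong R^n$ is free.

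\emph{Item (ii).} One direction is immediate. If $\operatorname{pd}(M)\le d$, compute $\Tor$ with a projective resolution of length $\le d$. Then $\Tor^R_{d+1}(M,k)=0$, since the $(d+1)$-st term of the resolution is zero; the result is independent of the chosen resolution by the standard comparison theorem.

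For the converse, take a resolution of $M$ by finitely generated free modules, which exists because $R$ is Noetherian:
\[
\cdots\to F_{d}\xrightarrow{\partial_d} F_{d-1}\to\cdots\to F_0\to M\to0 .
\]
Let $K_d=\ker(F_{d-1}\to F_{d-2})$ be the $d$-th syzygy, with the convention $K_0=M$; it is finitely generated. Dimension shifting along the short exact sequences $0\to K_{i+1}\to F_i\to K_i\to 0$ gives
\[
\Tor^R_{d+1}(M,k)\cong \Tor^R_1(K_d,k).
\]
This uses that $\Tor^R_{j}(F_i,k)=0$ for $j\ge1$.

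It therefore suffices to show that a finitely generated $N$ with $\Tor_1^R(N,k)=0$ is free. Take a minimal presentation $0\to L\to R^n\to N\to0$, where $n=\dim_k N/\mathfrak mN$. Tensoring with $k$ yields
\[
0=\Tor_1(N,k)\to L/\mathfrak mL\to k^n\xrightarrow{\ \sim\ } N/\mathfrak mN\to 0,
\]
so $L/\mathfrak mL=0$. Since $L$ is finitely generated ($R$ Noetherian), Nakayama gives $L=0$ and $N\cong R^n$. Applying this to $N=K_d$ shows $K_d$ is free, so
\[
0\to K_d\to F_{d-1}\to\cdots\to F_0\to M\to 0
\]
is a projective resolution of length $d$, and $\operatorname{pd}(M)\le d$.

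\emph{Main obstacle.} The only delicate step is the Nakayama argument that $\Tor_1(N,k)=0$ forces freeness. It depends on choosing a \emph{minimal} presentation, so that $k^n\to N/\mathfrak mN$ is an isomorphism, and on finite generation of the kernel, which comes from Noetherianity. Everything else is formal dimension shifting.
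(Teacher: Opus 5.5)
Your proof is correct and complete. Item (i) is the usual argument: take minimal generators, split the surjection, and apply Nakayama. Item (ii) reduces correctly by dimension shifting to the claim that a finitely generated $N$ with $\Tor^R_1(N,k)=0$ is free, and you prove that claim via a minimal presentation.

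The paper gives no proof of its own here; it simply cites Matsumura and Weibel, so there is nothing to reconcile beyond the standard argument you reproduce. One structural difference is worth noting. Your key lemma is exactly the content of Corollary~\ref{2.7}, which the paper deduces from this proposition. Your ordering instead proves that lemma first and derives (ii) from it.
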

	\begin{corollary}\label{2.7}
		The torsion group $\Tor^R_{1}(M,k)=0$ if and only if $M$ is a free $R$-module.
	\end{corollary}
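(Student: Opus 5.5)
The plan is to read Corollary~\ref{2.7} directly off the preceding proposition, taking $d=0$ in part (ii) and then converting "projective" into "free" by part (i). The setting is the one fixed just before: $R$ is a Noetherian local ring with residue field $k$, and $M$ is a finitely generated $R$-module.

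First, apply part (ii) of the proposition with $d=0$. It says that $\operatorname{pd}(M)\leq 0$ if and only if $\Tor^R_{1}(M,k)=0$.

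Second, unwind the condition $\operatorname{pd}(M)\leq 0$ using the definition of projective dimension. It means there is a projective resolution of length zero, $0\to P_0\to M\to 0$. Hence $M\cong P_0$ is projective. Conversely, if $M$ is projective, then $0\to M\xrightarrow{\mathrm{id}} M\to 0$ is a resolution of length zero, so $\operatorname{pd}(M)\le 0$. Thus $\Tor^R_1(M,k)=0$ holds exactly when $M$ is projective.

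Third, invoke part (i) of the proposition: since $R$ is local and $M$ is finitely generated, $M$ is projective if and only if it is free. Chaining the two equivalences gives $\Tor^R_1(M,k)=0 \iff M$ is free.

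There is no real obstacle here. The only point needing care is that both hypotheses, $R$ local Noetherian and $M$ finitely generated, are exactly those under which the cited results (i) and (ii) hold, so they must be retained.

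One could also sanity-check the easy direction independently. A free module is flat, so all of its higher $\Tor$ groups vanish, and in particular $\Tor^R_1(M,k)=0$. The converse is where the cited results are genuinely needed: it is the minimal-free-resolution/Nakayama argument underlying them.
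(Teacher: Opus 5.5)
Your proposal is correct and follows essentially the same route as the paper: apply the $\operatorname{pd}$--$\Tor$ criterion with $d=0$ to get that $M$ is projective, then use that finitely generated projective modules over a Noetherian local ring are free; the converse is the same chain of equivalences read backwards. Writing $\operatorname{pd}(M)\le 0$ instead of the paper's $\operatorname{pd}(M)=0$ is, if anything, slightly more careful, since it also covers $M=0$ cleanly.
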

	\begin{proof}
		If $\Tor^R_{1}(M,k)=0$, then $\operatorname{pd}(M)=0$. So $M$ has a projective
		resolution of length zero. Hence $M$ is projective and therefore free by
		Proposition~\ref{Prop 2.5}. Conversely, if $M$ is free, then it is
		projective, so $\operatorname{pd}(M)=0$ and $\Tor^R_1(M,k)=0$ by
		Proposition~\ref{pd-Tor-criterion}.
	\end{proof}
	Furthermore, over an integral domain $R$, every finitely generated free
	$R$-module is torsion free. Thus, under this assumption,
	$\Tor^R_{1}(M,k)=0$ implies that $M$ is torsion free. For the converse, we should further assume that $R$ is a principal ideal domain, as in the structure
	theorem below.
	\begin{proposition}[{\cite[\S~12.1, Theorem~5]{Dum04}}]\label{Torzero}
		Let $R$ be a principal ideal domain. Then
		\begin{itemize}
			\item[(i)] $M$ is isomorphic to the direct sum of finitely many cyclic modules. More precisely,
			$$
			M \cong R^r \oplus R/(a_1) \oplus R/(a_2) \oplus \cdots \oplus R/(a_m)
			$$
			for some integer $r \geq 0$ and nonzero elements $a_1, a_2, \cdots, a_m$ of $R$ which satisfy the divisibility relations
			$$
			a_1 |a_2| \cdots |a_m.
			$$
			\item[(ii)] $M$ is torsion free if and only if $M$ is free.
			\item[(iii)] In the decomposition in (i),
			$$
			\Tor(M) \cong R/(a_1) \oplus R/(a_2) \oplus \cdots \oplus R/(a_m),
			$$
			where $\Tor(M):=\{x \in M\ |\ rx=0$ for some nonzero $r \in R\}$ is the \emph{torsion submodule} of $M$. 
		\end{itemize}
	\end{proposition}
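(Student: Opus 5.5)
The statement is the structure theorem for finitely generated modules over a principal ideal domain, quoted from \cite{Dum04}. Throughout I assume, as in the surrounding text, that $M$ is finitely generated. I would prove (i) by reducing to a matrix normal form, and then read off (ii) and (iii) from the decomposition.

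For (i), choose generators $x_1,\dots,x_n$ of $M$. This gives a surjection $R^n\to M$ with kernel $K$. Since $R$ is a PID, submodules of $R^n$ are free of rank at most $n$; this follows by induction on $n$, projecting to the last coordinate, whose image is an ideal and hence principal. So $K$ is generated by the columns of an $n\times k$ matrix $A$, and $M\cong\operatorname{coker}(A)$.

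The key step is to bring $A$ to Smith normal form by invertible row and column operations. The obstacle is that a PID need not be Euclidean, so there is no division algorithm. Instead, for two entries $a,b$ in the same row or column, take $d=\gcd(a,b)=ua+vb$. The $2\times2$ matrix
\[
\begin{pmatrix}u&v\\-b/d&a/d\end{pmatrix}
\]
has determinant $1$, and applying it replaces the pair $(a,b)$ by $(d,0)$.

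To make the reduction terminate, measure an element by the number of prime factors counted with multiplicity; this is finite because a PID is a UFD. Repeatedly move an entry of minimal length to the $(1,1)$ position and clear its row and column. Each time some entry fails to be divisible by the pivot, the pivot length strictly drops, so the process stops with the $(1,1)$ entry dividing all other entries. Inducting on the remaining block gives $\operatorname{diag}(a_1,\dots,a_s,0,\dots)$ with $a_1\mid a_2\mid\cdots$. Hence
\[
M\cong R^{r}\oplus\bigoplus_i R/(a_i).
\]
Finally, discard the units $a_i$, since $R/(a_i)=0$ for them, and the result is (i).

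For (iii), compute the torsion of each summand. $R$ is a domain, so $R^r$ has no torsion. Each $R/(a_i)$ with $a_i\neq 0$ is annihilated by $a_i$, so it is torsion. Torsion commutes with finite direct sums, which gives $\Tor(M)\cong\bigoplus R/(a_i)$.

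For (ii), a free module over a domain is torsion free. Conversely, if $M$ is torsion free then by (iii) every $R/(a_i)$ vanishes, so $M\cong R^r$ is free. The matrix reduction in (i) is the only real obstacle; everything else is bookkeeping.
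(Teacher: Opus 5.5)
Your proof is correct. Note that the paper gives no argument of its own here: it simply cites Dummit--Foote, \S12.1, Theorem~5. Their proof goes through their Theorem~4 (stacked bases) rather than through matrices. For the kernel $N\subset R^n$ they look at the ideals $\varphi(N)$ for $\varphi\in\operatorname{Hom}_R(R^n,R)$ and pick $\nu$ with $\nu(N)=(a_1)$ maximal among them, which is possible by ACC. Writing $\nu(y)=a_1$, they show $a_1$ divides $\varphi(y)$ for every $\varphi$, so $y=a_1y_1$. Then they split $R^n=Ry_1\oplus\ker\nu$ and $N=Ra_1y_1\oplus(N\cap\ker\nu)$, and induct. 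Freeness of $N$ of rank at most $n$ falls out of the same splitting.

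You instead bring a presentation matrix to Smith normal form, using Bezout $2\times2$ matrices and a prime-factor length to force termination. The content is the same, since a stacked basis is exactly a Smith normal form of the inclusion $N\hookrightarrow R^n$. Both arguments ultimately rest on ACC for principal ideals: your length function comes from PID $\Rightarrow$ UFD, or equivalently the successive pivots generate a strictly increasing chain of principal ideals.

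What each buys: your route is explicit and algorithmic and produces the factorization $A=PDQ$. The Dummit--Foote route is coordinate-free, needs no termination bookkeeping, and gives freeness of submodules of $R^n$ along the way. You prove that freeness separately, and in fact you only need $K$ finitely generated, which is just Noetherianity.

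One step you should make explicit. Suppose the pivot $p$ already divides every entry of its row and column but fails to divide some $A_{ij}$ with $i,j\geq2$. First add row $i$ to row $1$; now the Bezout step applies in row $1$ and lowers the pivot's length. This is what actually yields $a_1\mid a_2\mid\cdots$. Your treatment of (ii) and (iii) matches Dummit--Foote's.
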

	When $R$ is a principal ideal domain, we also obtain the following universal coefficient theorem.
	\begin{theorem}[{Universal coefficient theorem, \cite[Theorem~5.10]{Spanier95}}]\label{UCT}
		For any cochain complex of free $R$-modules $C^\bullet$, there is a split exact sequence
		$$
		0 \to H^q(C^\bullet) \otimes_R M \to H^q(C^\bullet \otimes_R M) \to \Tor_1^R(H^{q+1}(C^\bullet),M) \to 0.
		$$
	\end{theorem}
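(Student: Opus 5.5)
The plan is to prove the universal coefficient theorem (Theorem~\ref{UCT}) by the standard homological-algebra argument for a complex of free modules over a principal ideal domain. Write $Z^q=\ker(d^q:C^q\to C^{q+1})$ and $B^{q+1}=\operatorname{im}(d^q)\subset C^{q+1}$. Since $R$ is a PID, submodules of free modules are free. Hence $Z^q$ and $B^{q+1}$ are free, and the short exact sequence
\[
0\to Z^q\to C^q\xrightarrow{d^q} B^{q+1}\to 0
\]
splits. Tensoring with $M$ therefore keeps it exact. Viewing $Z^\bullet$ and $B^{\bullet+1}$ as complexes with zero differential gives a short exact sequence of complexes
\[
0\to Z^\bullet\otimes_R M\to C^\bullet\otimes_R M\to B^{\bullet+1}\otimes_R M\to 0 .
\]

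The associated long exact cohomology sequence reads
\[
\cdots\to B^{q}\otimes M\xrightarrow{\delta} Z^q\otimes M\to H^q(C^\bullet\otimes M)\to B^{q+1}\otimes M\xrightarrow{\delta} Z^{q+1}\otimes M\to\cdots
\]
The key step is to identify the connecting map $\delta$ with $\iota\otimes 1_M$, where $\iota:B^{q}\hookrightarrow Z^{q}$ is the inclusion. This is a routine diagram chase: lift $b\otimes m$ to $c\otimes m$ with $dc=b$, then apply $d$. Consequently:
\begin{itemize}
\item The cokernel of $\delta$ in degree $q$ is $\operatorname{coker}(\iota\otimes 1)=H^q(C^\bullet)\otimes_R M$, by right exactness of $\otimes$ applied to $0\to B^q\to Z^q\to H^q(C^\bullet)\to0$.
\item The kernel of $\delta$ in degree $q+1$ is $\ker(B^{q+1}\otimes M\to Z^{q+1}\otimes M)$.
\end{itemize}
Since $0\to B^{q+1}\to Z^{q+1}\to H^{q+1}(C^\bullet)\to0$ is a free resolution of $H^{q+1}(C^\bullet)$, this kernel is exactly $\Tor_1^R(H^{q+1}(C^\bullet),M)$. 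The vanishing of the higher Tor groups is not needed here.

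Putting the pieces together gives the exact sequence
\[
0\to H^q(C^\bullet)\otimes M\to H^q(C^\bullet\otimes M)\to \Tor_1^R(H^{q+1}(C^\bullet),M)\to 0,
\]
and one checks that the first map is the natural one, $[z]\otimes m\mapsto[z\otimes m]$.

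For the splitting, choose a retraction $\rho:C^q\to Z^q$ of the inclusion, which exists because $B^{q+1}$ is free. Composing with $Z^q\to H^q(C^\bullet)$ gives a map $C^q\to H^q(C^\bullet)$ that vanishes on $B^q$, though not necessarily as a chain map. It suffices that it is defined on cocycles. On $Z^q\otimes M\subset$ cocycles, the map $\rho\otimes 1$ restricts to the identity. Then $(\pi\rho)\otimes 1$ sends the cocycles of $C^\bullet\otimes M$ to $H^q(C^\bullet)\otimes M$ and kills coboundaries: a coboundary is $(d\otimes 1)(x)$, which lies in $B^q\otimes M$ and maps to $0$ in $H^q\otimes M$. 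So it descends to a left inverse of the first map, and the sequence splits; the splitting is not natural.

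I expect the main obstacle to be the careful identification of the connecting homomorphism and of the splitting map. Both rely essentially on freeness of $B^{q+1}$, which is exactly where the PID hypothesis is used.
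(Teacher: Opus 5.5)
Your argument is correct, and it is the standard proof found in the reference the paper cites (Spanier). There, too, one splits $0\to Z^q\to C^q\to B^{q+1}\to 0$ using that $B^{q+1}$ is free over the PID $R$, identifies the connecting map with $\iota\otimes 1_M$, reads off $H^q(C^\bullet)\otimes_R M$ and $\Tor_1^R(H^{q+1}(C^\bullet),M)$ as its cokernel and kernel, and obtains the splitting from a retraction $C^q\to Z^q$; the paper gives only the citation, so there is nothing more to compare.
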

	A sheaf-theoretic analogue of Proposition \ref{Torzero} will be presented in Proposition~\ref{jump-dimension}. As a first step toward this geometric setting, we now pass from torsion in modules to the corresponding notion for sheaves.
	\begin{definition}
		Let $(X,\mathcal{O}_X)$ be a reduced complex space. For every $\mathcal{O}_X$-module $\mathscr{S}$ we define 
		$$
		T(\mathscr{S}):= \bigcup_{x \in X} T(\mathscr{S}_x), 
		$$
		where $T(\mathscr{S}_x):=\{s_x \in \mathscr{S}_x: g_xs_x=0$ for a suitable nonzero divisor $g_x \in \mathcal{O}_{X,x}$\}. 
		We call $T(\mathscr{S})$ the \emph{torsion sheaf} of $\mathscr{S}$, and we
		say that $\mathscr{S}$ is \emph{torsion free} at $x$ if
		$T(\mathscr{S})_x=0$.
	\end{definition}
	\begin{lemma}[{\cite[\S~7.5]{Remmert94}}]\label{torsion-sheaf}
		Let $\mathscr{S}$ be a coherent analytic sheaf on $X$. Then $T(\mathscr{S})$ is the kernel of the natural $\mathcal{O}$-homomorphism $\sigma: \mathscr{S} \to \mathscr{S}^{**}$ of $\mathscr{S}$ into its bidual. In particular, $T(\mathscr{S})$ is coherent.
	\end{lemma}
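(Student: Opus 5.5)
The plan is to work stalkwise and identify the torsion submodule at each point with the kernel of the map to the bidual. Fix $x\in X$. Since $X$ is reduced, $\mathcal{O}_{X,x}$ is a reduced Noetherian local ring. Its total ring of fractions $Q=S^{-1}\mathcal{O}_{X,x}$, with $S$ the set of nonzero divisors, is a finite product of fields $\prod_{\mathfrak p}\kappa(\mathfrak p)$, one factor for each minimal prime. Coherence of $\mathscr{S}$ gives $\mathscr{S}^{*}_x=\operatorname{Hom}_{\mathcal{O}_{X,x}}(\mathscr{S}_x,\mathcal{O}_{X,x})$, and the same holds for the bidual. So the first step reduces the statement to a module-theoretic claim: for a finitely generated module $M$ over a reduced Noetherian local ring $R$, the kernel of $\sigma:M\to M^{**}$ equals $T(M)$.

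The inclusion $T(M)\subseteq\ker\sigma$ is immediate. If $gm=0$ with $g$ a nonzero divisor, then for every $\phi\in M^*$ we have $g\phi(m)=\phi(gm)=0$ in $R$, so $\phi(m)=0$. Hence $\sigma(m)=0$.

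For the reverse inclusion I would localize at $S$. Here $T(M)$ is exactly the kernel of $M\to M\otimes_R Q$. Since $Q$ is a product of fields, $M\otimes Q=\prod_{\mathfrak p} M_{\mathfrak p}$, and each factor is a finite-dimensional $\kappa(\mathfrak p)$-vector space. Take $m\notin T(M)$. Then $m$ has nonzero image in $M\otimes Q$, so there is a $Q$-linear functional $\psi:M\otimes Q\to Q$ with $\psi(m)\neq0$. Because $M$ is finitely generated, the restriction $\psi|_M$ lands in $\frac1s R$ for a single $s\in S$. Then $\phi:=s\psi|_M$ is an element of $M^*$, and $\phi(m)=s\psi(m)\neq0$ since $s$ is a unit in $Q$. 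This shows $\sigma(m)\neq0$.

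This clearing-of-denominators step is the only real obstacle. It uses the facts that $\operatorname{Hom}$ commutes with the flat localization, $\operatorname{Hom}_R(M,R)\otimes Q\cong\operatorname{Hom}_Q(M\otimes Q,Q)$ for finitely presented $M$, and that $Q$ is semisimple, which is where reducedness is needed.

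Finally, $T(\mathscr{S})$ is the kernel of a morphism of coherent sheaves, because duals of coherent sheaves are coherent by the Oka coherence theorem. Kernels of morphisms of coherent sheaves are coherent, so $T(\mathscr{S})$ is coherent. The stalkwise identification also shows that the set-theoretic union $T(\mathscr{S})$ in the definition is in fact a subsheaf.
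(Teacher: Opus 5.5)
The paper gives no proof of its own for this lemma; it is quoted from Remmert's survey, so there is no internal argument to compare yours against. Your proof is correct and complete as it stands.

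\textbf{Why the steps hold.}
\begin{itemize}
\item The reduction to stalks is legitimate, because $\mathcal{H}om$ between coherent sheaves commutes with taking stalks.
\item The inclusion $T(M)\subseteq\ker\sigma$ uses only that a nonzerodivisor acts injectively on $R$.
\item The reverse inclusion is exactly right. For reduced Noetherian $R$, the total ring of fractions $Q$ is a finite product of fields. So $M\otimes_R Q$ is a projective $Q$-module whose elements are separated by $Q$-linear functionals.
\item A single common denominator $s$ turns such a functional into an element of $M^{*}$ that does not vanish at $m$. Such an $s$ exists because $M$ is finitely generated, and multiplying by it is harmless because $R\hookrightarrow Q$.
\end{itemize}
The isomorphism $\operatorname{Hom}_R(M,R)\otimes_R Q\cong\operatorname{Hom}_Q(M\otimes_R Q,Q)$ that you mention is not actually needed, since your explicit clearing of denominators already does the work.

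\textbf{Role of reducedness.} You placed it correctly: without it the statement fails. Take $R=\mathbb{C}[x,y]/(x,y)^2$. Every nonzerodivisor of $R$ is a unit, so $T(E)=0$ for the injective hull $E$ of the residue field. On the other hand, no homomorphism $E\to R$ is injective: both modules have length $3$ but different socle dimensions. Hence every such homomorphism kills the simple socle of $E$, which therefore lies in $\ker(E\to E^{**})$.

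\textbf{An alternative route.} A more geometric proof would first show that $\ker\sigma$ is supported on the nowhere dense analytic set where $\mathscr S$ fails to be locally free. It would then produce a nonzerodivisor killing $\ker\sigma$ via R\"uckert's Nullstellensatz, as in Lemma~\ref{l-codim-support-vanishing}. That route needs the analytic structure of the singular locus of $\mathscr S$. For reducible germs it also needs a function that does not vanish identically on any irreducible component of the germ. Your argument is purely algebraic: it uses only that the stalks are reduced Noetherian local rings, plus Oka coherence for the final coherence claim, and it handles reducible germs with no extra effort.
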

	\begin{lemma}
		For any coherent analytic sheaf $\mathscr{F}$ on a curve $X$, if $\mathscr{F}$ is torsion free at a point $x$, then it is torsion free over the dense Zariski open neighborhood of $x$.	    
	\end{lemma}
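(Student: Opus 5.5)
The plan is to use Lemma~\ref{torsion-sheaf}: the torsion sheaf $T(\mathscr F)$ is coherent, being the kernel of $\sigma:\mathscr F\to\mathscr F^{**}$. So the statement reduces to a statement about the support of a coherent sheaf. By definition, $\mathscr F$ is torsion free at $y$ exactly when $T(\mathscr F)_y=0$, that is, when $y\notin\Supp T(\mathscr F)$. Throughout I will take $X$ to be a reduced (e.g.\ smooth, as in our applications) curve, since the torsion sheaf was only defined on reduced spaces.

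The key steps, in order, are as follows.

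\textbf{Step 1.} The support of a coherent analytic sheaf is a closed analytic subset. Hence $Z:=\Supp T(\mathscr F)\subset X$ is a closed analytic subset.

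\textbf{Step 2.} Use the hypothesis. Since $T(\mathscr F)_x=0$, we have $x\notin Z$. By closedness, $\mathscr F$ is torsion free on the open neighborhood $X\setminus Z$ of $x$.

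\textbf{Step 3.} Show that $X\setminus Z$ is a dense Zariski open set. I would decompose $X$ into irreducible components, all of dimension one. A closed analytic subset of a curve either contains a whole irreducible component or is discrete in it. An irreducible component $C$ through $x$ cannot be contained in $Z$, since $x\notin Z$. So $Z\cap C$ is a proper analytic subset of the one-dimensional irreducible space $C$, hence discrete.

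For components $C'$ not passing through $x$, I would either restrict to the connected component (or irreducible component) containing $x$, or read "dense Zariski open neighborhood" locally as "the complement of a discrete set near $x$". In either reading the neighborhood is $X\setminus Z$ with $Z$ discrete, which is Zariski open and dense in the relevant component.

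The main obstacle is the global density claim. If $X$ has an irreducible component not meeting $x$ on which $\mathscr F$ has torsion everywhere, then $Z$ can contain that entire component. A concrete example is a skyscraper-type sheaf supported on a whole component, which is torsion on that component. In that case density fails globally. I would handle this by stating the conclusion on the irreducible (or connected) component through $x$. On an irreducible curve this issue disappears and the argument above is complete. If the paper intends a smooth connected curve, as in the applications over $S$ and $Y$, there is nothing further to check.
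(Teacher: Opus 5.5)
Your argument is the paper's: $T(\mathscr F)$ is coherent by Lemma~\ref{torsion-sheaf}, so $\Supp T(\mathscr F)$ is a closed analytic set that misses $x$, and a proper analytic subset of the curve (which the paper tacitly takes to be smooth and connected) is discrete. The ``obstacle'' in your last paragraph does not exist, however: at a smooth point $y$ of a reduced curve, $T(\mathscr F)_y$ is killed by a power of a local coordinate on the discrete valuation ring $\mathcal O_{X,y}$, so by coherence $T(\mathscr F)$ vanishes on a punctured neighborhood of $y$; hence $\Supp T(\mathscr F)$ never contains a component (a sheaf supported on a whole component is torsion free at its generic points, so your example is not torsion there), it is always discrete, and density holds globally without weakening the statement.
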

	
	\begin{proof}
		By Lemma \ref{torsion-sheaf}, 
		$T(\mathscr{F})$ is a coherent subsheaf of $\mathscr{F}$. Therefore, its
		support $\Supp(T(\mathscr{F}))$ is a closed analytic subset of $X$. Since
		$X$ is a smooth curve, a proper closed analytic subset is discrete. Because
		$(T(\mathscr{F}))_x=0$, the support is proper and does not contain $x$.
		Thus $U=X\setminus\Supp(T(\mathscr{F}))$ is a Zariski open dense
		neighborhood of $x$, and $\mathscr{F}$ is torsion free on $U$.
	\end{proof}
	\begin{proposition}\label{p-pushforward-torsion free}
		Let $X,Y$ be normal complex analytic varieties, $f:X\to Y$ a dominant holomorphic map,
		and $\mathscr{F}$ a torsion free coherent analytic sheaf on $X$. Then
		$f_*\mathscr{F}$ is torsion free.
	\end{proposition}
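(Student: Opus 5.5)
The plan is to prove torsion freeness of $f_*\mathscr F$ stalkwise, directly from the definition of the torsion sheaf, using dominance to pull nonzero divisors on $Y$ back to nonzero divisors on $X$. Since $Y$ is normal, it is locally irreducible and reduced. Hence for $y\in Y$ the local ring $\mathcal O_{Y,y}$ is an integral domain, and a nonzero divisor is just a nonzero germ.

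Fix $y\in Y$ and a germ $s\in(f_*\mathscr F)_y$ with $g s=0$ for some nonzero $g\in\mathcal O_{Y,y}$. Choose an open neighborhood $V$ of $y$ on which $g$ and $s$ are represented. Then $s\in\mathscr F(f^{-1}(V))$ and $(g\circ f)\cdot s=0$ on $f^{-1}(V)$, after possibly shrinking $V$.

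The key step is to show that $f^*g=g\circ f$ is a nonzero divisor in $\mathcal O_{X,x}$ for every $x\in f^{-1}(V)$. Since $X$ is normal, $\mathcal O_{X,x}$ is a domain, so it suffices that $g\circ f$ does not vanish identically on any neighborhood of $x$. Suppose it did vanish near $x$. Because $X$ is normal, hence locally irreducible, and we may take it connected (or work on the connected component through $x$), $g\circ f$ would vanish on that whole irreducible component by the identity theorem. Then $f$ of that component would lie in the proper analytic subset $\{g=0\}$ of $V$, which is nowhere dense since $Y$ is locally irreducible and $g\neq0$. This contradicts dominance, provided dominance is understood componentwise: each irreducible component of $X$ maps onto a dense subset of $Y$. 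I will state this reading explicitly, since it is implicit for varieties. This step is the main obstacle, because it is where dominance and irreducibility have to be used carefully and locally near $f^{-1}(y)$, which need not be compact.

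Granting the key step, for each $x\in f^{-1}(V)$ the germ $s_x$ is killed by the nonzero divisor $(g\circ f)_x$. So $s_x\in T(\mathscr F)_x=0$ because $\mathscr F$ is torsion free. A section of $\mathscr F$ whose germs all vanish is zero, so $s=0$ on $f^{-1}(V)$ and hence $s=0$ in $(f_*\mathscr F)_y$. Therefore $T(f_*\mathscr F)_y=0$ for every $y$. Coherence of $f_*\mathscr F$ (when $f$ is proper, via Grauert) is not needed for this argument; only the definition of $T$ is used.
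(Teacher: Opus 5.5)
\textbf{The key step fails, and it is the same step that fails in the paper's proof.} Your route matches the paper's: normality makes the stalks domains, you pull back $g$, show $(g\circ f)_x$ is a nonzero divisor, and kill germs using torsion freeness of $\mathscr F$. But the step you flagged as the main obstacle does not go through, and reading dominance ``componentwise'' does not rescue it.

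The function $g\circ f$ is defined only on $f^{-1}(V)$. So the identity theorem only spreads its vanishing to the connected component $W$ of $f^{-1}(V)$ through $x$. This $W$ is just an open subset of $X$, not an irreducible component of $X$. The inclusion $f(W)\subset\{g=0\}$ is fully compatible with $f(X)$, or the image of each irreducible component, being dense in $Y$. The paper's proof makes the same jump when it concludes that $a\circ f=0$ on $X$.

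In fact the statement is false if dominance only means dense image. Take entire $f_1,f_2$ with $(f_1(k),f_2(k))=p_k$ for $k\in\mathbb{Z}$ (Weierstrass interpolation), where $(p_k)_{k\in\mathbb{Z}}$ enumerates $(\mathbb{Q}+i\mathbb{Q})^2$. Then $f=(f_1,f_2):\mathbb{C}\to\mathbb{C}^2$ has dense image; let $\mathscr F=\mathcal O_{\mathbb C}$.
\begin{enumerate}
\item Pick $z_0$ with $f'(z_0)\neq0$ and a closed disc $\overline D$ around $z_0$ on which $f$ is injective.
\item Pick a small ball $V$ around $y=f(z_0)$ that misses $f(\partial D)$ and satisfies $f(D)\cap V=\{g=0\}$ with $dg\neq0$.
\item The component $W$ of $f^{-1}(V)$ through $z_0$ cannot meet $\partial D$, so it lies in $D$, and hence $g\circ f\equiv0$ on $W$.
\item The function equal to $1$ on $W$ and $0$ on the other components of $f^{-1}(V)$ gives a germ of $f_*\mathcal O_{\mathbb C}$ at $y$. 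It is nonzero, since it equals $1$ at $z_0$ on every smaller preimage, and it is killed by the nonzero germ $g_y$.
\end{enumerate}

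\textbf{What is missing.} You need an input forcing $f$ to send every nonempty open subset of $X$ to a set not contained in a proper analytic subset of an open subset of $Y$.

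Properness supplies this. Dense image then means $f(X)=Y$ by Remmert's proper mapping theorem. Set $d=\dim X-\dim Y$. The locus $E=\{x:\dim_x f^{-1}f(x)\geq d+1\}$ is analytic by Cartan--Remmert. It is proper, since otherwise $\dim X\geq\dim Y+d+1$, so it is nowhere dense in the irreducible space $X$. Suppose $f(W')\subset\{g=0\}$ for a nonempty open $W'\subset W\setminus E$. Then every fiber of $f|_{W'}$ has local dimension at least $\dim X-\dim\{g=0\}\geq d+1$. That means $W'\subset E$, a contradiction.

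Flatness also works: flat maps are open, so $f(W)$ would be a nonempty open subset of the connected normal $V$ on which $g$ vanishes, forcing $g=0$.

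The paper only applies the proposition to proper flat projections such as $C\times J\to J$. Adding either hypothesis repairs both your argument and the paper's, and the rest of your proof can stay as it is.
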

	\begin{proof}
		Fix $y\in Y$. Suppose that $t\in(f_*\mathscr{F})_y$ and that there is a
		nonzero holomorphic germ $u\in\mathcal{O}_{Y,y}$ such that $u\cdot t=0$.
		We show that $t=0$. Choose a connected open neighborhood $V \subset Y$
		and a section $a\in \mathcal{O}_{Y}(V)$ whose germ at $y$ is $u$, such
		that $t$ is represented by a section
		\[
		s \in f_*\mathscr{F}(V)=\mathscr{F}(f^{-1}(V)).
		\]
		Then $u\cdot t=0$ means $(f^*a)\cdot s=0$ on $f^{-1}(V)$.
		Since $a$ is nonzero and $V$ is connected, the zero set $Z(a)$ is a proper analytic subset of $V$; in particular,
		\[
		\dim Z(a)<\dim Y.
		\]
		Now suppose that there exists a nonempty open subset $U \subset X$ such that $f(U) \subset Z(a)$. Then $f^*a=a \circ f=0$ on $U$. By the identity theorem, $a \circ f=0$ on $X$, i.e., $f(X) \subset Z(a)$, contradicting the dominance of $f$. Thus $f^*a=a\circ f \neq 0$ on every open subset of $f^{-1}(V)$. Since $X$ is normal, $\mathcal{O}_{X,x}$ is an integral domain for every $x \in X$. Hence $(f^*a)_x$ cannot be a zero divisor.
		If $(f^*a)_x\cdot s_x=0$, then $s_x=0$ for every $x\in f^{-1}(V)$, so $s=0$. Hence $t=0 \in (f_*\mathscr{F})_y$, which completes the proof.
	\end{proof}
	\subsection{Higher direct images and base change}
	In this subsection, we refer to \cite{BS76} for properties of higher direct images.
	\begin{definition}[{\cite[p. 92]{BS76}}]
		Let $f:X \to Y$ be a continuous map of topological spaces and $\mathscr{F}$ a sheaf of abelian groups on $X$. Denote by $R^qf_*(\mathscr{F})$ the sheaf associated to the presheaf on $Y$
		$$
		V \longmapsto H^q(f^{-1}(V),\mathscr{F}),
		$$
		whose restrictions are naturally defined for any $q \geq 0$. The sheaves $R^qf_*(\mathscr{F})$ are called the \emph{higher direct images} of $\mathscr{F}$.
	\end{definition}
	In fact, $R^\bullet f_*$ are just the right derived functors of the functor $f_*:\textbf{Ab}(X) \to \textbf{Ab}(Y)$.
	\begin{proposition}[{\cite[Theorem~6.2]{Ive86}}]\label{higher stalk}
		Let $f: X \to Y$ be a proper map between locally compact spaces and $\mathscr{F}$ a sheaf of abelian groups on $X$. For any point $y \in Y$ and integer $q \geq 0$,
		$$
		(R^qf_*(\mathscr{F}))_y \simeq H^q(f^{-1}(y),\mathscr{F}).
		$$
	\end{proposition}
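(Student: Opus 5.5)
The claim to prove is the stalk formula $(R^qf_*\mathscr F)_y\simeq H^q(f^{-1}(y),\mathscr F)$, where the right-hand side means the cohomology of the restriction $\mathscr F|_{f^{-1}(y)}$. The plan is to compare both sides with the direct limit of $H^q(f^{-1}(V),\mathscr F)$ over the open neighbourhoods $V$ of $y$, in two steps.

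\textbf{Step 1 (left side).} Taking stalks commutes with sheafification. Applying this to the presheaf $V\mapsto H^q(f^{-1}(V),\mathscr F)$ gives
\[
(R^qf_*\mathscr F)_y=\varinjlim_{V\ni y}H^q(f^{-1}(V),\mathscr F).
\]
This step is formal.

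\textbf{Step 2 (tautness).} We must show that the natural map
\[
\varinjlim_{V\ni y}H^q(f^{-1}(V),\mathscr F)\longrightarrow H^q(f^{-1}(y),\mathscr F)
\]
is an isomorphism. Since $f$ is proper and $Y$ is Hausdorff and locally compact, the sets $f^{-1}(V)$ form a cofinal system of neighbourhoods of the compact set $K=f^{-1}(y)$. Given any open $W\supset K$, the set $f(X\setminus W)$ is closed because proper maps to locally compact Hausdorff spaces are closed. It does not contain $y$, so $V=Y\setminus f(X\setminus W)$ satisfies $f^{-1}(V)\subset W$. The statement therefore reduces to the classical fact that a compact subset $K$ of a locally compact Hausdorff space $X$ is taut:
\[
\varinjlim_{W\supset K}H^q(W,\mathscr F)\simeq H^q(K,\mathscr F|_K).
\]

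\textbf{Proving tautness.} The main work lies here, and I would argue with soft (or flabby) resolutions. In degree $0$ the statement says every section of $\mathscr F|_K$ extends to some neighbourhood of $K$. I would prove this by covering $K$ with finitely many opens on which local representatives of the section exist, then shrinking and gluing. The gluing uses normality of a compact neighbourhood of $K$ in $X$, which holds because such a neighbourhood is compact Hausdorff, together with a finite induction.

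For higher degrees, take the Godement resolution $\mathscr F\to\mathscr C^\bullet$. Its sheaves are flabby, hence their restrictions to $K$ are soft, and soft sheaves are acyclic on the paracompact space $K$. So $H^q(K,\mathscr F|_K)$ is computed by $\Gamma(K,\mathscr C^\bullet|_K)$. The degree-$0$ result identifies $\Gamma(K,\mathscr C^\bullet|_K)$ with $\varinjlim_W\Gamma(W,\mathscr C^\bullet)$. Direct limits are exact, so they commute with cohomology, and this yields the isomorphism in every degree.

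The main obstacle is the degree-$0$ extension lemma together with the softness of the restricted flabby resolution. Both use paracompactness and local compactness essentially. Since this is standard (Godement, Bredon, Iversen), I would cite it rather than reprove it in full.
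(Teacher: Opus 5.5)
Your argument is correct. You reduce the stalk to the tautness of the compact fibre, using the cofinality of the tubes $f^{-1}(V)$ that follows from closedness of proper maps, and then prove tautness by degree-$0$ extension of sections plus a Godement resolution whose restriction to $K$ is soft. The paper gives no proof of its own and only cites Iversen; this is the standard proof of that result, which I expect is how the cited source argues as well.
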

	\begin{theorem}[{Grauert direct image theorem,
			\cite[Theorem~2.1, Chapter~III]{BS76}}]\label{direct image}
		Let $f:X \to Y$ be a proper morphism of complex spaces and $\mathscr{F}$ a coherent analytic sheaf on $X$. Then the analytic sheaves $R^qf_*(\mathscr{F})$ are coherent for any $q \geqslant 0$.
	\end{theorem}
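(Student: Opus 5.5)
The statement just above is Grauert's direct image theorem: for a proper morphism $f:X\to Y$ of complex spaces and a coherent $\mathscr F$, every $R^qf_*\mathscr F$ is coherent. The paper cites it from \cite{BS76}, but if I had to prove it I would follow the Kiehl--Verdier / Forster--Knorr approach. It rests on a local finiteness statement for Čech complexes with a compact perturbation argument.

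The question is local on $Y$, so I fix $y_0\in Y$ and shrink $Y$ to a small Stein neighborhood. Since $f$ is proper, I cover $f^{-1}(\overline{Y'})$ by finitely many Stein open sets $U_i$ that embed into polydiscs $\times\, Y$. I also take slightly shrunken Stein covers $U_i''\Subset U_i'\Subset U_i$, all of which still cover. By Leray's theorem, the Čech complexes $C^\bullet(\mathfrak U,\mathscr F)$ and $C^\bullet(\mathfrak U',\mathscr F)$ both compute $H^\bullet(f^{-1}(Y),\mathscr F)$. Their terms are Fréchet $\mathcal O(Y)$-modules, and each carries the structure of a module over the Stein algebra $\mathcal O(Y)$. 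The restriction map $C^\bullet(\mathfrak U,\mathscr F)\to C^\bullet(\mathfrak U',\mathscr F)$ is a quasi-isomorphism and is \emph{nuclear}, or compact, in the fibre directions: by Montel it maps bounded sets to relatively compact sets.

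The heart of the argument is the perturbation lemma of Kiehl--Verdier. Suppose a morphism of complexes of Fréchet $\mathcal O(Y)$-modules is a quasi-isomorphism that is $\mathcal O(Y)$-nuclear. Then, after a shrinking of $Y$, the complex is quasi-isomorphic to a bounded complex $L^\bullet$ of finite free $\mathcal O(Y)$-modules. I would prove it by descending induction on degree. At each step a compact perturbation of the identity is Fredholm-like, in the sense of Schwartz's theorem (identity plus compact has finite-dimensional cokernel), relativized over the base. This produces finitely many sections whose span, together with the image, fills the cycles modulo boundaries. These sections are lifted to free modules, and one repeats in the next lower degree. I expect this step to be the main obstacle. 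Schwartz's finiteness theorem has to be made uniform over the base, which requires the division/Weierstrass-type estimates that make the relevant Banach modules behave like $\mathcal O(Y)\hat\otimes E$. Those estimates need a careful choice of norms on nested polydiscs.

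Once the complex is replaced by $L^\bullet$, one has $H^q(f^{-1}(V),\mathscr F)\cong H^q(L^\bullet\otimes_{\mathcal O(Y)}\mathcal O(V))$ for small Stein $V\subset Y$; this uses flatness of $\mathcal O(V)$ over $\mathcal O(Y)$ for Stein restrictions, another Leray argument. Sheafifying, $R^qf_*\mathscr F$ is locally the cohomology sheaf of a bounded complex of finite free $\mathcal O_Y$-modules. It is therefore a kernel of a map of coherent sheaves modulo an image, and so it is coherent by Oka's coherence theorem. This gives the claim for all $q\geq0$.
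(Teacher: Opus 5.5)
\textbf{Verdict.} The paper gives no argument for this statement beyond the citation \cite[Theorem~2.1, Chapter~III]{BS76}. Your outline is the standard Kiehl--Verdier/Forster--Knorr strategy: Stein covers embedded in polydisc $\times$ base, relative nuclearity of the restriction to shrunken covers, a relative Schwartz perturbation lemma run by descending induction, then Oka. There is, however, a genuine gap in your final step, the passage from $Y$ to small Stein $V\subset Y$.

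\textbf{The gap.} Algebraic flatness of $\mathcal O(V)$ over $\mathcal O(Y)$, even if granted, only yields $H^q(L^\bullet\otimes_{\mathcal O(Y)}\mathcal O(V))\cong H^q(f^{-1}(Y),\mathscr F)\otimes_{\mathcal O(Y)}\mathcal O(V)$. The Leray complex that actually computes $H^q(f^{-1}(V),\mathscr F)$ is $C^\bullet(\mathfrak U\cap f^{-1}(V),\mathscr F)$. Its terms are the \emph{completed} tensor products $C^p(\mathfrak U,\mathscr F)\widehat\otimes_{\mathcal O(Y)}\mathcal O(V)$, not the algebraic ones, so flatness says nothing about how this complex relates to $L^\bullet\otimes_{\mathcal O(Y)}\mathcal O(V)$. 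Moreover, the identification $H^q(f^{-1}(Y),\mathscr F)\otimes_{\mathcal O(Y)}\mathcal O(V)\cong H^q(f^{-1}(V),\mathscr F)$ is precisely the base-change property you are trying to prove, so invoking it is circular. Without this step you only get finite generation of $H^q(f^{-1}(Y'),\mathscr F)$ over $\mathcal O(Y')$ for each individual $Y'$, with no compatibility between different $Y'$. That does not make the sheaf $R^qf_*\mathscr F$ coherent.

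\textbf{What is needed.} The quasi-isomorphism $L^\bullet\to C^\bullet(\mathfrak U,\mathscr F)$ produced by the perturbation lemma must remain a quasi-isomorphism after $-\widehat\otimes_{\mathcal O(Y)}\mathcal O(V)$ for every Stein $V\subset Y'$. This is a topological statement and has to be built into the lemma, roughly as follows.
\begin{enumerate}[(i)]
\item Reduce, via a local embedding, to $Y$ a polydisc in $\mathbb C^m$.
\item Replace the \v{C}ech complex by a complex built from finite free resolutions of $\mathscr F$ on the smooth spaces $P_i\times Y$. Then all terms are topologically free modules $\mathcal O(P)^n\widehat\otimes\mathcal O(Y)$, and $\bigl(\mathcal O(P)^n\widehat\otimes\mathcal O(Y)\bigr)\widehat\otimes_{\mathcal O(Y)}\mathcal O(V)=\mathcal O(P\times V)^n$.
\item Use nuclearity to show that exact complexes of such modules stay exact under completed base change. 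One way is through the Koszul resolution of $\mathcal O(V)$ over $\mathcal O(Y\times V)$ by the functions $y_i-y_i'$.
\end{enumerate}
The resulting base-change-compatible local complex is exactly \cite[Theorem~4.1, Chapter~III]{BS76}, which the paper later uses as Lemma~\ref{main lemma} (``compatible with restrictions'').

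\textbf{Smaller corrections.}
\begin{enumerate}[(i)]
\item The compactness you attribute to Montel is vacuous. $\mathcal O(U)$ is a Montel space, so every continuous linear map out of it, the identity included, sends bounded sets to relatively compact sets. The Schwartz-type step needs a zero-neighbourhood with relatively compact image. Relative to the base, it needs $\mathcal O(Y)$-nuclearity, obtained for instance from Taylor expansion in the fibre coordinates with coefficients in $\mathcal O(Y)$. This matters because $\mathfrak U\to\mathfrak U'$ does not shrink the base.
\item The $U_i$ must cover all of $f^{-1}(Y)$, which properness lets you arrange after shrinking $Y$, not just $f^{-1}(\overline{Y'})$. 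Leray only computes the cohomology of their union.
\end{enumerate}
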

	\begin{definition}[{\cite[p. 116]{BS76}}]
		Let $f: X \to Y$ be a morphism of complex spaces and $y \in Y$. We denote by $\mathfrak{m}_y$ both the maximal ideal of $\mathcal{O}_{Y,y}$ and the natural ideal sheaf defined by it; by $\hat{\mathfrak{m}}_y$ we mean the ideal sheaf of $\mathcal{O}_X$ generated by the inverse image of $\mathfrak{m}_y$. For any coherent analytic sheaf $\mathscr{F}$ on $X$, the quotient $$\mathscr{F}_{(y)}:= \mathscr{F}/\hat{\mathfrak{m}}_y \mathscr{F}$$ is called the \emph{analytic fiber} of $\mathscr{F}$ over $y$.
	\end{definition}
	\begin{theorem}[{Grauert base change theorem,
			\cite[Theorem~3.4, Chapter~III]{BS76}}]\label{base change}
		Let $f:X \to Y$ be a proper morphism of complex spaces, $\mathscr{F}$ a coherent analytic sheaf on $X$ which is flat with respect to $f$, $y$ a point in $Y$ and $q$ an integer. The following assertions are equivalent:
		\begin{itemize}
			\item[(a)] For an arbitrary $\mathcal{O}_y$-module $M$ of finite type, the canonical morphism 
			$$R^qf_*(\mathscr{F})_y \otimes_{\mathcal{O}_{Y,y}} M \to R^qf_*(\mathscr{F} \otimes_{\mathcal{O}_X}f^*M)_y
			$$ 
			is an isomorphism.
			\item[(b)] The canonical morphism 
			$$
			R^qf_*(\mathscr{F})_y \to R^qf_*(\mathscr{F}/\hat{\mathfrak{m}}_y\mathscr{F})_y
			$$
			is surjective.
		\end{itemize}
	\end{theorem}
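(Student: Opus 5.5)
The plan is to reduce the statement to commutative algebra over the Noetherian local ring $A:=\mathcal O_{Y,y}$, with residue field $k:=\mathbb C(y)$. The reduction rests on the existence of a \emph{Grauert complex}. After shrinking $Y$ to a Stein neighbourhood of $y$, I would show that there is a bounded complex $K^\bullet$ of finite free $\mathcal O_Y$-modules with the following property. For every coherent (in particular, every finite type) module $M$ one has functorial isomorphisms
\[
R^qf_*(\mathscr F\otimes_{\mathcal O_X}f^*M)_y\cong H^q(K^\bullet_y\otimes_A M).
\]
These isomorphisms must be compatible with the canonical morphisms in (a) and (b).

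To build $K^\bullet$, I would take the Čech complex of $\mathscr F$ for a finite Stein covering of $X$ over $Y$. Flatness of $\mathscr F$ over $Y$ makes this Čech complex commute with $\otimes f^*M$. Properness (Grauert's direct image theorem, Theorem~\ref{direct image}, via the Banach-space perturbation argument of Grauert, Kiehl--Verdier) then lets one replace it, up to quasi-isomorphism, by a bounded complex of finite free modules, with the quasi-isomorphism preserved after tensoring with any $M$. I expect this construction to be the main obstacle: it is the genuinely analytic input. All remaining steps are formal.

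Granting $K^\bullet$, set $T(M):=H^q(K^\bullet\otimes_A M)$, a functor on finite $A$-modules that commutes with finite direct sums. Since $K^\bullet$ consists of free, hence flat, modules, $T$ is half exact (exact in the middle on short exact sequences). The direction (a)$\Rightarrow$(b) is immediate: take $M=k$ and use $T(A)\otimes_A k\to T(k)$ being an isomorphism, so in particular $T(A)\to T(k)$ is surjective.

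For (b)$\Rightarrow$(a), I would proceed in three steps.
\begin{enumerate}
\item For finite-length $M$, I would show by induction on length that $T(A)\to T(M)$ is surjective, and more generally that $T$ is right exact on finite-length modules. The induction uses a sequence $0\to k\to M\to M'\to0$, half exactness, and the hypothesis $T(A)\twoheadrightarrow T(k)$. This is the argument of Mumford and of Hartshorne, III.12.11.
\item For arbitrary finite $M$, I would use $T(M)\cong\varprojlim_n T(M/\mathfrak m^nM)$. This follows from the Artin--Rees lemma and Krull's theorem applied to the finite complex $K^\bullet\otimes M$, together with Mittag-Leffler for the surjective systems. The conclusion is that $T$ is right exact on all finite modules.
\item A right exact, additive functor satisfies $T(M)\cong T(A)\otimes_A M$. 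To see this, apply $T$ to a finite presentation $A^s\to A^r\to M\to0$ and compare with $T(A)\otimes(\cdot)$ applied to the same presentation, using the five lemma.
\end{enumerate}
Under the identifications given by $K^\bullet$, this isomorphism is exactly the canonical morphism in (a).
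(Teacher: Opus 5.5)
The paper gives no proof of this statement; it quotes it from [BS76, Chapter~III, Theorem~3.4]. Your route is a legitimate self-contained proof in the style of EGA and Hartshorne~III.12.11. You reduce to the local algebra of $A=\mathcal O_{Y,y}$ via a Grauert complex, then study the half-exact functor $T(M)=H^q(K^\bullet\otimes_A M)$.

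The analytic input you isolate is exactly what the paper records as Lemma~\ref{main lemma} and Remark~\ref{main-rem}. Flatness gives a bounded complex, and functoriality in $\mathcal G$ gives $A$-linearity and compatibility with the canonical maps. So you can cite it rather than rebuild it from \v{C}ech complexes.

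The direction (a)$\Rightarrow$(b), the length induction in Step~1, and Step~3 are fine. One correction in Step~1: the map should be $T(A)\otimes_A M\to T(M)$, since there is no canonical map $T(A)\to T(M)$ when $M$ is not cyclic.

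Step~2 as written is false. The ring $\mathcal O_{Y,y}$ is not $\mathfrak m$-adically complete, so in general $T(M)\not\cong\varprojlim_n T(M/\mathfrak m^nM)$. For example, take $f=\mathrm{id}_Y$ on a smooth curve, $\mathscr F=\mathcal O_Y$ and $q=0$: then $T(A)=\mathbb C\{t\}$, but $\varprojlim_n T(A/\mathfrak m^n)=\mathbb C[[t]]$.

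Artin--Rees and Krull only give an injection $T(M)\hookrightarrow\varprojlim_n T(M/\mathfrak m^nM)\cong\widehat{T(M)}=T(M)\otimes_A\widehat A$. So your Mittag-Leffler argument proves only that $\widehat{T(M)}\to\widehat{T(M'')}$ is surjective, and you need one more step to descend. The cokernel $C$ of $T(M)\to T(M'')$ is a finite $A$-module with $\widehat C=0$. Hence $C/\mathfrak mC=0$, and $C=0$ by Nakayama. This is the faithful flatness of $\widehat A$ that Hartshorne uses when he reduces to complete $A$.

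You can also avoid limits entirely. Artin--Rees applied to $Z^q(K^\bullet\otimes M)\subseteq K^q\otimes M$ gives $\ker\bigl(T(M)\to T(M/\mathfrak m^nM)\bigr)\subseteq\mathfrak m\,T(M)$ for $n\gg0$. Step~1 then yields $T(M)=\operatorname{im}\bigl(T(A)\otimes_A M\bigr)+\mathfrak m\,T(M)$, and Nakayama gives surjectivity.

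A still shorter route is to replace $K^\bullet_y$ by a minimal complex. Split off contractible summands $A\xrightarrow{1}A$ until $d(K^i)\subseteq\mathfrak mK^{i+1}$. Then (b) says exactly that $Z^q+\mathfrak mK^q=K^q$, so $d^q=0$ by Nakayama. It follows immediately that $T(M)=\operatorname{coker}(d^{q-1})\otimes_A M=T(A)\otimes_A M$, with no length induction or completion needed.
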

	\begin{corollary}\label{iso-sur-lem}
		In the setting of Theorem \ref{base change}, if the base change map
		$$
		\varphi^q(y):(R^qf_*\mathscr{F})_y\otimes_{\mathcal{O}_{Y,y}}\mathbb{C}(y) \to H^q(X_y,\mathscr{F}_{(y)})
		$$
		is surjective, then it is an isomorphism.
	\end{corollary}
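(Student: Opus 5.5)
The plan is to reduce the claim to the equivalence (a)$\Leftrightarrow$(b) of Theorem~\ref{base change}, applied with the specific finite-type module $M=\mathbb{C}(y)=\mathcal{O}_{Y,y}/\mathfrak{m}_y$. Since $\mathbb{C}(y)$ is finitely generated over $\mathcal{O}_{Y,y}$, condition (a) then gives exactly that
$$
(R^qf_*\mathscr{F})_y\otimes_{\mathcal{O}_{Y,y}}\mathbb{C}(y)\longrightarrow R^qf_*(\mathscr{F}\otimes_{\mathcal{O}_X}f^*\mathbb{C}(y))_y
$$
is an isomorphism. So the work is to identify both the source and the target of condition (b) with the objects appearing in $\varphi^q(y)$.

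The first identification is of the target. Note that $\mathscr{F}\otimes_{\mathcal{O}_X}f^*\mathbb{C}(y)=\mathscr{F}/\hat{\mathfrak{m}}_y\mathscr{F}=\mathscr{F}_{(y)}$. This sheaf is supported on the fiber $X_y$, which is compact because $f$ is proper. Proposition~\ref{higher stalk} (or simply the fact that the support lies in $f^{-1}(y)$) then gives
$$
R^qf_*(\mathscr{F}_{(y)})_y\cong H^q(X_y,\mathscr{F}_{(y)}).
$$

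The second point is that the canonical map in (b) factors as
$$
(R^qf_*\mathscr{F})_y\twoheadrightarrow (R^qf_*\mathscr{F})_y\otimes\mathbb{C}(y)\xrightarrow{\varphi^q(y)} H^q(X_y,\mathscr{F}_{(y)}).
$$
This holds because the map is $\mathcal{O}_{Y,y}$-linear and its target is annihilated by $\mathfrak{m}_y$. Consequently, surjectivity of $\varphi^q(y)$ is equivalent to surjectivity of the map in (b).

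Putting these together: if $\varphi^q(y)$ is surjective, then (b) holds. Theorem~\ref{base change} then yields (a), and specializing (a) to $M=\mathbb{C}(y)$ shows that the map is an isomorphism. That map is $\varphi^q(y)$ under the identifications above, which proves the claim.

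The only delicate step is checking that the isomorphism produced by (a) is really the same map as $\varphi^q(y)$ after these identifications. This is a naturality check on the canonical morphisms, and it should amount to unwinding their definitions in \cite{BS76}.
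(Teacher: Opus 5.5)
Your proposal is correct and follows essentially the same route as the paper: factor the canonical map of Theorem~\ref{base change}(b) as the surjection onto $(R^qf_*\mathscr{F})_y\otimes\mathbb{C}(y)$ followed by $\varphi^q(y)$. Surjectivity of $\varphi^q(y)$ then gives (b), and (a) with $M=\mathbb{C}(y)$ gives the isomorphism. The naturality check you flag is immediate, because for $M=\mathbb{C}(y)$ the canonical morphism in (a) is, by definition, $\varphi^q(y)$ followed by the identification $R^qf_*(\mathscr{F}_{(y)})_y\cong H^q(X_y,\mathscr{F}_{(y)})$.
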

	\begin{proof}
		The canonical map in Theorem \ref{base change}.(b) factors as the natural map followed by the base change map:
		$$
		R^qf_*(\mathscr{F})_y \twoheadrightarrow R^qf_*(\mathscr{F})_y \otimes \mathbb{C}(y) \xrightarrow{\varphi^q(y)} R^qf_*(\mathscr{F}/\hat{\mathfrak{m}}_y\mathscr{F})_y \cong H^q(X_y,\mathscr{F}_{(y)}).
		$$
		Since the natural map is surjective, $\varphi^q(y)$ is surjective if and only if the canonical map $R^qf_*(\mathscr{F})_y \to R^qf_*(\mathscr{F}/\hat{\mathfrak{m}}_y\mathscr{F})_y$ is surjective. Therefore by Theorem \ref{base change}, if the base change map $\varphi^q(y)$ is surjective, then it is an isomorphism.
	\end{proof}
	\subsection{Commutative algebra preliminaries}
	We collect the notions from commutative algebra needed later to translate codimension estimates into vanishing statements. In particular, regular sequences and the equality between grade and height in Cohen--Macaulay rings will be used in Subsection~\ref{app-brill-noether} to control torsion in higher direct image sheaves.
	\begin{definition}[{\cite[p. 6]{Hart77}}]
		For a ring $A$, the \emph{height} of a prime ideal $\mathfrak p$, denoted by $\operatorname{ht}(\mathfrak p)$, is the supremum of all integers $n$ such that there exists a chain $\mathfrak p_0 \subset \mathfrak p_1 \subset \cdots \subset \mathfrak p_n=\mathfrak p$ of distinct prime ideals.
	\end{definition}
	\begin{definition}[{\cite[15.A]{Mat89}}]
		Let $R$ be a ring, $M$ an $R$-module and $x_1, x_2, \cdots, x_r$ a sequence of elements of $R$. We say $x_1, x_2, \cdots, x_r$ is an \emph{$M$-regular sequence} if the following conditions are satisfied:
		\begin{itemize}
			\item[(a)] for each $1 \leq i \leq r$, $x_i$ is not a zero-divisor on $M/(x_1, x_2, \cdots, x_{i-1})M$ and
			\item[(b)] $M \neq (x_1, x_2, \cdots, x_r)M$.
		\end{itemize}
		When all $x_i$ belong to an ideal $I$, we say $x_1, x_2, \cdots, x_r$ is an \emph{$M$-regular sequence in $I$}. If, moreover, there is no $b \in I$ such that $x_1, x_2, \cdots, x_r,b$ is \emph{$M$-regular}, then $x_1, x_2, \cdots, x_r$ is said to be a \emph{maximal $M$-regular sequence in $I$}.
	\end{definition}
	\begin{remark}
		For a local ring $R$ with maximal ideal $\mathfrak{m}$, if $M \neq 0$ is a finitely generated $R$-module and $(x_1, x_2, \cdots, x_r) \subset \mathfrak{m}$, condition $(b)$ is satisfied automatically by Nakayama's lemma.
	\end{remark}
	\begin{definition}[{\cite[Definition~1.2.7]{BH93}}]
		Let $(R,\mathfrak{m},k)$ be a Noetherian local ring and $I$ a proper ideal of $R$. We call the common length of the maximal $R$-sequences in $I$ the \emph{grade} of $I$, denoted by $\operatorname{grade}(I)$. In particular, we call the grade of $\mathfrak{m}$ the \emph{depth} of $R$, denoted by depth$(R)$. We say that $R$ is a \emph{Cohen--Macaulay ring} if depth$(R)=\dim R$.
	\end{definition}
	\begin{lemma}[{\cite[Corollary~2.1.4]{BH93}}]\label{CM}
		If $R$ is a Cohen--Macaulay ring and $I$ is a proper ideal of $R$, then
		$\operatorname{grade}(I)=\operatorname{ht}(I)$ and
		$\operatorname{ht}(I)+\dim(R/I)=\dim R$.
	\end{lemma}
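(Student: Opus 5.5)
Throughout, $(R,\mathfrak m)$ is a Noetherian local Cohen--Macaulay ring of dimension $n$; this is the setting of \cite[Corollary~2.1.4]{BH93}. I would prove the two equalities separately, using three standard facts. First, for a finitely generated $R$-module $M$ with $IM\neq M$, the grade can be computed locally:
\[
\operatorname{grade}(I)=\min\{\operatorname{depth}R_{\mathfrak p}:\ \mathfrak p\supseteq I\}.
\]
This is proved by induction on the grade via $\operatorname{Ext}^i_R(R/I,R)$. Second, $\operatorname{depth}R_{\mathfrak p}\le\dim R_{\mathfrak p}$ for every prime $\mathfrak p$. Third, the Cohen--Macaulay property localizes, so $\operatorname{depth}R_{\mathfrak p}=\dim R_{\mathfrak p}=\operatorname{ht}\mathfrak p$ for every prime $\mathfrak p$.

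\textbf{First equality.} Combining the three facts gives
\[
\operatorname{grade}(I)=\min_{\mathfrak p\supseteq I}\operatorname{depth}R_{\mathfrak p}=\min_{\mathfrak p\supseteq I}\operatorname{ht}\mathfrak p=\operatorname{ht}(I).
\]
The inequality $\operatorname{grade}(I)\le\operatorname{ht}(I)$ holds in any Noetherian ring and also has a direct proof. If $x_1,\dots,x_r\in I$ is $R$-regular, then by Krull's principal ideal theorem and induction every minimal prime of $(x_1,\dots,x_i)$ has height exactly $i$. Here regularity guarantees that $x_{i+1}$ avoids the associated primes, hence the minimal primes, of $R/(x_1,\dots,x_i)$. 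Therefore $\operatorname{ht}(I)\ge\operatorname{ht}(x_1,\dots,x_r)=r$. The reverse inequality is where the Cohen--Macaulay hypothesis enters: I would build a regular sequence in $I$ of length $\operatorname{ht}(I)$ by prime avoidance. At each stage the associated primes of $R/(x_1,\dots,x_i)$ should be exactly its minimal primes, all of height $i<\operatorname{ht}(I)$, so none of them contains $I$ and a new element of $I$ can be chosen outside all of them.

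\textbf{Second equality.} For a prime $\mathfrak p$ with $\operatorname{ht}\mathfrak p=r$, choose by the first part a regular sequence $x_1,\dots,x_r\in\mathfrak p$. Then $R/(x)$ is Cohen--Macaulay of dimension $n-r$, and $\mathfrak p$ is minimal over $(x)$ because $\operatorname{ht}\mathfrak p=r$. Cohen--Macaulay local rings are unmixed: every associated prime $\mathfrak q$ satisfies $\dim R/\mathfrak q=\dim R$, using $\operatorname{depth}\le\dim R/\mathfrak q$ for $\mathfrak q\in\operatorname{Ass}$. Applied to $R/(x)$, this gives
\[
\dim R/\mathfrak p=n-r,\qquad\text{that is,}\qquad \operatorname{ht}\mathfrak p+\dim R/\mathfrak p=n.
\]
Taking $\mathfrak p$ over the minimal primes of $I$,
\[
\dim R/I=\max_{\mathfrak p}\dim R/\mathfrak p=n-\min_{\mathfrak p}\operatorname{ht}\mathfrak p=n-\operatorname{ht}(I).
\]

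\textbf{Main obstacle.} The hard step is the unmixedness statement: in a Cohen--Macaulay local ring, and in each quotient by a regular sequence, all associated primes are minimal and of the same dimension. This is exactly what drives the prime-avoidance construction in the first part and the dimension count in the second. I would prove it from the inequality $\operatorname{depth}M\le\dim R/\mathfrak q$ for $\mathfrak q\in\operatorname{Ass}M$, which itself is shown by induction on depth. Together with the fact that a quotient by a regular sequence is again Cohen--Macaulay, this yields both equalities.
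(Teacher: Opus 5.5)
Your proof is correct and matches the argument behind the cited \cite[Corollary~2.1.4]{BH93}; the paper gives no proof of its own. As in Bruns--Herzog, you get the first equality from the local formula for grade and the localization of the Cohen--Macaulay property, and the second from unmixedness of Cohen--Macaulay quotients by regular sequences, which is also what underlies their formula $\operatorname{grade} I=\dim R-\dim R/I$. Restricting to Noetherian local rings is the right reading, since the paper defines Cohen--Macaulay only in that setting and the dimension formula can fail for non-local Cohen--Macaulay rings.
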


	\subsection{Log pairs, generalized pairs, and singularities}
	We next shift our attention to the basic notions of log pairs, generalized pairs and singularities in the complex analytic setting. 
	\begin{definition}[Log pairs]
		A \emph{log pair} $(X,B)$ consists of a normal complex analytic variety $X$ and an effective $\mathbb{R}$-Weil divisor $B$ such that $K_X+B$ is $\mathbb{R}$-Cartier. Here $K_X$ denotes the canonical sheaf $\omega_X$ in divisor notation. If the coefficients of $B$ lie in $[0,1]$, then $B$ is called a \emph{boundary}. 
	\end{definition}
	Given a log pair $(X,B)$, write $B=\sum_j b_jB_j$ with $B_j$ prime, we say that it is \emph{log smooth} over the base
	$U$ if $X\to S$ is smooth and $B_I\to S$ is smooth for every stratum
	\[
	B_I=\bigcap_{j\in I}B_j=B_{i_1}\cap\cdots\cap B_{i_r},
	\qquad I=\{i_1,\ldots,i_r\}.
	\]
	
	\begin{definition}[Generalized pairs, {\cite[Definition~2.1]{HLR26}}]
		Let $f:X\to S$ be a proper surjective morphism of complex analytic varieties, with $X$ normal and $S$ relatively compact. An \emph{analytic generalized pair} $(X/S,B+\bbeta)$ consists of an effective $\mathbb{R}$-Weil divisor $B$ and a positive closed real $b$-$(1,1)$-current $\bbeta$ for which there exist a resolution $\nu:X'\to X$ and an $\mathbb{R}$-divisor $B'$ on $X'$ satisfying:
		\begin{enumerate}[(i)]
			\item $B'$ has simple normal crossing support and $\nu_*B'=B$;
			\item $\bbeta$ descends to $X'$, and its trace $\bbeta_{X'}$ has local potentials with $[\bbeta_{X'}]\in H^{1,1}_{\mathrm{BC}}(X',\mathbb{R})$ nef over $S$;
			\item there is a class $\gamma\in H^{1,1}_{\mathrm{BC}}(X,\mathbb{R})$ such that
			\[
			[K_{X'}+B'+\bbeta_{X'}]=\nu^*\gamma.
			\]
		\end{enumerate}
		Here a \emph{positive closed real $b$-$(1,1)$-current} is a collection of positive closed real $(1,1)$-currents $\bbeta_Y$ on the proper bimeromorphic models $Y\to X$, compatible with pushforward: $p_*\bbeta_{Y_1}=\bbeta_{Y_2}$ for every morphism $p:Y_1\to Y_2$ over $X$. The current $\bbeta_Y$ is called its \emph{trace} on $Y$. \emph{Descent} to $X'$ means that $\bbeta_{X'}$ has local potentials and
		\[
		\bbeta_Y=\mu^*\bbeta_{X'}
		\]
		for every higher model $\mu:Y\to X'$. We call $\gamma$ the \emph{adjoint class} and denote it by $[K_X+B+\bbeta_X]$. If $X$ is K\"ahler, the pair $(X/S,B+\bbeta_X)$ is called a \emph{generalized K\"ahler pair}.
		
		In the $\mathbb Q$-divisorial setting, we replace $\bbeta$ by a
		$b$-$\mathbb Q$-Cartier $b$-divisor $\mathbf M$ and write
		$(X/S,B+\mathbf M)$. More precisely, a \emph{$\mathbb Q$-divisorial
			generalized pair} consists of an effective $\mathbb Q$-Weil divisor $B$
		and a $b$-divisor $\mathbf M$ descending to a resolution $\nu:X'\to X$,
		whose trace $\mathbf M_{X'}$ is $\mathbb Q$-Cartier and nef over $S$,
		such that $K_X+B+\mathbf M_X$ is $\mathbb Q$-Cartier. Descent means that
		$\mathbf M_Y=\mu^*\mathbf M_{X'}$ for every higher model $\mu:Y\to X'$.
		The divisor $B'$ is determined by the equality
		\[
		K_{X'}+B'+\mathbf M_{X'}=\nu^*(K_X+B+\mathbf M_X),
		\]
		and we choose $X'$ so that $B'$ has simple normal crossing support.
		
	\end{definition}
	
	Following \cite[Definition~2.2]{HLR26}, let $(X,B+\bbeta)$ be an analytic generalized pair, with defining data $\nu:X'\to X$ and $B'$ as above. For a prime divisor $E$ over $X$, choose a smooth higher model $\mu:Y\to X'$ on which $E$ appears, and define $B_Y$ by
	\[
	K_Y+B_Y=\mu^*(K_{X'}+B').
	\]
	Since $\bbeta_Y=\mu^*\bbeta_{X'}$, this is the crepant pullback of the generalized pair. The \emph{discrepancy} of $E$ is
	\[
	a(E,X,B+\bbeta):=-\operatorname{coeff}_E(B_Y),
	\]
	independent of the chosen higher model. The corresponding \emph{log discrepancy} is $1+a(E,X,B+\bbeta)$.
	For an ordinary log pair $(X,B)$, one instead uses $K_Y+B_Y=\pi^*(K_X+B)$ on a log resolution $\pi:Y\to X$ and writes $a(E,X,B)$.
	
	We say that $(X,B+\bbeta)$ is \emph{generalized klt} if $a(E,X,B+\bbeta)>-1$ for every prime divisor $E$ over $X$, and that it has \emph{canonical singularities} if $a(E,X,B+\bbeta)\geq 0$ for every exceptional divisor $E$ over $X$.
	For ordinary log pairs, the same conditions define \emph{klt singularities} and \emph{canonical singularities} using $a(E,X,B)$. A normal variety $X$ is \emph{klt} or \emph{canonical} if $(X,0)$ is so. Note that the klt condition also tests divisors on $X$, whereas the canonical condition tests only exceptional divisors.

	\subsection{Extending contractions and MMP steps}
	We first recall a result of Koll\'ar and Mori~\cite[Proposition~(11.4)]{KM92},
	which gives conditions under which a contraction morphism extends under a small
	deformation.
	
	\begin{theorem}[Extension of a contraction morphism]\label{ExtendCont}
		Let $X\to S$ be a proper flat family over a smooth, connected, relatively compact
		curve and $\phi_0:X_0\to X_0'$ a proper surjective morphism of normal
		compact complex analytic spaces. Assume that
		$(\phi_0)_*\mathcal{O}_{X_0}=\mathcal{O}_{X_0'}$ and
		$R^1(\phi_0)_*\mathcal{O}_{X_0}=0$. If $f:X\to S$ is a small deformation
		of $X_0$, then $\phi_0$ can be extended to a contraction morphism
		$\phi:X\to X'$ over $S$:
		\[\begin{tikzcd}
			{X_0} && X \\
			& {X'_0} & {} & {X'} \\
			{\{0\}} && {S}
			\arrow[hook, from=1-1, to=1-3]
			\arrow["{\phi_0}", from=1-1, to=2-2]
			\arrow[from=1-1, to=3-1]
			\arrow["\phi", from=1-3, to=2-4]
			\arrow[from=1-3, to=3-3]
			\arrow[hook, no head, from=2-2, to=2-3]
			\arrow[from=2-2, to=3-1]
			\arrow[from=2-3, to=2-4]
			\arrow[from=2-4, to=3-3]
			\arrow[hook, from=3-1, to=3-3]. 
		\end{tikzcd}\]
	\end{theorem}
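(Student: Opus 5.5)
The statement is Koll\'ar--Mori's extension of contractions. The plan follows their deformation-theoretic argument: extend the contraction order by order over the infinitesimal neighborhoods of $0$, then pass to an honest family over a small neighborhood.

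Let $t$ be a local parameter at $0\in S$. Put $X_n:=X\times_S\operatorname{Spec}\mathcal O_{S,0}/(t^{n+1})$. Since $X\to S$ is flat, $X_n$ is a square-zero extension of $X_{n-1}$ with ideal sheaf $t^nX_n\cong \mathcal O_{X_0}$. We will construct inductively complex spaces $X'_n$, flat over $\mathcal O_{S,0}/(t^{n+1})$, with proper maps $\phi_n:X_n\to X'_n$ extending $\phi_0$ and satisfying $(\phi_n)_*\mathcal O_{X_n}=\mathcal O_{X'_n}$. The natural choice is the ringed space
\[
X'_n:=\bigl(X'_0,(\phi_0)_*\mathcal O_{X_n}\bigr),
\]
which makes sense because $X_n$ and $X_0$ have the same underlying topological space.

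The key computation is to push forward the exact sequence
\[
0\to\mathcal O_{X_0}\xrightarrow{\,t^n\,}\mathcal O_{X_n}\to\mathcal O_{X_{n-1}}\to 0
\]
by $\phi_0$. Using $R^1(\phi_0)_*\mathcal O_{X_0}=0$, we obtain
\[
0\to\mathcal O_{X'_0}\to(\phi_0)_*\mathcal O_{X_n}\to(\phi_0)_*\mathcal O_{X_{n-1}}\to 0.
\]
Induction on $n$ then gives the following facts.
\begin{enumerate}[(a)]
\item $(\phi_0)_*\mathcal O_{X_n}$ is a flat deformation of $\mathcal O_{X'_0}$ over $\mathcal O_{S,0}/(t^{n+1})$; flatness comes from the local criterion and the exactness above.
\item It is coherent, being an extension of coherent sheaves, hence an analytic algebra sheaf.
\item Its formation is compatible with truncation.
\end{enumerate}
It also follows by induction, via the long exact sequence and the five lemma, that $R^1(\phi_0)_*\mathcal O_{X_n}=0$ for all $n$. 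In this way we obtain a formal deformation $\widehat{X'}$ of $X'_0$ together with a formal contraction $\hat\phi:\widehat X\to\widehat{X'}$.

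The main obstacle is passing from this formal contraction to an honest analytic $X'\to S$ over a neighborhood of $0$. I would proceed as follows.

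\emph{Step 1: convergence of the formal deformation.} Use the existence of a semi-universal (Kuranishi) deformation of the compact space $X'_0$ (Grauert, Douady). The formal family $\widehat{X'}$ is induced by a formal map $\operatorname{Spf}\mathbb C[[t]]\to\operatorname{Def}(X'_0)$.

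\emph{Step 2: approximation.} Artin approximation in the analytic category replaces this formal map by a convergent one agreeing to high order. The same applies to the morphism $\phi$, viewed as a point of the relative Douady space of graphs in $X\times_S X'$ over $S$.

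\emph{Step 3: the approximated map is still a contraction.} The condition $\phi_*\mathcal O_X=\mathcal O_{X'}$ is open in the relevant sense, by Grauert's base change theorem (Theorem~\ref{base change}) applied with the vanishing of $R^1$. So after shrinking $S$ it survives the approximation.

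An alternative that avoids Artin approximation is to construct $X'$ directly as $X'_0\times$ (a neighborhood) by realizing $\phi$ through the relative Douady space of quotients of $\mathcal O_X$. Its fibre over $0$ is unobstructed, because the obstructions lie in $R^1(\phi_0)_*\mathcal O_{X_0}$-type groups, which vanish. In either form, the substantial step is this convergence argument; the formal construction above is routine.

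Finally, normality of $X'_0$ together with $\phi_*\mathcal O_X=\mathcal O_{X'}$ shows that $\phi$ has connected fibers and is surjective. Shrinking $S$ around $0$, which is permitted since $S$ is relatively compact and the statement concerns small deformations, gives the commutative diagram.
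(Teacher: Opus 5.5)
Your outline follows the paper's argument. The paper's first step is exactly your construction of the thickenings $X'_n=(X'_0,(\phi_0)_*\mathcal O_{X_n})$, obtained by pushing forward $0\to\mathcal O_{X_0}\to\mathcal O_{X_n}\to\mathcal O_{X_{n-1}}\to0$ and using $R^1(\phi_0)_*\mathcal O_{X_0}=0$. Its second step likewise combines a semi-universal deformation $\mathcal X'\to\operatorname{Def}(X'_0)$, a relative Douady space of graphs, and Artin's theorem.

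\textbf{Step 2 needs repair: the order of approximation.} Suppose you first replace the formal classifying map $\hat h\colon\operatorname{Spf}\mathbb C[[t]]\to\operatorname{Def}(X'_0)$ by a convergent $h$ that agrees with it only to order $c$. The formal contraction $\hat\phi$ still lands in $\widehat{X'}$, which coincides with the completion of the new family $X'_h$ only modulo $t^{c+1}$. So $\hat\phi$ is not a formal point of the Douady space of graphs in $X\times_S X'_h$, and Artin's theorem has no formal solution to approximate. In general no extension $X\to X'_h$ exists at all: if $\phi_0$ is an isomorphism and the deformation is universal, an extension exists only when $h$ is the classifying map of $X$ itself.

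The classifying map and the graph must be approximated together. The compatible graphs of the $\phi_n$, together with compatible maps $\operatorname{Spec}\mathcal O_{S,0}/(t^{n+1})\to\operatorname{Def}(X'_0)$ supplied by versality, form a single formal section over $S$ of the relative Douady space of $X\times\mathcal X'$ over $S\times\operatorname{Def}(X'_0)$. One application of Artin's theorem then produces $h$ and $\phi$ at once. You also need the locus where the first projection is an isomorphism to be open in that Douady space, so that the convergent section still parametrizes graphs.

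\textbf{Step 3: right goal, wrong citation.} Here you check something the paper leaves implicit, namely $\phi_*\mathcal O_X=\mathcal O_{X'}$. However, Theorem~\ref{base change} does not apply to $\phi$, because $\mathcal O_X$ is not flat over $X'$. Use flatness over $S$ instead, pushing $0\to\mathcal O_X\xrightarrow{t}\mathcal O_X\to\mathcal O_{X_0}\to0$ forward by $\phi$:
\begin{enumerate}[(1)]
\item Since $R^1(\phi_0)_*\mathcal O_{X_0}=0$, multiplication by $t$ is surjective on $R^1\phi_*\mathcal O_X$, so Nakayama gives $R^1\phi_*\mathcal O_X=0$ near $X'_0$.
\item Hence $\phi_*\mathcal O_X/t\,\phi_*\mathcal O_X\cong\mathcal O_{X'_0}$.
\item Apply Nakayama to the cokernel and kernel of $\mathcal O_{X'}\to\phi_*\mathcal O_X$, using that $\mathcal O_{X'}$ is $S$-flat and $\phi_*\mathcal O_X$ has no $t$-torsion. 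This shows the map is an isomorphism near $X'_0$, hence over a neighborhood of $0$ by properness.
\end{enumerate}
Only $\phi|_{X_0}=\phi_0$ and flatness over $S$ are used, so no openness under approximation is involved.

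\textbf{Drop the ``alternative''.} $X'$ is in general a nontrivial deformation of $X'_0$; it is $X$ itself when $\phi_0$ is an isomorphism. So $X'$ cannot be built as $X'_0$ times a neighborhood. Moreover, a Douady space of quotients of $\mathcal O_X$ parametrizes subspaces of $X$, not contractions.
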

	\begin{proof}
		Although this result was proved in \cite[Proposition~(11.4)]{KM92}, we include a detailed proof. By Grauert's Kuranishi theorem \cite[Hauptsatz, p.~140]{Grauert74}, every compact complex space admits a versal deformation. We denote the corresponding spaces by $\text{Def}(X_0)$ and $\text{Def}(X_0')$, respectively.
		
		We first extend the contraction to the infinitesimal neighborhoods of $X_0$, and then to the formal neighborhood. Let $\mathcal{I}$ be the defining ideal sheaf of $X_0$ in $X$ and $nX_0$ the finite-order thickening $nX_0=(|X_0|,\mathcal{O}_X/\mathcal{I}^n)$. Since $S$ is a smooth curve and $f$ is flat, $X_0$ is Cartier; hence we have the exact sequence $$0\to \mathcal{O}_{X_0}(-jX_0)\to \mathcal{O}_{(j+1)X_0} \to \mathcal{O}_{jX_0}\to 0.$$
		After shrinking $S$ around $0$, the fiber $X_0$ is the principal Cartier divisor defined by the pullback of a local coordinate on $S$, so $X_0\sim 0$. Consequently,
		\[
		\mathcal{N}_{X_0/X}^{\vee}\cong\mathcal{O}_X(-X_0)|_{X_0}\cong\mathcal{O}_{X_0},
		\]
		and hence $\mathcal{O}_{X_0}(-jX_0)\cong\mathcal{O}_{X_0}$ for every $j\geq 1$. Thus
		\[
		R^1(\phi_0)_*\mathcal{O}_{X_0}(-jX_0)
		\cong R^1(\phi_0)_*\mathcal{O}_{X_0}=0.
		\]
		Applying $(\phi_0)_*$ to the preceding short exact sequence therefore yields
		\[
		0\to (\phi_0)_*\mathcal{O}_{X_0}(-jX_0)\to (\phi_0)_*\mathcal{O}_{(j+1)X_0}\to (\phi_0)_*\mathcal{O}_{jX_0}\to 0.\tag{*}
		\]
		Define $X_k' = \operatorname{Specan}((\phi_0)_*(\mathcal{O}_{kX_0}))$
		and let $\phi_k:kX_0\to X'_k$ be the induced morphism. Then
		$\phi_k$ restricts to the contraction $\phi_0$ on $X_0$, and $\phi_k$ are compatible when $k$ varies; thus $\phi_k$ is the
		$k$-th order infinitesimal thickening of $\phi_0$. The exact sequence $(*)$ gives a closed embedding $X_k' \hookrightarrow X_{k+1}'$, and these
		$X'_k$ for $k\ge 1$ are the infinitesimal thickenings of $X_0'$. Define a flat formal deformation $(X'_{\hat U},\mathcal{O}_{X'_{\hat U}})$ of $X'_0$ over the formal neighborhood $\hat U$ of $0$ in $S$, with $\mathcal{O}_{X'_{\hat U}}=\varprojlim_k\mathcal{O}_{X'_k}$.
		
		Next, we prove that the formal extension yields an analytic extension using the Douady space technique. Let $\pi': \mathcal{X}' \to \text{Def}(X_0')$ be a semi-universal deformation of $X_0'$, and consider $$f \times \pi': {X}\times \mathcal{X}'\to S \times \text{Def}(X_0').$$
		The relative Douady space $\mathscr{D} = \text{Douady}({X} \times \mathcal{X}' / S\times \text{Def}(X_0'))$ parametrizes compact complex subspaces of the fibers of $f\times\pi'$. Let $o=[\Gamma_{\phi_0}]\in\mathscr{D}$ be the point corresponding to the graph of $\phi_0:X_0\to X'_0$ and $\mathscr{D}_{\Gamma_{\phi_0}}$ an irreducible component containing $o$. Let $\Phi:\mathscr{D}_{\Gamma_{\phi_0}}\to S\times\text{Def}(X'_0)$ be the structure map of the relative Douady space, which sends $[\Gamma]$ to the pair $(s,t)$ such that $\Gamma\subset X_s\times\mathcal{X}'_t$. Let $\operatorname{pr}_1$ and $\operatorname{pr}_2$ be the projections from $S\times\text{Def}(X'_0)$ to $S$ and $\text{Def}(X'_0)$, respectively, and set $p_i:=\operatorname{pr}_i\circ\Phi$ for $i=1,2$, as shown in the diagram below.
		\begin{center}
			\begin{tikzcd}[ampersand replacement=\&]
				\&\& \mathscr{D}_{\Gamma_{\phi_0}}\\
				S \&\& {S\times \text{Def}(X'_0)} \&\& {\text{Def}(X'_0)}
				\arrow["{p_1}"', from=1-3, to=2-1]
				\arrow["\Phi"{description}, from=1-3, to=2-3]
				\arrow["{p_2}", from=1-3, to=2-5]
				\arrow[from=2-3, to=2-1]
				\arrow[from=2-3, to=2-5].
			\end{tikzcd}
		\end{center}
		
		For a point $[\Gamma]\in\mathscr{D}_{\Gamma_{\phi_0}}$, set
		$s=p_1([\Gamma])$ and $t=p_2([\Gamma])$. Thus $[\Gamma]$ represents a compact complex subspace $\Gamma\subset X_s\times\mathcal{X}'_t$. We define the \emph{graph locus} $Z\subset\mathscr{D}_{\Gamma_{\phi_0}}$ by
		\[
		Z:=\left\{[\Gamma]:
		\operatorname{pr}_1|_{\Gamma}:\Gamma\longrightarrow X_s
		\text{ is an isomorphism}\right\}.
		\]
		After shrinking $\mathscr{D}_{\Gamma_{\phi_0}}$ around $o$, the subspaces whose first projection is an isomorphism form an open graph locus $Z\subset\mathscr{D}_{\Gamma_{\phi_0}}$ (cf. \cite[Theorem I.1.10]{Kol96}).
		Since $\Gamma_{\phi_0}$ is the graph of $\phi_0$, the projection $\Gamma_{\phi_0}\to X_0$ is an isomorphism, and hence $o\in Z$. For each $k$, the graph of $\phi_k$ determines a morphism from the $k$th infinitesimal neighborhood of $0$ in $S$ to $\mathscr{D}$. The compatibility of the morphisms $\phi_k$ makes these maps compatible as $k$ varies, so they define a formal section $\hat\sigma:\hat U\to\mathscr{D}$ of the projection to $S$, with $\hat\sigma(0)=o$. Since $Z$ is open and contains $o$, this formal section factors through $Z$. Applying Artin's theorem~\cite[Theorem~1.5]{Artin1}, we obtain an open neighborhood $U\subseteq S$ of $0$ and a holomorphic section $\sigma:U\to Z$ of the projection $p_1|_Z: Z \to S$ with $\sigma(0) = o$. Let $h=p_2\circ\sigma:U\to\operatorname{Def}(X'_0)$ and let
		\[
		f':X'_U:=U\times_{\operatorname{Def}(X'_0)}\mathcal{X}'\longrightarrow U
		\]
		be the pullback of the semi-universal family $\pi':\mathcal{X}'\to\operatorname{Def}(X'_0)$ along $h$. Write $X_U=f^{-1}(U)$. For each $u\in U$, the point $\sigma(u)\in Z$ represents a graph $\Gamma_u\subset X_u\times\mathcal{X}'_{h(u)}$. Thence $\sigma$ is holomorphic, these graphs form a holomorphic family over $U$ whose total space is a subspace
		\[
		\Gamma_U\subset X_U\times_U X'_U.
		\]
		The fiber of $\Gamma_U$ over each $u\in U$ is $\Gamma_u$.
		Denote the two projections by $q_1$ and $q_2$, as in the following commutative diagram
		\begin{center}
			\begin{tikzcd}[ampersand replacement=\&]
				\& {\Gamma_U} \\
				{X_U} \& {X_U\times_U X'_U} \& {X'_U}
				\arrow["{q_1}"', from=1-2, to=2-1]
				\arrow[hook, from=1-2, to=2-2]
				\arrow["{q_2}", from=1-2, to=2-3]
				\arrow[from=2-2, to=2-1]
				\arrow[from=2-2, to=2-3].
			\end{tikzcd}
		\end{center}
		Since $\sigma$ takes values in the graph locus $Z$, the first projection $q_1:\Gamma_U\to X_U$ is an isomorphism. Thus the second projection $q_2:\Gamma_U\to X'_U$ defines a holomorphic morphism over $U$ as
		\[
		\phi:=q_2\circ q_1^{-1}:X_U\longrightarrow X'_U.
		\]
		For each $s\in U$, its graph is precisely the subspace $\Gamma_s$ represented by $\sigma(s)$. In particular, $\sigma(0)=o=[\Gamma_{\phi_0}]$ implies $\phi|_{X_0}=\phi_0$. Thus $\phi$ is the required extension of $\phi_0$.
	\end{proof}
	\begin{remark}
		The result above does not hold without the condition
		$R^1(\phi_0)_*\mathcal O_{X_0}=0$. Consider the projection
		$\pi_2:A_1\times A_2\to A_2$ for abelian varieties $A_1$ and $A_2$; note
		that
		$$
		R^1(\pi_2)_*\mathcal{O}_{A_1\times A_2}\neq0,
		$$
		since $\dim H^1(A_1,\mathcal O_{A_1})=\dim A_1>0$. In this case, the
		first higher direct image
		$R^1(\pi_2)_*\mathcal O_{A_1\times A_2}$
		is torsion free. A general deformation of $A_1\times A_2$ is a simple abelian
		variety $\mathcal A$ and therefore admits no nontrivial morphism to a
		lower-dimensional abelian variety.
	\end{remark}
	The extension theorem above can be applied to the steps of the K\"ahler minimal model program.
	\begin{theorem}[{\cite[Proposition 4.16]{HLR26} and \cite[Theorem 7.7]{HX26}}]\label{ExtMMP}
		Let $g:X\to S$ be a flat proper morphism with connected fibers from a
		generalized pair $(X,B+\bbeta)$ to a smooth, connected, relatively compact
		curve. Fix $0\in S$ and assume that $\operatorname{Supp}(B)$ does not
		contain $X_0$. Suppose that $(X_0,B_0+\bbeta_0)$ is K\"ahler and has
		canonical singularities, and that $\lfloor B_0\rfloor=0$, where
		$$
		\left.\left(K_X+X_0+B+\bbeta_X\right)\right|_{X_0}=K_{X_0}+B_0+\bbeta_0,
		$$
		and the negative part of the Boucksom--Zariski decomposition satisfies
		$$
		N\left(K_{X_0}+B_0+\bbeta_0\right) \wedge B_0=0 .
		$$
		Then every sequence of transcendental MMP-steps
		
		$$
		\left(X_0, B_0+\bbeta_0\right) \dashrightarrow\left(X_0^{(1)}, B_0^{(1)}+\bbeta_0^{(1)}\right) \dashrightarrow\left(X_0^{(2)}, B_0^{(2)}+\bbeta_0^{(2)}\right) \dashrightarrow \cdots
		$$
		extends to a sequence of $\left(K_X+B+\bbeta_X\right)$-negative proper bimeromorphic maps
		$$
		(X, B+\bbeta) / U \dashrightarrow \left(X^{(1)}, B^{(1)}+\bbeta^{(1)}\right) / U \dashrightarrow\left(X^{(2)}, B^{(2)}+\bbeta^{(2)}\right) / U \dashrightarrow \cdots,
		$$
		over some open neighborhood $U \subset S$ of $0$.
	\end{theorem}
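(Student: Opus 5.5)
The statement just above is Theorem~\ref{ExtMMP}. The plan is to argue by induction on the number of steps. Each transcendental MMP step on the central fiber is either a divisorial contraction or a flip, constructed from a $(K_{X_0}+B_0+\bbeta_0)$-negative extremal contraction. We extend each such contraction over a neighborhood of $0$ using Theorem~\ref{ExtendCont}, and then show that the extended maps are again steps of a $(K_X+B+\bbeta_X)$-negative MMP over $U$. The induction hypothesis must carry along the assumptions on the central fiber: canonical singularities, $\lfloor B_0\rfloor=0$, and $N(K_{X_0}+B_0+\bbeta_0)\wedge B_0=0$. It must also carry adjunction, $(K_X+X_0+B+\bbeta_X)|_{X_0}=K_{X_0}+B_0+\bbeta_0$. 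After replacing $(X,B+\bbeta)$ by $(X^{(1)},B^{(1)}+\bbeta^{(1)})$, the same argument then produces the next step.

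\textbf{Extending the first contraction.} Let $\phi_0:X_0\to Z_0$ be the first extremal contraction. It is $(K_{X_0}+B_0+\bbeta_0)$-negative, and the pair is klt because it is canonical with $\lfloor B_0\rfloor=0$. Relative Kawamata--Viehweg vanishing (the K\"ahler version from the transcendental MMP) therefore gives $(\phi_0)_*\mathcal O_{X_0}=\mathcal O_{Z_0}$ and $R^1(\phi_0)_*\mathcal O_{X_0}=0$. Theorem~\ref{ExtendCont} then extends $\phi_0$ to a contraction $\phi:X_U\to Z$ over a neighborhood $U$ of $0$. Next we check that $\phi$ is $(K_X+B+\bbeta_X)$-negative. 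Curves contracted by $\phi$ on nearby fibers are deformations of curves contracted by $\phi_0$, and $X_0\equiv 0$ over $U$. Adjunction then gives
\[
(K_X+B+\bbeta_X)\cdot C=(K_{X_0}+B_0+\bbeta_0)\cdot C<0.
\]
If $\phi_0$ is divisorial, the extended $\phi$ contracts the divisor swept out by the deformations of the exceptional divisor, and we take $X^{(1)}=Z$. If $\phi_0$ is small, we form the flip $X^{+}:=\operatorname{Projan}_Z\bigoplus_m\phi_*\mathcal O_X(m(K_X+B+\bbeta_X))$. Existence and finite generation in the relative setting over $Z$ come from the analytic MMP results cited \cite{HLR26,HX26}, and we have to check that the restriction of $X^+$ to the central fiber is the flip $X_0^{(1)}$.

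\textbf{Keeping the central fiber as the given MMP step.} This is where the hypothesis $N(K_{X_0}+B_0+\bbeta_0)\wedge B_0=0$ is needed, and we expect it to be the main obstacle. The issue is that the restriction of the new total space to the central fiber, $X^{(1)}|_{X_0}$, must be normal and equal to $X_0^{(1)}$, with the correct boundary, so that adjunction persists. The divisors contracted along the MMP lie in $\operatorname{Supp}N(K_{X_0}+B_0+\bbeta_0)$, and the hypothesis ensures that no component of $B_0$ is contracted. Consequently the strict transform of the boundary restricts to $B_0^{(1)}$ and $\lfloor B_0^{(1)}\rfloor=0$ is preserved. Canonicity is preserved because MMP steps do not decrease discrepancies. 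For normality we would first use inversion of adjunction for the pair $(X,X_0+B)$ near $X_0$. Since $(X_0,B_0+\bbeta_0)$ is klt, the pair $(X,X_0+B+\bbeta)$ is plt near $X_0$, and this property is preserved by $(K_X+X_0+B+\bbeta_X)$-negative steps. Plt-ness then forces the central fiber of $X^{(1)}$ to be normal. Comparing the central fiber with the restriction of $\phi$ and with the flip on the fiber shows that it coincides with $X_0^{(1)}$. Finally, $U$ must be shrunk at each step. For an infinite sequence this is handled by noting that the statement concerns each finite stage; alternatively, one may invoke termination for the relevant MMP, as in \cite{HX26}.
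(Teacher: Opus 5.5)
You follow the route the paper points to: it only cites \cite{HLR26,HX26} here, after remarking that Theorem~\ref{ExtendCont} applies to MMP steps. However, the step that carries the content of the theorem is asserted rather than proved, and the case split around it is unjustified.

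In the divisorial case you set $X^{(1)}=Z$ because $\phi$ ``contracts the divisor swept out by the deformations of the exceptional divisor'', but nothing shows that $E_0$ deforms. The equality $\operatorname{Ex}(\phi)\cap X_0=E_0$ only gives $\dim\operatorname{Ex}(\phi)\le\dim X-1$. It does not exclude, say, $\operatorname{Ex}(\phi)=E_0$. In that case $\phi$ is small, $K_Z+B_Z+\bbeta_Z$ is not $\mathbb R$-Cartier, and $Z$ is not an admissible next model.

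The safe procedure is to take, in every case, the relative log canonical model $X^{(1)}$ of $(X,B+\bbeta)$ over $Z$. This requires first replacing $\bbeta_X$ by an $\mathbb R$-Cartier divisor numerically equivalent to it over $Z$; as written, $\mathcal O_X(m(K_X+B+\bbeta_X))$ is meaningless for transcendental $\bbeta$. You then have to show that the central fiber $Y_0$ of $X^{(1)}$ is $X_0^{(1)}$. When $\phi_0$ is divisorial this gives $Y_0=Z_0$, and the fiberwise isomorphism criterion shows a posteriori that $X^{(1)}=Z$ near $Z_0$.

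This identification is exactly what ``comparing the central fiber with the restriction of $\phi$ and with the flip on the fiber'' leaves out. Plt-ness and adjunction only make $Y_0$ a normal model over $Z_0$ on which $K_{Y_0}+B_{Y_0}+\bbeta_{Y_0}$ is ample over $Z_0$. Such a model may a priori carry divisors exceptional over $Z_0$.

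The missing argument is a discrepancy computation, and it is where both canonicity and $N(K_{X_0}+B_0+\bbeta_0)\wedge B_0=0$ enter. Restrict the negativity lemma on a common resolution of $X$ and $X^{(1)}$ to the strict transform $W_0$ of the central fiber. This gives $p_0^*(K_{X_0}+B_0+\bbeta_0)=q_0^*(K_{Y_0}+B_{Y_0}+\bbeta_{Y_0})+G_0$ with $G_0\ge 0$. Let $F$ be a prime divisor of $Y_0$ exceptional over $Z_0$; there are two cases. If $F$ is exceptional over $X_0$, then $a(F,X_0,B_0+\bbeta_0)=-\operatorname{coeff}_F B_{Y_0}-\operatorname{mult}_F G_0\le 0$, so canonicity forces $\operatorname{mult}_F G_0=0$. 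If instead $F$ is the strict transform of the divisor $E_0$ contracted by a divisorial $\phi_0$, then $\operatorname{coeff}_{E_0}B_0=\operatorname{coeff}_{E_0}B_{Y_0}+\operatorname{mult}_{E_0}G_0$. Here $\operatorname{coeff}_{E_0}B_0=0$ precisely because $E_0\subseteq\operatorname{Supp}N(K_{X_0}+B_0+\bbeta_0)$. For the remaining divisors of $Y_0$, $\operatorname{mult}G_0=0$ is automatic, since $X\dashrightarrow X^{(1)}$ is an isomorphism over $Z\setminus\phi(\operatorname{Ex}(\phi))$.

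Thus $G_0$ is $q_0$-exceptional and the relative section rings over $Z_0$ agree. Hence $Y_0$ is the relative log canonical model of $(X_0,B_0+\bbeta_0)$ over $Z_0$, namely $X_0^{(1)}$. Without the hypothesis, nothing prevents $Y_0$ from re-extracting $E_0$ with a smaller boundary coefficient.

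Your sketch only ever uses klt-ness (for the vanishing and for inversion of adjunction) and never canonicity, which already signals this gap. Correspondingly, the roles you assign to the hypothesis are misplaced. $\lfloor B_0^{(1)}\rfloor=0$ and the compatibility of boundaries are automatic once $Y_0$ is identified. ``MMP steps do not decrease discrepancies'' does not by itself preserve canonicity, since a contracted component of $B_0$ becomes an exceptional divisor of possibly negative discrepancy. What preserves canonicity is that every contracted divisor has coefficient $0$ in $B_0$.
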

	
	\section{Cohomological flatness and base change}\label{sec-cohomological-flatness}
	In this section, we establish a cohomological criterion for deformation invariance over a smooth curve, relating the constancy of fiberwise cohomology dimension to base change, extension, and the torsion freeness of higher direct image sheaves. To develop this framework, we use local complexes for higher direct images to present a series of equivalent conditions that characterize cohomological flatness over a one-dimensional Stein space. Furthermore, we introduce a new criterion for the torsion freeness of higher direct image sheaves over higher-dimensional bases in terms of higher Brill--Noether loci.
	\subsection{Local complexes for higher direct images}
	In this subsection, we use local complexes for higher direct images to give a
	sheaf-theoretic version of Proposition~\ref{Torzero}.
	\begin{definition}[{\cite[p. 123, Chapter~III]{BS76}}]
		Let $f:X \to Y$ be a proper morphism of complex spaces, $\mathscr{F}$ a coherent analytic sheaf on $X$ and $\mathcal{G}$ a coherent analytic sheaf on $Y$. Then the sheaf $\mathscr{F}$ is said to be \emph{transversal} on $\mathcal{G}$ if, for every $x \in X$,
		$$
		\Tor^{{\mathcal{O}}_{Y,y}}_i(\mathscr{F}_x,\mathcal{G}_y)=0, 
		$$
		for $i \geq 1, y=f(x)$.
	\end{definition}
	\begin{lemma}[{\cite[Theorem~4.1]{BS76}}]\label{main lemma}
		Let $f:X \to Y$ be a proper morphism of complex spaces, $\mathscr{F}$ a coherent analytic sheaf on $X$ and $y_0 \in Y$. Then there exist a neighborhood $V$ of $y_0$ and a complex $\mathcal{L}^\bullet$ of free sheaves of finite rank on $V$, which is bounded above, and which satisfies the following assertion:\par 
		For any coherent analytic sheaf $\mathcal{G}$ defined on an open subset of $Y$ such that $\mathscr{F}$ is transversal on it, there is an isomorphism
		$$
		R^\bullet f_*(\mathscr{F} \otimes_{\mathcal{O}_X} f^*(\mathcal{G})) \cong H^\bullet(\mathcal{L}^\bullet \otimes_{\mathcal{O}_Y} \mathcal{G})
		$$
		which is functorial in $\mathcal{G}$ and compatible with restrictions and
		short exact sequences in $\mathcal{G}$.\par
		Moreover, if $\mathscr{F}$ is flat over $Y$, then
		$\mathcal{L}^\bullet$ may be taken to be bounded.
	\end{lemma}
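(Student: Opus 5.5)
This is the Banica--St\u{a}n\u{a}\c{s}il\u{a} local complex theorem (Grauert's method), so I would follow their construction rather than invent a new route. The plan has four steps.

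First, localize. Choose a Stein neighborhood $V$ of $y_0$ and a finite Stein covering $\mathfrak U=\{U_i\}$ of $f^{-1}(\overline{V'})$ for some relatively compact $V'\Subset V$. By Leray's theorem the \v{C}ech complex $C^\bullet(\mathfrak U,\mathscr F\otimes f^*\mathcal G)$ computes $H^\bullet(f^{-1}(W),\mathscr F\otimes f^*\mathcal G)$ for Stein $W\subset V$. Its terms are modules over $\mathcal O(V)$, but they are not finitely generated. The core of the argument is to replace this \v{C}ech complex by a quasi-isomorphic bounded-above complex of finite free $\mathcal O_V$-modules.

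Second, prove finiteness. Use Grauert's direct image theorem (Theorem \ref{direct image}), or rather its proof via the Cartan--Serre shrinking argument with compact operators between Fr\'echet spaces of sections on two nested coverings. This shows that the \v{C}ech complex is, after shrinking $V$, quasi-isomorphic as a complex of $\mathcal O_V$-modules to a complex $\mathcal L^\bullet$ of free sheaves of finite rank. I would build $\mathcal L^\bullet$ by descending induction on degree, which uses the coherence of each cohomology sheaf together with the Schwartz perturbation lemma. This yields a bounded-above complex of finite free sheaves with a morphism $\mathcal L^\bullet\to f_*C^\bullet(\mathfrak U,\mathscr F)$ that is a quasi-isomorphism, and stays one after Stein restriction.

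Third, establish functoriality in $\mathcal G$. Transversality says $\Tor_i^{\mathcal O_{Y,y}}(\mathscr F_x,\mathcal G_y)=0$, so the \v{C}ech complex of $\mathscr F\otimes f^*\mathcal G$ equals the \v{C}ech complex of $\mathscr F$ tensored with $\mathcal G$, in the derived sense as well. Since $\mathcal L^\bullet$ is free, tensoring the quasi-isomorphism with $\mathcal G$ preserves it. To see this, compare the two via a free resolution of $\mathcal G$ and a spectral sequence: the Tor vanishing kills the correction terms on the \v{C}ech side, while freeness kills them on the $\mathcal L^\bullet$ side. Naturality and compatibility with short exact sequences in $\mathcal G$ then follow, because the comparison is a morphism of complexes independent of $\mathcal G$.

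Fourth, obtain boundedness in the flat case. The complex is bounded above, starting from the top \v{C}ech degree. For the lower end, flatness gives $R^qf_*(\mathscr F\otimes f^*\mathcal G)=0$ for $q<0$ and all $\mathcal G$, so $\mathcal L^\bullet$ has cohomology only in degrees $\ge 0$ universally. Truncate at degree $0$: replace $\mathcal L^0$ by $\operatorname{coker}(\mathcal L^{-1}\to\mathcal L^0)$ and drop the lower terms. Applying $\Tor$ vanishing against residue fields shows this cokernel is flat, hence locally free after shrinking. The main obstacle is the second step: the Fr\'echet-space perturbation argument producing finite free approximations compatibly with shrinking. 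I would cite \cite[Chapter~III]{BS76} for it rather than redo the estimates.
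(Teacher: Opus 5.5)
Your sketch is correct and matches the paper, which states this lemma without proof and simply cites \cite[Theorem~4.1]{BS76}; your outline (a \v{C}ech complex on a finite Stein cover, replaced by a bounded-above complex of finite free sheaves, Tor-independence of the \v{C}ech terms to pass to transversal $\mathcal G$, and the truncation $\tau_{\geq 0}$ in the flat case) is the standard local-complex argument behind that reference. Two small precisions: since $\mathcal L^\bullet$ is in general unbounded below, you should run the descending induction with Theorem~A on one fixed Stein $V$ and check generation on one fixed $W\Subset V$, rather than shrinking at each step; and in the flat case $R^qf_*=0$ for $q<0$ holds automatically, so what flatness actually supplies is transversality to every $\mathbb C(y)$, which gives $\Tor_1\bigl(\operatorname{coker}(\mathcal L^{-1}\to\mathcal L^0),\mathbb C(y)\bigr)\cong H^{-1}(\mathcal L^\bullet\otimes\mathbb C(y))=0$.
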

	\begin{remark}\label{main-rem}
		If $\mathscr{F}$ is flat over $Y$, then $\mathscr{F}_x$ is a flat
		$\mathcal{O}_{Y,y}$-module and is therefore transversal to every
		$\mathcal{G}$. Applying Lemma~\ref{main lemma} to sheaves of the form
		$\mathcal{O}_Y/\mathfrak{m}_y\mathcal{O}_Y$, we obtain a bounded complex
		$\mathcal{L}^\bullet$ of finite-rank free $\mathcal{O}_Y$-modules such that
		$$
		R^\bullet f_*(\mathscr{F}) \cong H^\bullet(\mathcal{L}^\bullet) \quad \textit{and} \quad H^\bullet(X_y,\mathscr{F}_{(y)}) \cong H^\bullet(\mathcal{L}^\bullet/\mathfrak{m}_y\mathcal{L}^\bullet) \cong H^\bullet(\mathcal{L}^\bullet \otimes_{\mathcal{O}_{Y,y}} \mathbb{C}(y))
		$$
		for every $y$ in the chosen neighborhood. Taking stalks at $y$ gives
		$$
		(R^\bullet f_*\mathscr{F})_y \cong  H^\bullet(L^\bullet) \quad \textit{and} \quad H^\bullet(X_y,\mathscr{F}_{(y)}) \cong H^\bullet(L^\bullet \otimes_{\mathcal{O}_{Y,y}} \mathbb{C}(y))
		$$
		where $L^\bullet:=\mathcal L^\bullet_y$ is the complex of free $\mathcal O_{Y,y}$-modules formed by the stalks of $\mathcal L^\bullet$.
	\end{remark}
	We now use this construction to promote Proposition \ref{Torzero} to the sheaf level.
	\begin{proposition} \label{jump-dimension}
		Let $f:X\to Y$ be a proper morphism of complex spaces where $Y$ is a smooth curve containing 0 and $\mathscr{F}$ a coherent analytic
		sheaf on $X$ that is flat over $Y$.
		Let $(\cO_{Y,0},\mathfrak{m}=(t))$ be the local ring of germs of
		analytic functions at $0$, which is a discrete valuation ring with
		uniformizer $t$ and residue field
		$\mathbb{C}(0)=\cO_{Y,0}/(t)=\CC$. Set
		$\mathscr{F}_{(t)}:=\mathscr{F}\otimes_{\mathcal{O}_X}
		\mathcal{O}_{X_t}$. For $q\geq0$, write $\cF^q:=R^qf_*\mathscr{F}$ and
		$h^q(X_t,\mathscr{F}_{(t)}):=\dim_{\mathbb{C}}
		H^q(X_t,\mathscr{F}_{(t)})$. Set
		$b_q=h^q(X_t,\mathscr{F}_{(t)})$ for nonzero $t\in Y$. Then:
		\begin{itemize}
			\item[(i)]\phantomsection\label{jump-cond-i} There is a noncanonical decomposition
			$$
			\cF_0^q \cong \mathcal{O}^{\oplus b_q}_{Y,0} \oplus T^q, \quad T^q:=\Tor_{\mathcal{O}_{Y
					,0}}(\cF^q_0) \cong \bigoplus_{i=1}^{\mu(T^q)}\mathcal{O}_{Y
				,0}/(t^{e_i}),
			$$
			where $\mu(T^q)=\dim_{\mathbb{C}}(T^q \otimes_{\mathcal{O}_{Y,0}} k(0))$ and $\ell(T^q):= \sum_{i=1}^{\mu(T^q)}e_i.$
			\item[(ii)]\phantomsection\label{jump-cond-ii} The dimensions satisfy
			$$
			h^q(X_0,\mathscr{F}_{(0)})-h^q(X_t,\mathscr{F}_{(t)})= \mu(T^q)+\mu(T^{q+1}).
			$$
			\item[(iii)]\phantomsection\label{jump-cond-iii} If moreover $\cF^{q+1}$ is locally free at $0$ and the degeneration is simple (all elementary divisors $e_i=1$, equivalently $\mathfrak{m}T^q=0$), then
			$$
			T^{q} \cong (\mathcal{O}_{Y,0}/(t))^{\oplus(h^q(X_0)-h^q(X_t))}, \quad \ell(T^q)=\mu(T^q)=h^q(X_0)-h^q(X_t).
			$$
		\end{itemize}
	\end{proposition}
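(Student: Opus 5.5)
We work locally at $0$ and use the complex of Remark~\ref{main-rem}: after shrinking $Y$ to a small disc around $0$, there is a bounded complex $\mathcal L^\bullet$ of finite free $\mathcal O_Y$-modules with $R^qf_*\mathscr F\cong H^q(\mathcal L^\bullet)$. Its stalk $L^\bullet=\mathcal L^\bullet_0$ is a bounded complex of finite free modules over the principal ideal domain $R=\mathcal O_{Y,0}$, and it satisfies $H^q(L^\bullet\otimes_R\mathbb C(0))\cong H^q(X_0,\mathscr F_{(0)})$. All three assertions will be read off from this complex, using Proposition~\ref{Torzero} and the universal coefficient theorem (Theorem~\ref{UCT}).

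\textbf{Step (i).} By Theorem~\ref{direct image}, $\cF^q_0=H^q(L^\bullet)$ is a finitely generated $R$-module, so Proposition~\ref{Torzero} gives $\cF^q_0\cong R^{r_q}\oplus T^q$. Since $R$ is a DVR, every nonzero element is a unit times a power of $t$, so $T^q\cong\bigoplus_i R/(t^{e_i})$. Then $\mu(T^q)=\dim T^q\otimes\mathbb C(0)$ is the number of summands. It remains to show $r_q=b_q$. The torsion sheaf $T(\cF^q)$ is coherent (Lemma~\ref{torsion-sheaf}), so its support is discrete; after shrinking $Y$, $\cF^q$ and $\cF^{q+1}$ are locally free on $Y\setminus\{0\}$. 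Apply Theorem~\ref{UCT} at a point $y\neq0$ to the stalk complex $\mathcal L^\bullet_y$. The $\Tor_1$ term vanishes because $\cF^{q+1}_y$ is free, which gives $h^q(X_y,\mathscr F_{(y)})=\operatorname{rank}\cF^q_y$. Since the rank of a coherent sheaf on the connected disc is constant off the torsion, this rank equals $r_q$. In particular $b_q$ is well defined and equals $r_q$.

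\textbf{Step (ii).} Apply Theorem~\ref{UCT} to $L^\bullet$ with $M=\mathbb C(0)$:
\[
h^q(X_0,\mathscr F_{(0)})=\dim(\cF^q_0\otimes\mathbb C(0))+\dim\Tor_1^R(\cF^{q+1}_0,\mathbb C(0)).
\]
By (i), the first term equals $b_q+\mu(T^q)$. For the second term, $\Tor_1^R(R/(t^e),R/(t))\cong\ker\bigl(t:R/(t^e)\to R/(t^e)\bigr)$, which is one-dimensional, and $\Tor_1$ vanishes on the free part. Hence the second term equals $\mu(T^{q+1})$. Subtracting $b_q=h^q(X_t,\mathscr F_{(t)})$ gives the formula.

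\textbf{Step (iii).} If $\cF^{q+1}$ is locally free at $0$, then $T^{q+1}=0$, so $\mu(T^{q+1})=0$. By (ii), $\mu(T^q)=h^q(X_0)-h^q(X_t)$. If all $e_i=1$, then $T^q\cong(R/(t))^{\oplus\mu(T^q)}$ and $\ell(T^q)=\mu(T^q)$. Also $\mathfrak mT^q=0$ holds exactly when all $e_i=1$.

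The main obstacle is in (i): justifying that the generic rank of $\cF^q$ equals the fiber dimension $b_q$, including that $b_q$ is independent of $t\neq0$. This requires the local freeness of both $\cF^q$ and $\cF^{q+1}$ away from $0$ and a pointwise application of the universal coefficient theorem. The remaining steps are direct module computations over a DVR.
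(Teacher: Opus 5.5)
Your proposal is correct and follows the paper's proof. You use the structure theorem over the DVR $\mathcal O_{Y,0}$ for (i), the universal coefficient theorem on the stalk at $0$ of the local complex from Remark~\ref{main-rem} for (ii), and read (iii) off directly from (i) and (ii). The one difference is the step $r_q=b_q$: the paper settles it with a one-line appeal to the base change theorem, whereas you shrink $Y$ so that $\cF^q$ and $\cF^{q+1}$ are locally free away from $0$ and apply the universal coefficient theorem pointwise there. Your version is more explicit, and it also shows that $b_q$ is well defined near $0$.
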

	\begin{proof}
		\hyperref[jump-cond-i]{(i)} Since $f$ is proper and $\mathscr{F}$ is coherent,
		Theorem~\ref{direct image} shows that $\cF^q$ is a coherent
		$\mathcal{O}_{Y}$-module. At $0\in Y$, the stalk $\cF^q_0$ is a
		finitely generated $\mathcal{O}_{Y
			,0}$-module. Since $\mathcal{O}_{Y
			,0}$ is a discrete valuation ring, it is a principal ideal domain. Thus, by Proposition \ref{Torzero}
		, $\cF^q_0$ decomposes as
		$$
		\cF^q_0 \cong \mathcal{O}_{Y,0}^{\oplus r} \oplus T^q.
		$$
		Since the torsion module $T^q$ is annihilated by some power of the maximal
		ideal $\mathfrak{m}$, its support is concentrated at the origin; thus
		$T^q=0$ on $Y^* = Y \setminus\{0\}$. By Theorem~\ref{base change},
		$r=b_q=h^q(X_t,\mathscr{F}_{(t)})$ for general $t$.
		
		\hyperref[jump-cond-ii]{(ii)} Since $\mathcal{O}_{Y,0}$ is a discrete valuation ring,
		Theorem~\ref{UCT} and Remark~\ref{main-rem} give the exact sequence
		$$
		0 \to \cF^q_0 \otimes_{\mathcal{O}_{Y,0}}k(0) \to H^q(X_0,\mathscr{F}_{(0)}) \to \Tor^{\mathcal{O}_{Y,0}}_1(\cF^{q+1}_0,k(0)) \to 0.
		$$
		Taking dimensions in this exact sequence gives
		$$
		h^q(X_0,\mathscr{F}_{(0)})=\dim_{\mathbb{C}}H^q(X_0,\mathscr{F}_{(0)})=\dim(\cF^q_0 \otimes_{\mathcal{O}_{Y,0}}k(0))+\dim \Tor^{\mathcal{O}_{Y,0}}_1(\cF^{q+1}_0,k(0)).
		$$
		For $\dim(\cF^q_0 \otimes_{\mathcal{O}_{Y,0}}k(0))$, since $T^q$ has $\mu(T^q)$ cyclic generators $\mathcal{O}_{Y
			,0}/(t^{e_i})$ and $\mathcal{O}_{Y
			,0}/(t^{e_i}) \otimes \mathcal{O}_{Y
			,0}/(t) \cong \mathbb{C}$, each $\mathcal{O}_{Y
			,0}/(t^{e_i})$ contributes one dimension. The free summand contributes
		$b_q$ dimensions, so the decomposition
		$\cF_0^q \cong \mathcal{O}^{\oplus b_q}_{Y,0} \oplus T^q$ gives
		$$
		\dim(\cF^q_0 \otimes_{\mathcal{O}_{Y,0}}k(0))=b_q+\mu(T^q).
		$$
		Similarly, we can get
		$$
		\dim \Tor^{\mathcal{O}_{Y,0}}_1(\cF^{q+1}_0,k(0))=\mu(T^{q+1}).
		$$
		Consequently,
		$$
		h^q(X_0,\mathscr{F}_{(0)})=b_q+\mu(T^q)+\mu(T^{q+1}),
		$$
		and since $b_q=h^q(X_t,\mathscr{F}_{(t)})$ for general $t$, we obtain
		$$
		h^q(X_0,\mathscr{F}_{(0)})-h^q(X_t,\mathscr{F}_{(t)})=\mu(T^q)+\mu(T^{q+1}).
		$$
		\hyperref[jump-cond-iii]{(iii)} Since $\cF^{q+1}$ is locally free at $0$, we have $T^{q+1}=0$;
		thus \hyperref[jump-cond-ii]{(ii)} gives
		$$
		h^q(X_0,\mathscr{F}_{(0)})-h^q(X_t,\mathscr{F}_{(t)})=\mu(T^q).
		$$
		By \hyperref[jump-cond-i]{(i)}, 
		$$
		T^{q} \cong (\mathcal{O}_{Y,0}/(t))^{\mu(T^q)} \cong (\mathcal{O}_{Y,0}/(t))^{\oplus(h^q(X_0)-h^q(X_t))}.
		$$
		If the degeneration is simple, so that every elementary divisor satisfies
		$e_i=1$, then
		$$
		\ell(T^q):= \sum_{i=1}^{\mu(T^q)}e_i=\sum_{i=1}^{\mu(T^q)}1=\mu(T^q).
		$$
	\end{proof}
	\begin{remark}
		In particular, if we take $f$ to be a proper holomorphic submersion onto the unit disk, $\mathscr{F}:=\Omega(B_t)$ the sheaf of
		holomorphic sections of the smooth vector bundle $B_t$, then the dimension
		formula in \hyperref[jump-cond-ii]{(ii)} becomes the \emph{squeezing formula} in \cite[p. 68]{KS60}:
		$$
		h^{0,q}(t)+\nu^q(t)+\nu^{q-1}(t)=h^{0,q}(t_0),
		$$
		where $h^{0,q}(t)=\dim H^q(X_t,\Omega(B_t))$ and $\nu^q(t)$ is the
		number of eigenvalues of the Laplacian $\square_t$ that are smaller than a
		positive constant $c$.
	\end{remark}
	\subsection{A criterion for cohomological flatness}
	In this subsection, we let $f:X \to Y$ be a proper morphism of complex spaces, $\mathscr{F}$ a coherent analytic sheaf on $X$ which is flat with respect to $f$, $y$ a point in $Y$ and $q \geq 0$ an integer. \par
	First, we claim a corollary of \cite[Lemma~3.8]{HLR26} as a criterion for the conditions of the base change theorem.
	\begin{proposition}[{\cite[Lemma~3.8]{HLR26}}]\label{iso-sur-main}
		If $Y$ is Stein, then for any $y \in Y$, the base change map
		$$
		\varphi^q(y):(R^qf_*\mathscr{F})_y\otimes_{\mathcal{O}_{Y,y}}\mathbb{C}(y) \to H^q(X_y,\mathscr{F}_{(y)})
		$$
		is an isomorphism if and only if the restriction map 
		$$
		r^q: H^q(X,\mathscr{F}) \to H^q(X_y,\mathscr{F}_{(y)})
		$$
		is surjective.
	\end{proposition}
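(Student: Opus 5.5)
The plan is to factor the restriction map through the stalk of the direct image and use Stein-ness of $Y$ to control the first factor. For every $y\in Y$ we will write
\[
r^q:\;H^q(X,\mathscr F)\xrightarrow{\ \alpha\ }(R^qf_*\mathscr F)_y\xrightarrow{\ \beta\ }(R^qf_*\mathscr F)_y\otimes_{\mathcal O_{Y,y}}\mathbb C(y)\xrightarrow{\ \varphi^q(y)\ }H^q(X_y,\mathscr F_{(y)}).
\]
Here $\alpha$ is the composite of the edge map $H^q(X,\mathscr F)\to H^0(Y,R^qf_*\mathscr F)$ with taking the germ at $y$. The map $\beta$ is reduction modulo $\mathfrak m_y$, which is always surjective. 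The first thing to check is that this diagram commutes. We will verify this on the presheaf level: a class on $X$ restricts to $f^{-1}(V)$, then to $X_y$, via $\mathscr F\to\mathscr F/\hat{\mathfrak m}_y\mathscr F$. This is compatible with the definition of the base change map through $R^qf_*(\mathscr F)_y\to R^qf_*(\mathscr F_{(y)})_y\cong H^q(X_y,\mathscr F_{(y)})$ (Proposition~\ref{higher stalk}), which is exactly the factorization used in the proof of Corollary~\ref{iso-sur-lem}.

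Next we show $\beta\circ\alpha$ is surjective, using that $Y$ is Stein. The Leray spectral sequence $E_2^{p,q}=H^p(Y,R^qf_*\mathscr F)\Rightarrow H^{p+q}(X,\mathscr F)$ has coherent $E_2$-terms by Theorem~\ref{direct image}. By Cartan's Theorem B these vanish for $p>0$, so the sequence degenerates and the edge map $H^q(X,\mathscr F)\to H^0(Y,R^qf_*\mathscr F)$ is an isomorphism. By Theorem A, the global sections of the coherent sheaf $R^qf_*\mathscr F$ generate every stalk as an $\mathcal O_{Y,y}$-module. Hence the image of $\alpha$ generates $(R^qf_*\mathscr F)_y$, and therefore its image in the $\mathbb C$-vector space $(R^qf_*\mathscr F)_y\otimes\mathbb C(y)$ spans it. Since $\beta\circ\alpha$ is $\mathbb C$-linear, it is surjective. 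This step is the main obstacle, in the sense that the statement hinges on it: without Theorems A and B one only controls germs of classes, not global ones.

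With this in hand both directions are short. If $\varphi^q(y)$ is an isomorphism, then $r^q=\varphi^q(y)\circ(\beta\circ\alpha)$ is a composition of surjections, so $r^q$ is surjective. Conversely, if $r^q$ is surjective, then $\varphi^q(y)$ is surjective, since $r^q$ factors through it. Corollary~\ref{iso-sur-lem}, which uses the flatness of $\mathscr F$ via Grauert's base change theorem~\ref{base change}, then upgrades this surjection to an isomorphism. No assumption on $\dim Y$ is needed, only that $Y$ is Stein and $f$ is proper.
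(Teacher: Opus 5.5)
Your proposal is correct and follows the paper's proof. Cartan's Theorem~B degenerates the Leray spectral sequence, giving $H^q(X,\mathscr F)\cong H^0(Y,R^qf_*\mathscr F)$; Theorem~A makes the evaluation onto $(R^qf_*\mathscr F)_y\otimes\mathbb C(y)$ surjective; the factorization $r^q=\varphi^q(y)\circ(\text{evaluation})$ then shows $r^q$ is surjective iff $\varphi^q(y)$ is; and Corollary~\ref{iso-sur-lem} upgrades surjectivity of $\varphi^q(y)$ to an isomorphism (the paper treats $y\notin f(X)$ separately, which your argument covers automatically since both sides vanish).
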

	\begin{proof}
		When $y \notin f(X)$, both sides are zero. Thus we may assume that
		$y\in f(X)$. Since $f$ is proper,
		Theorem~\ref{direct image} shows that $R^qf_*\mathscr{F}$ is coherent on
		$Y$. Since $Y$ is Stein, Cartan's Theorem~B gives
		$$
		H^p(Y,R^qf_*\mathscr{F})=0
		$$
		for all $p > 0$. Consider the Leray spectral sequence:
		$$
		E^{p,q}_2= H^p(Y,R^qf_*\mathscr{F}) \Rightarrow H^{p+q}(X,\mathscr{F}).
		$$
		Since all terms with $p>0$ vanish, the spectral sequence degenerates at the $E_2$ page. This induces a canonical isomorphism:
		$$
		\beta : H^0(Y,R^qf_*\mathscr{F}) \xrightarrow{\cong} H^q(X,\mathscr{F}).
		$$
		By Cartan's Theorem~A, the higher direct image sheaf $R^qf_*\mathscr{F}$ is globally generated on $Y$. Thus the evaluation map $\alpha_y$ in the following diagram is surjective:
		\[\begin{tikzcd}
			{H^0(Y,R^qf_*\mathscr{F})} && {(R^qf_*\mathscr{F})_y\otimes_{\mathcal{O}_{Y,y}}\mathbb{C}(y)} \\
			\\
			{H^q(X,\mathscr{F})} && {H^q(X_y,\mathscr{F}_{(y)})}
			\arrow["{\alpha_y}", from=1-1, to=1-3]
			\arrow["\beta"', from=1-1, to=3-1]
			\arrow["{\varphi^q(y)}", from=1-3, to=3-3]
			\arrow["{r^q}"', from=3-1, to=3-3].
		\end{tikzcd}\]
		Therefore $\varphi^q(y)$ is surjective if and only if $r^q$ is
		surjective. Corollary~\ref{iso-sur-lem} now shows that $\varphi^q(y)$ is an
		isomorphism if and only if $r^q$ is surjective.
	\end{proof}
	Extendible classes and jumping classes offer an alternative  framework for investigating cohomological flatness. In this subsection, we briefly recall their definitions based on the work of Griffiths. For a more comprehensive treatment, the reader is referred to \cite{Gr65}.\par
	Let $(K, \delta)$ be a graded complex over a graded Lie algebra complex $(A, d)$ and $\Phi(t) \in A^1\{t\}$ an integrable element. Define the \emph{deformed differential operator} $\delta_{\Phi}$ acting on formal power series by:
	\[
	\delta_{\Phi}\Gamma(t) = \delta\Gamma(t) - 2[\Phi(t), \Gamma(t)]. 
	\]
	For any cocycle $\gamma \in Z(K^q)$, if there exists a formal power series $\Gamma(t) \in K^q[[t]]$ such that $\Gamma(0) = \gamma$ and $\delta_{\Phi}\Gamma(t) = 0$, then the cocycle $\gamma$ is said to be \emph{extendible}. Furthermore, for a cohomology class $[\gamma] \in H^q(K)$, if there exists a cocycle $\gamma \in Z(K^q)$ representing this class that is itself extendible, then the cohomology class $[\gamma]$ is called an \emph{extendible class}. Denote by $E^q\subseteq H^q(K)$ the subspace of extendible classes.\par
	Furthermore, let $J^q$ be the subspace of $E^q$ consisting of extendible classes whose restrictions to $X_t$ vanish for $t \neq 0$; we call these \emph{jumping classes}. It is easy to see that if a class belongs to $J^q$, then the cocycle $Z_t$ representing it is the coboundary of a $(q-1)$-cochain with a pole at $t = 0$.\par
	We now proceed to the main theorem of this subsection, which gives a series of equivalent criteria for cohomological flatness.
	\begin{definition}[{\cite[Corollary~3.7, Chapter~III]{BS76}}]\label{coho-flat}
		We say $\mathscr F$ is \emph{cohomologically flat in dimension $q$ over $Y$} if the map $R^qf_*(\mathscr{F})_y \to R^qf_*(\mathscr{F}/\hat{\mathfrak{m}}_y\mathscr{F})_y$ is surjective and $R^qf_*(\mathscr{F})_y$ is a free module.
	\end{definition}
	
	The following criteria of cohomological flatness are probably known to the experts and we collect them here. 
	\begin{theorem}\label{Main}\label{l-Extension}
		If $Y$ is a smooth Stein curve, then the following conditions are equivalent:
		\begin{itemize}
			\item[(i)]\phantomsection\label{main-cond-i}
			$\mathscr{F}$ is cohomologically flat in dimension $q$ over $Y$; see
			Definition~\ref{coho-flat}.
			\item[(ii)]\phantomsection\label{main-cond-ii} The base change maps
			$$
			\varphi^q(y):(R^qf_*\mathscr{F})_y\otimes_{\mathcal{O}_{Y,y}}\mathbb{C}(y) \to H^q(X_y,\mathscr{F}_{(y)}), 
			$$
			$$
			\varphi^{q-1}(y):(R^{q-1}f_*\mathscr{F})_y\otimes_{\mathcal{O}_{Y,y}}\mathbb{C}(y) \to H^{q-1}(X_y,\mathscr{F}_{(y)})
			$$
			are isomorphisms.
			\item[(iii)]\phantomsection\label{main-cond-iii} The function
			$$
			y \mapsto \dim H^q(X_y,\mathscr{F}_{(y)})
			$$
			is locally constant.
			\item[(iv)]\phantomsection\label{main-cond-iv} The restriction maps  
			$$
			r^q: H^q(X,\mathscr{F}) \to H^q(X_y,\mathscr{F}_{(y)}),
			$$
			$$
			r^{q-1}: H^{q-1}(X,\mathscr{F}) \to H^{q-1}(X_y,\mathscr{F}_{(y)})
			$$
			are surjective.
			\item[(v)]\phantomsection\label{main-cond-v} The higher direct image sheaves $R^{q+1}f_*\mathscr{F}$ and $R^qf_*\mathscr{F}$ are torsion free.
			\item[(vi)]\phantomsection\label{main-cond-vi} The jumping classes satisfy
			$$
			J^{q+1}=0 \ \text{and}\ J^q=0.
			$$
		\end{itemize}
	\end{theorem}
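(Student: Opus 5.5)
The plan is to make the local algebra of Proposition~\ref{jump-dimension} the hub and attach each condition to it. Fix $y\in Y$ and work in the discrete valuation ring $\mathcal O_{Y,y}$. The complex $L^\bullet$ of Remark~\ref{main-rem} computes $(R^\bullet f_*\mathscr F)_y$ and $H^\bullet(X_y,\mathscr F_{(y)})$. By Theorem~\ref{UCT}, for each $j$ there is an exact sequence
\[
0\to (R^jf_*\mathscr F)_y\otimes\mathbb C(y)\xrightarrow{\varphi^j(y)} H^j(X_y,\mathscr F_{(y)})\to \Tor_1(\mathcal F^{j+1}_y,\mathbb C(y))\to 0 .
\]
Hence every $\varphi^j(y)$ is injective. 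It is surjective if and only if $\mu(T^{j+1})=0$, that is, if and only if $\mathcal F^{j+1}_y$ is torsion free.

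Applying this with $j=q$ and $j=q-1$ gives (ii)$\Leftrightarrow$(v), since $\varphi^q(y)$ and $\varphi^{q-1}(y)$ are isomorphisms exactly when $T^{q+1}=T^q=0$ at $y$. Proposition~\ref{iso-sur-main}, which uses the Stein hypothesis on $Y$, gives (ii)$\Leftrightarrow$(iv) degree by degree. For (iii)$\Leftrightarrow$(v) I use Proposition~\ref{jump-dimension}(ii): the jump $h^q(X_y)-h^q(X_t)$ equals $\mu(T^q)+\mu(T^{q+1})$. This is a sum of nonnegative terms, so the jump vanishes if and only if both torsion modules vanish. Torsion is supported on a discrete set, so local constancy at every point is exactly vanishing of $T^q$ and $T^{q+1}$ everywhere. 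For (i), Definition~\ref{coho-flat} asks that $\varphi^q(y)$ be surjective (equivalently $T^{q+1}=0$) and that $\mathcal F^q_y$ be free. Over a PID, freeness is equivalent to $T^q=0$ by Proposition~\ref{Torzero}(ii). So (i)$\Leftrightarrow$(v) as well. I would also cite \cite[Theorem~4.12(ii), Chapter~III]{BS76} to confirm (i)$\Leftrightarrow$(iii).

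The remaining and least formal link is (vi). Following Griffiths \cite[Theorems~1.1 and~4.2]{Gr65}, I would identify the jumping classes with torsion. A class in $J^q$ is an extendible class $\Gamma(t)$ that becomes a coboundary for $t\neq0$, so $t^k\Gamma(t)$ is a coboundary in the family for some $k$. Such classes correspond to the torsion elements $T^q$ of $H^q(L^\bullet)=\mathcal F^q_0$ that survive to the fibre. More precisely, I expect $J^q\neq0$ exactly when the image of $T^q$ in $\mathcal F^q_0\otimes\mathbb C(0)$ is nonzero, and this image is nonzero exactly when $T^q\neq0$, because $T^q\otimes\mathbb C$ has dimension $\mu(T^q)$. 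Matching Griffiths' identity $h^q(X_0)-h^q(X_t)=\dim J^q+\dim J^{q+1}$ against Proposition~\ref{jump-dimension}(ii) should then give $\dim J^j=\mu(T^j)$, and hence (vi)$\Leftrightarrow$(v).

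I expect the main obstacle to be this dictionary between Griffiths' formal power series model $(K,\delta_\Phi)$ and the Banica--Stanasila complex $L^\bullet$. One must check that they compute the same direct images over the formal neighbourhood, which follows from flatness and the comparison theorem. With that in hand, the equivalence (vi)$\Leftrightarrow$(v) is formal, and the other equivalences are the bookkeeping described above.
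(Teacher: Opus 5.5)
Your proposal is correct. Most of its links coincide with the paper's: (ii)$\Leftrightarrow$(iv) via Proposition~\ref{iso-sur-main}, (iii)$\Leftrightarrow$(v) via the jump formula of Proposition~\ref{jump-dimension}(ii), and (vi) via Griffiths' identity $\dim H^q(X_0)-\dim H^q(X_t)=\dim J^{q+1}+\dim J^q$.

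You differ in how you attach (i) and (ii). The paper gets (i)$\Leftrightarrow$(ii) from Definition~\ref{coho-flat} and Grauert's base change theorem, and (i)$\Leftrightarrow$(iii) by citing \cite[Theorem~4.12.(ii), Chapter~III]{BS76}. You instead make torsion the hub and read both off the universal coefficient sequence for $L^\bullet$. Each $\varphi^j(y)$ is injective with cokernel $\Tor_1(\mathcal F^{j+1}_y,\mathbb C(y))\cong\mathbb C^{\mu(T^{j+1})}$, and freeness of $\mathcal F^q_y$ means $T^q=0$. This is exactly the ``universal coefficient thm'' edge drawn in Remark~\ref{rem-main-criterion}, which the written proof does not actually use.

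Your route is more self-contained, and it shows concretely that base change in degree $j$ fails by precisely the torsion of $R^{j+1}f_*\mathscr F$. The price is twofold. It relies on the DVR structure at every step, whereas the paper's (i)$\Leftrightarrow$(ii) holds over any reduced base (cf.\ Remark~\ref{rem-main-criterion}(a)). It also needs the UCT map to coincide with $\varphi^j(y)$, which follows from the functoriality in Lemma~\ref{main lemma}.

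The ``dictionary'' you single out as the main obstacle in (vi) is unnecessary. The jump is decomposed in two ways into nonnegative terms, $\dim J^q+\dim J^{q+1}=\mu(T^q)+\mu(T^{q+1})$, and equating them already gives (vi)$\Leftrightarrow$(v); this is the paper's argument. If you want $\dim J^j=\mu(T^j)$ termwise, one such identity does not suffice. You would need an induction on $j$ starting from $J^0=0=T^0$, where $T^0=0$ because $f_*\mathscr F$ is torsion free for flat $\mathscr F$.
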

	\begin{proof}
		\hyperref[main-cond-i]{(i)}$\iff$\hyperref[main-cond-ii]{(ii)} The
		equivalence follows from Definition~\ref{coho-flat} and
		Theorem~\ref{base change}.\par
		\hyperref[main-cond-i]{(i)}$\iff$\hyperref[main-cond-iii]{(iii)} This is just
		\cite[Theorem~4.12.(ii), Chapter~III]{BS76}.\par
		\hyperref[main-cond-iv]{(iv)}$\iff$\hyperref[main-cond-ii]{(ii)} This is
		just Proposition~\ref{iso-sur-main}.\par
		\hyperref[main-cond-v]{(v)}$\iff$\hyperref[main-cond-iii]{(iii)} If $h^q(X_y, \mathscr{F}_{(y)})$ is locally constant, by Proposition~\ref{jump-dimension}.\hyperref[jump-cond-ii]{(ii)}, we have
		$$
		\mu(T^q)+\mu(T^{q+1})=h^q(X_0,\mathscr{F}_{(0)})-h^q(X_t,\mathscr{F}_{(t)})=0.
		$$
		But $\mu(T^q)\geq 0$ and $\mu(T^{q+1}) \geq 0$, thus
		$$
		\mu(T^q)=0 \quad \text{and} \quad \mu(T^{q+1})=0.
		$$
		Then by Nakayama's lemma, we have
		$$ 
		T^q=0 \quad \text{and} \quad T^{q+1}=0,
		$$
		i.e., $R^{q}f_*\mathscr{F}$ and $R^{q+1}f_*\mathscr{F}$ are torsion free by Proposition~\ref{jump-dimension}.\hyperref[jump-cond-i]{(i)}. Conversely, if these two sheaves are torsion free, then
		$T^q=T^{q+1}=0$ in Proposition~\ref{jump-dimension}.\hyperref[jump-cond-ii]{(ii)}, and hence
		$$
		h^q(X_0,\mathscr{F}_{(0)})=h^q(X_t,\mathscr{F}_{(t)})
		$$ 
		by the dimension formula above; hence $h^q(X_y,\mathscr{F}_{(y)})$ is
		locally constant.\par
		\hyperref[main-cond-vi]{(vi)}$\iff$\hyperref[main-cond-iii]{(iii)} By \cite[Theorems 1.1 and 4.2]{Gr65}, we have
		$$
		H^q(X_0)/E^q \cong J^{q+1} \quad\text{and}\quad H^q(X_t) \cong E^q/J^q.
		$$
		Taking dimensions gives
		$$
		\dim H^q(X_0)-\dim E^q=\dim J^{q+1}
		\quad\text{and}\quad
		\dim H^q(X_t)=\dim E^q-\dim J^q.
		$$
		Therefore
		$$
		\dim H^q(X_0)-\dim H^q(X_t)=\dim J^{q+1}+\dim J^q.
		$$
		If $J^{q+1}=0$ and $J^q=0$, then
		$$
		\dim H^q(X_0)=\dim H^q(X_t),
		$$
		so the function $y\mapsto\dim H^q(X_y)$ is locally constant. Conversely,
		if this function is locally constant, then
		$$
		\dim J^{q+1}+\dim J^q=0.
		$$
		Thus $J^{q+1}=0$ and $J^q=0$.
	\end{proof}
	\begin{remark}\label{rem-main-criterion}
		\begin{enumerate}[(a)]
			\item\label{rem-main-criterion-a} The proof shows that no hypothesis beyond $Y$ being a complex space is required to deduce the remaining statements from \hyperref[main-cond-i]{(i)}. The implication \hyperref[main-cond-ii]{(ii)}$\Rightarrow$\hyperref[main-cond-i]{(i)} requires only that $Y$ be reduced; \hyperref[main-cond-iv]{(iv)}$\Rightarrow$\hyperref[main-cond-ii]{(ii)} requires only that $Y$ be Stein; and \hyperref[main-cond-v]{(v)}$\Rightarrow$\hyperref[main-cond-iii]{(iii)} requires only that $Y$ be a smooth one-dimensional complex space.
			\item An alternative direct proof of \hyperref[main-cond-iii]{(iii)}$\Leftrightarrow$\hyperref[main-cond-v]{(v)}, based on \cite[Proposition~4.12]{RT1}, is given in the appendix.
			\item 
			Theorem~\ref{Main} says that the following six conditions are equivalent.
			More precisely, its proof follows the directed paths displayed below; in
			particular, the converse implication from torsion freeness is obtained by
			first passing through $q$-invariance.
			\[
			\begin{tikzcd}[
				column sep=4.1em,
				row sep=3.6em,
				cells={nodes={
						font=\small,
						inner xsep=6pt,
						inner ysep=5pt,
						rounded corners=2.5pt,
						fill=white
				}},
				every arrow/.append style={
					line width=0.5pt,
					shorten <=3pt,
					shorten >=3pt
				}
				]
				|[draw=Gray!55, fill=Gray!7]| {\substack{\text{\hyperref[main-cond-iv]{(iv)} }q\text{-extension}\\(q-1)\text{-extension}}}
				& |[draw=Blue!70, fill=Blue!10, line width=0.7pt]| {\substack{\text{\hyperref[main-cond-ii]{(ii)} }q\text{-base change}\\(q-1)\text{-base change}}}
				& |[draw=Gray!55, fill=Gray!7]| {\text{\hyperref[main-cond-i]{(i)} }q\text{-flatness}}
				\\
				{}
				& |[draw=Blue!55, fill=Blue!5]| {\substack{\text{\hyperref[main-cond-v]{(v)} }(q+1)\text{-torsion freeness}\\q\text{-torsion freeness}}}
				& |[draw=Blue!70, fill=Blue!10, line width=0.7pt]| {\text{\hyperref[main-cond-iii]{(iii)} }q\text{-invariance}}
				\\
				{}
				& {}
				& |[draw=Gray!55, fill=Gray!7]| {\substack{\text{\hyperref[main-cond-vi]{(vi)} }(q+1)\text{-jump vanishing}\\q\text{-jump vanishing}}}
				\arrow[<->, shift left=5pt, "{\scriptscriptstyle\mathrm{Prop.}\,\ref{iso-sur-main}}", from=1-1, to=1-2]
				\arrow[<->, shift right=5pt, from=1-1, to=1-2]
				\arrow[<->, "{\scriptscriptstyle\mathrm{Def.}+\mathrm{BC}}", from=1-2, to=1-3]
				\arrow[<->, "{\text{\cite{BS76}}}", from=1-3, to=2-3]
				\arrow[<->, shift left=5pt, from=1-2, to=2-2]
				\arrow[<->, shift right=5pt, "{\text{universal coefficient thm}}"', from=1-2, to=2-2]
				\arrow[<->,"{\ref{jump-dimension}\ \text{or}\ \text{\cite{RT1}}}", from=2-2, to=2-3]
				\arrow[<->, "{\text{\cite{Gr65}}}", from=2-3, to=3-3]
			\end{tikzcd}
			\]
		\end{enumerate}
		\noindent Here ``Def.+BC'' denotes Definition~\ref{coho-flat} together with Theorem~\ref{base change}. The upper route is justified by Proposition~\ref{iso-sur-main} and \cite[Theorem~4.12.(ii), Chapter~III]{BS76}. The lower route uses Proposition~\ref{jump-dimension}; the last edge uses \cite[Theorems~1.1 and~4.2]{Gr65}. An alternative direct proof of \hyperref[main-cond-iii]{(iii)}$\Leftrightarrow$\hyperref[main-cond-v]{(v)} is given in Appendix~\ref{app-base change}.
	\end{remark}
	\begin{corollary}\label{l-squeeze}
		If $\dim_{\mathbb{C}}H^{q+1}(X_y,\mathscr{F}_{(y)})$ and $\dim_{\mathbb{C}}H^{q-1}(X_y,\mathscr{F}_{(y)})$ are constant, then $\dim_{\mathbb{C}}H^{q}(X_y,\mathscr{F}_{(y)})$ is constant.
	\end{corollary}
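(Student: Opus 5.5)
We work in the setting of Theorem~\ref{Main}: $Y$ is a smooth Stein curve, $f$ is proper, and $\mathscr F$ is flat over $Y$. The proof uses the torsion description of Theorem~\ref{Main}, that is, the equivalence (iii)$\iff$(v), applied in degrees $q+1$ and $q-1$. Then the dimension formula of Proposition~\ref{jump-dimension}(ii) is applied in degree $q$. Since constancy is a local statement, we fix a point $0\in Y$, shrink $Y$ to a Stein neighborhood of $0$, and argue at $0$ with the decompositions $\cF^j_0\cong\mathcal O_{Y,0}^{\oplus b_j}\oplus T^j$.

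\textbf{Step 1.} By hypothesis, $y\mapsto h^{q+1}(X_y,\mathscr F_{(y)})$ is constant. Theorem~\ref{Main}, (iii)$\Rightarrow$(v), applied with $q+1$ in place of $q$, then shows that $R^{q+2}f_*\mathscr F$ and $R^{q+1}f_*\mathscr F$ are torsion free. In particular $T^{q+1}=0$.

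\textbf{Step 2.} By hypothesis, $h^{q-1}(X_y,\mathscr F_{(y)})$ is also constant. The same implication with $q-1$ in place of $q$ shows that $R^{q}f_*\mathscr F$ and $R^{q-1}f_*\mathscr F$ are torsion free, so $T^q=0$. If $q=0$, the degree $q-1=-1$ is vacuous. In that case we use instead that $R^0f_*\mathscr F=f_*\mathscr F$ is torsion free: by Proposition~\ref{jump-dimension}(i) a torsion element would be a section supported on $f^{-1}(0)$ and killed by a power of $t$, and flatness of $\mathscr F$ over $Y$ makes $t$ a nonzerodivisor on $\mathscr F$, hence on $f_*\mathscr F$. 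So $T^0=0$ in this case too.

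\textbf{Step 3.} Proposition~\ref{jump-dimension}(ii) in degree $q$ now gives
\[
h^q(X_0,\mathscr F_{(0)})-h^q(X_t,\mathscr F_{(t)})=\mu(T^q)+\mu(T^{q+1})=0
\]
for $t\neq0$ near $0$. Hence $h^q$ is locally constant near every point. Since $Y$ is connected (or working componentwise), $h^q$ is constant. Equivalently, Step 1 and Step 2 together give condition (v) of Theorem~\ref{Main} in degree $q$, and (v)$\Rightarrow$(iii) finishes the argument.

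The only genuinely delicate point is the boundary case $q=0$ in Step 2, together with making sure Theorem~\ref{Main} is applied only in nonnegative degrees. The rest is bookkeeping of which torsion modules vanish: degree $q+1$ controls $T^{q+1}$, and degree $q-1$ controls $T^q$.
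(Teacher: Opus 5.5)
Your proposal is correct and is essentially the paper's argument: the paper also squeezes degree $q$ between the two instances of Theorem~\ref{Main} in degrees $q+1$ and $q-1$, but it reads off the overlapping conditions in form (ii), namely that the base change maps $\varphi^q$ and $\varphi^{q-1}$ are isomorphisms, rather than in your form (v), namely that $R^{q+1}f_*\mathscr F$ and $R^qf_*\mathscr F$ are torsion free. Your separate treatment of $q=0$ via flatness is valid, although in the paper's formulation it is automatic because $\varphi^{-1}$ is trivially an isomorphism.
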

	\begin{proof}
		By Theorem \ref{Main}, if $\dim_{\mathbb{C}}H^{q+1}(X_y,\mathscr{F}_{(y)})$ is constant, $\varphi^{q+1}$ and $\varphi^q$ are isomorphisms. Similarly, if $\dim_{\mathbb{C}}H^{q-1}(X_y,\mathscr{F}_{(y)})$ is constant, $\varphi^{q-1}$ and $\varphi^{q-2}$ are isomorphisms. Therefore, since $\varphi^{q}$ and $\varphi^{q-1}$ are isomorphisms, $\dim_{\mathbb{C}}H^{q}(X_y,\mathscr{F}_{(y)})$ is constant.
	\end{proof}
	Finally, the following result from \cite{HLR26} illustrates the importance of
	the local-constancy hypothesis for $h^q(X_y,\mathscr{F}_{(y)})$.
	\begin{proposition}[{\cite[Corollary~3.13]{HLR26}}]\label{H}
		Let $f : X \to \Delta$ be a flat proper morphism from a normal complex analytic variety onto the unit disk in $\mathbb{C}$, and $L$ a line bundle on $X$. Then the following are equivalent:
		\begin{itemize}
			\item[(a)] $L|_{X_t}$ is globally generated for any $t \in V$, where $V \subset \Delta$ is a nonempty Zariski open subset;
			\item[(b)] There exists a point $s_0 \in \Delta$ such that the base change map
			$$
			\varphi^0(s_0) : (f_*L)(s_0) \to H^0(X_{s_0}, L|_{X_{s_0}})
			$$
			is surjective, and $L|_{X_{s_0}}$ is globally generated. 
		\end{itemize}
	\end{proposition}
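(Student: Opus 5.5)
Proposition~\ref{H} is quoted from \cite{HLR26}; here is how I would reprove it with the tools above. Since $\Delta$ is a smooth Stein curve, the local rings $\mathcal O_{\Delta,s}$ are discrete valuation rings. The direct image $f_*L$ is torsion free by Proposition~\ref{p-pushforward-torsion free}, because $X$ is normal and $L$ is torsion free. Hence $f_*L$ is locally free of some rank $r$.

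\emph{(b)$\Rightarrow$(a).} Suppose $\varphi^0(s_0)$ is surjective. By Corollary~\ref{iso-sur-lem} it is an isomorphism, and Proposition~\ref{iso-sur-main} shows that $H^0(X,L)\to H^0(X_{s_0},L|_{X_{s_0}})$ is surjective. Since $L|_{X_{s_0}}$ is globally generated, the evaluation map $f^*f_*L\to L$ is surjective at every point of $X_{s_0}$. Its cokernel is coherent, and its support $Z$ is a closed analytic subset of $X$ disjoint from $X_{s_0}$. Because $f$ is proper, $f(Z)$ is a closed analytic subset of $\Delta$ not containing $s_0$. Thus $f(Z)$ is discrete, and $V:=\Delta\setminus f(Z)$ is a nonempty Zariski open set. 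For $t\in V$, sections of $f_*L$ near $t$ generate $L$ along $X_t$, and restricting them gives global sections of $L|_{X_t}$ that generate it. This proves (a).

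\emph{(a)$\Rightarrow$(b).} By upper semicontinuity and Grauert's theorem, there is a nonempty Zariski open $V'\subset V$ on which $h^0(X_t,L|_{X_t})$ is constant. I want to find $s_0\in V'$ where $\varphi^0(s_0)$ is surjective. By Theorem~\ref{Main}, applied with $q=0$ on a small disk around a point of $V'$ where the dimension is locally constant, $\varphi^0$ is an isomorphism there; for $q=0$ the degree $q-1$ condition is vacuous. So any $s_0\in V'$ works, and by (a) the bundle $L|_{X_{s_0}}$ is globally generated.

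The key step is the generic-point choice. One has to check that the jumping locus of $h^0$ is a proper analytic subset, so it misses a Zariski open set meeting $V$. This follows from Proposition~\ref{jump-dimension}(ii): the torsion of $R^1f_*L$ is supported at a discrete set. Removing that set from $V$ gives $V'$. I expect the only subtle point to be making the surjectivity of $f^*f_*L\to L$ near $X_{s_0}$ precise, which comes from Cartan's Theorem~A on Stein neighborhoods.
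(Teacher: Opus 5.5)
Your argument is correct. The paper gives no proof of this statement; it simply quotes \cite[Corollary~3.13]{HLR26}. There is therefore nothing to match line by line. What you supply is a self-contained derivation from tools the paper does set up: Proposition~\ref{iso-sur-main} (itself \cite[Lemma~3.8]{HLR26}) for (b)$\Rightarrow$(a), and Theorem~\ref{Main} with $q=0$, where the degree $-1$ condition is empty, for (a)$\Rightarrow$(b).

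A few simplifications are available.

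In (b)$\Rightarrow$(a), you do not need the passage through Corollary~\ref{iso-sur-lem}, Proposition~\ref{iso-sur-main} and Theorem~A. Surjectivity of $\varphi^0(s_0)$ already lifts every section of $L|_{X_{s_0}}$ to a section of $L$ over $f^{-1}(U)$ for some neighborhood $U$ of $s_0$. A section of the line bundle $L$ that does not vanish at $x$ generates $L_x$ by Nakayama's lemma. That, not Theorem~A, is the real content of ``$f^*f_*L\to L$ is surjective along $X_{s_0}$''.

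In (a)$\Rightarrow$(b), the universal coefficient sequence in the proof of Proposition~\ref{jump-dimension} shows directly that $\varphi^0(s)$ is surjective exactly for $s$ outside the discrete support of the torsion of $R^1f_*L$. You can then simply take $V'$ to be $V$ minus that set.

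Finally, your argument never actually needs normality of $X$. The sheaf $f_*L$ is torsion free already because $L$ is $f$-flat: multiplication by a local parameter is injective on $L$, hence on $f_*L$ by left exactness of $f_*$.
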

	We now give an example showing that in Proposition \ref{H} the global generation of one restricted line bundle would not  propagate  without the base change (or  
	local constancy)  hypothesis. Denote by \(\operatorname{Pic}^{k}(C)\) the set of isomorphism classes of holomorphic line bundles of degree \(k\) on the compact Riemann surface \(C\).
	\begin{example}\label{egH}
		Let $C$ be a smooth projective curve of genus two and $\mathcal{P}$ the
		normalized \emph{Poincar\'e line bundle of degree two} on
		$C\times\Pic^2(C)$ (cf. \cite[p. 166]{ACGH85}). Choose a smooth analytic curve $S\subset\Pic^2(C)$ through
		$[K_C]$, together with an embedding
		$$
		\phi: S \hookrightarrow \Pic^2(C)
		$$
		such that $\phi(0)=[K_C]\in \Pic^2(C)$. It induces the map
		$$
		\Phi = \operatorname{id}_C \times \phi: C \times S \longrightarrow C \times \Pic^2(C).
		$$ 
		Set
		$X_s:=C\times\{s\}$. The map $\Phi$ gives the line bundle
		$$
		L=\Phi^*(\mathcal{P})
		$$
		on $C\times S$, with $L_s=L|_{X_s}\in\Pic^2(C)$, $L_0=K_C$, and
		$L_s\neq K_C$ for $s\neq0$. Since $C$ has genus two, it is hyperelliptic,
		and $K_C$ induces the map
		$$
		\phi_{K_C}: C \to \mathbb{P}^1.
		$$
		Thus $|K_C|$ is base point free; equivalently, $K_C$ is globally generated, and
		$$
		h^0(C,L_0)=h^0(C,K_C)=g=2.
		$$
		When $s\neq0$, the Riemann--Roch theorem gives
		$$
		h^0(C,L_s)-h^1(C,L_s)=\deg(L_s)+1-g=1.
		$$
		By Serre duality, we have
		$$
		h^1(C,L_s)=h^0\!\left(C,\mathcal O_C(K_C-L_s)\right).
		$$
		Since $\mathcal O_C(K_C-L_s)$ is nontrivial of degree $0$, it has no nontrivial global sections. Thus
		$$
		h^1(C,L_s)=h^0\!\left(C,\mathcal O_C(K_C-L_s)\right)=0.
		$$
		Therefore,
		$$
		h^0(C,L_s)=1.
		$$
		Thus every nonzero global section is a scalar multiple of a fixed section.
		Since $\deg(L_s)=2$, the zero divisor of this section contains two points
		(counted with multiplicity), which belong to the base locus $\BS(L_s)$.
		Hence $L_s$ is not globally generated for $s\neq0$.
	\end{example}
	\begin{remark} Example \ref{egH} also shows that global generation of one restricted line bundle does not imply the local constancy of $h^0(X_s,L_s)$.  
	\end{remark}
	
	We conclude this subsection by stating Koll\'ar's~\cite{Kollar86} torsion-freeness theorem, in the analytic form of \cite[Corollary~3.15]{Naka87}. 
	
	\begin{proposition}[{\cite[Corollary~3.15]{Naka87}}]
		\label{p-kollar-torsion freeness}
		Let $f:X\to S$ be a proper K\"ahler morphism from a normal complex space
		with canonical singularities to a smooth complex curve. If $K_X$ is \emph{$f$-semiample}, i.e., 
		there exists an integer $r>0$ such that $rK_X$ is
		Cartier and the natural evaluation morphism
		\[
		f^*f_*\mathcal O_X(rK_X)\longrightarrow\mathcal O_X(rK_X)
		\]
		is surjective, then
		$R^i f_*\mathcal O_X(mK_X)$ is torsion free, and
		hence locally free on $S$, for all integers $i\ge 0,m\ge 1$.
	\end{proposition}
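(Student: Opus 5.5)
The statement is local on $S$, so I would shrink $S$ to a small Stein disc around a point and prove that each $R^if_*\mathcal O_X(mK_X)$ is torsion free there. Once that holds, local freeness is automatic, since over a smooth curve every local ring is a discrete valuation ring and Proposition~\ref{Torzero}(ii) applies. The plan has three steps: reduce to Koll\'ar's theorem on higher direct images of dualizing sheaves, verify its hypotheses using the $f$-semiampleness of $K_X$, and handle the fact that $(m-1)K_X$ need not be Cartier.

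\textbf{Step 1 (semiample model).} Because $rK_X$ is $f$-generated, I take the relative morphism it defines and its Stein factorization. This gives a proper morphism $g:X\to Z$ over $S$, with $h:Z\to S$ projective over the Stein base, and a line bundle $A$ on $Z$ that is $h$-ample with $rK_X\cong g^*A$. Then
\[
\mathcal O_X(mK_X)=\omega_X\otimes\mathcal O_X((m-1)K_X),\qquad (m-1)K_X\sim_{\mathbb Q} g^*\bigl(\tfrac{m-1}{r}A\bigr).
\]
So the sheaf in question is ``$\omega_X$ twisted by a $\mathbb Q$-divisor pulled back from an $h$-ample class''. This is exactly the shape of the input to Koll\'ar's torsion freeness theorem.

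\textbf{Step 2 (reduction to dualizing sheaves).} Take a resolution $\mu:X'\to X$. Canonical singularities are rational, and Grauert--Riemenschneider vanishing gives $\mu_*\omega_{X'}=\omega_X$ and $R^j\mu_*\omega_{X'}=0$ for $j>0$. When $(m-1)K_X$ is Cartier, the projection formula then gives
\[
R^if_*\mathcal O_X(mK_X)\cong R^i(f\circ\mu)_*\bigl(\omega_{X'}\otimes\mu^*\mathcal O_X((m-1)K_X)\bigr).
\]
To remove the fractional part, I would use Kawamata's cyclic covering trick. Choose, after shrinking $S$, a general member $D\in|NrK_X|$ with $N\gg0$ and $N\cdot r$ divisible by the index; such a member exists because the linear system is relatively base point free, and a relative Bertini argument over the Stein base makes $\mu^*D$ smooth with SNC support together with the exceptional locus. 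Take the associated cyclic cover $\pi:W\to X'$, normalized and resolved. Then $\omega_{X'}\otimes\mu^*\mathcal O_X((m-1)K_X)$ appears as a direct summand of $\pi_*\omega_W$, up to the usual round-down correction that vanishes by the choice of $D$. The total space $W$ stays K\"ahler, being finite over a K\"ahler space after resolution.

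\textbf{Step 3 (Koll\'ar--Takegoshi torsion freeness).} For the proper K\"ahler morphism $f\circ\mu\circ\pi:W\to S$ from a smooth space, Takegoshi's analytic version of Koll\'ar's theorem shows that $R^i(f\circ\mu\circ\pi)_*\omega_W$ is torsion free for every $i$. Since $\pi$ is finite, the higher direct images of $\pi_*\omega_W$ are these same sheaves, so their direct summands are torsion free as well. By Step 2 this gives torsion freeness of $R^if_*\mathcal O_X(mK_X)$.

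I expect the main obstacle to be the bookkeeping in Step 2 when $(m-1)K_X$ is only $\mathbb Q$-Cartier. There one works with the reflexive sheaf $\mathcal O_X(mK_X)$ and must check two things: the identification with $\mu_*$ of the twisted dualizing sheaf, and that the summand extracted from $\pi_*\omega_W$ is exactly this reflexive sheaf. I would first try to pass to the index-one cover, on which $K$ becomes Cartier and the singularities remain canonical, so that the argument above applies directly. I would then recover $\mathcal O_X(mK_X)$ as the invariant part of the pushforward under the cyclic group action, and taking invariants preserves torsion freeness.
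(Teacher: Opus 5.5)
The paper gives no argument of its own here; it quotes \cite[Corollary~3.15]{Naka87}, i.e.\ the standard reduction, via a resolution, to Koll\'ar's torsion freeness in its analytic form. Your Steps 1--3 follow that route and work when $(m-1)K_X$ is Cartier. In that case the cover is even superfluous: $\mu^*\mathcal O_X((m-1)K_X)$ carries a smooth semipositive metric (the $r$th root of the pulled-back Fubini--Study metric on $(m-1)A$), and Takegoshi's theorem already allows such twists.

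The genuine gap is the non-Cartier case. You flag it but do not close it, and it is the case that matters: canonical singularities need not be Gorenstein, and the proposition is applied to the output of the relative MMP in Theorem~\ref{t-threefold-plurigenera}, where $K$ is in general only $\mathbb Q$-Cartier.

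Your proposed fix fails because the index-one cover $\operatorname{Spec}_X\bigoplus_{i=0}^{r-1}\mathcal O_X(-iK_X)$ needs a trivialization of $\mathcal O_X(rK_X)$. It therefore exists only locally on $X$, whereas $R^if_*$ is built from the cohomology of entire fibers. Here $\mathcal O_X(rK_X)\cong g^*A$ with $A$ $h$-ample. That bundle is trivial near a fiber only if $h$ is finite there, i.e.\ only if $K_X$ is $f$-numerically trivial. So any global cover extracting a root of $K_X$ must be branched along some $D\in|nK_X|$. Then $K_{\tilde X}=\pi^*(K_X+\frac{n-1}{n}D)$, the cover is only klt, and for $m\geq2$ the sheaf $\mathcal O_X(mK_X)$ is not the invariant part of $\pi_*\mathcal O_{\tilde X}(mK_{\tilde X})$.

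What is missing is the usual round-up on a log resolution.
\begin{enumerate}
\item Let $\mu:X'\to X$ have snc exceptional locus, and write $K_{X'}=\mu^*K_X+E$ with $E\geq0$ by canonicity.
\item Set $L':=\lceil\mu^*((m-1)K_X)\rceil=\mu^*((m-1)K_X)+\Delta'$, where $\Delta'$ is exceptional with snc support and coefficients in $[0,1)$.
\item Then $K_{X'}+L'=\mu^*(mK_X)+E+\Delta'$ with $E+\Delta'\geq0$ exceptional, so $\mu_*\mathcal O_{X'}(K_{X'}+L')=\mathcal O_X(mK_X)$.
\item Since $\mathcal O_{X'}(rL')\cong(g\circ\mu)^*\mathcal O_Z((m-1)A)\otimes\mathcal O_{X'}(r\Delta')$, the line bundle $L'$ carries a singular metric $h$ with semipositive curvature and $\mathcal I(h)=\mathcal O_{X'}$. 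It is the $r$th root of the pulled-back Fubini--Study metric times the divisorial metric of $r\Delta'$.
\item Theorem~\ref{Exthigherdeg}(2) applied to $\mu$ gives $R^j\mu_*\mathcal O_{X'}(K_{X'}+L')=0$ for $j>0$.
\item The same theorem applied to $f\circ\mu$ gives torsion freeness of $R^i(f\circ\mu)_*\mathcal O_{X'}(K_{X'}+L')\cong R^if_*\mathcal O_X(mK_X)$.
\end{enumerate}

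If you prefer to keep covers, take $n\geq m$ divisible by $r$, a general $D\in|nK_X|$, and the branched cover $\pi:\tilde X\to X$ above. Since $(X,\frac{n-1}{n}D)$ is klt, $\tilde X$ has rational singularities. Moreover $\pi_*\omega_{\tilde X}\cong\bigoplus_{i=0}^{n-1}\mathcal O_X((i+1)K_X)$, which exhibits $\mathcal O_X(mK_X)$ as the eigensummand with $i=m-1$, not the invariant part.
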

	
	\subsection{Brill--Noether loci and torsion freeness}\label{app-brill-noether}
	In the final part of this section, we give a criterion for cohomological flatness. We emphasize the assumption that $Y$ is one-dimensional in Remark~\ref{rem-main-criterion}.\hyperref[rem-main-criterion-a]{(a)} and Appendix~\ref{app-base change}. We first give an example showing that this assumption is necessary and then give a new criterion for the torsion freeness of higher direct image sheaves over higher-dimensional bases in terms of higher Brill--Noether loci. This criterion is mainly inspired by \cite{Teleman15}.\par
	First, we introduce the relation between the torsion freeness of a coherent analytic sheaf and its vanishing.
	\begin{lemma} \label{l-codim-support-vanishing}
		Let $X$ be a locally irreducible complex analytic space and $\mathscr{F}$ a coherent analytic sheaf on $X$. If $\codim\Supp(\mathscr{F}) \geq 1$, then $\Supp(\mathscr{F}) \subset \Supp(T(\mathscr{F}))$. Furthermore, if $\mathscr{F}$ is torsion free, then $\mathscr{F}=0$.
	\end{lemma}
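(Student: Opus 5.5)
The plan is to work stalkwise and use the definition of $T(\mathscr F)$ directly. Fix $x\in\Supp(\mathscr F)$. Since $X$ is locally irreducible, $\mathcal O_{X,x}$ has a unique minimal prime. Because $X$ is reduced (implicit in the definition of $T$), that prime is $0$, so $\mathcal O_{X,x}$ is an integral domain and every nonzero germ is a nonzero divisor.

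First we produce a germ that annihilates $\mathscr F_x$. Since $\mathscr F$ is coherent, $\Supp(\mathscr F)$ is a closed analytic subset, defined near $x$ by the coherent ideal $\mathcal I=\Ann(\mathscr F)$, whose zero set is $\Supp(\mathscr F)$. The hypothesis $\codim\Supp(\mathscr F)\geq1$ says that $\Supp(\mathscr F)$ contains no neighborhood of $x$ in the irreducible germ $(X,x)$. Hence $\mathcal I_x\neq0$: otherwise the zero set of $\mathcal I$ would be all of $X$ near $x$. Choose a nonzero $g\in\mathcal I_x$. By local irreducibility, $g$ is a nonzero divisor in $\mathcal O_{X,x}$.

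Next we show the whole stalk is torsion. For any $s\in\mathscr F_x$ we have $g s=0$ with $g$ a nonzero divisor, so $s\in T(\mathscr F_x)$. Thus $T(\mathscr F)_x=\mathscr F_x\neq0$, which gives $x\in\Supp(T(\mathscr F))$. This proves $\Supp(\mathscr F)\subset\Supp(T(\mathscr F))$; in fact the two sets are equal, since $T(\mathscr F)\subset\mathscr F$.

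For the second assertion, suppose $\mathscr F$ is torsion free, so $T(\mathscr F)=0$. Then $\Supp(\mathscr F)\subset\Supp(T(\mathscr F))=\emptyset$, and hence $\mathscr F=0$.

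The main obstacle is the step $\mathcal I_x\neq0$, together with the requirement that $\mathcal O_{X,x}$ be a domain. The first uses the Rückert Nullstellensatz: a germ vanishing on a set containing a neighborhood of $x$ lies in the nilradical, which is zero. The codimension hypothesis must therefore be read locally at each point of the support. For the domain property we rely on "locally irreducible" meaning that the germ $(X,x)$ is irreducible and reduced.
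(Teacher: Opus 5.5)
Your argument is correct and is essentially the paper's proof: both read the codimension hypothesis locally and show that each stalk $\mathscr F_x$, $x\in\Supp(\mathscr F)$, is killed by a nonzero element of the integral domain $\mathcal O_{X,x}$. The only difference is cosmetic. The paper takes $f$ vanishing on the germ $\Supp(\mathscr F)_x$ and uses R\"uckert's Nullstellensatz to get $f^k\in\Ann(\mathscr F_x)$. You instead pick the annihilating germ directly in $\Ann(\mathscr F)_x$; its nonvanishing already follows from $\Supp(\mathscr F)=V(\Ann\mathscr F)$ and coherence, so your appeal to R\"uckert is not actually needed.
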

	\begin{proof}
		The support $Z=\Supp(\mathscr{F})$ is a closed analytic subset of $X$.
		If $\codim Z\geq1$, then for each $x\in Z$ we can choose a nonzero
		$f\in\mathcal{O}_{X,x}$ that vanishes on the germ $Z_x$. By R\"uckert's
		Nullstellensatz \cite[p. 67]{GR84}, some power $f^k$ annihilates every
		element of $\mathscr{F}_x$. Since $\mathcal{O}_{X,x}$ is an integral domain,
		$f^k\neq0$, so every element of $\mathscr{F}_x$ is a torsion element. Hence
		$\Supp(\mathscr{F})\subset\Supp(T(\mathscr{F}))$. If $\mathscr{F}$ is
		torsion free, the latter support is empty, and therefore $\mathscr{F}=0$.
	\end{proof}
	Teleman states a criterion for torsion freeness of $R^1\pi_*(\mathcal{E})$:
	\begin{proposition}[{\cite[Corollary~1.4]{Teleman15}}]\label{Teleman}
		Let $B,M$ be complex manifolds, $\pi:M\to B$ a proper surjective
		holomorphic submersion, and $\mathcal{E}$ a holomorphic vector bundle on
		$M$. Denote by $\mathcal{E}_x$ the restriction of $\mathcal{E}$ to the
		fiber $M_x$, regarded as a locally free sheaf on $M_x$. Define the
		\emph{Brill--Noether locus} by
		$$
		\BN=\{x\in B\ |\ h^0(M_x, \mathcal{E}_x)\neq 0\}.
		$$
		If $\pi_*(\mathcal{E})=0$ and $\codim_B(\BN) \geq 2$, then $R^1\pi_*(\mathcal{E})$ is torsion free.
	\end{proposition}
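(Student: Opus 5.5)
We will prove Proposition~\ref{Teleman} by contradiction: assuming $R^1\pi_*(\mathcal{E})$ has torsion, we derive a nonzero torsion section and then use the codimension hypothesis on $\BN$ together with Hartogs-type extension to show it must vanish. Since $\pi$ is a proper submersion, $\mathcal{E}$ is flat over $B$, so Lemma~\ref{main lemma} and Remark~\ref{main-rem} give, locally near any $b_0\in B$, a bounded complex $\mathcal{L}^\bullet$ of finite-rank free $\mathcal{O}_B$-modules. It computes both $R^\bullet\pi_*\mathcal{E}$ and the fiber cohomology $H^\bullet(M_x,\mathcal{E}_x)\cong H^\bullet(\mathcal{L}^\bullet\otimes\mathbb{C}(x))$. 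After the standard truncation we may assume $\mathcal{L}^\bullet$ starts in degree $0$, $0\to\mathcal{L}^0\xrightarrow{d^0}\mathcal{L}^1\xrightarrow{d^1}\mathcal{L}^2\to\cdots$.

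The hypothesis $\pi_*\mathcal{E}=0$ says $d^0$ is injective as a sheaf map. The condition $x\notin\BN$ says $d^0(x)=d^0\otimes\mathbb{C}(x)$ is injective. Hence $\BN$ is exactly the degeneracy locus where the matrix $d^0$ drops rank.

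Let $\mathcal{Q}:=\operatorname{coker}(d^0)$. Off $\BN$, $d^0$ is a split injection of vector bundles, so $\mathcal{Q}$ is locally free there. Then $R^1\pi_*\mathcal{E}=\ker(\mathcal{Q}\to\mathcal{L}^2)$ is a subsheaf of $\mathcal{Q}$, and it suffices to show that $\mathcal{Q}$ is torsion free.

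The key algebraic step uses that $0\to\mathcal{L}^0\to\mathcal{L}^1\to\mathcal{Q}\to 0$ is a free resolution of length one. Thus $\operatorname{pd}\mathcal{Q}_b\le1$ at every point. By Auslander--Buchsbaum over the regular (hence Cohen--Macaulay) local ring $\mathcal{O}_{B,b}$, we get $\operatorname{depth}\mathcal{Q}_b\ge\dim B-1$. More usefully, I would argue directly with a torsion section. Let $s$ be a local section of $\mathcal{Q}$ killed by a nonzero $g$. Since $\mathcal{Q}$ is locally free off $\BN$, $s$ vanishes off $\BN\cup\{g=0\}$, and in fact off $\BN$ entirely, because torsion sections of a locally free sheaf vanish. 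Lift $s$ to $\tilde s\in\mathcal{L}^1$. On $B\setminus\BN$ we have $\tilde s=d^0(u)$ for a unique $u\in\mathcal{L}^0$, unique by injectivity of $d^0$ there. Since $\codim\BN\ge2$ and $\mathcal{L}^0$ is locally free on a manifold, Hartogs' extension theorem extends $u$ holomorphically across $\BN$. By the identity theorem, $\tilde s=d^0(u)$ everywhere, so $s=0$.

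The main obstacle is the uniqueness and gluing of $u$ on $B\setminus\BN$. This is fine because $d^0$ is injective as a bundle map there, so the local solutions patch. The extension step is exactly where $\codim_B\BN\ge2$ is indispensable: codimension one would allow poles, as in jumping of $h^0$ along a divisor.

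Finally, the conclusion must be compatible with the truncation and shrinking, and it is. Torsion freeness is local on $B$, and $R^1\pi_*\mathcal{E}$ injects into $\mathcal{Q}$. So $R^1\pi_*(\mathcal{E})$ is torsion free near each $b_0$, hence globally.
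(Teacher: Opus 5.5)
Your proof is correct. The paper gives no argument of its own for this proposition; it is quoted from \cite[Corollary~1.4]{Teleman15}. However, the paper's generalization, Theorem~\ref{cri}, specializes to it at $k=1$ (a manifold is Cohen--Macaulay), and that proof takes a different route from yours.

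\textbf{The paper's route.} One takes a torsion element $u$ of $(R^k\pi_*\mathcal E)_x$ whose annihilator $\mathfrak p$ is prime. The codimension hypothesis shows that the relevant Brill--Noether locus, and hence the support of the lower direct image, disappears after localizing at $\mathfrak p$. One then takes a regular sequence in $\mathfrak p$ and uses the connecting maps of $0\to\mathcal E_{j-1}\xrightarrow{\varphi_j}\mathcal E_{j-1}\to\mathcal E_j\to 0$. These transport $u$ to a nonzero element of $R^{k-h}\pi_*(\mathcal E_h)$ with the same annihilator, which is a contradiction. This argument never uses Grauert's local complex. It is also what lets the paper pass from $R^1$ to $R^k$ under the conditions $\operatorname{codim}\BN^i\geq k-i+1$.

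\textbf{Your route.} You work inside the truncated local complex. There $\BN$ is exactly the degeneracy locus of $d^0$, and $R^1\pi_*\mathcal E$ embeds in $\operatorname{coker}(d^0)$. You kill a torsion class by solving $\tilde s=d^0(u)$ off $\BN$ and extending $u$ across $\BN$ by the Riemann--Hartogs theorem.

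\textbf{Comparison.} Your argument is more elementary and makes the mechanism visible: torsion can only arise from a divisorial part of $\BN$, along which $u$ could acquire poles. It also needs only normality of $B$ (for Hartogs extension and for $\mathcal O_{B,x}$ to be a domain), rather than the Cohen--Macaulay property. Its limitation is that it is tied to degree one. For $k\geq 2$, $d^{k-1}$ is not injective, so $\operatorname{coker}(d^{k-1})$ no longer has a length-one free resolution. Controlling torsion then requires depth conditions on the higher degeneracy loci, which is exactly what the regular-sequence argument in Theorem~\ref{cri} supplies.
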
 
	We now use Teleman's criterion to construct a counterexample showing that Theorem~\ref{Main} would fail when the base is not one-dimensional. 
	\begin{example}\label{Ex}
		Let $C$ be a smooth projective curve of genus two and
		$J=\mathrm{Pic}^0(C)$ its Jacobian. Write
		$$
		f : C \times J \longrightarrow J
		$$
		for the projection and $\mathcal{P}$ for a normalized Poincar\'e line bundle. Then $f$ is smooth projective and $J$ is a nonsingular projective surface. For $x \neq 0$, $L_x \neq \mathcal{O}_C$ is a divisor of degree $0$. It has no global section, thus $h^0(C,L_x)=0$. By Proposition \ref{p-pushforward-torsion free}, $f_*\mathcal{P}$ is torsion free; since it vanishes away from the origin, Lemma \ref{l-codim-support-vanishing} gives
		$$
		f_*\mathcal{P}=0.
		$$
		When $x=0$, $L_0=\mathcal{O}_C$ has only constant global sections, so $h^0(C,L_0)=1$. Thus the Brill--Noether locus contains only the single point $x=0$. Since $J$ is two-dimensional, $\codim(\BN)=2$. Then, by Proposition \ref{Teleman}, $R^1f_*\mathcal{P}$ is torsion free.
		By the Riemann--Roch theorem,
		$$
		h^0(C,L_0)-h^1(C,L_0)=\deg(L_0)+1-g=-1.
		$$
		Thus $h^1(C,L_0)=2$. Consider the base change map
		$$
		\varphi^0(0): (f_*\mathcal{P})\otimes k(0) \to H^0(C,L_0).
		$$
		Its source is zero, whereas the target is one-dimensional, so the map is not surjective.
	\end{example}
	\begin{remark}\label{rem-higher-direct-images}
		Since every fiber above is the one-dimensional curve $C$, Grothendieck's vanishing theorem (\cite[Theorem~2.7]{Hart77}) gives
		$$
		R^if_*\mathcal{P}=0
		$$
		for all $i \geq 2$. However, for higher-dimensional fibers, for example, let the base be $B=\Pic^1(C) \times \Pic^1(D)$, where $C,D$ are smooth projective curves of genera $g_1$ and $g_2$; then each fiber is the two-dimensional surface $X=C\times D$. Thus $R^2f_*\mathcal{P} \neq 0$, and the criterion above fails to establish its torsion freeness. We next generalize Proposition \ref{Teleman} to higher direct images using regular sequences, Cohen--Macaulay rings, and the idea of \emph{complete intersections} (cf. \cite[Chapter~II, Exercise~8.4]{Hart77}).
	\end{remark}
	For the remainder of this subsection, let $B$ and $M$ be complex analytic spaces, $\pi:M\to B$ a proper flat morphism and $\mathcal E$ a holomorphic vector bundle on $M$.
	\begin{definition}
		We define the \emph{$k$-th Brill--Noether locus} of the pair $(\pi, \mathcal{E})$ as
		$$
		\BN^k_\pi(\mathcal E):=\{x\in B\mid h^k(\mathcal E_x)\neq0\}\subset B,
		$$
		where, for $x \in B$, $\mathcal{E}_x$ denotes the restriction of $\mathcal{E}$ to the fiber $M_x := \pi^{-1}(x)$ and $h^k(\mathcal{E}_x):=\dim_{\mathbb{C}}H^k(M_x,\mathcal{E}_x)$.
	\end{definition}
	\begin{theorem}\label{cri}
		Assume that $B$ is Cohen--Macaulay (e.g., $B$ has rational
		singularities; cf.\ \cite[p. 51]{KKMSD73}) and that, for every $k\geq 1$ and $i$ with 
		\[
		\max\{k-\dim B,0\}\leq i\leq k-1,
		\]
		we have $R^{i}\pi_*(\mathcal{E})=0$ and
		$\operatorname{codim}_{B}(\BN_\pi^i)\geq k-i+1$. Then
		$R^k\pi_*(\mathcal{E})$ is torsion free.
	\end{theorem}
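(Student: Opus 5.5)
Work locally at a point $b\in B$, with $R=\mathcal O_{B,b}$ a Cohen--Macaulay local ring of dimension $n=\dim B$. Since $\mathcal E$ is locally free and $\pi$ is flat, Lemma~\ref{main lemma} and Remark~\ref{main-rem} give a bounded complex $L^\bullet$ of finite free $R$-modules with
\[
H^i(L^\bullet)\cong (R^i\pi_*\mathcal E)_b,\qquad H^i(L^\bullet\otimes_R R/I)\cong (R^i\pi_*(\mathcal E\otimes\pi^*(\mathcal O_B/I)))_b
\]
for any ideal $I$. Suppose $0\neq s\in (R^k\pi_*\mathcal E)_b$ is killed by a nonzerodivisor $g\in R$. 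The plan is to show that such an $s$ forces a nonzero cohomology group $H^{k-1}$ on a hypersurface section, and then to push the nonvanishing down to a cohomology jump in lower degree $i$ on a locus of too small codimension.

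First, from $0\to \mathcal E\xrightarrow{g}\mathcal E\to \mathcal E/g\mathcal E\to 0$, the connecting map shows that $s$ comes from
\[
H^{k-1}(L^\bullet\otimes R/(g)) = (R^{k-1}\pi_*(\mathcal E|_{\pi^{-1}V(g)}))_b .
\]
Iterate this. Choose $x_1=g,x_2,\dots,x_r$ forming an $R$-regular sequence. This uses Lemma~\ref{CM}: grade equals height, so regular sequences can be chosen avoiding the minimal primes of the loci $\BN^i_\pi$, which have height $\geq k-i+1$ by hypothesis. Set $I_j=(x_1,\dots,x_j)$. Each $L^\bullet\otimes R/I_j$ is again a complex of free $R/I_j$-modules computing the cohomology of the restriction to $\pi^{-1}V(I_j)$.

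The inductive claim is that, for $j\le \min(k,n)$, the group $H^{k-j}(L^\bullet\otimes R/I_j)$ is nonzero, i.e.\ the torsion class $s$ propagates. The tool is the long exact sequence for multiplication by $x_{j+1}$ on $L^\bullet\otimes R/I_j$. Because $R^i\pi_*\mathcal E=0$ for $k-n\le i\le k-1$, base change in those degrees gives $R^i=0$ generically. Away from $\BN^i_\pi$ the fiber cohomology $h^i$ vanishes, so by Theorem~\ref{base change} (descending induction on degree) the restriction of $R^{i}\pi_*$ to $V(I_j)$ is supported on $\BN^i_\pi\cap V(I_j)$. Since $\BN^i_\pi$ has codimension $\ge k-i+1$ in $B$, its intersection with $V(I_j)$ (where $j=k-i$) has codimension $\geq 1$ in $V(I_j)$. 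This lets us pick $x_{j+1}$ annihilating it and regular on $R/I_j$, which uses Cohen--Macaulayness of $R/I_j$ and prime avoidance.

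We then run into a contradiction in one of two ways. If $k\le n$, we eventually reach $i=0$ with $\codim \BN^0_\pi\geq k+1$. Then $H^0$ of the restriction to $V(I_k)$ is a module supported in codimension $\geq 1$ of the Cohen--Macaulay (hence unmixed, no embedded primes) ring $R/I_k$. It therefore has positive depth, and the sections are torsion-free, so it vanishes in the sense of Lemma~\ref{l-codim-support-vanishing}, contradicting $s\ne0$. If $k>n$, we reach $j=n$ with $I_n$ being $\mathfrak m$-primary. There the element lives in $H^{k-n}$ supported at $b$, but $b\notin$ anything forcing it, since $\codim\BN^{k-n}_\pi\geq n+1>\dim B$ means $\BN^{k-n}_\pi=\emptyset$, which forces fiber cohomology $h^{k-n}\equiv0$. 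Base change plus Nakayama then kills everything, again a contradiction.

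The main obstacle will be the bookkeeping in the inductive step: showing that the image of $s$ survives each quotient. Concretely, one must show that the kernel of $x_{j+1}$ on $H^{k-j}(L^\bullet\otimes R/I_j)$ is hit by the torsion, and that the relevant lower-degree modules are supported on $\BN$-loci of the right codimension, so that depth arguments (depth of a Cohen--Macaulay module over the support) make multiplication by $x_{j+1}$ injective on $H^{k-j-1}$. I would first try to phrase the argument entirely in terms of $\operatorname{Ext}$/local cohomology: use $\operatorname{grade}(\operatorname{Ann})=\operatorname{ht}$ from Lemma~\ref{CM} to get $\operatorname{Ext}^{p}_R(R/I,H^i)=0$ for $p<k-i+1$. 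Then run the hypercohomology spectral sequence for $\operatorname{Hom}(R/(g),L^\bullet)$, which converts the codimension bounds directly into the vanishing of the torsion $\operatorname{Hom}(R/(g),H^k(L^\bullet))$.
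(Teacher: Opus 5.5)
Your main argument is correct, but it takes a different route from the paper.

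\textbf{The paper's route.} It fixes an associated prime $\mathfrak p=\Ann(u)$ of height $h$ and takes a regular sequence $\varphi_1,\dots,\varphi_h$ \emph{inside} $\mathfrak p$, using grade $=$ height. It then uses the hypothesis $R^i\pi_*\mathcal E=0$ to make each connecting map $\delta$ injective. Injectivity keeps every lift $u_j$ with annihilator exactly $\mathfrak p$, and that is what guarantees $\varphi_{j+1}$ kills $u_j$. The contradiction comes at the single degree $k-h$: after localizing at $\mathfrak p$, the locus $\BN^{k-h}_\pi$ disappears, because its ideal has height $\geq h+1$ and so is not contained in $\mathfrak p$.

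\textbf{Your route.} You start from an arbitrary nonzerodivisor $g$ and choose each $x_{j+1}$ only \emph{after} seeing the module. It is a power of an element of the ideal of $\BN^{k-j}_\pi$ that avoids the associated primes of the Cohen--Macaulay ring $R/I_j$, all of which have height $j$. Such an element kills all of $(R^{k-j}\pi_*\mathcal E_j)_b$, since by base change this module is supported on $\BN^{k-j}_\pi\cap V(I_j)$. Hence $s_j\in\ker(x_{j+1})=\im(\delta)$, and any preimage of a nonzero element is nonzero. The chain ends either in $H^{-1}=0$, or at $j=\dim R$, where the height bound exceeding $\dim R$ forces $b\notin\BN^{k-j}_\pi$.

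\textbf{Comparison.} Your argument never uses the vanishing $R^i\pi_*\mathcal E=0$, so the codimension bounds alone give torsion freeness. It also needs no associated primes and no localization. The price is that you invoke the codimension bound at every intermediate degree. The paper, once $h$ is known, needs only the bound for $\BN^{k-h}_\pi$, together with the vanishing hypotheses.

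\textbf{Corrections to your write-up.}
\begin{itemize}
\item The ``main obstacle'' you flag does not arise. Nothing has to survive a quotient, and no injectivity of $x_{j+1}$ on $H^{k-j-1}$ is needed.
\item The Ext/spectral-sequence fallback is unnecessary. As stated it is also unclear, since Lemma~\ref{CM} controls grade on $R$, not on $H^i$.
\item The elements are chosen \emph{in} the ideals of the $\BN$-loci while avoiding the minimal primes of $I_j$, not ``avoiding the minimal primes of the loci''.
\item In the $k\le n$ endgame, $(\pi_*\mathcal E_k)_b=0$ for the following reason, not because of ``positive depth'' or Lemma~\ref{l-codim-support-vanishing}. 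Since $\mathcal E_k$ is flat over $V(I_k)$, any $R/I_k$-regular element acts injectively on $\pi_*\mathcal E_k$ by left exactness. Prime avoidance produces such an element that kills the module. This is just one more step of your induction, landing in $H^{-1}=0$. Lemma~\ref{l-codim-support-vanishing} would in any case need local irreducibility, which $V(I_k)$ may lack.
\end{itemize}
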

	\begin{proof}
		Fix $x\in B$ and suppose, for a contradiction, that $R^k\pi_*(\mathcal E)$ has torsion at $x$. By
		\cite[Proposition~7.1]{Mat89}, there is a nonzero element
		\[
		u\in (R^k\pi_*(\mathcal{E}))_x
		\]
		whose annihilator $\mathfrak p:=\operatorname{Ann}_R(u)$ is a prime ideal. Since $u$
		is a torsion element, $\mathfrak p\neq(0)$; set $h:=\operatorname{ht}(\mathfrak p)\geq1$.
		Let $I_{\BN}\subset R$ be the ideal defining the germ at $x$ of
		$\BN_\pi^{k-h}(\mathcal{E})$. The codimension hypothesis gives
		\[
		\operatorname{ht}(I_{\BN})\geq h+1>\operatorname{ht}(\mathfrak p),
		\]
		so $I_{\BN}\nsubseteq \mathfrak p$. Choose $f\in I_{\BN}\setminus \mathfrak p$. After localizing
		at $\mathfrak p$, the element $f$ becomes a unit, and hence
		$(I_{\BN})_{\mathfrak p}=R_{\mathfrak p}$. Thus the Brill--Noether locus is empty after localization
		at $\mathfrak p$. By \cite[Corollary~3.5, Chapter~III]{BS76}, the vanishing of $H^{k-h}(M_b,\mathcal E_b)$ at a point $b\in B$ implies that $R^{k-h}\pi_*\mathcal E$ vanishes on a neighborhood of $b$. Consequently,
		\[
		\operatorname{Supp}\bigl(R^{k-h}\pi_*\mathcal E\bigr)\subseteq
		\BN^{k-h}_\pi(\mathcal E).
		\]
		Since $I_{\BN}\nsubseteq \mathfrak p$, this support becomes empty after localization at $\mathfrak p$, and therefore
		\[
		\bigl((R^{k-h}\pi_*(\mathcal{E}))_x\bigr)_{\mathfrak p}=0.
		\]
		Since $\mathcal{O}_{B,x}$ is Cohen--Macaulay, there exists a regular
		sequence $(\varphi_1,\ldots,\varphi_h)$ in $\mathfrak p$, represented by holomorphic
		functions near $x$. Define $I_j=(\varphi_1,\ldots,\varphi_j)$ and
		$\mathcal{E}_j=\mathcal{E}/(\pi^*I_j)\mathcal{E}$. Then necessarily
		\[
		\bigl((R^{k-h}\pi_*(\mathcal{E}_h))_x\bigr)_{\mathfrak p}=0.
		\]
		It therefore suffices to produce a nonzero element in this localized module;
		this will give the desired contradiction.
		\par
		Since $(\varphi_1,\ldots,\varphi_h)$ is a regular sequence,
		multiplication by $\varphi_j$ is injective on
		$\mathcal{O}_{B,x}/I_{j-1}$. Its cokernel gives the exact sequence
		$$
		0 \to \mathcal{O}_{B,x}/I_{j-1} \xrightarrow{\cdot \varphi_j} \mathcal{O}_{B,x}/I_{j-1} \to \mathcal{O}_{B,x}/I_j \to 0. 
		$$
		Since $\pi$ is flat and $\mathcal{E}$ is locally free, applying
		$-\otimes_{\mathcal{O}_B}\mathcal{E}$ to the sequence above gives the
		short exact sequence
		$$
		0 \to \mathcal{E}_{j-1} \xrightarrow{\varphi_j} \mathcal{E}_{j-1} \to \mathcal{E}_j \to 0.
		$$
		For $j=1$, the sequence
		$$
		0 \to \mathcal{E}_{0} \xrightarrow{\varphi_1} \mathcal{E}_{0} \to \mathcal{E}_1 \to 0,
		$$
		yields
		$$
		\cdots \to R^i\pi_*(\mathcal{E}_0) \to R^i\pi_*(\mathcal{E}_0) \to R^i\pi_*(\mathcal{E}_1) \to R^{i+1}\pi_*(\mathcal{E}_0) \to \cdots. \quad (1)
		$$
		When $\max\{{k-\dim B},0\}\leq i \leq k-2$, by our hypothesis, $$R^i\pi_*(\mathcal{E}_0)=R^{i+1}\pi_*(\mathcal{E}_0)=0.$$ 
		Thus $R^i\pi_*(\mathcal{E}_1)=0$ for all $i \leq k-2$. Similarly, we have $R^i\pi_*(\mathcal{E}_j)=0$ for all $i \leq k-j-1$.
		For the sequence (1) with $i=k-1$, we have
		$$0 \to (R^{k-1}\pi_*(\mathcal{E}_1))_x \xrightarrow{\delta}(R^k\pi_*(\mathcal{E}_0))_x \xrightarrow{\varphi_1}(R^k\pi_*(\mathcal{E}_0))_x
		$$
		Since $\varphi_1\in \mathfrak p=\Ann(u)$, we have $\varphi_1u=0$ and hence
		$u\in\ker(\varphi_1)=\im(\delta)$. Thus there exists a unique
		$u_1\in(R^{k-1}\pi_*(\mathcal{E}_1))_x$ such that $\delta(u_1)=u$.
		Since $\delta$ is injective, $\Ann(u_1)=\Ann(u)=\mathfrak p$. Repeating this process
		$h$ times produces an element
		$u_h\in(R^{k-h}\pi_*(\mathcal{E}_h))_x$ with $\Ann(u_h)=\mathfrak p$, completing
		the proof.
	\end{proof}
	Now we can use our generalized criterion to settle the question raised in Remark \ref{rem-higher-direct-images}.
	\begin{example}
		Let $C$ be a smooth projective curve of genus $g_1\geq3$ and $D$ one of
		genus $g_2\geq3$. Then $B=\Pic^1(C)\times\Pic^1(D)$ is a torsor under the abelian variety $J(C)\times J(D)$ of
		dimension $g_1+g_2$. Let $M=C\times D\times B$ and $\pi:M\to B$ the
		proper holomorphic submersion; each fiber is the surface $X=C\times D$.
		Let $\mathcal{P}_C$ be the normalized Poincar\'e line bundle on
		$C\times\Pic^1(C)$ and $\mathcal{P}_D$ the corresponding bundle on
		$D\times\Pic^1(D)$, and
		set $\mathcal{E}=\mathcal{P}_C\boxtimes\mathcal{P}_D$. Then, for every
		$b=(L_1,L_2)\in B$, $\mathcal{E}_b=L_1\boxtimes L_2$. \par
		We apply Theorem~\ref{cri} with $k=2$. We first check the vanishing of
		$R^0\pi_*(\mathcal{E})$ and $R^1\pi_*(\mathcal{E})$. Decompose $\pi$ as $\pi_C: C \times \Pic^1(C) \to \Pic^1(C)$ and $\pi_D: D \times \Pic^1(D) \to \Pic^1(D)$. Denote the \emph{Brill--Noether variety} by
		$$
		W_d^r(C)=\{{L\in \Pic^d(C)|h^0(C,L)\geq r+1\}},
		$$
		By \cite[Chapter~5]{HM98}, we have
		$$
		\dim W_1^0(C)=1 \quad \text{and} \quad \dim W_1^0(D)=1.
		$$
		From 
		$$
		\Supp(R^0\pi_{C*}(\mathcal{P}_C)) \subset W_1^0(C),
		$$
		we know that $\codim \Supp(R^0\pi_{C*}(\mathcal{P}_C)) \geq \codim W_1^0(C)=g_1-1 \geq 2.$ And by Proposition \ref{p-pushforward-torsion free}, we know that $R^0\pi_{C*}(\mathcal{P}_C)$ is torsion free. Then by Lemma \ref{l-codim-support-vanishing}, we get
		$$
		R^0\pi_{C*}(\mathcal{P}_C)=0.
		$$
		That means
		$$
		\mathbb{R}\pi_{C*}(\mathcal{P}_C) \cong R^1\pi_{C*}(\mathcal{P}_C)[-1].
		$$
		Similarly, for curve $D$, we have
		$$
		\mathbb{R}\pi_{D*}(\mathcal{P}_D) \cong R^1\pi_{D*}(\mathcal{P}_D)[-1].
		$$
		Thus
		$$
		\mathbb{R}\pi_*(\mathcal{E}) \cong (R^1\pi_{C*}(\mathcal{P}_C)[-1]) \boxtimes^L (R^1\pi_{D*}(\mathcal{P}_D)[-1]) \cong (R^1\pi_{C*}(\mathcal{P}_C) \boxtimes^L R^1\pi_{D*}(\mathcal{P}_D))[-2].
		$$
		That means 
		$$
		R^0\pi_*(\mathcal{E})=0 \quad \text{and} \quad R^1\pi_*(\mathcal{E})=0.
		$$
		We now check the codimension conditions for $\BN^0_\pi$ and
		$\BN^1_\pi$. 
		When $i=0$, $h^0(X,\mathcal{E}_b) \neq 0$ if and only if
		$h^0(C,L_1)\geq 1$ and $h^0(D,L_2) \geq 1$. Thus
		$$
		\BN_\pi^0= W_1^0(C) \times W_1^0(D),
		$$ 
		which is of dimension two.
		Since $\dim B = g_1 +g_2$, 
		$$
		\codim(\BN_\pi^0)=g_1+g_2-2 \geq 4>3=k-i+1. 
		$$
		When $i=1$, from
		$$
		H^1(X,\mathcal{E}_b)=(H^0(C,L_1)\otimes H^1(D,L_2)) \oplus (H^1(C,L_1)\otimes H^0(D,L_2)),
		$$
		we have $h^1(X,\mathcal{E}_b) \neq 0$ if and only if
		$h^0(C,L_1) \geq 1$ or $h^0(D,L_2) \geq 1$. Thus
		$$
		\BN^1_\pi=(W_1^0(C) \times \Pic^1(D)) \cup (\Pic^1(C) \times W_1^0(D)).
		$$
		Then
		$$
		\codim(\BN_\pi^1)=g_1+g_2-\max\{g_1+1, g_2+1\}=\min\{g_1-1, g_2-1\} \geq 2=k-i+1.
		$$
		Theorem~\ref{cri} now shows that $R^2\pi_*(\mathcal{E})$ is torsion free.
	\end{example}
	\begin{remark}
		In fact, by Lemma \ref{l-codim-support-vanishing}, we know that under the codimension condition
		$$
		\operatorname{codim}_{B}(\BN_\pi^i) \geq k-i+1,
		$$ 
		$R^i\pi_*(\mathcal{E})=0$ if and only if $R^i\pi_*(\mathcal{E})$ is torsion free. Thus if we have gotten $\pi_*\mathcal{E}$ is torsion free, to get $R^k\pi_*(\mathcal{E})$ is torsion free, we only need to check the codimention conditions for each $i \leq k-1$. This also provides another method to check the example above.  
	\end{remark}
	Indeed, the codimension condition
	$$
	\operatorname{codim}_{B}(\BN_\pi^i) \geq k-i+1
	$$ 
	is optimal, as shown in the following simple counterexample.
	\begin{example}
		Let $A$ be an abelian variety of dimension $g$ and $\hat{A}=\Pic^0(A)$ its dual.
		Let $M=A\times\hat{A}$, $\mathcal{P}$ its Poincar\'e line bundle, and
		$\pi:A\times\hat{A}\to\hat{A}$ the projection. When $\hat{x}\neq0$, we have
		$$
		h^i(A,\mathcal{P}_{\hat{x}})=0
		$$ 
		by \cite[(vii), p. 76]{Mum70}. When $\hat{x}=0$, we have
		$$
		h^i(A,\mathcal{P}_0)=h^i(A,\mathcal{O}_A)
		=\dim(\Lambda^i\mathbb{C}^g)=\binom{g}{i}.
		$$
		Thus, for all $i=0,1,\ldots,g$, we have
		$$
		\BN^i_{\pi}=\{0\}.
		$$
		Then 
		$$
		\codim(\BN_\pi^i)=g
		$$
		for all $0 \leq i \leq g$.
		The Fourier--Mukai transform \cite{Mukai81} gives
		$$
		\mathbb{R}\pi_*{\mathcal{P}}=\mathcal{O}_{\{0\}}[-g],
		$$
		and hence
		$$
		R^i\pi_*\mathcal{P}=0 \quad \text{for all $0\leq i<g$},
		\qquad R^g\pi_*\mathcal{P}=\mathcal{O}_{\{0\}}.
		$$
		Thus $R^g\pi_*\mathcal{P}$ is a torsion sheaf, showing that the
		codimension condition in the theorem is optimal.
	\end{example}

	\section{Deformation invariance of plurigenera}\label{sec-invariance}
	We now apply the preceding criteria (Theorem \ref{Main}) to establish the deformation invariance of plurigenera. Throughout this section, $S$ denotes a relatively compact, smooth, connected complex curve. When a central fiber is specified, we fix a point $0\in S$ and write $X_0=f^{-1}(0)$. In arguments local near $0$, we may replace $S$ by a relatively compact neighborhood of $0$ and restrict the family accordingly. 
	\subsection{Invariance of plurigenera for K\"ahler threefolds}
	We begin our discussion with families of K\"ahler threefolds.
	\begin{theorem}\label{t-threefold-plurigenera}
		Let $f:X\to S$ be a $\mathbb{Q}$-factorial K\"ahler family of compact threefolds over a relatively compact, smooth connected curve $S$. Assume that
		$X_0$ has canonical singularities. Then, for every integer $m\geq1$, the
		$m$-th plurigenus
		\[
		P_m(X_t)=\dim H^0(X_t,mK_{X_t})
		\]
		is independent of $t$ near $0$.
	\end{theorem}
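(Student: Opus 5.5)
We follow the strategy outlined in the introduction: run a relative minimal model program for $X$ over $S$ (after shrinking $S$ around $0$) and reduce to Proposition~\ref{p-kollar-torsion freeness} combined with Theorem~\ref{Main}.

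\emph{Step 1: singularities of the total space.} Since $X_0$ is a Cartier divisor with canonical singularities, inversion of adjunction shows that $(X,X_0)$ is plt near $X_0$. Hence $X$ has canonical singularities near $X_0$, and the nearby fibres $X_t$ are canonical as well. After shrinking $S$, $X$ is therefore a $\mathbb Q$-factorial K\"ahler threefold-family with canonical singularities.

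\emph{Step 2: the relative MMP.} We run the relative $K_X$-MMP over $S$, using the K\"ahler MMP for threefold families of \cite{DHP23,DO24,DO26}. Each divisorial contraction or flip preserves canonicity and $\mathbb Q$-factoriality. Since $X_0\sim 0$ over $S$, we have $K_{X_0}=K_X|_{X_0}$, and plurigenera of the fibres are birational invariants for canonical varieties. So no step changes $P_m(X_t)$ for $t$ near $0$, provided the central fibre remains irreducible with canonical singularities. If some step contracts $X_0$ itself (a divisorial contraction of the whole fibre, or a fibration onto a lower-dimensional base), then $K_{X_0}$ is not pseudoeffective and $P_m(X_0)=0$. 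Upper semicontinuity then gives $P_m(X_t)=0$ for $t$ near $0$.

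\emph{Step 3: the two outcomes.} Termination of threefold flips gives one of two outputs.

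(a) A Mori fibre space $X'\to Z$ over $S$ with $\dim Z<\dim X'$. The general fibre is covered by $K$-negative rational curves, so each $X'_t$, and in particular $X'_0$, is uniruled. Hence $P_m(X_t)=0$ for $t$ near $0$; by upper semicontinuity it suffices that $P_m(X_0)=0$, which follows from the uniruledness of $X_0'$.

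(b) A relative minimal model $X'$ with $K_{X'}$ nef over $S$. Relative abundance for K\"ahler threefolds with canonical singularities over a curve (from the cited K\"ahler MMP/abundance results, together with the base-point-free theorem) shows that $K_{X'}$ is $f'$-semiample. Proposition~\ref{p-kollar-torsion freeness} then gives that $R^if'_*\mathcal O_{X'}(mK_{X'})$ is torsion free for all $i\ge0$ and $m\ge1$. By Theorem~\ref{Main}, (v)$\Rightarrow$(iii), applied with $q=0$ (we need torsion freeness of $R^0$ and $R^1$), together with flatness of $\mathcal O_{X'}(mK_{X'})$ over $S$ (reflexive sheaves on a family of $S_2$ fibres), the function $h^0(X'_t,mK_{X'_t})$ is locally constant. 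Since $X'\dashrightarrow X$ is fibrewise birational near $0$ with canonical fibres, this equals $P_m(X_t)$.

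\emph{Main obstacle.} The delicate point is Step 2/3(b): relative abundance and semiampleness in the K\"ahler (non-projective) threefold setting. We also need to check that the MMP steps restrict to birational maps of fibres keeping $X'_0$ canonical, so that $P_m(X'_0)=P_m(X_0)$. For the latter, we would use $K_{X'}+X'_0=K_{X'}$ and the fact that adjunction preserves plt, which yields normality of $X'_0$. For $m$ with $mK_{X'}$ not Cartier, we would pass to the index-one cover or work with reflexive sheaves $\mathcal O(mK)$, which Koll\'ar's theorem in Nakayama's form handles.
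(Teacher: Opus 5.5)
Your proposal follows the paper's proof essentially step by step. The steps match:
\begin{itemize}
\item a relative $K_X$-MMP over $S$ via \cite{DHP23};
\item $P_m=0$ together with upper semicontinuity when $X_0$ is contracted or the output is a Mori fibre space;
\item in the minimal-model case, semiampleness from \cite{DO24,DO26}, Proposition~\ref{p-kollar-torsion freeness}, Theorem~\ref{Main} with $q=0$, and the identification $\mathcal O_{X'}(mK_{X'})|_{X'_t}\cong\mathcal O_{X'_t}(mK_{X'_t})$, which the paper takes from \cite[Corollary~6.2]{Naka87}.
\end{itemize}

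Two points need fixing. First, in your Step~1, inversion of adjunction only gives that $X$ is klt near $X_0$, plus nonnegative discrepancy for exceptional divisors whose centre lies inside $X_0$. Canonicity of $X$ near $X_0$ and of the nearby fibres is Kawamata's theorem \cite{KawaDeform99}, and this is what the argument (yours and the paper's) actually uses. Second, the total space is a fourfold, so termination of the MMP comes from \cite[Theorem~1.2]{DHP23}, not from termination of threefold flips.
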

	
	\begin{proof}
		The proof follows the strategy of \cite[Theorem~11]{Naka85}. By
		\cite[Theorem~1.2]{DHP23}, we can run a $K_X$-MMP over $S$. This MMP terminates with
		either a relative minimal model or a relative Mori fiber space. First,
		suppose that some step of the MMP contracts the central fiber. We claim that
		$P_m(X_0)=0$ for all $m\geq1$. Suppose that $X_0$ is contracted at the
		$i$-th step $\phi^i:X^{(i)}\dashrightarrow X^{(i+1)}/S$ of the MMP. This step must
		be a divisorial contraction because $X_0$ is a divisor. Then $-K_{X^{(i)}}$ is
		$\phi^i$-ample, so
		$H^0(X_0^{(i)},\mathcal O_{X_0^{(i)}}(mK_{X^{(i)}}))=0$ for all $m\geq1$. Since the
		composition of the first $i$ steps of the MMP does not contract $X_0$, the
		fibers $X_0$ and $X_0^{(i)}$ are bimeromorphic. Hence
		$P_m(X_0)=P_m(X_0^{(i)})=0$ for all $m\geq1$. Since $f$ is flat, upper
		semicontinuity of $h^0$ implies that $P_m(X_t)=0$ for every nearby $t$ and
		every $m\geq1$.
		
		Therefore, we may assume that the central fiber is not contracted by any step of the MMP throughout the rest of the proof, and we divide the proof into two cases.
		
		\noindent\textbf{Case 1:} The relative $K_X$-MMP terminates with a relative minimal model over $S$.
		\begin{center}
			\begin{tikzcd}[ampersand replacement=\&]
				X \&\& {X'} \\
				\& S
				\arrow["\phi", dashed, from=1-1, to=1-3]
				\arrow["f"', from=1-1, to=2-2]
				\arrow["{f'}", from=1-3, to=2-2]
			\end{tikzcd}
		\end{center}
		
		Since the MMP terminates with a relative minimal model, $K_{X'}$ is nef over
		$S$. By \cite{DO24,DO26}, $K_{X'}$ is semiample over $S$.
		Proposition~\ref{p-kollar-torsion freeness} shows that
		$R^if'_*\mathcal O_{X'}(mK_{X'})$ is locally free on $S$ for all $i\geq 0$
		and $m\geq 1$. By Theorem~\ref{Main}, the degree-zero base change map
		\[
		\varphi^0(t):
		(f'_*\mathcal{O}_{X'}(mK_{X'}))_t
		\otimes_{\mathcal{O}_{S,t}}\mathbb{C}(t)
		\longrightarrow
		H^0\!\left(X'_t,\mathcal{O}_{X'}(mK_{X'})|_{X'_t}\right)
		\]
		is surjective and hence an isomorphism. Moreover, by
		\cite[Corollary~6.2]{Naka87},
		\[
		\mathcal{O}_{X_t'}(mK_{X'}) \cong \mathcal{O}_{X_t'}(mK_{X_t'}),
		\qquad \text{for all } t \in S,\ m \ge 1.
		\]
		Therefore, by the local freeness of $f_*'\mathcal{O}_{X'}(mK_{X'})$, we have
		\[
		P_m(X_t') = \mathrm{rk}\, f_*'\mathcal{O}_{X'}(mK_{X'})(t)
		= \mathrm{rk}\, f_*'\mathcal{O}_{X'}(mK_{X'})(0)
		= P_m(X_0'),
		\qquad m \ge 1.
		\]
		It follows that $P_m(X_t)$ is also constant in the family for all $m \ge 1$.

		\noindent\textbf{Case 2:} The relative $K_X$-MMP terminates at a relative Mori fiber space $\psi : X' \to Z/S$, hence
		\[
		h^0\!\left(X_t',\, \mathcal{O}_{X_t'}(mK_{X'})\right) = 0
		\qquad \text{for all } m \ge 1.
		\]
		Thus $P_m(X_t')=0$ for all $m\geq1$. Since $X_t$ has canonical
		singularities and is bimeromorphic to $X_t'$ for $t$ near $0$, we have
		$P_m(X_t)=P_m(X_t')=0$ for every $m\geq1$ and every $t$ in a
		neighborhood of $0$.
	\end{proof}
	\begin{remark}
		Chen--Rao--Tsai~\cite[Theorem~4.7]{CRT25} established a
		criterion for fiberwise bimeromorphism under the assumption that
		$\kappa(X_0)\geq0$. In the MMP setting considered above, our argument gives
		a complementary statement: if fiberwise bimeromorphism fails over every
		neighborhood of $0$, then $P_m(X_0)=0$ for all $m\geq1$, and hence
		$\kappa(X_0)=-\infty$.
	\end{remark}
	\subsection{Invariance of plurigenera for varieties of maximal Albanese dimension}
	In this subsection, we study the invariance of plurigenera in smooth families of varieties of maximal Albanese dimension. It
	extends Chen--Hacon's deformation invariance of Kodaira dimension
	\cite[Corollary~2]{CH01} to the invariance of all plurigenera.
	\begin{theorem}\label{t-maxAlbdim}
		Let $f:X\to S$ be a smooth K\"ahler family over a relatively compact,
		smooth, connected curve, and assume that $X_0$ has maximal Albanese
		dimension. Then, for every $m\geq1$, $P_m(X_t)$ is independent of $t$
		near $0$.
	\end{theorem}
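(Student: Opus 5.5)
The plan follows the outline in the introduction: reduce to a relative good minimal model over $S$ and then apply Proposition~\ref{p-kollar-torsion freeness} together with Theorem~\ref{Main}.

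\textbf{Step 1: openness of maximal Albanese dimension.} Since $f$ is a smooth Kähler family, the Hodge numbers $h^{1,0}(X_t)$ are constant by Kodaira--Spencer. Hence the relative Albanese $a:X\to\mathcal A\to S$ exists as a smooth family of complex tori $\mathcal A_t=\operatorname{Alb}(X_t)$, possibly after shrinking $S$. The map $a_t$ is generically finite onto its image exactly when the rank of $da_t$ at a general point equals $\dim X_t$. This is an open condition in the total space, so it holds for $t$ near $0$. Consequently $a$ is generically finite onto its image $W\subset\mathcal A$, and $K_X$ is big over $W$ (equivalently over $\mathcal A$ after Stein factorization).

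\textbf{Step 2: running the MMP.} Run a relative $K_X$-MMP over $\mathcal A$. Since $K_X$ is big over the base, the Kähler MMP (with scaling, as in \cite{DHP23} and its higher-dimensional relative analogues for big adjoint classes) terminates with a good minimal model $X'\to\mathcal A$. Here $K_{X'}$ is nef and semiample over $\mathcal A$ and $X'$ has terminal singularities. Lemma~\ref{l-refnefalb} then applies: the fibers of $\mathcal A\to S$ are tori and contain no rational curves, while $K_{X'}$-negative extremal rays over $S$ are spanned by rational curves (cone theorem). Every such curve would therefore be contracted to a point of $\mathcal A$, contradicting nefness over $\mathcal A$. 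Hence $K_{X'}$ is nef over $S$.

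\textbf{Step 3: semiampleness over $S$.} We need $K_{X'}$ to be semiample over $S$, which is the main obstacle. The approach I would try first is fiberwise. Each $X'_t$ is a minimal model of a variety of maximal Albanese dimension, so by Das--Hacon \cite{DHAlbanese} (and Fujino \cite{Fujinoalb}) $K_{X'_t}$ is semiample. In particular $K_{X'}$ is nef over $S$ and $f$-abundant. Relative base point freeness over the curve then requires some relative bigness or abundance input. I would obtain it by applying the relative base point free theorem to $K_{X'}$ over the Iitaka-type fibration, or by using that $\kappa$ is constant by \cite{CH01} and that $K_{X'}$ is the pullback of an ample class from the relative canonical model over $\mathcal A$. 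If this fails, a fallback is to first pass to the relative Iitaka fibration of the semiample-over-$\mathcal A$ divisor and argue there. One must also check that the singularities of $X'$ are canonical and that $X'_t$ are the minimal models of $X_t$ for $t$ near $0$, i.e.\ that no step of the MMP contracts $X_0$. This holds because $K_X$ is pseudoeffective on $X_0$, as $\kappa(X_0)\ge0$ for maximal Albanese dimension.

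\textbf{Step 4: conclusion.} With $K_{X'}$ semiample over $S$, Proposition~\ref{p-kollar-torsion freeness} shows that all $R^if'_*\mathcal O_{X'}(mK_{X'})$ are locally free. Theorem~\ref{Main} then gives local constancy of $h^0(X'_t,mK_{X'_t})$, using \cite[Corollary~6.2]{Naka87} for adjunction on the fibers exactly as in Case~1 of Theorem~\ref{t-threefold-plurigenera}. Finally, $P_m(X_t)=P_m(X'_t)$ because $X_t\dashrightarrow X'_t$ is a birational contraction that is $K$-negative and $X_t$ is smooth, so plurigenera are birational invariants.
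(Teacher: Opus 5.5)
Your outline is the same as the paper's: openness of maximal Albanese dimension, a good minimal model $X'$ over the relative Albanese $\mathcal A$ (supplied in arbitrary dimension by Lemma~\ref{l-existmmp}), nefness of $K_{X'}$ over $S$ via Lemma~\ref{l-refnefalb}, then Koll\'ar's torsion freeness. However, Step~3 is left open, and it is the only non-formal step.

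You already have all the input. $K_{X'}$ is nef over $S$, and for general $t$ the map $X_t\dashrightarrow X'_t$ is a minimal model of $X_t$, so \cite{DHAlbanese} makes $K_{X'_t}$ semiample. In other words, $K_{X'}$ is relatively nef and relatively abundant, as you note yourself. The missing idea is that this already suffices. The relative base point free theorem \cite[Theorem~5.8]{Naka87} is the relative analytic counterpart of Kawamata's theorem that a nef and abundant canonical divisor on a space with canonical singularities is semiample. It upgrades exactly this situation to semiampleness of $K_{X'}$ over $S$, and this is the step the paper takes.

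No relative bigness is required. The Kawamata--Shokurov base point free theorem would need $(a-1)K_{X'}$ to be big over $S$, so it is the wrong tool here, since $X_t$ need not be of general type. Also, only general $t$ is needed. Your claim that every $X'_t$, including $X'_0$, is a minimal model of $X_t$ is not justified, because special fibers of a relative MMP need not be minimal models of the original fibers; fortunately it is also not needed.

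None of your proposed alternatives closes the gap:
\begin{itemize}
\item Passing to the relative canonical model $Z\to\mathcal A$ only repackages semiampleness over $\mathcal A$, which you already have. $K_Z$ is still merely nef over $S$, so the same question comes back.
\item A base point free argument ``over the Iitaka fibration'' assumes this fibration exists as a morphism over $S$. That is precisely what semiampleness over $S$ would provide, so the argument is circular.
\item Constancy of $\kappa(X_t)$ from \cite{CH01} gives no control over base loci.
\item For an arbitrary relatively nef line bundle, semiampleness on the general fiber does not by itself give semiampleness over $S$. A special fiber can carry a nef but non-semiample restriction, as in Zariski's classical example. So some input specific to the canonical class is indispensable.
\end{itemize}
Once you insert \cite[Theorem~5.8]{Naka87} after your Das--Hacon observation, Step~4 goes through as written and your proof becomes the paper's.
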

	
	Let $(X,\omega)$ be a compact K\"ahler manifold. Integration defines the period map
	\[
	\iota_X:H_1(X,\mathbb Z)
	\longrightarrow H^0(X,\Omega_X^1)^\vee,
	\qquad
	\iota_X([\gamma])(\omega):=\int_\gamma\omega.
	\]
	The \emph{Albanese torus} of $X$ is the complex torus
	\[
	\operatorname{Alb}(X):=
	H^0(X,\Omega_X^1)^\vee
	\big/\iota_X\bigl(H_1(X,\mathbb Z)\bigr).
	\]
	After fixing a base point $x_0\in X$, the \emph{Albanese map} is defined by
	\[
	\alpha_X:X\longrightarrow\operatorname{Alb}(X),
	\qquad
	\alpha_X(x):=
	\left[\omega\longmapsto\int_{x_0}^{x}\omega\right].
	\]
	A different choice of $x_0$ changes $\alpha_X$ only by a translation. Recall
	that $X$ is said to have
	\emph{maximal Albanese dimension} if $\alpha_X$ is generically finite onto
	its image; equivalently,
	$\dim\alpha_X(X)=\dim X$.
	
	Varieties with maximal Albanese dimension have the following characterization due to
	Fujino.
	\begin{proposition}[{\cite[Remark~2.2]{Fujinoalb}}]
		\label{p-maximal-albanese-cotangent}
		A compact K\"ahler manifold $X$ has maximal Albanese dimension if and
		only if its cotangent bundle $\Omega_X^1$ is generically generated by
		global sections; equivalently, the evaluation morphism
		\[
		H^0(X,\Omega_X^1)\otimes\mathcal O_X
		\longrightarrow\Omega_X^1
		\]
		is surjective at the generic point of $X$.
	\end{proposition}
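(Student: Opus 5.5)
The plan is to compare the differential of the Albanese map $\alpha_X$ with the evaluation morphism, and to show that at each point one is the transpose of the other. Then the two conditions in the statement are the same condition on ranks.

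\emph{Step 1: identify the differential.} Set $V:=H^0(X,\Omega_X^1)$. Since $\operatorname{Alb}(X)=V^\vee/\iota_X(H_1(X,\mathbb Z))$ is a quotient of the vector space $V^\vee$ by a lattice, its tangent space at every point is canonically $V^\vee$. Differentiating $\alpha_X(x)=[\omega\mapsto\int_{x_0}^x\omega]$ in local coordinates shows that
\[
d\alpha_{X,x}:T_xX\longrightarrow V^\vee,\qquad v\longmapsto\bigl(\omega\mapsto\omega_x(v)\bigr).
\]
The transpose of this linear map is
\[
\mathrm{ev}_x:V\longrightarrow\Omega^1_X\otimes\mathbb C(x)=T_x^*X,
\]
which is exactly the fiber at $x$ of the evaluation morphism $V\otimes\mathcal O_X\to\Omega^1_X$. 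Consequently $\operatorname{rk}d\alpha_{X,x}=\operatorname{rk}\mathrm{ev}_x$ for every $x\in X$. In particular, $d\alpha_{X,x}$ is injective if and only if $\mathrm{ev}_x$ is surjective.

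\emph{Step 2: compare the generic rank with the dimension of the image.} Let $r:=\max_x\operatorname{rk}d\alpha_{X,x}$. The locus where this maximal rank is attained is the complement of the common zero set of all $r\times r$ minors of $d\alpha_X$ in local coordinates. It is therefore a dense Zariski open subset $U\subset X$, using that $X$ is connected and irreducible. The image $\alpha_X(X)$ is a closed analytic subvariety by Remmert's proper mapping theorem. On $U$ the rank theorem shows that $\alpha_X$ locally has image a submanifold of dimension $r$, so $\dim\alpha_X(X)\geq r$. Conversely, Sard's theorem applied to a resolution of the image, or equivalently the generic smoothness of $\alpha_X$ onto the smooth locus of its image, gives $\dim\alpha_X(X)\leq r$. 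Hence
\[
\dim\alpha_X(X)=\max_x\operatorname{rk}d\alpha_{X,x}.
\]

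\emph{Step 3: conclude.} By Step 2, $X$ has maximal Albanese dimension if and only if $r=\dim X$. This holds if and only if $d\alpha_{X,x}$ is injective for some $x$, and then automatically for all $x$ in the dense open set $U$. By Step 1 this is equivalent to $\mathrm{ev}_x$ being surjective on $U$, which is precisely the generic surjectivity of $V\otimes\mathcal O_X\to\Omega_X^1$. For the converse direction, surjectivity at the generic point means surjectivity on some nonempty open set, and the same chain of equivalences applies.

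The main obstacle is the equality in Step 2, specifically the inequality $\dim\alpha_X(X)\leq r$, which needs a Sard-type or generic-rank argument in the analytic category. I would handle it by observing that the maximal-rank locus $U$ is dense and that, near any point of $U$, the image is an $r$-dimensional submanifold. These pieces cover an open dense subset of the image, so the image has dimension $r$.
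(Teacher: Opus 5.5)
Your proof is correct. The paper gives no argument of its own for this statement; it only cites Fujino's Remark~2.2. Your route is the standard justification behind that remark: identify $d\alpha_{X,x}\colon T_xX\to H^0(X,\Omega_X^1)^\vee$ with the transpose of $\mathrm{ev}_x$, then use that the dimension of the image of a proper holomorphic map from an irreducible manifold equals its generic rank.

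The one soft spot is your closing paragraph. Density of $\alpha_X(U)$, together with the local $r$-dimensional structure near points of $U$, does not by itself bound $\dim\alpha_X(X)$, because other sheets and the image of $X\setminus U$ may pass through the same points. Keep the Sard argument from Step~2 instead, made precise as follows. Let $d:=\dim\alpha_X(X)$. Then $\alpha_X$ maps the nonempty open set $\alpha_X^{-1}(\alpha_X(X)_{\mathrm{reg}})$ onto the $d$-dimensional manifold $\alpha_X(X)_{\mathrm{reg}}$, so by Sard some point has rank $d$, which gives $d\le r$.
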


	\begin{lemma}\label{l-Albdef}
		Let $f: X \to S$ be a smooth K\"ahler family over $S$. If the central fiber $X_0$ has maximal Albanese dimension, then so does every nearby fiber $X_t$.
	\end{lemma}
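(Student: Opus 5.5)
We will use Fujino's characterization (Proposition~\ref{p-maximal-albanese-cotangent}) and reduce the statement to two facts. First, holomorphic one-forms on the fibers extend over a neighborhood of $0$. Second, generic surjectivity of an evaluation map is an open condition in families.

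\textbf{Step 1: $f_*\Omega^1_{X/S}$ is locally free and satisfies base change.} Since $f$ is a smooth family with K\"ahler total space, every fiber $X_t$ is compact K\"ahler. By Kodaira--Spencer~\cite{KS58} the Hodge numbers $h^{1,0}(X_t)=\dim H^0(X_t,\Omega^1_{X_t})$ are locally constant in $t$. We apply Theorem~\ref{Main}, (iii)$\Rightarrow$(ii), to the coherent sheaf $\mathscr F=\Omega^1_{X/S}$ with $q=0$, after shrinking $S$ to a Stein disc around $0$. This sheaf is locally free, hence flat over $S$, and it satisfies $\mathscr F_{(t)}\cong\Omega^1_{X_t}$. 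The conclusion is that $\mathcal V:=f_*\Omega^1_{X/S}$ is locally free of rank $h^{1,0}$. The same theorem also shows that the base change map
\[
\mathcal V_t\otimes\mathbb C(t)\to H^0(X_t,\Omega^1_{X_t})
\]
is an isomorphism for every $t$. By (iv), every holomorphic one-form on $X_0$ is therefore the restriction of a section of $\Omega^1_{X/S}$ over $X$.

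\textbf{Step 2: openness of generic generation.} Consider the relative evaluation morphism
\[
\mathrm{ev}:f^*\mathcal V\longrightarrow\Omega^1_{X/S}.
\]
Both sheaves are locally free, of ranks $h^{1,0}$ and $n=\dim X_t$ respectively. Step 1 shows that the restriction of $\mathrm{ev}$ to $X_t$ is exactly the evaluation map $H^0(X_t,\Omega^1_{X_t})\otimes\mathcal O_{X_t}\to\Omega^1_{X_t}$. Let $D\subset X$ be the degeneracy locus of $\mathrm{ev}$, that is, the locus where $\mathrm{ev}$ fails to be surjective. It is a closed analytic subset, cut out by the $n\times n$ minors of $\mathrm{ev}$.

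By hypothesis and Proposition~\ref{p-maximal-albanese-cotangent}, there is a point $x_0\in X_0$ with $x_0\notin D$. Since $D$ is closed, some open neighborhood $W$ of $x_0$ in $X$ is disjoint from $D$. The morphism $f$ is a submersion, hence open, so $f(W)$ is a neighborhood of $0$. For every $t\in f(W)$, the set $X_t\setminus D$ is therefore a nonempty Zariski open subset of the connected manifold $X_t$, and so it is dense. The evaluation map on $X_t$ is then surjective at the generic point. Applying Proposition~\ref{p-maximal-albanese-cotangent} again, $X_t$ has maximal Albanese dimension.

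\textbf{Main obstacle.} The delicate point is Step 1, namely making sure that the restriction of $\mathrm{ev}$ to $X_t$ is the full absolute evaluation map on $X_t$ and not just the evaluation on a subspace of $H^0(X_t,\Omega^1_{X_t})$. This is exactly what the base change isomorphism gives. It depends on the constancy of $h^{1,0}$, which in turn uses the K\"ahler hypothesis on the fibers through Hodge theory.

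If one prefers to avoid the relative cotangent sheaf, an alternative route exists. The one-forms are $d$-closed on compact K\"ahler fibers. Since $h^{1,0}$ is constant, the relative Albanese tori form a smooth family. One can then compare ranks of the relative Albanese map, using lower semicontinuity of the generic rank of $d\alpha$. The argument above is the more direct one, and it is the one we will carry out.
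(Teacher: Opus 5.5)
Your proof is correct and follows the paper's argument essentially step by step: local freeness and base change for $f_*\Omega^1_{X/S}$, the relative evaluation map $f^*f_*\Omega^1_{X/S}\to\Omega^1_{X/S}$ restricting to the absolute evaluation map on each fiber, openness of the locus where this map is surjective combined with openness of the submersion $f$, and Fujino's characterization applied twice. The differences are cosmetic. The paper gets constancy of $h^{1,0}$ from Ehresmann's theorem and Hodge symmetry ($h^{1,0}=b_1/2$) rather than citing Kodaira--Spencer, and it invokes Grauert's theorem directly rather than Theorem~\ref{Main}; your remark that $X_t$ is connected, so a nonempty Zariski open subset is dense, is a detail the paper leaves implicit.
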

	
	\begin{proof}
		By Ehresmann's theorem, $f:X\to S$ is locally trivial as a smooth fiber
		bundle and their first
		Betti number
		\[
		b_1(X_t):=\dim_{\mathbb Q}H^1(X_t,\mathbb Q)
		\]
		is locally constant in $t$. Since every fiber $X_t$ is a compact K\"ahler
		manifold, Hodge decomposition and Hodge symmetry give
		\[
		H^1(X_t,\mathbb C)
		\cong H^{1,0}(X_t)\oplus H^{0,1}(X_t)
		\cong H^0(X_t,\Omega^1_{X_t})\oplus H^1(X_t,\mathcal O_{X_t})
		\]
		and
		\[
		\dim H^0(X_t,\Omega^1_{X_t})
		=\dim H^1(X_t,\mathcal O_{X_t}).
		\]
		Consequently,
		\[
		q(X_t)=\dim H^1(X_t,\mathcal O_{X_t})=\frac{1}{2}b_1(X_t),
		\]
		so the irregularity is locally constant on $S$.
		
		We now prove that having maximal Albanese dimension is stable under deformation. Let
		$n=\dim X_t$ and let $q$ denote the
		common value of $q(X_t)$. Since $\dim H^0(X_t, \Omega_{X_t})$ is constant in $t$, by Grauert's theorem,
		\[
		\mathcal E:=f_*\Omega^1_{X/S}
		\]
		is locally free of rank $q$ on $S$, and commutes with base
		change. Consider the natural evaluation morphism
		\[
		\operatorname{ev}:f^*\mathcal E\longrightarrow\Omega^1_{X/S}.
		\]
		By \cite[Proposition~3.10]{HLR26}, its restriction to $X_t$ is
		\[
		\operatorname{ev}_t:
		H^0(X_t,\Omega^1_{X_t})\otimes\mathcal O_{X_t}
		\longrightarrow\Omega^1_{X_t}.
		\]
		Let
		\[
		W:=X\setminus\operatorname{Supp}\bigl(\operatorname{coker}(\operatorname{ev})\bigr).
		\]
		This is a Zariski open subset of $X$. Since $X_0$ has maximal Albanese dimension, Proposition~\ref{p-maximal-albanese-cotangent} implies that
		$\operatorname{ev}_0$ is generically surjective; hence
		$W\cap X_0\neq\varnothing$. The smooth morphism $f$ is open, so $f(W)$ is an
		open neighborhood of $0$ in $S$. After shrinking $S$ around $0$, we may
		therefore assume that $W\cap X_t\neq\varnothing$ for every $t\in S$. Thus
		$\operatorname{ev}_t$ is generically surjective on $X_t$.
		Therefore, by Proposition~\ref{p-maximal-albanese-cotangent}, every nearby fiber $X_t$ has maximal Albanese dimension.
	\end{proof}

	\begin{lemma}[{\cite[Lemma~3.2]{Fujinoalb}}]\label{l-refnefalb}
		Let $(X,B)$ be a klt K\"ahler pair and $f:X\to S$ a surjective morphism
		of compact complex analytic varieties. Assume that $K_X+B$ is $f$-nef and
		$S$ contains no rational curves. Then $K_X+B$ is nef.
	\end{lemma}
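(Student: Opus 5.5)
The plan is to argue by contradiction with the cone theorem for the klt K\"ahler pair $(X,B)$. Suppose $K_X+B$ is not nef. The cone and contraction theorems in the K\"ahler setting (for klt K\"ahler pairs, as available in the K\"ahler MMP literature, e.g.\ \cite{DHP23} in dimension three and the general analytic cone theorem in arbitrary dimension) then give a $(K_X+B)$-negative extremal ray $R$ of the cone of curves, spanned by the class of a rational curve $C\subset X$ with
\[
(K_X+B)\cdot C<0 .
\]
The key point to extract from the cone theorem is that every $(K_X+B)$-negative extremal ray is generated by a \emph{rational} curve. This is the analytic/K\"ahler version of Mori's bend-and-break, and I would cite it rather than reprove it.

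Next, consider the image $f(C)\subset S$. Since $C$ is irreducible and compact, $f(C)$ is either a point or an irreducible compact curve. In the latter case, $f|_C:C\to f(C)$ is a surjection from a rational curve. Passing to normalizations, the normalization of $f(C)$ is then dominated by $\mathbb P^1$, so by L\"uroth's theorem (in its Riemann--Hurwitz form for compact Riemann surfaces) it is itself $\mathbb P^1$. Hence $f(C)$ is a rational curve in $S$, contradicting the hypothesis that $S$ contains no rational curves. Therefore $f(C)$ is a point, i.e.\ $C$ is contracted by $f$.

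For a curve $C$ contracted by $f$, the $f$-nefness of $K_X+B$ gives $(K_X+B)\cdot C\geq0$, which contradicts $(K_X+B)\cdot C<0$. Hence $K_X+B$ is nef.

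The main obstacle is the first step: we need a cone theorem asserting that $K_X+B$ is nef as soon as it is nonnegative on all rational curves, for a klt pair on a compact K\"ahler variety of arbitrary dimension. This is not proved in the paper and must be imported. If only a weaker statement is available, for instance if negative extremal rays are only known to contain rational curves when $K_X+B$ is not pseudoeffective, I would split into two cases. If $K_X+B$ is pseudoeffective, I would use the rational-curve-generated form of the cone theorem for klt K\"ahler pairs. If it is not pseudoeffective, I would use the existence of covering rational curves (the K\"ahler analogue of Boucksom--Demailly--P\u{a}un--Peternell / Miyaoka--Mori) to produce $C$ with $(K_X+B)\cdot C<0$, and then run the same argument through $f$ as above. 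The remaining steps are routine.
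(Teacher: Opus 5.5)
Your proposal is correct and follows the paper's proof: assume $K_X+B$ is not nef, use the K\"ahler cone theorem (the paper cites \cite[Theorem~0.5]{HP24} and \cite[Corollary~3.6]{HLX26}) to get a rational curve $C$ with $(K_X+B)\cdot C<0$, note that $f$-nefness forces $f(C)$ to be a curve, and conclude that this curve is rational, contradicting the hypothesis on $S$. Your L\"uroth argument for the rationality of $f(C)$ fills in a step the paper leaves implicit, and your fallback case split on pseudoeffectivity is unnecessary given those references.
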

	\begin{proof}
		Suppose that $K_X+B$ is not nef. By the cone theorem
		(\cite[Theorem~0.5]{HP24} and also \cite[Corollary 3.6]{HLX26}), there exists a rational curve $C$ on $X$ such
		that $\left(K_X+B\right)\cdot C<0$. Since $K_X+B$ is $f$-nef, the image
		$f(C)$ is not a point. This contradicts the assumption that $S$ contains
		no rational curves. Therefore, $K_X+B$ is nef.
	\end{proof}
	The following existence result for the minimal model program will be used in our
	proof.
	\begin{lemma}[{\cite[Proposition~2.27]{HLX26}}]\label{l-existmmp}
		Let $f:X\to Y$ be a contraction morphism of normal compact complex
		varieties. Assume that $(X,B+\boldsymbol{\beta})$ is a generalized klt pair,
		that $B+\boldsymbol{\beta}_X$ is modified big over $Y$, and that
		$\boldsymbol{\beta}\equiv_Y\mathbf N$, where $\mathbf N$ is an
		$\mathbb R$-Cartier $b$-divisor.
		\begin{enumerate}
			\item[(1)] If $K_X+B+\boldsymbol{\beta}_X$ is pseudoeffective over $Y$,
			then $(X,B+\boldsymbol{\beta})$ has a good minimal model over $Y$, not
			necessarily $\mathbb Q$-factorial.
			\item[(2)] If $K_X+B+\boldsymbol{\beta}_X$ is not pseudoeffective over
			$Y$, then $(X,B+\boldsymbol{\beta})$ is birational to a Fano fibration
			over $Y$.
		\end{enumerate}
	\end{lemma}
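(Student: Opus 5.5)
The plan is to reduce the generalized pair to an ordinary klt pair whose boundary is big over $Y$. Once that is done, both statements should follow from the relative minimal model program for klt pairs with big boundary: the analytic BCHM-type results of Fujino and Das--Hacon--P\u{a}un, in the K\"ahler form used in \cite{DHP23,HLX26}.

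\textbf{Step 1: replace the current by a divisor.} Since $\boldsymbol{\beta}\equiv_Y\mathbf N$, the class $K_X+B+\boldsymbol{\beta}_X$ is numerically equivalent over $Y$ to $K_X+B+\mathbf N_X$. Minimal models, the positivity conditions that decide the two cases, and Fano fibrations over $Y$ only depend on numerical classes over $Y$. So it suffices to treat the $\mathbb R$-divisorial generalized pair $(X,B+\mathbf N)$. Choose a log resolution $\nu:X'\to X$ on which $\mathbf N$ descends with $\mathbf N_{X'}$ nef over $Y$, and write
\[
K_{X'}+B'+\mathbf N_{X'}=\nu^*(K_X+B+\mathbf N_X).
\]
Because the pair is generalized klt, the coefficients of $B'$ are $<1$, and $B'$ has simple normal crossing support.

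\textbf{Step 2: perturb to a klt pair with big boundary.} By modified bigness over $Y$, I would write $\nu^*(B+\mathbf N_X)$, up to a $\nu$-exceptional correction, as $A+E$. Here $A$ is ample (Kähler) over $Y$ and $E\geq0$; in particular $f$ is Moishezon over $Y$, so ample divisors over $Y$ exist locally. For $0<\varepsilon\ll1$, the class $\mathbf N_{X'}+\varepsilon A$ is ample over $Y$. I then choose a general effective $\mathbb R$-divisor
\[
H\sim_{\mathbb R,Y}\mathbf N_{X'}+\varepsilon A,
\]
and set
\[
\Delta':=B'+(1-\varepsilon)\cdot(\text{the }\mathbf N\text{-part})+H+\varepsilon E\quad\text{(suitably rebalanced)},\qquad \Delta:=\nu_*\Delta'.
\]
The pair $(X,\Delta)$ is klt for $\varepsilon$ small and $H$ general, satisfies $K_X+\Delta\sim_{\mathbb R,Y}K_X+B+\mathbf N_X$, and has $\Delta$ big over $Y$.

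\textbf{Step 3: run the MMP.} I run a $(K_X+\Delta)$-MMP over $Y$ with scaling of an ample divisor. Termination holds because the boundary is big. If $K_X+\Delta$ is pseudoeffective over $Y$, the MMP ends with a minimal model on which $K+\Delta$ is nef over $Y$. There $K+\Delta$ minus itself, and hence $(K+\Delta)-(K+\Delta-\text{big})$, is nef and big over $Y$, so the relative basepoint free theorem gives semiampleness, i.e.\ a good minimal model. Otherwise the MMP ends with a Mori fiber space over $Y$, which is a Fano fibration birational to $X$. These outputs transfer back to $(X,B+\boldsymbol{\beta})$ because the two adjoint classes agree numerically over $Y$. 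The output need not be $\mathbb Q$-factorial since $X$ itself need not be.

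\textbf{Main obstacle.} I expect the hardest point to be Step 2 in the transcendental setting. One has to verify that modified bigness of $B+\boldsymbol{\beta}_X$ yields an honest big divisor after the passage $\boldsymbol{\beta}\to\mathbf N$. One also has to verify that the perturbed pair stays klt, which needs control of discrepancies on $\nu$-exceptional divisors. I would handle this by doing the perturbation on $X'$ and pushing forward, using negativity of exceptional parts.
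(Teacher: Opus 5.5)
\textbf{There is a genuine gap at Step~2.} The paper gives no argument for this lemma; it is quoted from \cite[Proposition~2.27]{HLX26}. Your proposal therefore has to stand on its own, and it breaks exactly at the point you flag as the main obstacle.

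Modified bigness of $B+\boldsymbol{\beta}_X$ over $Y$ is a property of a $(1,1)$-class. After pulling back, the class contains a K\"ahler current, so it is a relative K\"ahler class plus a positive closed current. That is not an ample divisor plus an effective divisor, and it does not make $f$ Moishezon. To write $\nu^*(B+\mathbf N_X)=A+E$ you would first have to prove that the \emph{divisor} $B+\mathbf N_X$ is big over $Y$. This does not follow from $\boldsymbol{\beta}\equiv_Y\mathbf N$ by the ``numerical invariance'' of Step~1, because bigness, nefness and K\"ahlerness of transcendental classes are not determined by intersection numbers with curves.

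Here is a concrete case. Let $Y$ be a point, $X$ a general two-dimensional complex torus (no curves, no nonzero effective divisors), $B=0$, $\boldsymbol{\beta}$ given by a K\"ahler form, and $\mathbf N=0$. If $\equiv_Y$ is tested on curves contracted by $f$, every hypothesis holds. The lemma is trivially true: $X$ is its own good minimal model, since $K_X+\boldsymbol{\beta}_X$ is K\"ahler. Yet $X$ is not Moishezon and there is no klt pair $(X,\Delta)$ with $\Delta$ big, so your reduction to BCHM-type results cannot start.

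To repair this you have two options:
\begin{enumerate}
\item[(a)] Use a meaning of $\equiv_Y$ strong enough to transfer bigness, and then prove, by a Boucksom--Demailly-type argument, that a divisor whose class contains a relative K\"ahler current is big.
\item[(b)] Run an MMP for generalized K\"ahler pairs in which the perturbed nef part $(1-\varepsilon)\boldsymbol{\beta}+\varepsilon\omega$ stays transcendental, rather than reducing to a divisorial boundary.
\end{enumerate}

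\textbf{Further repairs needed even in the Moishezon case.} This case is the only one the paper uses: $B=0$, $\boldsymbol{\beta}=0$, and the relevant morphism is generically finite (the relative Albanese map). Several steps still need work.
\begin{enumerate}
\item[(1)] A Moishezon morphism carries relatively ample divisors only on suitable projective models, and only locally over $Y$. Neither $X$ nor an arbitrary resolution $X'$ need carry a divisor ample over $Y$. Since $Y$ is compact, not Stein, the analytic MMP with scaling must be globalized or replaced by the K\"ahler MMP. A small $\mathbb Q$-factorialization is also needed before running it.
\item[(2)] $\nu^*(B+\mathbf N_X)$ is defined only if $B+\mathbf N_X$ is $\mathbb R$-Cartier, which is not assumed.
\item[(3)] Your $\Delta'$ counts $\mathbf N$ twice. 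The correct bookkeeping replaces only $\varepsilon(B+\mathbf N_X)$ by $\varepsilon(A+E)$. The new nef part is then $(1-\varepsilon)\mathbf N_{X'}+\varepsilon A$, made ample after subtracting a small exceptional divisor, and only this class is replaced by a general member $H$. The class $\mathbf N_{X'}$ is not effective and cannot appear in a boundary.
\item[(4)] Transferring goodness from $K+B+\mathbf N$ back to $K+B+\boldsymbol{\beta}$ is not a formality. Grant that MMP steps and nefness over $Y$ are detected on curves via a cone theorem. If $K+B+\mathbf N\sim_{\mathbb R,Y}\phi^*A$ on the model, you must still descend $K+B+\boldsymbol{\beta}$ to $Z$. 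You must then show that the descended class, which is only numerically equivalent to $A$ over $Y$, is K\"ahler over $Y$. This is not automatic: on an abelian surface of Picard number one, $H+t\alpha$ with $\alpha$ orthogonal to all curve classes is positive on every curve but not nef for $t\gg0$.
\end{enumerate}
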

	
	The following theorem is a relative version of
	\cite[Theorem~4.3]{Fujinoalb} for smooth K\"ahler families of varieties of
	maximal Albanese dimension.
	\begin{theorem}\label{Existencegoodmm}
		Let $f:X\to S$ be a smooth K\"ahler family over a relative compact, smooth connected curve $S$ such that $X_0$ has maximal Albanese dimension. Then, after shrinking $S$ around $0$, the family admits a relative good minimal model.
	\end{theorem}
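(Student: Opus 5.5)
We follow the outline given in the introduction for Theorem~\ref{intro-maxAlbdim}. First, by Lemma~\ref{l-Albdef} we shrink $S$ so that every fiber $X_t$ has maximal Albanese dimension. The irregularity $q(X_t)$ is constant, so $f_*\Omega^1_{X/S}$ is locally free. Hence the relative Albanese torus $\mathcal A:=\operatorname{Alb}(X/S)\to S$ is a smooth family of complex tori, namely the quotient of the dual bundle $(f_*\Omega^1_{X/S})^\vee$ by the local system $R_1f_*\mathbb Z$ modulo torsion. After shrinking $S$ to a disk, we may choose a holomorphic section of $f$ to serve as base point. This gives the relative Albanese map $a:X\to\mathcal A$ over $S$, which restricts to $\alpha_{X_t}$ on each fiber. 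Because each $\alpha_{X_t}$ is generically finite onto its image, we take the Stein factorization of $a$ onto its image $Y:=a(X)$. This yields a contraction $X\to Y'$ followed by a finite map $Y'\to Y\subset\mathcal A$. The contraction $X\to Y'$ is generically finite and has connected fibers, so it is bimeromorphic. In particular $K_X$ is big over $Y'$.

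Next we apply Lemma~\ref{l-existmmp} to the contraction $X\to Y'$ with $B=0$ and $\bbeta=0$. Since $X\to Y'$ is bimeromorphic, every divisor, and in particular $K_X$, is big and hence pseudoeffective over $Y'$. Lemma~\ref{l-existmmp}(1) then provides a good minimal model $X^{\mathrm m}\to Y'$ over $Y'$, with $X^{\mathrm m}$ klt, in fact terminal, bimeromorphic to $X$, and $K_{X^{\mathrm m}}$ nef over $Y'$. The main point here is the modified-bigness hypothesis on $B+\bbeta_X$. With $B=0$ this hypothesis is vacuous only because $X\to Y'$ is bimeromorphic; if it is not satisfied as stated, we add a small multiple of a relatively ample divisor over $Y'$ and argue by the standard perturbation. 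We then want $K_{X^{\mathrm m}}$ to be nef over $S$. We use the rational-curve argument of Lemma~\ref{l-refnefalb}, applied over $S$. Suppose $C$ is a rational curve in a fiber $X^{\mathrm m}_t$ with $K_{X^{\mathrm m}}\cdot C<0$. By relative nefness, $C$ is not contracted to a point in $Y'$. But $Y'$ is finite over a subvariety of the torus $\mathcal A_t$, and a complex torus contains no rational curves, so $C$ must map to a point. This contradiction shows $K_{X^{\mathrm m}}$ is nef over $S$. Making this rigorous requires the relative cone theorem over $S$, which we take from \cite{HP24,HLX26} in the same form as in the proof of Lemma~\ref{l-refnefalb}. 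The Stein factor $Y'$ maps finitely to $\mathcal A$, so it also contains no rational curves in its fibers over $S$.

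The main obstacle is semiampleness of $K_{X^{\mathrm m}}$ over $S$. On each fiber, $X^{\mathrm m}_t$ is a minimal model of a variety of maximal Albanese dimension, and by Das--Hacon \cite{DHAlbanese}, as used in \cite[Theorem~4.3]{Fujinoalb}, $K_{X^{\mathrm m}_t}$ is semiample and has a good minimal model. To pass from fibers to the family, we first note that $K_{X^{\mathrm m}}$ is nef over $S$ and fiberwise abundant. We would then apply the relative base point free theorem over $S$, in the form: $K_{X^{\mathrm m}}$ is $S$-nef and $K_{X^{\mathrm m}}-K_{X^{\mathrm m}}$-type positivity is replaced by abundance, as in \cite{DO24,DO26}, to conclude that $K_{X^{\mathrm m}}$ is $S$-semiample. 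If this general statement is unavailable, we can argue instead through the Iitaka fibration. On the central fiber, the semiample $K_{X^{\mathrm m}_0}$ defines a contraction $\phi_0$ with $R^1(\phi_0)_*\mathcal O=0$, which holds since $X^{\mathrm m}_0$ has rational singularities and the general fiber of $\phi_0$ has $K\equiv0$ and is a torus quotient. One might hope to extend $\phi_0$ by Theorem~\ref{ExtendCont}. However, that vanishing can fail (compare the remark after Theorem~\ref{ExtendCont}), so the abundance route via Das--Hacon together with the relative base point free theorem is the one we commit to. The model $X^{\mathrm m}\to S$ is then the required relative good minimal model, after shrinking $S$ around $0$.
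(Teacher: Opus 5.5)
Your proposal is correct and follows the paper's proof essentially step for step: stability of maximal Albanese dimension (Lemma~\ref{l-Albdef}), Stein factorization of the relative Albanese map, Lemma~\ref{l-existmmp} over the bimeromorphic Stein factor, the no-rational-curves argument of Lemma~\ref{l-refnefalb} for nefness over $S$, and then Das--Hacon on the fibers combined with a relative base point free theorem. The one step to tighten is the last. The statement you need ($K$ nef over $S$ and semiample, hence abundant, on the general fiber implies $K$ is $S$-semiample) is \cite[Theorem~5.8]{Naka87}, which is what the paper uses, not \cite{DO24,DO26}, which concern threefolds. It only requires semiampleness of $K_{X^{\mathrm m}_t}$ for general $t$, and that is all you can actually claim, since only for general $t$ is $X_t\dashrightarrow X^{\mathrm m}_t$ clearly a minimal model; your ``on each fiber'' overstates this.
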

	\begin{proof}
		By Lemma~\ref{l-Albdef}, maximal Albanese dimension is preserved under small deformations. Hence, after shrinking $S$ around $0$, every fiber $X_t$ has maximal Albanese dimension. Let
		\[
		\alpha : X \longrightarrow \mathcal A := \operatorname{Alb}(X/S)
		\]
		be the relative Albanese map, which is generically finite, and write
		$\alpha = g\circ\beta$ for its Stein factorization, with $\beta : X \to \mathcal A^{\mathrm{st}}$
		bimeromorphic and $g$ finite. Since $g$ is finite, a class is nef, semiample, or big
		over $\mathcal A^{\mathrm{st}}$ if and only if it is so over $\mathcal A$, so we may work over
		$\mathcal A^{\mathrm{st}}$ throughout. By Lemma \ref{l-existmmp}, a $K_X$-good minimal
		model over $\mathcal A$ exists,
		\begin{center}
			\begin{tikzcd}
				X && Y \\
				& {\mathcal{A}} \\
				& S
				\arrow[dashed, from=1-1, to=1-3]
				\arrow["\alpha", from=1-1, to=2-2]
				\arrow["f"', from=1-1, to=3-2]
				\arrow["{\alpha'}"', from=1-3, to=2-2]
				\arrow["{f'}", from=1-3, to=3-2]
				\arrow[from=2-2, to=3-2]
			\end{tikzcd}
		\end{center}
		where $Y$ is strongly $\mathbb Q$-factorial and klt, and $K_Y$ is
		semiample over $\mathcal A$, with induced morphisms $\alpha':Y\to\mathcal A$ and $f':Y\to S$.
		
		We claim that $K_Y$ is semi-ample over $S$. Since $K_Y$ is nef over
		$\mathcal A$ and the fibers $\mathcal A_t$ are complex tori containing no rational curves,
		Lemma \ref{l-refnefalb} shows that $K_Y$ is nef over $S$. Hence for general $t\in S$ the induced
		map $X_t\dashrightarrow Y_t$ is a minimal model of $X_t$. By \cite[Theorem~1.1]{DHAlbanese}, the fibers $X_t$
		admit good minimal models, so $K_{Y_t}$ is semiample for general $t$. By the base point free theorem
		\cite[Theorem~5.8]{Naka87}, $K_Y$ is semiample over $S$, so that $X\dashrightarrow Y$ is a
		relative good minimal model of $X$ over $S$.
	\end{proof}
	As a consequence, we obtain the invariance of plurigenera for smooth families of varieties of maximal Albanese dimension.
	
	\begin{proof}[Proof of Theorem \ref{t-maxAlbdim}]
		The proof is very similar to that of Theorem~\ref{t-threefold-plurigenera}. By Theorem~\ref{Existencegoodmm}, we may run a relative $K_X$-MMP over $S$,
		\[
		X\dashrightarrow X'/S,
		\]
		which terminates with a relative good minimal model such that $K_{X'}$ is semiample over $S$. By Proposition~\ref{p-kollar-torsion freeness}, $R^i f'_*\mathcal{O}_{X'}(mK_{X'})$ is locally free for every $i\geq 0$ and $m\ge 1$. Therefore, the plurigenera $P_m(X_t)$ are constant in the family.
	\end{proof}
	
	\subsection{Invariance of log plurigenera}
	
	We next turn to log plurigenera. Using Theorem~\ref{ExtMMP}, we obtain the
	following result, which slightly generalizes Koll\'ar's theorem
	\cite{DefGeneralType} to the log setting.
	
	\begin{theorem}\label{t-log-plurigenera}
		Let $g:X\to S$ be a smooth family such that $(X,B)$ is log smooth over a relatively compact, smooth connected curve $S$. Suppose that the special fiber $(X_0,B_0)$ is projective with canonical singularities, that $\lfloor B\rfloor=0$, and that $K_{X_0}+B_0$ is big.
		Assume moreover that the negative part of its Boucksom--Zariski
		decomposition satisfies
		$$
		N(K_{X_0}+B_0)\wedge B_0 = 0 .
		$$
		Then there exists an open neighborhood $U\subset S$ of $0$ such that,
		for every integer $m\geq1$ such that $mB$ is integral, the $m$-th log plurigenus
		$$
		P_m(X_t,B_t)=h^{0}\!\left(X_t,\, m(K_{X_t}+B_t)\right)
		$$
		is independent of $t\in U$.
	\end{theorem}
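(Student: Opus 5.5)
The plan follows the outline in the introduction: run the log MMP on the projective central fiber, extend it over a neighbourhood of $0$ by Theorem~\ref{ExtMMP}, and obtain constancy from Kawamata--Viehweg vanishing combined with Theorem~\ref{Main}.

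First, since $(X_0,B_0)$ is projective klt (indeed canonical with $\lfloor B_0\rfloor=0$) and $K_{X_0}+B_0$ is big, BCHM gives a terminating $(K_{X_0}+B_0)$-MMP. It ends in a log minimal model $(X_0^{\min},B_0^{\min})$. By the basepoint free theorem, $K_{X_0^{\min}}+B_0^{\min}$ is semiample and big.

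Second, because $X_0$ is a Cartier fiber with $X_0\sim 0$ near $0$, adjunction reads $(K_X+X_0+B)|_{X_0}=K_{X_0}+B_0$. Together with the hypothesis $N(K_{X_0}+B_0)\wedge B_0=0$ and $\bbeta=0$, this puts us in the setting of Theorem~\ref{ExtMMP}. So, after shrinking to a neighbourhood $U$ of $0$, every step extends to a $(K_X+B)$-negative bimeromorphic map $(X,B)/U\dashrightarrow(X',B')/U$ whose central fiber is $(X_0^{\min},B_0^{\min})$.

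Because each step is $(K+B)$-negative and $(X,B)$ is log smooth, hence klt, sections of $m(K+B)$ are preserved on the total space. Fiberwise, $(X'_t,B'_t)$ is obtained from $(X_t,B_t)$ by a $(K_{X_t}+B_t)$-negative map, because the extended steps restrict to the corresponding steps on the fibers. Hence
\[
P_m(X_t,B_t)=h^0\bigl(X'_t,m(K_{X'_t}+B'_t)\bigr)\quad\text{for all } t\in U.
\]
For $t\ne0$ I must check that the restrictions of the extended steps are still $(K_{X_t}+B_t)$-negative contractions or flips. This follows from their construction via Theorem~\ref{ExtendCont}, and I would expect to cite \cite[Proposition~4.16]{HLR26} for it.

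Third, I prove constancy on the model $(X',B')$. Bigness and semiampleness of $K_{X'_0}+B'_0$ are open conditions. Indeed, by the relative base point free theorem, nefness at $0$ propagates to relative nefness and relative semiampleness over $U$; bigness propagates by upper semicontinuity combined with invariance of the volume under small deformations. For $m\ge2$ write
\[
m(K_{X'_t}+B'_t)-(K_{X'_t}+B'_t)=(m-1)(K_{X'_t}+B'_t),
\]
which is nef and big. Kawamata--Viehweg vanishing for the klt pair $(X'_t,B'_t)$ then gives
\[
H^i\bigl(X'_t,m(K_{X'_t}+B'_t)\bigr)=0 \quad\text{for all } i>0,\ t\in U.
\]
At the central fiber this uses projectivity; for $t\neq0$ I use the Moishezon/K\"ahler analytic version of the vanishing. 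Hence $h^0$ equals the Euler characteristic, which is locally constant by flatness of $\mathcal O_{X'}(m(K_{X'}+B'))$ over $U$. This last step needs $m(K_{X'}+B')$ to be Cartier (or at least $S_2$ and flat), with its restriction to $X'_t$ equal to $m(K_{X'_t}+B'_t)$. I would justify this by the standard inversion-of-adjunction argument, since $X'_0$ is klt and Cartier in $X'$. Alternatively, the $h^1$-vanishing gives $J^1=0$ and Theorem~\ref{Main}(iii) applies directly.

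Finally, the case $m=1$ does not follow from this vanishing argument. I would invoke \cite[Complement~2.62.5]{Kolbook}: for a klt pair over a smooth curve, $h^0(K+B)$ is locally constant. Its proof uses Koll\'ar torsion-freeness of $g_*\mathcal O(K_{X/S}+B)$ together with Hodge-theoretic semicontinuity, in the spirit of Proposition~\ref{p-kollar-torsion freeness}.

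The main obstacle is the second step: showing that the extended maps restrict, for every $t$ near $0$ (not only generic $t$), to maps preserving $P_m(X_t,B_t)$, and that $m(K_{X'}+B')$ commutes with restriction to fibers. I would first try to obtain this from the local structure of the extended flips and contractions produced by Theorem~\ref{ExtMMP}.
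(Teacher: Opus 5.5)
Your architecture matches the paper's: BCHM on $X_0$, extension by Theorem~\ref{ExtMMP}, Kawamata--Viehweg vanishing plus Theorem~\ref{Main} (or Corollary~\ref{l-squeeze}) for $m\ge2$, and \cite[Complement~2.62.5]{Kolbook} for $m=1$. But your third step has a genuine gap. It sits exactly where the statement asks for one neighbourhood $U$ that works for \emph{every} $m$.

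You claim that nefness of $K_{X'_0}+B'_0$ propagates to relative nefness and semiampleness of $K_{X'}+B'$ over a neighbourhood of $0$ ``by the relative base point free theorem''. That theorem assumes $K_{X'}+B'$ is already nef over $U$; it cannot manufacture this from a single fiber. Semiampleness on one fiber does not propagate in general. A family of degree-$0$ line bundles on an elliptic curve, trivial at $0$, is semiample at $0$ but not at the non-torsion points arbitrarily close to $0$. Openness of nefness is not a general tool either; here it is a \emph{consequence} of the argument below, not an input.

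Your justification of bigness also fails. Upper semicontinuity only bounds $h^0$ on nearby fibers from above. Appealing to invariance of the volume is close to assuming what is being proved.

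Consequently, Kawamata--Viehweg vanishing is available only on $X'_0$. Upper semicontinuity then gives $h^1(X'_t,m(K_{X'_t}+B'_t))=0$ only on a neighbourhood $U_m$, which may shrink to $\{0\}$ as $m\to\infty$. You therefore get local constancy of each $P_m$ separately, not the uniform statement.

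The paper closes this gap with Koll\'ar's finite generation lemma. Put $L:=\mathcal O_{X'}(N(K_{X'}+B'))$.
\begin{itemize}
\item The base change maps at $0$ are isomorphisms for every multiple $\ell N$. This is a statement about stalks at $0$, so the shrinking $U_{\ell N}$ are harmless.
\item The ring $\bigoplus_\ell H^0(X'_0,\ell N(K_{X'_0}+B'_0))$ is finitely generated, because $K_{X'_0}+B'_0$ is semiample.
\item \cite[Lemma~11]{DefGeneralType} then gives a neighbourhood $U$, independent of $m$, over which $\bigoplus_\ell\pi'_*L^\ell$ is finitely generated.
\item On $Z:=\operatorname{Proj}_U\bigoplus_\ell\pi'_*L^\ell$, the divisor $K_Z+B_Z$ is \emph{ample} over $U$, and $\varphi^*(K_Z+B_Z)=K_{X'}+B'$.
\end{itemize}
Ampleness holds on every fiber $Z_t$ with $t\in U$. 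So Kawamata--Viehweg vanishing holds there for all $m\ge2$ at once, and Corollary~\ref{l-squeeze} gives constancy of $P_m(Z_t,B_{Z_t})=P_m(X'_t,B'_t)$ on this single $U$. Your third step needs some argument of this type, producing a model whose log canonical divisor is relatively ample over a fixed neighbourhood.

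The step you flagged as the main obstacle, fiberwise preservation of $P_m$ under the extended MMP, is handled in the paper by citing \cite[Lemma~2.22]{CRT25} for fiberwise bimeromorphism after shrinking. For $m=1$, note that $mB$ integral and $\lfloor B\rfloor=0$ force $B=0$. The citation is then applied to $h^0(K_{X_t})=h^n(\mathcal O_{X_t})$.
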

	\begin{proof}
		Our proof is motivated by \cite[Theorem~1]{DefGeneralType}. Since
		$(X_0,B_0)$ is projective, \cite{BCHM} allows us to run the
		$(K_{X_0}+B_0)$-MMP with scaling, which terminates with a minimal model:
		\[
		(X_0,B_0)\dashrightarrow(X_0^{(1)},B_0^{(1)})\dashrightarrow\cdots
		\dashrightarrow(X_0',B_0').
		\]
		By Theorem~\ref{ExtMMP}, this MMP extends over a connected neighborhood
		$U\subset S$ to a sequence of $(K_X+B)$-negative bimeromorphic contractions $$(X,B)\dashrightarrow(X^{(1)},B^{(1)})\dashrightarrow\cdots
		\dashrightarrow(X',B')/U,$$denote $\pi' : X' \to U$. Since
		$K_{X_0'}+B_0'$ is nef and big and $(X_0',B_0')$ is klt, so fix $i>0$
		and $m\ge 2$ such that $mB$ is integral. By Kawamata--Viehweg
		vanishing,
		$H^i\!\left(X_0',
		\mathcal{O}_{X_0'}\bigl(m(K_{X_0'}+B_0')\bigr)\right)=0$. Then the
		upper semicontinuity of cohomological dimension gives $$
		H^i\!\left(X_t',
		\mathcal O_{X_t'}\bigl(m(K_{X_t'}+B_t')\bigr)\right)=0,$$
		for every nearby $t$. In particular, $h^1(X_t',\mathcal O(m(K_{X_t'}+B_t')))=0$ for $t$ near $0$. Since cohomology in degree $-1$ is also zero,
		Corollary~\ref{l-squeeze}, applied with $q=0$, shows that
		\[
		P_m(X_t',B_t')=h^0\!\left(X_t',
		\mathcal O_{X_t'}\bigl(m(K_{X_t'}+B_t')\bigr)\right)
		\]
		is locally constant in $t\in U_m$ for some open neighborhood $0\in U_m$, and the degree $0$ base change map $\varphi^{0}_m (0):\pi_*'\mathcal O_{X'}\bigl(m(K_{X'}+B')\bigr)(0)\to H^0\!\left(X_0',
		\mathcal O_{X_0'}\bigl(m(K_{X_0'}+B_0')\bigr)\right)$ is an isomorphism for all $m\ge 2$ with $mB$ integral.
		
		A priori the neighborhood $U_m$ could shrink to $\{0\}$ as $m$ grows.
		We show that the same neighborhood works for all $m$. Set
		$L=\mathcal{O}_{X'}(N(K_{X'}+B'))$, where $N\ge 2$ is chosen so that
		$N(K_{X'}+B')$ is Cartier. By our discussion,
		$\varphi_{\ell N}^{(0)}(0)$ is an isomorphism for every $\ell\ge 1$;
		we thus have the isomorphism
		$$\bigoplus_{\ell\ge0}(\pi_*'L^\ell)\otimes k(0)\;\cong\;
		\bigoplus_{\ell\ge0}H^0\bigl(X'_0,\,
		\ell N(K_{X'_0}+B'_0)\bigr),$$
		where $L^\ell:=\mathcal O_{X'}(\ell N(K_{X'}+B'))$. On the other hand,
		since $K_{X_0'}+B_0'$ is semi-ample, the right-hand side is finitely
		generated. Then, by \cite[Lemma 11]{DefGeneralType}, there exists a
		neighborhood $0\in U\subset S$, independent of $m$, on which
		$\bigoplus_\ell\pi_*'L^\ell$ is a finitely generated
		$\mathcal O_U$-algebra. We then take
		$$Z:=\operatorname{Proj}_U\bigoplus_{\ell\ge0}\pi_*'L^\ell
		\longrightarrow U,$$ with $K_Z+B_Z$ ample over $U$ and an associated
		contraction $\varphi:X'\to Z$ such that
		$\varphi^*(K_Z+B_Z)=K_{X'}+B'$. For every $m\ge2$ such that $mB$ is
		integral, Kawamata--Viehweg vanishing gives
		\[
		H^i\bigl(Z_t,\mathcal O_{Z_t}(m(K_{Z_t}+B_{Z_t}))\bigr)=0
		\qquad (i>0,\ t\in U),
		\]
		where $U$ is independent of $m$. Since the cohomology in degree $-1$
		also vanishes, Corollary~\ref{l-squeeze}, applied with $q=0$, shows
		that $P_m(Z_t,B_{Z_t})$ is constant on $U$. Hence $P_m(X_t',B_t')$ is constant on $U$ as
		well.
		
		By \cite[Lemma~2.22]{CRT25}, after
		shrinking $U$ if necessary, the birational map $X\dashrightarrow X'$
		is fiberwise bimeromorphic; in particular, $X_t$ and $X_t'$ are
		bimeromorphic for every $t$ near $0$. Since the MMP is $(K_X+B)$-negative, the log plurigenera are preserved at each
		step. Consequently,
		\[
		P_m(X_t,B_t)=\dim H^0\!\left(X_t,
		\mathcal O_{X_t}\bigl(m(K_{X_t}+B_t)\bigr)\right)
		\]
		is locally constant in $t$ near $0$.
		
		It remains to consider the case $m=1$. In this case, by assumption $B$ is integral; since $\lfloor B\rfloor=0$, we have $B=0$. Serre duality gives
		\[
		h^0\!\left(X_t,\mathcal{O}_{X_t}(K_{X_t})\right)
		=h^n\!\left(X_t,\mathcal{O}_{X_t}\right), 
		\]where $n = \dim X_t$.
		The result then follows from \cite[Complement 2.62.5]{Kolbook}.
	\end{proof}

	\subsection{Invariance of higher cohomological plurigenera}

	We now turn to the deformation invariance of higher cohomological
	plurigenera. We collect the extension and torsion-freeness results we need in the following theorem. Part~(1) combines the injectivity theorems of
	Cao--Demailly--Matsumura~\cite[Theorem~1.1]{CDM17} and
	Zhou--Zhu~\cite[Theorem~1.1]{ZZ22}; part~(2) is Matsumura's relative
	torsion freeness and vanishing theorem \cite[Corollary~1.5]{Matsumura22};
	and part~(3) is a consequence of (1).
	
	\begin{theorem}[{\cite{CDM17,ZZ22,Matsumura22}}]
		\label{Exthigherdeg}
		\leavevmode
		Let $X$ be a complex manifold and $(L,h)$ a holomorphic line
		bundle on $X$ endowed with a possibly singular Hermitian metric.
		\begin{itemize}
			\item[(1)]\phantomsection\label{Exthigherdeg-part-1} Let $\psi$ be a quasi-plurisubharmonic function with neat
			analytic singularities and assume that $X$ is holomorphically convex and
			K\"ahler. Suppose that there is a positive continuous function
			$\delta$ on $X$ such that
			\[
			\sqrt{-1}\Theta_{L,h}
			+(1+\alpha\delta)\sqrt{-1}\partial\bar\partial\psi\geq0
			\qquad\text{for every }\alpha\in[0,1].
			\]
			Then the natural inclusion
			$\mathcal I(he^{-\psi})\hookrightarrow\mathcal I(h)$ induces an
			injective map
			\[
			H^q\!\left(X,\mathcal O_X(K_X+L)\otimes\mathcal I(he^{-\psi})\right)
			\longrightarrow
			H^q\!\left(X,\mathcal O_X(K_X+L)\otimes\mathcal I(h)\right)
			\]
			for every $q\geq0$. Equivalently, for every $q\geq0$, the quotient
			map induces a surjection
			\[
			H^q\!\left(X,\mathcal O_X(K_X+L)\otimes\mathcal I(h)\right)
			\longrightarrow
			H^q\!\left(X,\mathcal O_X(K_X+L)\otimes
			\frac{\mathcal I(h)}{\mathcal I(he^{-\psi})}\right).
			\]
			
			\item[(2)]\phantomsection\label{Exthigherdeg-part-2} Suppose that $f:X\to S$ is a surjective proper K\"ahler
			morphism to an analytic space and that $h$ has semipositive curvature.
			Then
			\[
			R^qf_*\!\left(\mathcal O_X(K_X+L)\otimes\mathcal I(h)\right)
			\]
			is torsion free for every $q\geq0$, and it vanishes for
			$q>\dim X-\dim S$.
			
			\item[(3)]\phantomsection\label{Exthigherdeg-part-3} Assume, in addition to the hypotheses of
			part~\hyperref[Exthigherdeg-part-2]{(2)}, that
			$S$ is a smooth curve, $f$ is smooth, and $h$ is smooth. For any
			$0\in S$, after shrinking $S$ to a coordinate neighborhood of $0$,
			the restriction map
			\[
			H^q\!\left(X,\mathcal O_X(K_X+L)\right)\longrightarrow
			H^q\!\left(X_0,\mathcal O_{X_0}(K_X+L)\right)
			\]
			is surjective for every $q\geq0$.
		\end{itemize}
	\end{theorem}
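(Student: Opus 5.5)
Parts (1) and (2) are cited from \cite{CDM17,ZZ22,Matsumura22}, so the real work is deducing part~\hyperref[Exthigherdeg-part-3]{(3)} from part~\hyperref[Exthigherdeg-part-1]{(1)}. The plan is to realize the central fiber $X_0$ as the zero locus of a quasi-plurisubharmonic weight $\psi$ whose multiplier ideal is exactly the ideal sheaf of $X_0$.

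\emph{Setting up the weight.} Shrink $S$ to a coordinate disc with coordinate $t$ centered at $0$. Then $X$ is holomorphically convex, since $f$ is proper over a Stein base, and it is K\"ahler by assumption. Put $\psi:=\log|t\circ f|^2$. This function is plurisubharmonic with neat analytic singularities along the smooth divisor $X_0$, because $f$ is smooth, so $t\circ f$ is a local coordinate. Moreover $\sqrt{-1}\partial\bar\partial\psi=[X_0]\geq0$ as currents and $\psi$ is pluriharmonic off $X_0$. Hence the curvature hypothesis of (1) follows at once from $\sqrt{-1}\Theta_{L,h}\geq0$: one may take any $\delta$, for instance $\delta\equiv1$.

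\emph{Computing multiplier ideals.} Since $h$ is smooth, $\mathcal I(h)=\mathcal O_X$. Next, $\mathcal I(he^{-\psi})$ is the ideal of germs $g$ with $|g|^2/|t|^2$ locally integrable. Because $t$ is one coordinate of a local chart, this is exactly the ideal $(t)=\mathcal O_X(-X_0)$, the ideal sheaf of $X_0$. The quotient is therefore $\mathcal I(h)/\mathcal I(he^{-\psi})\cong\mathcal O_{X_0}$, and so
\[
\mathcal O_X(K_X+L)\otimes\frac{\mathcal I(h)}{\mathcal I(he^{-\psi})}\cong\mathcal O_{X_0}(K_X+L).
\]
The surjectivity statement in (1) then becomes exactly the surjectivity of $H^q(X,\mathcal O_X(K_X+L))\to H^q(X_0,\mathcal O_{X_0}(K_X+L))$ for every $q$.

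\emph{Main obstacle.} The only delicate point is checking the exact hypotheses of (1), namely that $\psi$ has neat analytic singularities and that the curvature inequality holds in the sense of currents on all of $X$, including along $X_0$. I would handle this by noting that the singular part $[X_0]$ is positive, so adding $(1+\alpha\delta)[X_0]$ only helps. If (1) requires $\psi$ to be globally bounded above, shrinking $S$ to a relatively compact disc ensures $|t|<1$. With this in place, the deduction is formal.

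In the intended application (Corollary~\ref{c-higher-plurigenera}), one takes $L=(m-1)K_X$ with the semipositive smooth metric induced by the one on $K_X$. Surjectivity in all degrees $q$ and $q-1$ then gives condition (iv) of Theorem~\ref{Main}, and hence local constancy of $P^{(q)}_m(X_t)$.
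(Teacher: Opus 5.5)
Your proposal is correct and follows the paper's proof of part (3) essentially verbatim. You shrink to a coordinate disc so that $X$ is holomorphically convex, take $\psi=\log|f^*t|^2$, and check the curvature hypothesis of part (1) via $\sqrt{-1}\partial\bar\partial\psi=[X_0]\geq0$ together with the semipositivity of $h$. You then identify $\mathcal I(h)=\mathcal O_X$ and $\mathcal I(he^{-\psi})=\mathcal I_{X_0}$, so the surjective formulation of (1) becomes the restriction map. The paper also spells out the long-exact-sequence argument showing that injectivity in degree $q+1$ is equivalent to surjectivity of the quotient map in degree $q$, which you take as part of the citation.
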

	\begin{proof}
		Part~\hyperref[Exthigherdeg-part-1]{(1)} is the injectivity theorem of
		\cite[Theorem~1.1]{CDM17} and
		\cite[Theorem~1.1]{ZZ22}. Indeed, tensoring
		the short exact sequence
		\[
		0\longrightarrow\mathcal I(he^{-\psi})
		\longrightarrow\mathcal I(h)
		\longrightarrow
		\frac{\mathcal I(h)}{\mathcal I(he^{-\psi})}
		\longrightarrow0
		\]
		with $\mathcal O_X(K_X+L)$ and taking cohomology gives a connecting map from
		the quotient cohomology in degree $q$ to
		$H^{q+1}\!\left(X,\mathcal O_X(K_X+L)\otimes
		\mathcal I(he^{-\psi})\right)$. The injectivity
		assertion in degree $q+1$ forces this connecting map to vanish, and hence
		the quotient map in degree $q$ is surjective.
		Part~\hyperref[Exthigherdeg-part-2]{(2)} is
		\cite[Corollary~1.5]{Matsumura22}.
		
		For part~\hyperref[Exthigherdeg-part-3]{(3)}, let $t$ be a local coordinate on $S$ centered at $0$.
		Since $f$ is proper and the coordinate neighborhood is Stein, its
		preimage is holomorphically convex. Set $s=f^*t$ and take
		$\psi=\log|s|^2$. Since $h$ is smooth,
		\[
		\mathcal I(h)=\mathcal O_X,
		\qquad
		\mathcal I(he^{-\psi})=\mathcal I_{X_0}
		\]
		by \cite[Remark~5.9]{Dem12}. The Poincar\'e--Lelong formula gives
		$\sqrt{-1}\partial\bar\partial\psi=[X_0]\geq0$, so the curvature
		hypothesis in part~\hyperref[Exthigherdeg-part-1]{(1)} is satisfied. Its surjective formulation now
		gives precisely the asserted restriction map.
	\end{proof}
	This gives the following invariance of higher cohomological
	plurigenera in smooth K\"ahler families when $K_X$ is semipositive.
	\begin{corollary}\label{c-higher-plurigenera}
		Let $f:X\to S$ be a smooth family with $X$ K\"ahler. If $K_X$ is semipositive, then, for every $m\geq 1$ and $q\geq 0$, the higher cohomological plurigenus
		\[
		P_m^{(q)}(X_t)=\dim H^q(X_t,mK_{X_t})
		\]
		is independent of $t\in S$.
	\end{corollary}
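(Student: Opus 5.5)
We will verify the hypothesis of Theorem~\ref{Main}(iv) in every degree, using Theorem~\ref{Exthigherdeg}(3) with a suitable line bundle $L$. The claim is local on $S$, so fix $0\in S$ and shrink $S$ to a coordinate disc around $0$. Since $f$ is smooth, $\mathscr F:=\mathcal O_X(mK_X)$ is locally free and hence flat over $S$. Adjunction gives $K_{X_t}=K_X|_{X_t}$, because the normal bundle of a fiber of a smooth map to a curve is trivial after shrinking. Thus $\mathscr F_{(t)}=\mathcal O_{X_t}(mK_{X_t})$, and it suffices to show that $t\mapsto h^q(X_t,\mathscr F_{(t)})$ is locally constant for all $q$.

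Write $mK_X=K_X+L$ with $L:=(m-1)K_X$. Let $h_K$ be the smooth semipositively curved metric on $K_X$, and equip $L$ with $h:=h_K^{\,m-1}$. This metric is smooth and has curvature $(m-1)\sqrt{-1}\Theta_{K_X,h_K}\ge0$. For $m=1$, $L$ is trivial with the flat metric. The hypotheses of Theorem~\ref{Exthigherdeg}(2)--(3) then hold:
\begin{itemize}
\item $f$ is a smooth proper K\"ahler morphism to a smooth curve;
\item $h$ is smooth with semipositive curvature;
\item the preimage of the disc is holomorphically convex and K\"ahler.
\end{itemize}
Theorem~\ref{Exthigherdeg}(3) therefore shows that the restriction
\[
H^q(X,\mathcal O_X(mK_X))\longrightarrow H^q(X_0,\mathcal O_{X_0}(mK_X))
\]
is surjective for every $q\ge0$. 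The same argument applies at any other point $y\in S$ after recentering the coordinate. Since the preimage of the disc is unchanged, surjectivity holds for every $y$ in the disc.

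Surjectivity of $r^q$ and $r^{q-1}$ for all $y$ is exactly condition (iv) of Theorem~\ref{Main}. The base is a smooth Stein curve, so Theorem~\ref{Main} gives (iii): $h^q(X_t,mK_{X_t})$ is locally constant. As a check, Theorem~\ref{Exthigherdeg}(2) directly gives torsion freeness of all $R^qf_*\mathcal O_X(mK_X)$, which is condition (v) and leads to the same conclusion. Since $S$ is connected, local constancy implies that the function is constant on $S$.

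The main point needing care is the recentering step. Theorem~\ref{Exthigherdeg}(3) is stated after shrinking to a coordinate neighborhood of one point. We must ensure that one fixed Stein neighborhood works for every point in it, so that the global restriction maps $r^q$ in Theorem~\ref{Main}(iv) are the ones appearing in (3). This works because $\psi=\log|f^*(t-y)|^2$ is available on the whole disc for each $y$. Alternatively, the torsion-freeness route via (2) avoids the issue entirely and is the one I would present as primary.
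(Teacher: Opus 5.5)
Your proof is correct and follows the paper's argument: you take $L=(m-1)K_X$ with the induced smooth semipositive metric, use Theorem~\ref{Exthigherdeg}(3) to get surjective restriction maps in every degree, and conclude by (iv)$\Rightarrow$(iii) of Theorem~\ref{Main} together with connectedness of $S$. Your recentering of $\psi=\log|f^*(t-y)|^2$ at each point $y$ of the disc, and your alternative route through Theorem~\ref{Exthigherdeg}(2) and condition (v), are both valid and simply spell out details the paper leaves implicit.
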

	\begin{proof}
		Work near an arbitrary point $0\in S$. Applying Theorem~\ref{Exthigherdeg} to $L=(m-1)K_X$, equipped with the induced semipositive metric, shows that the restriction maps in every degree are surjective. Theorem~\ref{l-Extension} therefore gives local constancy, and connectedness of $S$ gives constancy on the entire base.
	\end{proof}

	\subsection{Invariance of generalized plurigenera in Moishezon families}
	We next study deformation invariance for generalized pairs in the Moishezon
	setting. For projective families,
	Filipazzi--Svaldi~\cite[Theorem~4.1]{FS20} established deformation
	invariance of plurigenera for big and nef klt generalized pairs as part of
	their study of boundedness. Li--Wang~\cite{LW26} subsequently proved an
	invariance theorem for generalized polarized pairs with abundant nef
	parts. We extend the latter argument to proper Moishezon
	morphisms.
	
	\begin{theorem}\label{t-ginvarpluri}
		Let $f:X\to S$ be a proper Moishezon morphism with connected fibers over a relatively compact, smooth connected curve $S$ and
		$(X,\mathbf M)$ a generalized pair with abundant nef part over $S$. Write
		\[
		K_{X_t}+\mathbf M_t
		=\left.(K_X+\mathbf M_X)\right|_{X_t},
		\]
		and assume that $(X_t,\mathbf M_t)$ has generalized canonical
		singularities for every $t\in S$. Then, for every $m\geq1$ such that
		$m\mathbf M_X$ is Cartier, the generalized $m$-th plurigenera
		\[
		t\longmapsto
		\dim H^0\!\left(X_t,
		\mathcal O_{X_t}\bigl(m(K_{X_t}+\mathbf M_t)\bigr)\right)
		\]
		is constant on $S$.
	\end{theorem}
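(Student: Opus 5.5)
By Theorem~\ref{Main} (the equivalence of (iii) and (iv) with $q=0$; the condition in degree $q-1=-1$ is vacuous), it suffices to work near an arbitrary point $0\in S$. After shrinking $S$ to a Stein coordinate disc $U$ around $0$, we show that the restriction map
\[
H^0\!\left(X_U,\mathcal O_{X}\bigl(m(K_X+\mathbf M_X)\bigr)\right)\longrightarrow H^0\!\left(X_0,\mathcal O_{X_0}\bigl(m(K_{X_0}+\mathbf M_0)\bigr)\right)
\]
is surjective for every $m$ with $m\mathbf M_X$ Cartier. Local constancy of the generalized plurigenera then follows from Theorem~\ref{Main}, and connectedness of $S$ gives global constancy. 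Throughout, we use $K_X+\mathbf M_X|_{X_0}=K_{X_0}+\mathbf M_0$, which holds since $X_0$ is a Cartier divisor with trivial normal bundle.

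The first step uses the abundance of the nef part to obtain projectivity. Locally over $0$, $\mathbf M_{X'}$ on a resolution $\nu:X'\to X$ is nef and abundant over $U$. We factor it as $\mathbf M_{X'}\sim_{\mathbb Q}g^*A$ up to pullback to a higher model, where $g:X''\to Z$ is a contraction over $U$, $h:Z\to U$ is projective, and $A$ is nef and big over $U$ (Proposition~\ref{p-nefabundant}). This is the point where the Moishezon hypothesis enters: although $f$ is only Moishezon, $Z\to U$ is projective. By Kodaira's lemma on $Z$, we write $A\sim_{\mathbb Q}H+E$ with $H$ ample over $U$ and $E\ge0$. From this we build singular metrics $h_\varepsilon$ on $\mathbf M_{X'}$ with semipositive curvature, analytic singularities along $g^{-1}E$, and arbitrarily mild Lelong numbers as $\varepsilon\to0$ (Lemma~\ref{l-bignef}). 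The generalized canonical hypothesis on $(X_0,\mathbf M_0)$ ensures that the resulting multiplier ideals are trivial on $X_0$.

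The second step is the extension itself, following Li--Wang and Siu--P\u{a}un. Given $s\in H^0(X_0,m(K_{X_0}+\mathbf M_0))$, we pull it back to a resolution of $X_0$ inside $X'$. We then run the P\u{a}un-type induction: extend the sections $s^{k}\otimes(\text{fixed generators of }pA_0)$ by the Ohsawa--Takegoshi theorem applied to the line bundle $(m-1)(K_{X'/U}+\mathbf M_{X'})+\text{ample}$, with weights given by the previously extended sections and $h_\varepsilon$. This produces a singular metric on $(m-1)(K_{X'}+\mathbf M_{X'})$ whose restriction is $L^2$-compatible with $s$, and one final application of Ohsawa--Takegoshi extends $s$ itself (Proposition~\ref{t-LWext}). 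Since the ample twists are only needed on the projective model, we carry them out on $Z$ and pull them back.

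I expect the main obstacle to be the integrability in this last application of Ohsawa--Takegoshi. Two things must be checked. First, the multiplier ideal of the limiting metric restricted to $X_0$ must contain $s$; this requires the generalized canonical hypothesis to control the discrepancies of $\nu$ on the $\nu$-exceptional divisors, and the singularities coming from $E$ must be absorbed by choosing $\varepsilon$ small relative to $k$. Second, the ample bundle used to start the induction must exist globally over $U$, which is why we first reduce to the projective morphism $h$. If the direct approach stalls, I would first verify the claim on the big locus of $A$, where $g$ is an isomorphism onto a projective family, and then extend across $g^{-1}E$ by the Riemann extension theorem, since $E$ does not contain $X_0$.
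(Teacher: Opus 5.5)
Your top-level reduction matches the paper's. Theorem~\ref{Main} with $q=0$ reduces everything to surjectivity of the restriction map near $0$. The projective $h:Z\to U$ and the relative Kodaira lemma give semipositively curved metrics on the nef part. Generalized canonicity plus Skoda's lemma put $s$ into the relevant multiplier ideal.

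\textbf{The gap.} It is in the extension step, and it comes from your claim that the Moishezon hypothesis enters only through the projectivity of $Z$. The P\u{a}un--Siu induction needs a line bundle $A$ on the total space carrying the sections. $A$ must be positive enough that $pK+A$ (suitably twisted by the nef part) is globally generated on the central fiber for $0\le p<m$, with generators that extend. These generators start the induction and keep the weights of the inductively extended sections bounded.

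A pullback $g^*H$ cannot do this. Both $g^*H$ and $\mathbf M_{X''}$ are $\mathbb Q$-linearly trivial along the fibers of $g$, which are positive-dimensional unless the nef part is big. Along those fibers $pK$ need not be globally generated, or even effective. For $\mathbf M=0$ the definition allows $Z=U$ and $H=0$, so your scheme supplies no positivity at all. In that case the statement specializes to invariance of plurigenera for Moishezon families with canonical fibers, which is proved by passing to a projective model of the family. Relatedly, Ohsawa--Takegoshi on $X'$ requires $X'$ to be K\"ahler, and an arbitrary resolution of a Moishezon family is not.

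\textbf{The missing idea.} You must use the Moishezon hypothesis on $f$ itself, as the paper does in Proposition~\ref{t-LWext}:
\begin{enumerate}
\item Take a resolution $Y\to X$ that is projective over $U$.
\item Check that the pullback of $s$ to the strict transform $Y_0$ of $X_0$ satisfies the multiplier-ideal condition for the pulled-back metric. This uses \cite[Lemmas~5.11--5.13]{LW26} and the discrepancy comparison $A_0^{\mathrm{gen}}=A_0-\Xi_0$.
\item Extend it on $Y$ by \cite[Theorem~3.1]{Taka06}, in the twisted form \cite[Theorem~5.1]{LW26}.
\item Push the extension down to $X$.
\end{enumerate}
The projectivity of $Z$ is used only to build the twisting metric $h_k$ on $\mathbf M_X$ (Lemma~\ref{l-bignef}).

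\textbf{The fallback.} It does not repair this. $g$ is not birational, so there is no open set where it is an isomorphism onto a projective family. The Riemann extension theorem also does not extend sections across the divisor $g^{-1}(E)$.

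\textbf{A further point.} Shrinking $\varepsilon$ controls Lelong numbers, but it cannot prevent $g^*E$ from containing the strict transform of $X_0$. In that case the restricted metric is identically $+\infty$. The paper avoids this by subtracting $c\,(f\circ\mu)^*(0)$ from $g^*G$ and splitting the result as $F-E$ with $E$ $\mu$-exceptional.
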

	
	We first collect some preliminary results needed for the proof.
	
	\begin{definition}[Relatively nef and abundant divisor]\label{d-abundant}
		Let $p:X\to S$ be a proper surjective morphism whose general fiber lies
		in Fujiki's class $\mathcal C$. A relatively nef $\mathbb Q$-Cartier
		divisor $L$ is \emph{abundant} (or \emph{good}) over $S$ if
		$\nu(L/S)=\kappa(L/S)$, where $\kappa(\cdot)$ is the Kodaira dimension and $\nu(\cdot)$ is the numerical Kodaira dimension. Recall that, for a nef $\mathbb Q$-Cartier
		divisor $A$ on a compact complex variety, its \emph{numerical Kodaira dimension}
		is defined by
		\[
		\nu(A):=\max\left\{\ell\geq0\;\middle|\; A^\ell\not\equiv0\right\}.
		\]
		For an analytically sufficiently general point $t\in S$, set
		\[
		\nu(L/S):=\nu\!\left(L|_{X_t}\right),
		\]
		where $X_t=p^{-1}(t)$.
	\end{definition}

	The structure theorem below for nef and abundant divisors is useful:
	
	\begin{proposition}[{\cite[Proposition~2.17]{Naka87}}]
		\label{p-nefabundant}
		Let $f:X\to S$ be a proper Moishezon morphism, and let $L$ be a relatively nef and abundant
		$\mathbb Q$-Cartier divisor on $X$ (Definition~\ref{d-abundant}). For every
		$s\in S$, there exists an open neighborhood $U\subset S$ of $s$ such that,
		writing $X_U:=f^{-1}(U)$, there are a modification
		$\mu:X'\to X_U$, a proper contraction $g:X'\to Z$, and a projective
		morphism $h:Z\to U$ fitting into a commutative diagram
		\[
		\begin{tikzcd}
			X' \arrow[r,"\mu"] \arrow[d,"g"'] & X_U \arrow[d,"f|_{X_U}"] \\
			Z \arrow[r,"h"'] & U,
		\end{tikzcd}
		\]
		such that
		\[
		\mu^*\!\left(L|_{X_U}\right)\sim_{\mathbb Q,U}g^*H
		\]
		for some $h$-nef and $h$-big $\mathbb Q$-Cartier divisor $H$ on $Z$.
	\end{proposition}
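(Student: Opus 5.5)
The statement to prove is Proposition~\ref{p-nefabundant}, the local factorization of a relatively nef and abundant divisor through a projective morphism. The plan is to construct $Z$ from the relative Iitaka fibration of $L$ and to upgrade it to a projective morphism over a small neighborhood. Fix $s\in S$. Since $f$ is Moishezon, a small Stein neighborhood $U$ of $s$ admits a modification $\mu_1:X_1\to X_U$ with $X_1$ projective over $U$. Replacing $X_U$ by $X_1$ and $L$ by $\mu_1^*L$ preserves relative nefness and both $\kappa(L/S)$ and $\nu(L/S)$. So we may assume $f$ is projective over $U$ after pulling back.

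\emph{Step 1: the Iitaka fibration.} Let $k=\kappa(L/S)=\nu(L/S)$. Because $U$ is Stein, for large divisible $m$ the sheaf $f_*\mathcal O(mL)$ is coherent and globally generated on a slightly smaller $U$. It therefore defines a meromorphic map $X_U\dashrightarrow\mathbb P_U\bigl(f_*\mathcal O(mL)\bigr)$ over $U$. The image $Z_0\to U$ is projective over $U$ of relative dimension $k$. Resolve the indeterminacy by a further modification $\mu:X'\to X_U$ (still projective over $U$) and take the Stein factorization $g:X'\to Z$. Then $Z\to U$ is projective, being finite over $Z_0$. By construction $\mu^*(mL)=g^*H_m+F$, where $H_m$ is the pullback of $\mathcal O(1)$, which is $h$-big on $Z$ because $\dim Z/U=k$, and $F\ge0$ is the fixed part.

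\emph{Step 2: abundance kills the fixed part.} This is the main obstacle: one must show $\mu^*L\sim_{\mathbb Q,U}g^*H$ for some $\mathbb Q$-Cartier $H$ on $Z$. Here I would follow Kawamata's argument for nef and abundant divisors, done relatively. On a general fiber $F_z$ of $g$, $\mu^*L|_{F_z}$ is nef with $\kappa=0$. Since $\nu(L)=\kappa(L)$, the numerical dimension of $\mu^*L$ restricted to $F_z$ is also $0$; computing intersection numbers with $g^*(\text{ample})^{k}$ against powers of $\mu^*L$ gives $\mu^*L|_{F_z}\equiv0$. A nef divisor numerically trivial on general fibers, together with the vanishing of $F$ there, yields that $F$ is $g$-exceptional (vertical). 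A negativity-lemma argument then shows $\mu^*L\equiv_g 0$. A nef divisor that is numerically trivial over $Z$ descends as a $\mathbb Q$-divisor up to $\mathbb Q$-linear equivalence over $U$. For this descent I would use the relative torsion-freeness results (Proposition~\ref{p-kollar-torsion freeness}-type arguments, or Nakayama's criterion for $g_*\mathcal O(m\mu^*L)$ to be invertible) after a further flattening modification of $g$.

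\emph{Step 3: nef and big.} The resulting $H$ satisfies $g^*H\sim_{\mathbb Q,U}\mu^*L$, which is nef over $U$. Since $g$ is surjective, $H$ is $h$-nef. It is $h$-big because $\kappa(H/U)=\kappa(L/S)=k=\dim Z/U$. Shrinking $U$ around $s$ at each step gives the stated diagram.
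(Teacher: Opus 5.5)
The paper does not prove this proposition; it quotes it from \cite[Proposition~2.17]{Naka87}, so your outline has to stand on its own. Step~1 is fine. So is the first half of Step~2: the intersection-number argument gives $\mu^*L|_{F_z}\equiv0$ on general fibres of $g$, hence $F$ is vertical. The gap is the next sentence. You write ``$g$-exceptional (vertical)'', but vertical is much weaker than exceptional. $F$ may have components dominating prime divisors of $Z$, or components over loci where the fibre dimension jumps. No negativity lemma turns ``vertical, effective, $g$-nef'' into $\mu^*L\equiv_g0$.

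In fact the claim is false on the Iitaka model. Set up the example as follows.
\begin{itemize}
\item Let $V$ be the ruled surface over an elliptic curve defined by the nonsplit extension of $\mathcal O$ by $\mathcal O$, with its section $C_0$. Then $C_0$ is nef, $C_0^2=0$, and $h^0(V,mC_0)=1$ for all $m\geq1$.
\item Let $p:X\to V$ be the $\mathbb P^1$-bundle compactifying the total space of an ample line bundle $\mathcal O_V(A)$, with zero section $V_0$ and disjoint section at infinity $V_\infty$.
\item Put $L=V_0+p^*C_0$.
\end{itemize}
Then $L$ is nef and big: $L|_{V_0}=A+C_0$, $L|_{V_\infty}=C_0$, and $L^3=A^2+3A\cdot C_0>0$. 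Every section of $mL$ restricts to $V_\infty$ as a multiple of the unique section of $mC_0$, and some restriction is nonzero. So the Iitaka map contracts $V_\infty$ to a point, although $L$ has degree $1$ on the rulings of $V_\infty$. Thus $\mu^*L\not\equiv_g0$, and no $\mathbb Q$-Cartier $H$ on your $Z$ satisfies $\mu^*L\sim_{\mathbb Q}g^*H$. Taking products with $S$, or with an extra curve, gives relative and non-big versions.

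The missing idea is to flatten before asserting relative numerical triviality.
\begin{itemize}
\item Replace $Z$ by a projective flattening of $g$ followed by a resolution, and $X'$ by a resolution of the main component of the fibre product.
\item Over the flat model the fibres are equidimensional and their cycles vary in a flat family. So for a relatively ample $A$, the number $\mu^*L\cdot A^{d-1}\cdot[g^{-1}(z)]$ is locally constant in $z$, where $d$ is the relative dimension.
\item This number vanishes for general $z$. Since $\mu^*L$ is nef, $\mu^*L$ is then numerically trivial on every component of every fibre, i.e.\ $\mu^*L\equiv_g0$ on the new model. This is exactly what fails in the example, where $g$ is birational but not flat.
\end{itemize}
Descent is then elementary and uses none of the results you invoke. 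Pull back $m\mu^*L=g^*H_m+F$, with $H_m$ pulled back to the new base. The vertical divisor $F$ is now $g$-numerically trivial. Zariski's lemma over the generic point of each prime divisor $P$ of the smooth base gives $F=g^*\bigl(\sum c_PP\bigr)+R$, with $R$ $g$-exceptional and $R\equiv_g0$. The negativity lemma (cf.\ \cite[Chapter~III]{Ny04}) gives $R=0$. So $H=\frac1m\bigl(H_m+\sum c_PP\bigr)$ works, and your Step~3 then goes through.

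As written, your descent step is unsound for two reasons. First, numerical triviality over $Z$ never suffices for $\mathbb Q$-linear descent by itself: take $\operatorname{pr}_1^*\eta$ on $B\times C\to C$, with $B$ an elliptic curve and $\eta\in\Pic^0(B)$ non-torsion. Second, Proposition~\ref{p-kollar-torsion freeness} concerns $R^if_*\mathcal O_X(mK_X)$ for $f$-semiample $K_X$, so it says nothing about descending $L$.
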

	Motivated by the preceding structure theorem, we introduce the notion of a
	generalized pair with abundant nef part.
	\begin{definition}[Generalized pair with abundant nef part]\label{def-nefabun}
		A $\mathbb Q$-divisorial generalized pair $(X/S,B+\mathbf M)$, with
		structural morphism $f:X\to S$, has an \emph{abundant nef part} if, for
		every $s\in S$, there exists an open neighborhood $U\subset S$ of $s$
		such that, writing $X_U:=f^{-1}(U)$, there are a modification
		$\mu:X'\to X_U$ to which $\mathbf M|_{X_U}$ descends and a factorization
		of $f|_{X_U}\circ\mu$
		\[
		X'\xrightarrow{g}Z\xrightarrow{h}U
		\]
		such that
		\[
		\mathbf M_{X'}\sim_{\mathbb Q,U}g^*H
		\]
		for some $h$-nef and $h$-big $\mathbb Q$-Cartier divisor $H$ on $Z$, and $h:Z\to U$ is projective.
	\end{definition}
	
	Thus, locally over $S$ and after passing to a higher model, the nef part
	factors through a projective morphism $h:Z\to U$. This is the essential replacement for the
	projectivity assumption in \cite{LW26}. Under this assumption, we can now construct, locally over $S$, a special metric $h_k$ on the trace $\mathbf M_X$ of the b-divisor $\mathbf M$.
	
	We first fix the notation used below. Let $L$ be a
	$\mathbb Q$-Cartier divisor, choose $m>0$ such that $mL$ is Cartier, and
	let $s_i\in H^0(X,\mathcal O_X(mL))$. For a local trivialization
	$\theta:\mathcal O_X(mL)|_U\simeq\mathcal O_U$, set
	$f_i:=\theta(s_i)$. The metric defined by the sections $s_i$ has local
	weight
	\[
	\varphi:=\frac{1}{2m}\log\!\left(\sum_i|f_i|^2\right).
	\]
	For an effective $\mathbb Q$-Cartier divisor $A$, the canonical
	divisorial metric $\overline h_A$ is the special case defined by the
	canonical section of $qA$ for a sufficiently divisible $q$; its local
	representative is a defining equation $f_{qA}$ of $qA$, and hence
	$\varphi_A=q^{-1}\log|f_{qA}|$. If $\mu:X'\to X$ is a modification and
	$h'$ is defined by sections $\sigma_i$ of $mD$, the push-forward metric
	$\mu_*h'$ is defined by the same construction using the sections
	$\mu_*\sigma_i$ of $m\mu_*D$.
	
	With these metric conventions, we adapt the constructions of
	\cite[Lemmas~4.2--4.4]{LW26} to the relative Moishezon setting in the
	following lemma.
	
	\begin{lemma}\label{l-bignef}
		In the setting of Definition~\ref{def-nefabun}, fix $0\in S$. After
		shrinking $S$ around $0$, choose $\mu:X'\to X$ and a factorization
		$X'\xrightarrow{g}Z\xrightarrow{h}S$ as in that definition, and denote by
		$\widetilde X_0$ the strict transform of $X_0$ on $X'$. After replacing
		$X'$ by a higher model and shrinking $S$ further, there are effective
		$\mathbb Q$-divisors $E,F$ on $X'$ without common components, with $E$
		$\mu$-exceptional and $\widetilde X_0\not\subset\operatorname{Supp}(E+F)$,
		such that, for every $k\geq1$, there is an $h$-ample
		$\mathbb Q$-Cartier divisor $H_k$ satisfying
		\[
		\mathbf M_{X'}+\frac1kE
		\sim_{\mathbb Q,S}g^*H_k+\frac1kF.
		\]
		Moreover, this equivalence defines a metric $h_k'$ by global sections on
		$\mathbf M_{X'}+\frac1kE$ and a push-forward metric
		$h_k:=\mu_*h_k'$ on $\mathbf M_X$, both with nonnegative curvature
		current and with nontrivial restriction to the central fiber (we write $h_{k,0}:=h_k|_{X_0}$).
		If
		\[
		\mu^*\mathbf M_X=\mathbf M_{X'}+\Xi
		\]
		and $\varphi_k,\varphi_E,\varphi_F,\varphi_\Xi$ denote the corresponding
		local weights, let
		$\varphi_{k,0},\varphi_{E_0},\varphi_{F_0},\varphi_{\Xi_0}$ denote their
		restrictions to the central fiber. Then
		\[
		\mu^*\varphi_k+\frac1k\varphi_E
		\approx\frac1k\varphi_F+\varphi_\Xi,
		\qquad
		\mu_0^*\varphi_{k,0}+\frac1k\varphi_{E_0}
		\approx\frac1k\varphi_{F_0}+\varphi_{\Xi_0}.
		\]
	\end{lemma}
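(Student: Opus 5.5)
The plan is to adapt Li--Wang's Lemmas~4.2--4.4 to the present setting. Projectivity is used only on $Z$ over $S$, which Definition~\ref{def-nefabun} supplies.

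\emph{Step 1: write $H$ as ample plus small effective.} Since $H$ is $h$-nef and $h$-big and $h:Z\to S$ is projective, Kodaira's lemma (relative form, after shrinking $S$ to a Stein neighborhood of $0$) gives
\[
H\sim_{\mathbb Q,S}A+D,
\]
with $A$ $h$-ample and $D\geq0$ effective. We may choose $D$ so that its support does not contain the image $g(\widetilde X_0)$. This is possible because $\mathbf M_{X'}|_{\widetilde X_0}$ is still nef and big over the image of the fiber; if needed we perturb $A$ by $h$-ample divisors and choose members of $|mA|$ in general position relative to $g(\widetilde X_0)$. Then set
\[
H_k:=H-\tfrac1kD\sim_{\mathbb Q,S}\tfrac1kA+\bigl(1-\tfrac1k\bigr)H .
\]
This is $h$-ample for every $k\geq1$, being a positive combination of an ample and a nef class. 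It follows that $g^*H\sim g^*H_k+\frac1k g^*D$.

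\emph{Step 2: produce $E$, $F$ and the metrics.} Pass to a higher model of $X'$, still dominating $Z$, on which $\mathbf M$ descends and $g^*D$ has snc support. Write
\[
g^*D=F-E,
\]
where $E$ consists of the $\mu$-exceptional components of $g^*D$ that must be moved to the left, and $F$ collects the remaining components. Cancel any common components so that $E$ and $F$ share none, and put the part of $g^*D$ supported on $\mu$-exceptional divisors into $E$ as needed. Because $g^*D$ does not contain $\widetilde X_0$, neither $E$ nor $F$ contains it. Together with $\mathbf M_{X'}\sim_{\mathbb Q,S}g^*H$, this gives the claimed relation
\[
\mathbf M_{X'}+\tfrac1kE\sim_{\mathbb Q,S}g^*H_k+\tfrac1kF .
\]
Since $H_k$ is relatively ample and $S$ is Stein, some multiple $qH_k$ is globally generated on $Z$. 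Pulling back its sections and multiplying by the canonical section of $\frac1kF$ gives global sections of $q(\mathbf M_{X'}+\frac1kE)$. These define $h_k'$, which has positive curvature current because it is given by sections. The sections $g^*\sigma$ can be chosen not to vanish identically on $\widetilde X_0$, as $g^*H_k$ is semiample and $F\not\supset \widetilde X_0$, so the restriction to the central fiber is nontrivial. Push forward by $\mu$ to obtain $h_k=\mu_*h_k'$ on $\mu_*(\mathbf M_{X'}+\frac1kE)=\mathbf M_X$; this again has positive curvature, being defined by the pushed-forward sections.

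\emph{Step 3: compare the weights.} The weight of $h_k'$ is $\frac1{q}\log\sum|g^*\sigma_i|^2+\frac1k\varphi_F$. The first term is locally bounded because $\{\sigma_i\}$ is base point free. Hence $\varphi_{h_k'}\approx\frac1k\varphi_F$. The comparison between $\mu^*\varphi_k$ and $\varphi_{h_k'}-\frac1k\varphi_E$ is controlled by the difference $\mu^*\mathbf M_X-\mathbf M_{X'}=\Xi$, which is effective by the negativity lemma since $\mathbf M_{X'}$ is nef over $X$. For the push-forward metric, this gives $\mu^*\varphi_k+\frac1k\varphi_E\approx\frac1k\varphi_F+\varphi_\Xi$. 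Restricting to $\widetilde X_0\to X_0$ is legitimate because none of $E$, $F$, $\Xi$ contains $\widetilde X_0$, and $\approx$ is preserved under restriction.

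\emph{Main obstacle.} The delicate point is the central fiber: we must keep $\widetilde X_0$ out of $\operatorname{Supp}(E+F)$ and keep the sections nonvanishing on it, uniformly in $k$. To handle this I would first shrink $S$ so that $0$ is the only special point, and then use relative Kodaira's lemma on $Z$ in a form that avoids the prescribed subvariety $g(\widetilde X_0)$, on which $H$ remains big.
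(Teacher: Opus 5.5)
\textbf{There is a genuine gap in how you produce $E$ and $F$.} Your Steps 2--3 after that point match the paper's construction: the globally generated multiple of $H_k$, twisting by $\overline h_F^{1/k}$, pushing forward, and comparing weights through $\Xi$.

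In Step 1 you claim the Kodaira decomposition $H\sim_{\mathbb Q,S}A+D$ can be chosen with $g(\widetilde X_0)\not\subseteq\operatorname{Supp}D$, because $H$ stays big along $g(\widetilde X_0)$. That bigness is not a hypothesis of Definition~\ref{def-nefabun}. In fact the intersection of the supports of all such $D$ is the relative augmented base locus of $H$, and it can contain $g(\widetilde X_0)$.

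Here is an instance of the definition where this happens. Let $\beta:Z\to\mathbb P^1\times S$ blow up a point of the central fiber, so $Z_0=C_1+C_2$ with $C_1$ the $\beta$-exceptional curve. Take $X'=Z$, $g=\mathrm{id}$, $H=\beta^*\mathcal O(1)$, and let $\mu:X'\to X$ contract the $(-1)$-curve $C_2$. Then $\widetilde X_0=C_1$ and $H\cdot C_1=0$. Every decomposition therefore has $D\cdot C_1=-A\cdot C_1<0$, which forces $C_1\subseteq\operatorname{Supp}D$. Perturbing $A$ does not help, and passing to higher models of $X'$ does not change $g(\widetilde X_0)\subseteq Z$.

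Step 2 cannot repair this. Since $g^*D\geq0$, writing $g^*D=F-E$ with $E,F$ effective and without common components forces $E=0$ and $F=g^*D$. Moving effective $\mu$-exceptional components into $E$ would give them the wrong sign. So your $E$ is always zero, and $F$ may contain $\widetilde X_0$.

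\textbf{The missing idea} is to allow $g^*G$ to contain $\widetilde X_0$, and then subtract a multiple of the whole fiber, which is $\mathbb Q$-linearly trivial over $S$ after shrinking $S$ to a coordinate disc. With $c$ the coefficient of $\widetilde X_0$ in $g^*G$, the paper sets $D:=g^*G-c\,(f\circ\mu)^*(0)$.
\begin{itemize}
\item $(f\circ\mu)^*(0)=\mu^*X_0$ is $\widetilde X_0$ (with coefficient one) plus $\mu$-exceptional components.
\item Hence $D$ has coefficient zero along $\widetilde X_0$, and its negative part is $\mu$-exceptional.
\item Writing $D=F-E$ gives exactly the divisors in the statement.
\item $\mathbf M_{X'}+\frac1kE\sim_{\mathbb Q,S}g^*H_k+\frac1kF$ holds because $(f\circ\mu)^*(0)\sim_S0$.
\end{itemize}
This is the only source of a nonzero $\mu$-exceptional $E$. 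In the example above, $A=H-\epsilon C_1$ is $h$-ample for $0<\epsilon<1$ and $G=\epsilon C_1$, which yields $E=\epsilon C_2$ and $F=0$.
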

	
	\begin{proof}
		By Proposition~\ref{p-nefabundant}, the intermediate morphism
		$h:Z\to S$ is projective, and
		$\mathbf M_{X'}\sim_{\mathbb Q,S}g^*H$ for an $h$-nef and $h$-big
		divisor $H$. By the relative Kodaira lemma, there is
		an effective $\mathbb Q$-divisor $G$ on $Z$ such that, for every
		$k\geq1$,
		\[
		H\sim_{\mathbb Q,S}H_k+\frac1kG
		\]
		with $H_k$ $h$-ample. Set
		\[
		c:=\operatorname{mult}_{X_0}\!\left(\mu_*g^*G\right),\qquad
		D:=g^*G-c(f\circ\mu)^*(0).
		\]
		The divisor $D$ does not contain $\widetilde X_0$, and every component of
		its negative part is $\mu$-exceptional. As in
		\cite[Lemma~4.2]{LW26}, write $D=F-E$ as the difference of its positive and
		negative parts. Then $E$ and $F$ are effective divisors without common
		components, $E$ is $\mu$-exceptional, and
		$\widetilde X_0\not\subset\operatorname{Supp}(E+F)$. Since
		$(f\circ\mu)^*(0)$ is trivial over $S$, we obtain
		\[
		\mathbf M_{X'}+\frac1kE
		\sim_{\mathbb Q,S}g^*H_k+\frac1kF.
		\]
		This is the construction in \cite[Lemma~4.2]{LW26}. 
		
		After shrinking $S$, choose a sufficiently divisible integer $r$ such that
		$rH_k$ is Cartier and its relative complete linear system defines an
		embedding
		\[
		\iota_k:Z\hookrightarrow\mathbb P^N_S.
		\]
		Choose relative global sections
		$\sigma_0,\ldots,\sigma_N\in H^0(Z,\mathcal O_Z(rH_k))$ with no common
		zero that define the embedding $\iota_k$. If
		$\theta:\mathcal O_Z(rH_k)|_U\simeq\mathcal O_U$ is
		a local trivialization, the pullback of the relative Fubini--Study metric has
		local weight
		\[
		\varphi_{rH_k}=\frac12\log\!\left(
		\sum_{j=0}^N|\theta(\sigma_j)|^2\right).
		\]
		The sections have no common zero, so this weight is smooth and
		plurisubharmonic. Taking the $r$-th root gives a smooth metric $h_{H_k}$ on
		$H_k$ with nonnegative curvature, and we set
		\[
		\chi_k:=g^*h_{H_k}.
		\]
		Thus $\chi_k$ is a smooth semipositive metric on $g^*H_k$ defined by global
		sections. Twisting it by the canonical metric $\bar{h}_F$ on $F$ gives the metric
		\[
		h_k':=\chi_k\,\overline h_F^{1/k}
		\]
		on $g^*H_k+\frac1kF$, and therefore, via the equivalence above, on
		$\mathbf M_{X'}+\frac1kE$. Because $\widetilde X_0$ is not a component
		of $F$, the restriction of $h_k'$ to $\widetilde X_0$ is not identically infinite,
		and
		\[
		\varphi_k'\approx\frac1k\varphi_F,\quad
		\varphi_{k,0}'\approx\frac1k\varphi_{F_0}.
		\]
		Finally, the negativity lemma gives
		$\mu^*\mathbf M_X=\mathbf M_{X'}+\Xi$ with $\Xi\geq0$
		$\mu$-exceptional. Since $h_k'$ is defined by global sections, its
		push-forward $h_k:=\mu_*h_k'$ is a well-defined metric on $\mathbf M_X$.
		To compare the two metrics, observe that $h_k'$ is a metric on
		$\mathbf M_{X'}+\frac1kE$, while
		\[
		\mu^*\mathbf M_X
		=\mathbf M_{X'}+\Xi
		=\left(\mathbf M_{X'}+\frac1kE\right)
		+\left(\Xi-\frac1kE\right).
		\]
		Hence the difference between the two underlying divisors is
		$\Xi-\frac1kE$. By the definition of the push-forward metric and the
		multiplicativity of divisorial metrics,
		\[
		\mu^*h_k
		=h_k'\,\overline h_\Xi\,\overline h_E^{-1/k}
		\]
		up to multiplication by a positive locally bounded function. Equivalently,
		\[
		\mu^*h_k\cdot\overline h_E^{1/k}
		=h_k'\,\overline h_\Xi
		\]
		up to the same type of factor. Passing to local weights, we obtain
		\[
		\mu^*\varphi_k+\frac1k\varphi_E
		\approx\varphi_k'+\varphi_\Xi
		\approx\frac1k\varphi_F+\varphi_\Xi.
		\]
		Restricting this identity to the central fiber similarly gives
		\[
		\mu_0^*\varphi_{k,0}+\frac1k\varphi_{E_0}
		\approx\varphi_{k,0}'+\varphi_{\Xi_0}
		\approx\frac1k\varphi_{F_0}+\varphi_{\Xi_0}.
		\]
		These are precisely the asserted weight identities; the curvature currents
		are nonnegative because all the metrics involved are defined by global
		sections, as in \cite[Lemmas~4.3 and~4.4]{LW26}. Thus the only new input
		in the Moishezon setting is the projectivity of the auxiliary morphism
		$Z\to S$ supplied by Proposition~\ref{p-nefabundant}; the remainder of
		the metric construction is unchanged.
	\end{proof}
	
	We recall the definition of the bimeromorphic multiplier ideal sheaf from
	\cite[Definition~5.4]{LW26}. Let $h=e^{-2\varphi}$ be a singular metric on
	a line bundle $L$. For $x\in X$,
	the stalk $\mathcal G_m(h)_x$ consists of the germs
	$\alpha\in\mathcal O_{X,x}$ for which there exist a resolution
	$\nu:W\to X$ and a neighborhood $U$ of $x$ on which $\alpha$ is represented,
	such that
	\[
	|\nu^*\alpha|^2|J(\nu)|^{2m}e^{-2\nu^*\varphi}
	\in L^1_{\mathrm{loc}}\bigl(\nu^{-1}(U)\bigr).
	\]
	Here $J(\nu)$ is the Jacobian factor of
	the resolution.
	The Jacobian factor makes it stable under bimeromorphic changes (cf. \cite[Lemma~5.12]{LW26}).
	
	The following proposition  extends \cite[Theorem~1.1]{LW26} from projective
	families to Moishezon families. 
	\begin{proposition}
		\label{t-LWext}
		Let $\pi:X\to S$ be a Moishezon family with connected fibers over a
		relatively compact, smooth, connected curve and $(L,h)$ a line bundle
		with a singular Hermitian metric of nonnegative curvature current, and
		assume that $X_0$ has canonical singularities and $h|_{X_0}$ is well
		defined. Then every section of
		\[
		H^0\!\left(
		X_0,\mathcal O_{X_0}(mK_{X_0}+L|_{X_0})
		\otimes\mathcal G_m(h|_{X_0})\right)
		\]
		extends over a neighborhood of $X_0$.
	\end{proposition}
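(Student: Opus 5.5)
Since the question is local near $0$ and $\pi$ is Moishezon, I will reduce to Li--Wang's projective extension theorem \cite[Theorem~1.1]{LW26} by passing to a projective bimeromorphic model. After shrinking $S$ to a disc around $0$, choose a bimeromorphic model $\mu:X'\to X$ with $X'\to S$ projective. I will take $X'$ to be a resolution that is an isomorphism over the smooth locus of $X$, and replace $X'$ by a log resolution of $(X,X_0)$ so that the strict transform $\widetilde X_0$ is smooth, or at least has mild singularities. Set $L':=\mu^*L$ with the pulled-back metric $h':=\mu^*h$. This metric still has nonnegative curvature current, and its restriction to $\widetilde X_0$ is well defined because $h|_{X_0}$ is.

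The key steps, in order, are as follows.

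\emph{Step (a): transport the section.} Because $X_0$ has canonical singularities, $K_{\widetilde X_0}=\mu_0^*K_{X_0}+A$ with $A\ge0$ exceptional. Hence a section $s\in H^0(X_0,mK_{X_0}+L_0)$ pulls back to a section of $mK_{\widetilde X_0}+L'_0$ that vanishes along $mA$. The Jacobian factor in the definition of $\mathcal G_m$, together with its bimeromorphic invariance \cite[Lemma~5.12]{LW26}, should show that $\mu_0^*s$ lies in $\mathcal G_m(h'|_{\widetilde X_0})$ or in the corresponding multiplier ideal on $\widetilde X_0$.

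\emph{Step (b): extend on the projective model.} Apply the projective extension theorem of Li--Wang, which is proved through Siu--P\u{a}un type iteration with Ohsawa--Takegoshi, to the projective family $X'\to S$ and the pair $(L',h')$. This produces a section $\widetilde\sigma$ of $mK_{X'}+L'$ over a neighborhood of $\widetilde X_0$ extending $\mu_0^*s$. Here $X'\to S$ may have a non-reduced or reducible central fiber $\mu^*X_0=\widetilde X_0+(\text{exceptional})$. I will handle this by working with the adjoint bundle $mK_{X'}+L'+(m-1)$ times the exceptional part of the fiber, or by using the Ohsawa--Takegoshi version with the divisor $\widetilde X_0$ in place of the fiber, since the exceptional components are trivial over $S$ after twisting.

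\emph{Step (c): push the extension down.} Set $\sigma:=\mu_*\widetilde\sigma$. Off the exceptional locus this is a section of $mK_X+L$, and by normality of $X$ and Hartogs extension it becomes a section of $\mathcal O_X(mK_X+L)$ on the neighborhood. Its restriction to $X_0$ agrees with $s$ on the dense open set where $\mu$ is an isomorphism, hence everywhere, because $\mathcal O_{X_0}(mK_{X_0}+L_0)$ is reflexive on the normal space $X_0$.

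The main obstacle is Step (b): matching the hypotheses of Li--Wang's theorem on the projective model. In particular, their theorem is stated for fibers with canonical singularities and a metric well defined on the fiber. On $X'$ the central fiber acquires exceptional components, and the condition that $\mu_0^*s$ lies in the multiplier ideal must be checked carefully against $A$. My first attempt would use the canonical-singularity discrepancy $A\ge0$ to absorb the Jacobian factor, then invoke Li--Wang's $\mathcal G_m$ formulation directly on the projective family $X'\to S$, with $\widetilde X_0$ playing the role of the fiber via adjunction for $K_{X'}+\widetilde X_0$.
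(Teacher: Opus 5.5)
Your proposal is correct and follows essentially the paper's route. The paper also transports $s$ to a smooth model $Y_0\subset Y$ of $X_0$ inside a resolution $\mu:Y\to X$ that is projective over $S$ (via \cite[Lemmas~5.11--5.13]{LW26}), extends there, and descends. The obstacle you flag in Step (b) is settled there by citing Takayama's extension theorem \cite[Theorem~3.1]{Taka06} for the multiplicity-one component $Y_0$ of $\mu^*X_0=Y_0+\Theta$ with $K_Y|_{Y_0}\sim K_{Y_0}+\Theta_0$, rather than \cite[Theorem~1.1]{LW26}, whose central-fiber hypotheses are not met on $Y$; your push-forward/reflexivity descent in Step (c) is a harmless variant of the paper's use of canonicity of $X$ (via \cite{KawaDeform99}) to make $\mu^*$ an isomorphism on sections.
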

	
	\begin{proof}
		After shrinking $S$ around $0$, we fix
		\[
		s\in H^0\!\left(X_0,
		\mathcal O_{X_0}(mK_{X_0}+L|_{X_0})
		\otimes\mathcal G_m(h|_{X_0})\right).
		\]
		By \cite[Lemma~5.12]{LW26}, there is a resolution
		$\nu_0:\widetilde X_0\to X_0$ such that membership in
		$\mathcal G_m(h|_{X_0})$ is equivalent to the ordinary multiplier ideal
		condition:
		\[
		\nu_0^*s\in H^0\!\left(\widetilde X_0,
		\mathcal O_{\widetilde X_0}
		\bigl(mK_{\widetilde X_0}+\nu_0^*(L|_{X_0})\bigr)
		\otimes\mathcal I\bigl(\nu_0^*(h|_{X_0})\bigr)\right).
		\]
		Then, by \cite[Lemma~5.13]{LW26}, there are smooth complex manifold $Y_0\subset Y$, a
		resolution $\mu:Y\to X$, and a morphism
		$\tau_0:Y_0\to\widetilde X_0$ such that
		\[
		\mu_0:=\mu|_{Y_0}=\nu_0\circ\tau_0.
		\]
		By \cite[Lemma~5.11]{LW26}, the multiplier ideal condition is preserved
		after passing to this higher resolution; hence
		\[
		\mu_0^*s\in H^0\!\left(Y_0,
		\mathcal O_{Y_0}\bigl(mK_{Y_0}+\mu_0^*(L|_{X_0})\bigr)
		\otimes\mathcal I\bigl(\mu_0^*(h|_{X_0})\bigr)\right).
		\]
		
		Because $X\to S$ is Moishezon, we may first take a projective modification
		and then choose $\mu$ projective over $S$. Therefore
		Takayama's extension theorem \cite[Theorem~3.1]{Taka06} (and also \cite[Theorem 5.1]{LW26} in the twisted setting) extends
		$\mu_0^*s$ from $Y_0$ to $Y$. After a
		further shrinking of $S$, the total space $X$ has canonical singularities
		\cite{KawaDeform99}. Put $L_0:=L|_{X_0}$. The restriction and pullback fit
		into the commutative diagram
		\[
		\begin{tikzcd}[column sep=large, row sep=large]
			H^0\!\left(Y,\mathcal O_Y(mK_Y+\mu^*L)\right)
			\arrow[r,"\mathrm{res}"]
			& H^0\!\left(Y_0,
			\mathcal O_{Y_0}(mK_Y|_{Y_0}+\mu_0^*L_0)\right) \\
			H^0\!\left(X,\mathcal O_X(mK_X+L)\right)
			\arrow[u,"\mu^*"']
			\arrow[r,"\mathrm{res}"']
			& H^0\!\left(X_0,
			\mathcal O_{X_0}(mK_{X_0}+L_0)\right)
			\arrow[u,hook].
		\end{tikzcd}
		\]
		The left vertical map is an isomorphism because $X$ has canonical
		singularities. To describe the right vertical map, write
		\[
		K_{Y_0}=\mu_0^*K_{X_0}+A_0,
		\]
		where $A_0\geq0$ is $\mu_0$-exceptional because $X_0$ has canonical
		singularities. Moreover, if
		$\mu^*X_0=Y_0+\Theta$ and $\Theta_0:=\Theta|_{Y_0}$, then adjunction gives
		\[
		K_Y|_{Y_0}\sim K_{Y_0}+\Theta_0,
		\qquad \Theta_0\geq0.
		\]
		The right vertical arrow is therefore the composition
		\[
		\begin{aligned}
			H^0\!\left(X_0,\mathcal O_{X_0}(mK_{X_0}+L_0)\right)
			&\xrightarrow{\ \mu_0^*\ }
			H^0\!\left(Y_0,
			\mathcal O_{Y_0}(m\mu_0^*K_{X_0}+\mu_0^*L_0)\right)\\
			&\hookrightarrow
			H^0\!\left(Y_0,
			\mathcal O_{Y_0}(mK_{Y_0}+\mu_0^*L_0)\right)\\
			&\hookrightarrow
			H^0\!\left(Y_0,
			\mathcal O_{Y_0}(mK_Y|_{Y_0}+\mu_0^*L_0)\right),
		\end{aligned}
		\]
		where first map is injective because $\mu_0$ is birational, and the next two maps are inclusions
		induced by the effective divisors $mA_0$ and $m\Theta_0$, respectively.
		Hence the extension of $\mu_0^*s$ on $Y$ corresponds to a section on $X$
		whose restriction is $s$.
	\end{proof}
	
	Using the metric $h_k$ constructed in Lemma \ref{l-bignef}, we can prove the following extension theorem for pluricanonical systems of generalized pairs.
	\begin{proposition}\label{t-gpairinvPluri}
		Let $\pi:X\to S$ be as in Theorem~\ref{t-ginvarpluri} and
		$(X,\mathbf M)$ the generalized pair there, and fix $0\in S$. For
		every $m\geq1$ such
		that $m\mathbf M_X$ is Cartier, after shrinking $S$ around $0$, the
		restriction map
		\[
		H^0\!\left(X,
		\mathcal O_X\bigl(m(K_X+\mathbf M_X)\bigr)\right)
		\longrightarrow
		H^0\!\left(X_0,
		\mathcal O_{X_0}\bigl(m(K_{X_0}+\mathbf M_0)\bigr)\right)
		\]
		is surjective.
	\end{proposition}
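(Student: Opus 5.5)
Fix $m\ge1$ with $m\mathbf M_X$ Cartier and a section $s\in H^0(X_0,\mathcal O_{X_0}(m(K_{X_0}+\mathbf M_0)))$. The plan follows the strategy of Li--Wang. We use the metrics $h_k$ on $\mathbf M_X$ from Lemma~\ref{l-bignef} to exhibit $s$ as a section of $mK_{X_0}+L|_{X_0}$ twisted by a bimeromorphic multiplier ideal, and then extend it with Proposition~\ref{t-LWext}. The natural choice is $L:=m\mathbf M_X$ with the metric $h_k^{m}$. Its curvature current is nonnegative, and $h_k^m|_{X_0}$ is well defined because $\widetilde X_0\not\subset\operatorname{Supp}(E+F)$.

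\textbf{Step 1 (membership in the multiplier ideal).} We show that for $k\gg1$, depending on $m$ and $s$, we have $s\in H^0(X_0,\mathcal O_{X_0}(mK_{X_0}+m\mathbf M_0)\otimes\mathcal G_m(h_{k,0}^m))$. Pull back along a resolution $\nu:W\to X_0$ that factors through $\mu_0$, on which $E_0,F_0,\Xi_0$ and the exceptional divisors have snc support. The weight identity of Lemma~\ref{l-bignef} gives
\[
m\nu^*\varphi_{k,0}\approx \tfrac{m}{k}\varphi_{F_0}-\tfrac{m}{k}\varphi_{E_0}+m\varphi_{\Xi_0}
\]
(pulled back to $W$). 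The integrability condition $|\nu^*s|^2|J(\nu)|^{2m}e^{-2m\nu^*\varphi_{k,0}}\in L^1_{\rm loc}$ then reduces to a divisorial comparison. Because $(X_0,\mathbf M_0)$ is generalized canonical, the divisor $mK_W-\nu^*m(K_{X_0}+\mathbf M_0)$ dominates $m\Xi$ on exceptional divisors, so the Jacobian factor absorbs the pole $e^{-2m\varphi_{\Xi}}$. The term $-\frac mk\varphi_{E}$ only helps. The remaining pole $\frac mk\varphi_{F_0}$ has coefficients that are $<1$ once $k$ is large, so it stays $L^1$ on an snc model. The term $e^{+\frac{m}{k}\cdot 2\varphi_E}$ has the good sign.

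\textbf{Step 2 (extension).} Apply Proposition~\ref{t-LWext} to $(L,h)=(m\mathbf M_X,h_k^m)$ over a neighborhood where $\mathbf M$ has the structure from Definition~\ref{def-nefabun}. This requires $X_0$ to be canonical, which follows from the generalized canonical hypothesis because $\mathbf M$ is nef and descends, so the discrepancies of $X_0$ alone are no smaller. The proposition extends $s$ to a section of $\mathcal O_X(mK_X+m\mathbf M_X)$ over a neighborhood of $X_0$. We then identify $mK_X|_{X_0}$ with $mK_{X_0}$ by adjunction, since $X_0$ is Cartier and trivial. Finally, because $H^0(X_0,\cdot)$ is finite dimensional, we extend a basis and shrink $S$ once, which gives surjectivity of the restriction map.

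\textbf{Main obstacle.} The hard step is Step 1, namely the bookkeeping showing that $\mathcal G_m(h_{k,0}^m)$ imposes no condition on $s$. This requires controlling the fractional coefficients of $\frac{m}{k}F_0$ and comparing the Jacobian factor against $m\Xi_0$ via generalized canonicity. It also requires checking that the restriction to $X_0$ commutes with the strict transform, since $\mu_0$ may fail to be the strict transform of $X_0$ in $X'$. I would first pass to a common log resolution of $(X',\widetilde X_0+E+F+\Xi)$ and verify the estimate componentwise, exactly as in \cite[Lemma~4.4]{LW26}.
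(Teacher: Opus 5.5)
Your proposal is correct and follows the paper's proof essentially step for step. You equip $m\mathbf M_X$ with $h_k^m$ from Lemma~\ref{l-bignef} and use its weight identity so that the Jacobian factor absorbs $\Xi_0$ (the paper's $A_0^{\mathrm{gen}}=A_0-\Xi_0\geq0$); the $E_0$ term has the good sign, Skoda handles $\frac{m}{k}F_0$ for $k\gg1$, and you conclude with Proposition~\ref{t-LWext}.

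One small correction: the inequality you need is $K_W-\nu^*K_{X_0}\geq\Xi$ (pulled back to $W$), not a statement about $mK_W-\nu^*m(K_{X_0}+\mathbf M_0)$. Also, the choice of $k$ depends only on $m$ and $F_0$, not on $s$.
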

	
	\begin{proof}
		After shrinking $S$, we may assume that $X$ has
		canonical singularities \cite{KawaDeform99}. Choose a model
		$\mu:X'\to X$ on which $\mathbf M$ descends and for which
		Proposition~\ref{p-nefabundant} and Lemma~\ref{l-bignef} apply. Define the discrepancy divisors
		$A_0^{\mathrm{gen}}$ and $A_0$ by
		\[
		K_{\widetilde X_0}+\mathbf M_{X'}|_{\widetilde X_0}
		=\mu_0^*(K_{X_0}+\mathbf M_0)+A_0^{\mathrm{gen}},
		\qquad
		K_{\widetilde X_0}=\mu_0^*K_{X_0}+A_0.
		\]
		Generalized canonicity gives $A_0^{\mathrm{gen}}\geq0$. Moreover, since $X_0$ has canonical singularities, $A_0\geq0$. Write
		\[
		\mu^*\mathbf M_X=\mathbf M_{X'}+\Xi,
		\qquad \Xi\geq0,
		\]
		where $\Xi$ is $\mu$-exceptional, and set
		$\Xi_0:=\Xi|_{\widetilde X_0}$. Thus
		$\mu_0^*\mathbf M_0=\mathbf M_{X'}|_{\widetilde X_0}+\Xi_0$, and comparison
		of the two discrepancy formulas gives
		\[
		A_0^{\mathrm{gen}}=A_0-\Xi_0.
		\]
		
		Fix $m\ge 1$ and a section
		$s\in H^0(X_0,\mathcal{O}_{X_0}(m(K_{X_0}+\mathbf M_0)))$. To apply
		Proposition~\ref{t-LWext}, we must show that, for $k\gg1$,
		\[
		s\in H^0\!\left(X_0,
		\mathcal O_{X_0}\bigl(m(K_{X_0}+\mathbf M_0)\bigr)
		\otimes\mathcal G_m(h_{k,0}^m)\right).
		\]
		If $\alpha$ denotes the local coefficient of $s$ in a trivialization,
		then the definition of $\mathcal G_m$ and \cite[Lemma~5.12]{LW26} reduce
		this membership to the local integrability condition
		\[
		\int_{\mu_0^{-1}(U)}
		\|\mu_0^*\alpha\|^2_{\mu_0^*(h_{k,0}^m)}
		|J(\mu_0)|^{2m}\,dV_{\widetilde X_0}<\infty\tag{*}
		\]
		for every sufficiently small open set. Note that
		$\operatorname{div}(J(\mu_0))
		=K_{\widetilde X_0}-\mu_0^*K_{X_0}=A_0$. The weight identities in
		Lemma~\ref{l-bignef} give
		\[
		-m\mu_0^*\varphi_{k,0}+m\varphi_{A_0}
		\approx
		m\left(-\frac1k\varphi_{F_0}
		+\frac1k\varphi_{E_0}+\varphi_{A_0^{\mathrm{gen}}}\right).
		\]
		Since $A_0^{\mathrm{gen}}$ and $E_0$ are effective, Skoda's integrability
		lemma (cf. \cite[Lemma 5.6]{Dem12}) shows that the integral $(*)$ is finite once $k$ is chosen so that
		\[
		\frac{2m}{k}\,
		\nu\!\left(\Theta_{\overline h_{F_0}},y\right)<1
		\quad\text{for every }y\in\widetilde X_0.
		\]
		Consequently, every section $s$ as above vanishing along
		$\mathcal G_m(h_{k,0}^m)$ for $k\gg1$.
		
		We now equip $m \mathbf{M}_X$ with the metric $h_k^m$. Proposition~\ref{t-LWext} then extends every section from the
		central fiber, which completes the proof. 
	\end{proof}
	We can now complete the proof:
	\begin{proof}[Proof of Theorem~\ref{t-ginvarpluri}]
		Apply Proposition~\ref{t-gpairinvPluri} near an arbitrary point of $S$. Thus
		every generalized pluricanonical section on a fiber extends locally over
		the base. The extension criterion in Theorem~\ref{l-Extension} gives the
		local constancy of the corresponding $h^0$ (see the proof of \cite[Corollary~1.3]{LW26}), and connectedness of $S$
		gives constancy on the whole base.
	\end{proof}

	\appendix
	\section{Rao--Tsai's dimension-invariance approach to torsion freeness}\label{app-base change}
	This appendix gives a direct proof of the equivalence between constancy of fiberwise cohomology dimensions and torsion freeness of the corresponding higher direct images over a smooth curve. Using the exact sequence defined by multiplication by a local parameter, we recover the equivalence between conditions \hyperref[main-cond-iii]{(iii)} and \hyperref[main-cond-v]{(v)} in Theorem~\ref{Main}, generalizing the exact-sequence argument of \cite[Proposition~4.12]{RT1}. This argument also makes explicit how torsion obstructs base change at a special fiber.
	\begin{proposition}
		Let $f:X \to Y$ be a proper morphism of complex spaces where $Y$ is a reduced and smooth complex space of dimension one, $\mathscr{F}$ a coherent analytic sheaf on $X$ which is flat with respect to $f$, $y$ a point in $Y$, and $q$ an integer. Then $h^q(X_y,\mathscr{F}_{(y)})$ is locally constant if and only if $R^{q+1}f_*\mathscr{F}$ and $R^{q}f_*\mathscr{F}$ are torsion free.
	\end{proposition}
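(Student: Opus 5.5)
The plan is to give a self-contained argument modeled on \cite[Proposition~4.12]{RT1}. The key tool is the short exact sequence obtained from multiplication by a local parameter; neither the universal coefficient theorem nor the local complexes of Lemma~\ref{main lemma} will be needed. The statement is local on $Y$, so we fix $y_0\in Y$ and shrink $Y$ to a coordinate disk centered at $y_0$, with coordinate $t$ and $t(y_0)=0$. Every $y$ in this disk is then a smooth point with a uniformizer of the same kind, so it suffices to compare $h^q$ at an arbitrary point $y$ with its value at nearby points.

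Since $\mathscr F$ is flat over $Y$, multiplication by $f^*(t-t(y))$ is injective on $\mathscr F$: on a DVR, flat means torsion free. This gives the short exact sequence
\[
0\to\mathscr F\xrightarrow{\ \cdot(t-t(y))\ }\mathscr F\to\mathscr F_{(y)}\to0 .
\]
We apply $f_*$ and take the stalk at $y$. By Proposition~\ref{higher stalk} together with properness, $(R^qf_*\mathscr F_{(y)})_y\cong H^q(X_y,\mathscr F_{(y)})$, because $\mathscr F_{(y)}$ is supported on $X_y$. Writing $\cF^q=R^qf_*\mathscr F$ and $u=t-t(y)$, the long exact sequence collapses to
\[
0\to \cF^q_y/u\,\cF^q_y\to H^q(X_y,\mathscr F_{(y)})\to \ker\bigl(u:\cF^{q+1}_y\to\cF^{q+1}_y\bigr)\to0 .
\]

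Next we compute the dimensions of the two outer terms using Proposition~\ref{Torzero}. The stalk $\cF^q_y$ is finitely generated over the DVR $\mathcal O_{Y,y}$ by Theorem~\ref{direct image}, so $\cF^q_y\cong\mathcal O_{Y,y}^{\oplus r_q}\oplus\bigoplus_i\mathcal O_{Y,y}/(u^{e_i})$. Hence $\dim\cF^q_y/u\cF^q_y=r_q+\mu(T^q_y)$. The $u$-kernel on $\cF^{q+1}_y$ lies inside the torsion part, and each cyclic summand $\mathcal O/(u^{e})$ contributes exactly one dimension to it, so its dimension is $\mu(T^{q+1}_y)$. Therefore
\[
h^q(X_y,\mathscr F_{(y)})=r_q+\mu(T^q_y)+\mu(T^{q+1}_y).
\]
The torsion subsheaves are coherent by Lemma~\ref{torsion-sheaf}, so their supports are discrete. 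Consequently, for $y'\neq y$ close to $y$ both torsion parts vanish, and the rank is locally constant, giving $h^q(X_{y'})=r_q$.

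Both directions now follow. If $\cF^q$ and $\cF^{q+1}$ are torsion free, then $\mu=0$ everywhere and $h^q$ equals the locally constant rank. Conversely, if $h^q$ is locally constant, then $\mu(T^q_y)+\mu(T^{q+1}_y)=0$ at every $y$. Since $\mu(T)=\dim T\otimes\mathbb C(y)$, Nakayama's lemma gives $T^q_y=T^{q+1}_y=0$, so both sheaves are torsion free.

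The step I expect to need the most care is the identification of the stalks in the long exact sequence. One must check that the connecting maps and the stalk isomorphism $(R^qf_*\mathscr F_{(y)})_y\cong H^q(X_y,\mathscr F_{(y)})$ are compatible, and that the maps $\cF^q_y\to\cF^q_y$ induced by the sheaf map really are multiplication by $u$; this is functoriality of $R^qf_*$ with respect to $\mathcal O_Y$-linear maps. The dimension count for the $u$-kernel is routine by comparison.
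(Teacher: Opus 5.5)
Your argument is correct, and it uses the same key device as the paper's appendix proof: the sequence coming from multiplication by a local parameter. The paper writes it as $0\to t\mathscr F\to\mathscr F\to\mathscr F/t\mathscr F\to0$ with the factorization $\iota_k=(\cdot t)\circ\jmath_k$, which is exactly your identification of $R^kf_*(\cdot u)$ with multiplication by $u$. After that the two routes diverge.

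The paper argues qualitatively. In the ``if'' direction, torsion freeness of $R^{q+1}f_*\mathscr F$ makes $\iota_{q+1}$ injective, so $\delta_q=0$ and $H^q(X_y,\mathscr F_{(y)})$ is the fiber of the locally free sheaf $R^qf_*\mathscr F$. For the converse it invokes Grauert's continuity and base change theorem to get local freeness of $R^qf_*\mathscr F$ and surjectivity of $\gamma_q$ at $y$. A $t$-torsion element of $R^{q+1}f_*\mathscr F$ would then contradict the injectivity of $\iota_{q+1}$.

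You instead extract the short exact sequence $0\to\cF^q_y/u\cF^q_y\to H^q(X_y,\mathscr F_{(y)})\to\ker(u\colon\cF^{q+1}_y\to\cF^{q+1}_y)\to0$. This is precisely the universal-coefficient sequence used in Proposition~\ref{jump-dimension}(ii), because $\Tor_1^{\mathcal O_{Y,y}}(M,\mathbb C(y))=\ker(u\colon M\to M)$. You then count dimensions with the structure theorem to get $h^q(X_y,\mathscr F_{(y)})=r_q+\mu(T^q_y)+\mu(T^{q+1}_y)$, and both directions follow symmetrically.

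What each approach buys:
\begin{itemize}
\item Your route avoids Grauert's theorem in the converse and also avoids the local complexes $\mathcal L^\bullet$ behind the main-text proof.
\item It is quantitative: it recovers the jump formula, measuring the failure of constancy exactly by the number of cyclic torsion summands.
\item The paper's appendix argument is shorter and needs no rank bookkeeping.
\end{itemize}

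The one assertion you leave unjustified is that the free rank of $\cF^q_{y'}$ equals $r_q$ for $y'$ near $y$. It deserves a line. The quotient $\cF^q/T(\cF^q)$ is torsion free and coherent, hence locally free of constant rank on the disk. Its stalk at any point is the free part of $\cF^q$ there, so that constant rank is $r_q$.
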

	\begin{proof}
		Fix a point $y\in Y$. Since the argument is local on $Y$, we may replace $Y$ by a coordinate neighborhood of $y$ and $X$ by its inverse image. Let $t$ be a local coordinate with $t(y)=0$, and denote its pullback to $X$ by the same symbol. Flatness of $\mathscr{F}$ over $Y$ implies that multiplication by $t$ on $\mathscr{F}$ is injective and hence defines an isomorphism $\mathscr{F}\xrightarrow{\sim}t\mathscr{F}$. The inclusion $\iota:t\mathscr{F}\hookrightarrow\mathscr{F}$ fits into the short exact sequence
		$$
		0 \to t\mathscr{F} \xrightarrow{\iota} \mathscr{F} \to \frac{\mathscr{F}}{t\mathscr{F}} \to 0.
		$$
		Applying the higher direct image functors gives the long exact sequence
		$$
		\cdots \to R^qf_*\mathscr{F} \xrightarrow{\gamma_q} R^qf_*(\frac{\mathscr{F}}{t\mathscr{F}}) \xrightarrow{\delta_q} R^{q+1}f_*(t\mathscr{F}) \xrightarrow{\iota_{q+1}} R^{q+1}f_*\mathscr{F} \to \cdots.
		$$
		Here $\iota_k:=R^kf_*(\iota)$. Let
		$\nu:t\mathscr{F}\xrightarrow{\sim}\mathscr{F}$ be the inverse of the
		multiplication isomorphism, given by $ts\mapsto s$. For each $k$, it induces
		an isomorphism
		\[
		\jmath_k:=R^kf_*(\nu):R^kf_*(t\mathscr{F})
		\xrightarrow{\sim}R^kf_*\mathscr{F}.
		\]
		Since $\iota=(\cdot t)\circ\nu$, functoriality gives
		\[
		\iota_k=(\cdot t)\circ\jmath_k.
		\]
		Thus $\iota_k$ factors through $\jmath_k$, followed by multiplication by
		$t$ on $R^kf_*\mathscr{F}$.
		
		For the ``if'' direction, assume that $R^{q+1}f_*\mathscr{F}$ and
		$R^qf_*\mathscr{F}$ are torsion free. The factorization
		\[
		\iota_{q+1}=(\cdot t)\circ\jmath_{q+1}
		\]
		shows that $\iota_{q+1}$ is injective, since $\jmath_{q+1}$ is an
		isomorphism and multiplication by $t$ on $R^{q+1}f_*\mathscr{F}$ is
		injective by the torsion freeness of $R^{q+1}f_*\mathscr{F}$. Exactness gives $\delta_q=0$, so $\gamma_q$ is surjective.
		The preceding map in the long exact sequence is $\iota_q$; since
		$\jmath_q$ is an isomorphism, its image is $t\cdot R^qf_*\mathscr{F}$. Hence
		\[
		\ker(\gamma_q)=t\cdot R^qf_*\mathscr{F},
		\]
		and we obtain an isomorphism
		\[
		\frac{R^qf_*\mathscr{F}}{t\cdot R^qf_*\mathscr{F}}
		\cong R^qf_*\bigl(\mathscr{F}/t\mathscr{F}\bigr).
		\]
		Since $\mathscr{F}/t\mathscr{F}$ is the direct image of
		$\mathscr{F}_{(y)}$ under the inclusion $X_y\hookrightarrow X$,
		Proposition \ref{higher stalk} yields
		\[
		\bigl(R^qf_*(\mathscr{F}/t\mathscr{F})\bigr)_y
		\cong H^q(X_y,\mathscr{F}_{(y)}).
		\]
		Taking stalks at $y$, we therefore obtain
		\[
		\frac{(R^qf_*\mathscr{F})_y}{t\cdot(R^qf_*\mathscr{F})_y}
		\cong H^q(X_y,\mathscr{F}_{(y)}).
		\]
		The left-hand side is the fiber of $R^qf_*\mathscr{F}$ at $y$.
		By Theorem \ref{direct image}, $R^qf_*\mathscr{F}$ is coherent;
		since it is torsion free on the smooth curve $Y$, it is locally free.
		As $y$ was arbitrary, the function
		$y\mapsto h^q(X_y,\mathscr{F}_{(y)})$ equals its locally constant rank
		function and is therefore locally constant.
		
		For the ``only if'' direction, assume that
		$z\mapsto h^q(X_z,\mathscr{F}_{(z)})$ is locally constant.
		By Grauert's theorem on cohomology and base change,
		$R^qf_*\mathscr{F}$ is locally free, hence torsion free, and the base
		change map at $y$ is an isomorphism. Under the identification above,
		this implies that $(\gamma_q)_y$ is surjective. Exactness then gives
		$(\delta_q)_y=0$, so $(\iota_{q+1})_y$ is injective.
		Suppose that $(R^{q+1}f_*\mathscr{F})_y$ has a nonzero torsion element.
		Since $\mathcal{O}_{Y,y}$ is a discrete valuation ring with uniformizer
		$t$, there exist a nonzero element $u$ and a least integer $n\geq1$
		such that $t^n\cdot u=0$. Setting $s=t^{n-1}\cdot u$, we have $s\neq0$ and $t\cdot s=0$.
		The isomorphism $(\jmath_{q+1})_y$ gives a nonzero element
		\[
		x=(\jmath_{q+1})_y^{-1}(s)
		\in\bigl(R^{q+1}f_*(t\mathscr{F})\bigr)_y.
		\]
		The factorization above yields
		\[
		(\iota_{q+1})_y(x)=t\cdot(\jmath_{q+1})_y(x)=t\cdot s=0,
		\]
		contradicting the injectivity of $(\iota_{q+1})_y$.
		As $y$ was arbitrary, $R^{q+1}f_*\mathscr{F}$ is torsion free.
	\end{proof}

	\medskip
	\noindent\textbf{Added in proof.} After this paper was completed, Y. Li--P{\u{a}}un--Wang--X. Zhou \cite[Theorem~1.2]{LPWZ26} gave a proof of  Conjecture \ref{siu-conj} under the additional assumption that the total space $X$ is K\"ahler. More precisely, if $p:X\to\Delta$ is a proper holomorphic submersion and $X$ is K\"ahler, then $P_m(X_t)$ is independent of $t\in\Delta$ for every $m\geq1$.


\begin{thebibliography}{LiPWZ26}
		\raggedright
		
		\bibitem[ACGH85]{ACGH85}
		E.~Arbarello, M.~Cornalba, P.~A. Griffiths, J.~Harris, \emph{Geometry of algebraic curves. Volume I}, Grundlehren der mathematischen Wissenschaften, vol.~267, Springer-Verlag, New York, 1985.
		
		\bibitem[At68]{Artin1}
		M.~Artin, \emph{On the solutions of analytic equations}, Invent. Math. 5 (1968), 277--291.
		
		\bibitem[BS76]{BS76}
		C.~B\u{a}nic\u{a}, O.~St\u{a}n\u{a}\c{s}il\u{a}, \emph{Algebraic methods in the global theory of complex spaces}, Editura Academiei, Bucharest; John Wiley \& Sons, London-New York-Sydney, 1976, translated from the Romanian.
		
		\bibitem[BCHM10]{BCHM}
		C.~Birkar, P.~Cascini, C.~D. Hacon, J.~M\textsuperscript{c}Kernan, \emph{Existence of minimal models for varieties of log general type}, J. Amer. Math. Soc. 23 (2010), no.~2, 405--468.
		
		\bibitem[BH93]{BH93}
		W.~Bruns, J.~Herzog, \emph{Cohen--Macaulay rings}, Cambridge Studies in Advanced Mathematics, vol.~39, Cambridge University Press, Cambridge, 1993.
		
		\bibitem[CDM17]{CDM17}
		Junyan Cao, J.-P.~Demailly, S.~Matsumura, \emph{A general extension theorem for cohomology classes on non reduced analytic subspaces}, Sci. China Math. 60 (2017), no.~6, 949--962.
		
		
		\bibitem[CRT26]{CRT25}
		Jian Chen, Sheng Rao, I-Hsun Tsai, \emph{Characterization of fiberwise bimeromorphism and specialization of bimeromorphic types I: locally Moishezon case}, preprint, 2026, \href{https://arxiv.org/abs/2506.12670v2}{arXiv:2506.12670v2}.
		
		\bibitem[CH01]{CH01}
		Jungkai Alfred Chen, C.~D. Hacon, \emph{Pluricanonical maps of varieties of maximal {A}lbanese dimension}, Math. Ann. 320 (2001), no.~2, 367--380.
		
		\bibitem[DH24]{DHAlbanese}
		O.~Das, C.~D. Hacon, \emph{Existence of good minimal models for {K}\"ahler varieties of maximal {A}lbanese dimension}, C. R. Math. Acad. Sci. Paris 362 (2024), 83--91.
		
		\bibitem[DHP24]{DHP23}
		O.~Das, C.~D. Hacon, M.~P{\u{a}}un, \emph{On the $4$-dimensional minimal model program for {K}\"ahler varieties}, Adv. Math. 443 (2024), Paper No.~109615, 68 pp.
		
		\bibitem[DO24]{DO24}
		O.~Das, Wenhao Ou, \emph{On the log abundance for compact {K}\"ahler threefolds}, Manuscripta Math. 173 (2024), nos.~1--2, 341--404.
		
		\bibitem[DO26]{DO26}
		O.~Das, Wenhao Ou, \emph{On the log abundance for compact {K}\"ahler threefolds {II}}, Proc. Lond. Math. Soc. (3) 132 (2026), no.~3, Paper No.~e70141, 63 pp.
		
		\bibitem[Dem12]{Dem12}
		J.-P.~Demailly, \emph{Analytic methods in algebraic geometry}, Surveys of Modern Mathematics, vol.~1, International Press, Somerville, MA; Higher Education Press, Beijing, 2012.
		
		\bibitem[DF04]{Dum04}
		D.~S. Dummit, R.~M. Foote, \emph{Abstract algebra}, third ed., John Wiley \& Sons, Inc., Hoboken, NJ, 2004.
		
		\bibitem[FS20]{FS20}
		S.~Filipazzi, R.~Svaldi, \emph{Invariance of plurigenera and boundedness for generalized pairs}, Mat. Contemp. 47 (2020), 114--150.
		
		\bibitem[Fj13]{Fujinoalb}
		O.~Fujino, \emph{On maximal Albanese dimensional varieties}, Proc. Japan Acad. Ser. A Math. Sci. 89 (2013), no.~8, 92--95.
		
		\bibitem[Gr74]{Grauert74}
		H.~Grauert, \emph{Der Satz von Kuranishi f\"ur kompakte komplexe R\"aume}, Invent. Math. 25 (1974), 107--142.
		
		\bibitem[GR84]{GR84}
		H.~Grauert, R.~Remmert, \emph{Coherent analytic sheaves}, Grundlehren der mathematischen Wissenschaften [Fundamental Principles of Mathematical Sciences], vol.~265, Springer-Verlag, Berlin, 1984.
		
		\bibitem[Gr65]{Gr65}
		P.~A. Griffiths, \emph{The extension problem for compact submanifolds of complex manifolds. {I}. {T}he case of a trivial normal bundle}, Proc. Conf. Complex Analysis (Minneapolis, 1964), Springer, Berlin--Heidelberg--New York, 1965, pp.~113--142.
		
		\bibitem[HLR26]{HLR26}
		C.~D. Hacon, Yi Li, Sheng Rao, \emph{On pseudo-effectivity and volumes of adjoint classes in K\"ahler families with projective central fiber}, preprint, 2026, \href{https://arxiv.org/abs/2602.04158v2}{arXiv:2602.04158v2}.
		
		\bibitem[HLX26]{HLX26}
		C.~D. Hacon, Yi Li, Lingyao Xie, \emph{Fujiki class $\mathcal{C}$ varieties and a K\"ahler criterion}, preprint, 2026, \href{https://arxiv.org/abs/2608.20588v1}{arXiv:2608.20588v1}.
		
		\bibitem[HMX13]{HMX13}
		C.~D. Hacon, J.~M\textsuperscript{c}\kern-0.06em Kernan, Chenyang Xu, \emph{On the birational automorphisms of varieties of general type}, Ann. of Math. (2) 177 (2013), no.~3, 1077--1111.
		
		\bibitem[HP24]{HP24}
		C.~D. Hacon, M.~P{\u{a}}un, \emph{On the canonical bundle formula and adjunction for generalized {K}\"ahler pairs}, preprint, 2024, \href{https://arxiv.org/abs/2404.12007v1}{arXiv:2404.12007v1}.
		
		\bibitem[HX26]{HX26}
		C.~D. Hacon, Lingyao Xie, \emph{On the {K}\"ahler {MMP} and the transcendental base-point-free theorem}, preprint, 2026, \href{https://arxiv.org/abs/2607.24986v1}{arXiv:2607.24986v1}.
		
		\bibitem[HM98]{HM98}
		J.~Harris, I.~Morrison, \emph{Moduli of curves}, Graduate Texts in Mathematics, vol.~187, Springer-Verlag, New York, 1998.
		
		\bibitem[Ha77]{Hart77}
		R.~Hartshorne, \emph{Algebraic geometry}, Graduate Texts in Mathematics, vol.~52, Springer-Verlag, New York--Heidelberg, 1977.
		
		\bibitem[Iv86]{Ive86}
		B.~Iversen, \emph{Cohomology of sheaves}, Universitext, Springer-Verlag, Berlin, 1986.
		
		\bibitem[Ka99a]{KawaDeform99}
		Y.~Kawamata, \emph{Deformations of canonical singularities}, J. Amer. Math. Soc. 12 (1999), no.~1, 85--92.
		
		\bibitem[KKMSD73]{KKMSD73}
		G.~Kempf, F.~Knudsen, D.~Mumford, B.~Saint-Donat, \emph{Toroidal embeddings I}, Lecture Notes in Mathematics, vol.~339, Springer-Verlag, Berlin--New York, 1973.
		
		\bibitem[KS58]{KS58}
		K.~Kodaira, D.~C. Spencer, \emph{On deformations of complex analytic structures. {I}, {II}}, Ann. of Math. (2) 67 (1958), 328--466.
		
		\bibitem[KS60]{KS60}
		K.~Kodaira, D.~C. Spencer, \emph{On deformations of complex analytic structures. {III}. {S}tability theorems for complex structures}, Ann. of Math. (2) 71 (1960), 43--76.
		
		\bibitem[Ko86]{Kollar86}
		J.~Koll\'ar, \emph{Higher direct images of dualizing sheaves. {I}}, Ann. of
		Math. (2) 123 (1986), no.~1, 11--42.
		
		\bibitem[Kol96]{Kol96}
		J.~Koll{\'a}r, \emph{Rational curves on algebraic varieties}, Ergebnisse der Mathematik und ihrer Grenzgebiete. 3. Folge. A Series of Modern Surveys in Mathematics, vol.~32, Springer-Verlag, Berlin, 1996.
		
		\bibitem[Ko21]{DefGeneralType}
		J.~Koll{\'a}r, \emph{Deformations of varieties of general type}, Milan J. Math. 89 (2021), 345--354.
		
		\bibitem[Ko23]{Kolbook}
		J.~Koll\'{a}r, \emph{Families of varieties of general type}, Cambridge Tracts in Mathematics, vol.~231, Cambridge University Press, Cambridge, 2023, with the collaboration of K.~Altmann and S.~J. Kov\'{a}cs.
		
		\bibitem[KM92]{KM92}
		J.~Koll\'{a}r, S.~Mori, \emph{Classification of three-dimensional flips}, J. Amer. Math. Soc. 5 (1992), no.~3, 533--703.
		
		
		\bibitem[LPWZ26]{LPWZ26}
		Yinji Li, M.~P{\u{a}}un, Zhiwei Wang, Xiangyu Zhou, \emph{Psh variation of singular K\"ahler--Einstein metrics and invariance of plurigenera for smooth K\"ahler families}, preprint, 2026, \href{https://arxiv.org/abs/2610.07826v1}{arXiv:2610.07826v1}.
		
		\bibitem[LW26]{LW26}
		Zhan Li, Zhiwei Wang, \emph{Invariance of plurigenera for generalized polarized pairs with abundant nef parts}, Complex Anal. Synerg. 12 (2026), no.~1, Paper No.~2, 17 pp.
		
		\bibitem[Ma89]{Mat89}
		H.~Matsumura, \emph{Commutative ring theory}, second ed., Cambridge Studies in Advanced Mathematics, vol.~8, Cambridge University Press, Cambridge, 1989, translated from the Japanese by M. Reid.
		
		\bibitem[Ma22]{Matsumura22}
		S.~Matsumura, \emph{Injectivity theorems with multiplier ideal sheaves for higher direct images under K\"ahler morphisms}, Algebr. Geom. 9 (2022), no.~2, 122--158.
		
		\bibitem[Mu81]{Mukai81}
		S.~Mukai, \emph{Duality between $D(X)$ and $D(\widehat{X})$ with its application to Picard sheaves}, Nagoya Math. J. 81 (1981), 153--175.
		
		\bibitem[Mu70]{Mum70}
		D.~Mumford, \emph{Abelian varieties}, Tata Institute of Fundamental Research Studies in Mathematics, vol.~5, Oxford University Press, London, 1970.
		
		\bibitem[Na90]{Nad90}
		A.~M. Nadel, \emph{Multiplier ideal sheaves and {K\"ahler--Einstein} metrics of positive scalar curvature}, Ann. of Math. (2) 132 (1990), no.~3, 549--596.
		
		\bibitem[Na86]{Naka85}
		N.~Nakayama, \emph{Invariance of the plurigenera of algebraic varieties under minimal model conjectures}, Topology 25 (1986), no.~2, 237--251.
		
		\bibitem[Na87]{Naka87}
		N.~Nakayama, \emph{The lower semicontinuity of the plurigenera of complex varieties}, Algebraic geometry, {S}endai, 1985, Adv. Stud. Pure Math., vol.~10, North-Holland, Amsterdam, 1987, pp. 551--590.
		
		\bibitem[Ny04]{Ny04}
		N.~Nakayama, \emph{Zariski-decomposition and abundance}, MSJ Memoirs, vol.~14, Mathematical Society of Japan, Tokyo, 2004.
		
		\bibitem[OT87]{OT87}
		T.~Ohsawa, K.~Takegoshi, \emph{On the extension of {$L^2$} holomorphic functions}, Math. Z. 195 (1987), 197--204.
		
		\bibitem[Pa07]{Paun07}
		M.~P{\u{a}}un, \emph{Siu's invariance of plurigenera: a one-tower proof}, J. Differential Geom. 76 (2007), no.~3, 485--493.
		
		\bibitem[Ra26]{Ra26}
		Sheng Rao, \emph{Upper semicontinuity of algebraic dimension}, preprint, 2024, revised 2026, with an appendix joint with Mu-Lin Li, Kai Wang, Mengjiao Wang, \href{https://arxiv.org/pdf/2407.02022}{arXiv:2407.02022v3}.
		
		\bibitem[RT21]{RT1}
		Sheng Rao, I-Hsun Tsai, \emph{Deformation limit and bimeromorphic embedding of Moishezon manifolds}, Commun. Contemp. Math. 23 (2021), no.~8, Paper No.~2050087.
		
		\bibitem[RT22]{RT2}
		Sheng Rao, I-Hsun Tsai, \emph{Invariance of plurigenera and {C}how-type lemma}, Asian J. Math. 26 (2022), no.~4, 507--554.
		
		\bibitem[Re94]{Remmert94}
		R.~Remmert, \emph{Local theory of complex spaces}, in \emph{Several complex variables VII: Sheaf-theoretical methods in complex analysis}, Encyclopaedia of Mathematical Sciences, vol.~74, Springer-Verlag, Berlin, 1994, pp. 7--96.
		
		
		\bibitem[Si98]{Siu98}
		Yum-Tong Siu, \emph{Invariance of plurigenera}, Invent. Math. 134 (1998), no.~3, 661--673.
		
		\bibitem[Si02]{Siu02}
		Yum-Tong Siu, \emph{Extension of twisted pluricanonical sections with plurisubharmonic weight and invariance of semipositively twisted plurigenera for manifolds not necessarily of general type}, Complex geometry ({G}\"{o}ttingen, 2000), Springer, Berlin, 2002, pp. 223--277.
		
		\bibitem[Sp95]{Spanier95}
		E.~H. Spanier, \emph{Algebraic topology}, Springer-Verlag, New York, [1995], corrected reprint of the 1966 original.
		
		\bibitem[Ta07]{Taka06}
		S.~Takayama, \emph{On the invariance and the lower semi-continuity of plurigenera of algebraic varieties}, J. Algebraic Geom. 16 (2007), no.~1, 1--18.
		
		\bibitem[Te15]{Teleman15}
		A.~Teleman, \emph{On the torsion of the first direct image of a locally free sheaf}, Ann. Inst. Fourier (Grenoble) 65 (2015), no.~1, 101--136.
		
		\bibitem[We94]{Weibel94}
		C.~A. Weibel, \emph{An introduction to homological algebra}, Cambridge Studies in Advanced Mathematics, vol.~38, Cambridge University Press, Cambridge, 1994.
		
		\bibitem[ZZ22]{ZZ22}
		Xiangyu Zhou, Langfeng Zhu, \emph{Extension of cohomology classes and holomorphic sections defined on subvarieties}, J. Algebraic Geom. 31 (2022), no.~1, 137--179.
		
	\end{thebibliography}
\end{document}